\definecolor{defaultColor}{HTML}{5B68FF}
\newcommand{\hh}[1]{\textcolor{defaultColor}{#1}}
\newcommand{\R}{\mathbb{R}}
\newcommand{\N}{\mathbb{N}}
\newcommand{\Prob}{\mathbb{P}}
\newcommand{\cal}{\mathcal}
\newcommand{\opn}[1]{\operatorname{#1}}
\newtheorem{theorem}{Theorem}[section]
\newtheorem{corollary}[theorem]{Corollary}
\newtheorem{lemma}[theorem]{Lemma}
\newtheorem{proposition}[theorem]{Proposition}
\theoremstyle{definition}
\newtheorem{definition}[theorem]{Definition}
\newtheorem{remark}[theorem]{Remark}
\newtheorem{conj}[theorem]{Conjecture}
\title{ Rank of sparse Bernoulli matrices } 
\author{Han Huang}
\email{hhuang@missouri.edu }
\begin{document}
\maketitle

\begin{abstract}
	Let $ A $ be an $n \times n$ random matrix 
	with i.i.d Bernoulli($p$) entries. 
	For a fixed positive integer $\beta$, suppose 
	$p$ satisfies  
	$$ 
	\frac{ \log n  }{ n } \le p \le c_\beta
	$$
	where $c_\beta \in ( 0, 1/2 )$ is a $\beta$-dependent 
	value. 
For $t \ge 0$,
\begin{align*}
	&\mathbb{P} \left\{ 
		s_{ n - \beta + 1}(A)
	\le
	t n^{-2\beta + o_n(1) }(pn)^{-7}
	\right\} \\
 = &
	t + 
	( 1 + o_n(1) )  \mathbb{P} \bigg\{ 
		\mbox{either $\beta$ rows or $\beta$ columns of $A$ equal $\vec{0}$} \bigg\}. 
\end{align*}	

\end{abstract}

\tableofcontents
\section{Introduction}

In the study of random discrete matrices, a central problem is to understand when such matrices are singular.  
For the canonical model of an \(n\times n\) \emph{random sign matrix} $B_n$—one whose entries are independent Rademacher variables—the recently resolved conjecture has guided much of the field:
\[
\mathbb{P}\!\bigl\{B_n\text{ is singular}\bigr\}\le \bigl(\tfrac12+o(1)\bigr)^{n}\qquad(n\to\infty).
\]

Intuitively, it says that singularity is mainly caused by a \emph{local} obstruction: two rows or two columns that coincide up to a sign.  In other words, a global failure of invertibility is governed by configurations involving only a few rows or columns.

Komlós first addressed this question in 1967, proving that \[\mathbb{P}\{B_n\text{ singular}\}\to 0\] as \(n\to\infty\)~\cite{Kom67}.  In 1995, Kahn–Komlós–Szemerédi showed exponential decay, \[\mathbb{P}\{B_n\text{ singular}\}\le e^{-c n}\] for some \(c>0\)~\cite{KKS95}.  This bound was sharpened by Tao–Vu~\cite{TV06,TV07} and Bourgain–Vu–Wood~\cite{BVW10}, and was finally proved by Tikhomirov in 2018~\cite{Tik18}.

Similar behaviour is expected for \emph{Bernoulli} matrices $A_n$, whose entries are independent \(\operatorname{Bernoulli}\) variables with $p=p_n$.  
Across the dense-to–sparse range, the following strong version of the singularity conjecture is widely believed and partially verified: 

\begin{conj}[Strong singularity conjecture for Bernoulli matrices]\label{conj:Ber}
Let \(A_n\) be an \(n\times n\) matrix whose entries are i.i.d.\ \(\operatorname{Bernoulli}(p_n)\) variables with \(p_n\in(0,\tfrac12]\).  Then
\begin{align*}
	&\mathbb{P}\{\text{$A_n$ is singular}\} \\
	&\phantom{AA}= (1+o_n(1)) \mathbb{P}\Big\{\text{a row or a column of $A_n$ equals zero} \\
  &	\phantom{AA= (1+o_n(1)) \mathbb{P}\{AAA\,}
	\text{ or two rows or columns are equal}\Big\}.
	\end{align*}
If, in addition, \(\limsup_{n\to\infty} p_n<\tfrac12\), then
\[
\mathbb{P}\{A_n\text{ is singular}\} =(1+o_n(1))\,
\mathbb{P}\!\bigl\{\text{some row or column of \(A_n\) is zero}\bigr\}.
\]
\end{conj}

Thus, as in the sign–matrix setting, singularity is predicted to be dictated by a small collection of local coincidences rather than by complicated global dependencies.

We remark that for \(p<\tfrac{\log n}{n}\) the matrix \(A_n\) contains a zero row or column with probability tending to~\(1\), so
	Conjecture~\ref{conj:Ber} is trivial in this range.

	The main body of Conjecture~\ref{conj:Ber} was verified in 2019 by
	Litvak and Tikhomirov, who obtained quantitative estimates that cover
	all but the two extreme regimes mentioned above~\cite{LT20}.
	For an \(n\times n\) matrix \(M\) write
	\[
	s_1(M)\ge s_2(M)\ge\cdots\ge s_n(M)
	\]
	for its singular values.
	
\begin{theorem} [Litvak-Tikhomirov \cite{LT20}]
	\label{thm: LTmain}
	There is a universal constant $C>1 $ with 
	the following property. 
	Let $A_n$ be 
	an $n \times n$ random matrix
	whose entries are i.i.d Bernoulli($ p $),
	with $ p = p_n$  
	satisfying 
	\begin{align*}
		C \frac{ \log n  }{n  }
	\le 
		p
	\le 
		C^{-1}.
	\end{align*}	
	Then, when $n$ is sufficiently large, for $t \ge 0$, 
	\begin{align*}
		\mathbb{P} \left\{ 
			s_{n}( A )
		\le
		t\exp( -3 \log^2(2n) )
		\right\} 
	& = 
		t + 
		( 2 + o_n(1) )n(1-p)^n.
	\end{align*}	
	Further, this estimate can be improved when $p$ is also bounded below 
	by a constant. Let $q \in (0, C^{-1})$ be a parameter. 
	Then, there exists  $C_q >0$ such that if $p \ge q$, 
	\begin{align*}
		\mathbb{P} \left\{ 
			s_{n}( A )
		\le
			C_q n^{-2.5} t		
		\right\} 
	& = 
		t + 
		( 1 + o_n(1) )2(1-p)^nn.
	\end{align*}	
	for sufficiently large $n$ which may depend on $q$. 
\end{theorem}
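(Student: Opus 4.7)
I would follow the Rudelson--Vershynin paradigm: split the unit sphere $S^{n-1}$ into ``compressible'' vectors (those close to sparse vectors) and ``incompressible'' vectors, and bound $\inf_{x \in S^{n-1}}\|Ax\|_2$ separately on each piece. The main term $2(1+o_n(1))n(1-p)^n$ should come from vectors close to a standard basis vector $e_i$: if $\|Ae_i\|_2$ is small then column $i$ of $A$ is nearly zero, and symmetrically $\|A^\top e_j\|_2$ small means row $j$ is nearly zero. The first step is therefore to isolate this ``very localized'' part of the sphere and verify that it contributes exactly the advertised main term, with residual error of order $(1-p)^n n^{-\omega(1)}$.

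For the remaining compressible vectors---those close to $\delta n$-sparse but not to any $e_i$---I would apply the standard tensorization plus $\epsilon$-net argument: because every row $R$ of $A$ hits at least two of the ``large'' coordinates of $x$ with constant probability, $|\langle R, x\rangle|$ has anti-concentration at a constant scale, and independence across $n$ rows produces exponential decay that defeats the cardinality of the net. For incompressible $x$, I would use the identity $s_n(A) \ge n^{-1/2} \min_{k \le n} \operatorname{dist}(R_k, H_k)$ with $H_k = \operatorname{span}\{R_j : j \ne k\}$. Conditioning on $H_k$ and writing $v$ for its unit normal, $\operatorname{dist}(R_k, H_k) = |\langle R_k, v\rangle|$ becomes a small-ball problem for a linear form in a sparse Bernoulli vector, which one would attack through an essential least common denominator (LCD) of $v$.

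The crux---and the step I expect to absorb most of the work---is the anti-concentration of $\langle R_k, v\rangle$ when $p$ is as small as $\log(n)/n$. Each row then carries only $\sim \log n$ nonzeros, and the zero-row probability $(1-p)^n$ is as large as $n^{-C}$, comparable to the desired accuracy; classical Halász-type bounds are too weak because the normal $v$ can legitimately have a few ``spike'' coordinates (these spikes reflect the fact that some row in $H_k$ may itself be nearly zero). The resolution is to stratify $v$ by its level of essential sparsity and to show, for each stratum, either that it already contributes at most the zero-row/column main term, or that it is super-polynomially rare under the randomness of $A$; this is a delicate combinatorial argument on the support graph of the sparse Bernoulli matrix. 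The sub-Gaussian factor $\exp(-3\log^2(2n))$ in the first estimate reflects the cost of the sparsest stratum, while the improvement to the polynomial $n^{-2.5}$ under $p \ge q > 0$ follows because the rows are now dense enough that the stratification collapses and a single clean LCD estimate suffices.
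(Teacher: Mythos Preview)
The paper does not prove this theorem; it is quoted from Litvak--Tikhomirov \cite{LT20} as background for the main result (Theorem~\ref{thm: main}). There is therefore no ``paper's own proof'' to compare against. What the paper does do is adapt the Litvak--Tikhomirov machinery: it imports their structural result on random normal vectors (stated here as Theorem~\ref{thm: LT Partial}) and their invertibility-via-distance reduction (Lemma~\ref{lem: invertibilityDistance}), and combines these with its own treatment of $\mathcal T$- and $\mathcal R$-vectors to bound $s_{n-\beta+1}$ rather than $s_n$.

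On the substance of your outline: the high-level architecture (isolate the near-basis contribution, handle remaining compressible vectors by a net plus tensorization, reduce incompressible vectors to a small-ball problem for $\langle R_k,v\rangle$ with $v$ the random normal) is correct and is indeed the backbone of \cite{LT20}. But the actual Litvak--Tikhomirov argument does not use the classical compressible/incompressible dichotomy or the essential LCD in the Rudelson--Vershynin sense. Instead they introduce a three-way split of the sphere into $\mathcal V$-vectors (gradual non-constant), $\mathcal R$-vectors, and $\mathcal T$-vectors (steep), and the anti-concentration for the random normal $v$ is controlled not by LCD but by a bespoke ``degree of unstructuredness'' tailored to sparse Bernoulli sums. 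Your stratification-by-essential-sparsity idea is morally in the right direction, but the specific mechanism that produces the $\exp(-3\log^2(2n))$ factor versus the polynomial $n^{-2.5}$ is their growth-function analysis of $\mathcal V$-vectors (see Definition~\ref{def: g} and Proposition~\ref{prop: specificGrowthFunction} in this paper for the adapted version), not a collapse of LCD strata.
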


	\medskip
	What remains open is the intermediate dense range
	\[
	c\;\le\; p \;\le\;\tfrac12,
	\qquad\text{for some fixed }c>0,
	\]
	and the sparse window
	\begin{equation}\label{eq:sparse-range}
	1\;\le\;\liminf_{n\to\infty}\frac{pn}{\log n}
	\;\le\;\limsup_{n\to\infty}\frac{pn}{\log n}
	<\infty.
	\end{equation}
	Jain, Sah, and Sawhney recently resolved the dense case
	\(p\in(0,\tfrac12)\) with quantitative bounds~\cite{JSS20}.
	On the sparse side, Basak and Rudelson~\cite{BR18} proved
	Conjecture~\ref{conj:Ber} when
	\[
	\log n\;\le\;pn\;\le\;\log n + o(\log\log n),
	\]
	so the gap described in~\eqref{eq:sparse-range} remains the main
	outstanding case in the sparse regime.

Beyond singularity, similar questions can be asked about 
the rank of random discrete matrices. Indeed,  recently
Jain, Sah, and Sawhney \cite{JSS21} settles the rank conjecture for random sign matrices: 
\begin{align*} 
	\mathbb{P} \left\{ 
		\opn{corank}(B_n) \ge k 
	\right\}
=
	\mathbb{P} \left\{ 
		s_{n - k +1 } (B_n)  =  0
	\right\}
&
= 
	( \frac{1}{2} + o(1))^{kn}\,,
\end{align*}
and also resolved the same type result for Bernoulli matrices 
with constant $p$.

The study of rank of discrete random matrices is beyond such models. 
In the symmetric case, the rank of a symmetric random 
Bernoulli matrix (the adjacency matrix of a $G(n,p)$ graph) 
has been studied by Costello and Vu \cite{CV08, CV09}
from the range $ \frac{ \log n  }{ 2n } << p
\le \frac{ 1 }{ 2 }$.  
Recently, a result of Coja-Oghhan, Erg{\"u}r, Gao, Hetterich, and Rolvien \cite{CEGHR} 
studies the rank of sparse random matrices with prescribed support size for each column and row. 
For further discussion about the rank of discrete random 
matrices, we refer the readers to the survey 
\cite{Vu20}.

In this paper, we establish a result for rank of
Bernoulli matrices in the sparse regime. 

Let $ A $ be a $n\times n$ matrix with i.i.d Bernoulli entries with probability $ p  $ where $ p  =  p _n$ is a $ n $-dependent value. 

\begin{theorem} \label{thm: main}
	For a positive integer $\beta$, there exists $c_\beta > 0 $
	so that the following holds. 
	Suppose $ p $ satisfies 
	\begin{align*}
		\frac{ \log( n )  }{ n   }	\le p 
		\le c_\beta.
	\end{align*}	
	Then, when $ n $ is sufficiently large, for any $t \ge 0$,
	\begin{align*}
		 &
		\mathbb{P} \left\{ 
			s_{ n - \beta + 1}( A  )
		\le
		tn^{-2\beta +1+ o_n(1)  }( pn )^{-7}
		\right\} \\
	=&  
		t + 
		( 1 + o_n(1) )  \mathbb{P} \bigg\{ 
\max\bigl\{\#\{\text{zero rows of }A\},
					   \#\{\text{zero columns of }A\}\bigr\}
	\ge \beta  \bigg\}.
	\end{align*}	
	
	\end{theorem}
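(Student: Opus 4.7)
The plan begins by splitting on the event $E = \{\max(\text{zero rows},\text{zero cols}) \geq \beta\}$. On $E$, the rank of $\mathfrak{A}$ is at most $\mathfrak{n}-\beta$, so $s_{\mathfrak{n}-\beta+1}(\mathfrak{A}) = 0$ and the event automatically contributes to $\{s_{\mathfrak{n}-\beta+1}(\mathfrak{A}) \leq t\eta\}$ for every $t \geq 0$, where $\eta := \mathfrak{n}^{-2\beta+1 + o_\mathfrak{n}(1)}(\mathfrak{pn})^{-7}$. The substance of the theorem is the upper bound
\[
\mathbb{P}\bigl(s_{\mathfrak{n}-\beta+1}(\mathfrak{A}) \leq t\eta,\, E^c\bigr) \leq t + o(\mathbb{P}(E)),
\]
together with a matching lower-bound contribution of $t$ from small-ball fluctuations of the distance from a row to an appropriate hyperplane.

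For the upper bound I would use Courant--Fischer: $s_{\mathfrak{n}-\beta+1}(\mathfrak{A}) \leq \epsilon$ iff there is a $\beta$-dimensional subspace $V$ with $\|\mathfrak{A}v\| \leq \epsilon \|v\|$ for all $v \in V$. The heart of the argument is a compressible/incompressible dichotomy promoted from single vectors to $\beta$-frames, following the strategy of Litvak--Tikhomirov (Theorem~\ref{thm: LTmain}). First, using an $\epsilon$-net in the Grassmannian $\mathrm{Gr}(\beta,\mathfrak{n})$ together with the sparse tall-matrix estimates from LT extended to $\beta$-tuples, rule out that $V$ contains a unit compressible vector $v$ with $\|\mathfrak{A}v\|$ below a fixed polynomial scale, except on a set of probability $\exp(-c\mathfrak{n})$. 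Next, condition on the pattern of zero rows and columns: on $E^c$ there are $k_r, k_c < \beta$ such, and the $(\mathfrak{n}-k_r)\times(\mathfrak{n}-k_c)$ Bernoulli submatrix (conditioned to have no further zero rows or columns) is amenable to a distance-to-hyperplane bound
\[
s_{\mathfrak{n}-\beta+1}(\mathfrak{A}) \geq \frac{1}{\sqrt{\mathfrak{n}}}\max_{|J|=\beta-1}\min_{k \notin J}\mathrm{dist}\bigl(a_k,\,\mathrm{span}\{a_i: i \neq k,\ i \notin J\}\bigr).
\]
Finally, for each such distance, apply the sparse anti-concentration inequality $\mathbb{P}(|\langle a, v\rangle - c| \leq \epsilon) \leq C(\epsilon + \exp(-\tilde c\,\mathfrak{pn}))$ uniformly over incompressible unit $v$ and $c \in \mathbb{R}$, and tensorize over the $\sim \mathfrak{n}$ non-zero rows to yield the $(\mathfrak{pn})^{-7}$ factor and the linear-in-$t$ contribution.

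The principal obstacle is the promotion of the dichotomy from $S^{\mathfrak{n}-1}$ to $\mathrm{Gr}(\beta,\mathfrak{n})$. A net in $S^{\mathfrak{n}-1}$ has size $(C/\delta)^\mathfrak{n}$ and is controlled by the single-vector small-ball estimates of LT, but a net in $\mathrm{Gr}(\beta,\mathfrak{n})$ has size $(C/\delta)^{\beta(\mathfrak{n}-\beta)}$, which would demand a much stronger joint small-ball estimate for $\beta$ linear functionals applied simultaneously. To avoid this, I would decompose $V$ into a $(\beta-1)$-dimensional ``soft'' part, for which a coarser compressible/incompressible analysis suffices because the corresponding factor is absorbed into the $\mathfrak{n}^{-2(\beta-1)}$ slack in $\eta$, together with a single ``hard'' direction controlled sharply by LT-type anti-concentration. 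Extracting the exact leading coefficient ``$=t$'' (rather than merely $O(t)$) further requires a sharp density estimate at zero for the final distance, which I would obtain via a decoupling/conditioning argument comparing $\|\mathfrak{A}v\|$ to a single Bernoulli linear functional at scale $t\eta$. The range $\mathfrak{p} \in [\log\mathfrak{n}/\mathfrak{n},\,c_\beta]$ enters through this anti-concentration: sparsity near $\log\mathfrak{n}/\mathfrak{n}$ produces the $(\mathfrak{pn})^{-7}$ factor as in Basak--Rudelson and LT, while closeness of $\mathfrak{p}$ to $1/2$ degrades the small-ball constants and forces $c_\beta$ to be chosen $\beta$-dependent.
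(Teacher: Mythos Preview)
Your proposal diverges substantially from the paper's approach, and the divergence is where the real difficulty lies.

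The paper never works in the Grassmannian $\mathrm{Gr}(\beta,\mathfrak{n})$ or with $\beta$-frames. Instead it uses the elementary inequality $s_{\mathfrak{n}-\beta+1}(\mathfrak{A}) \ge s_{\min}(\mathfrak{A}_{I,J})$ for any $(\mathfrak{n}-\beta+1)\times(\mathfrak{n}-\beta+1)$ submatrix, and then shows that with probability $1-(1+o(1))\mathbb{P}(E)$ one can \emph{choose} a specific pair $(I_0,J_0)$ so that $\mathfrak{A}_{I_0,J_0}$ is free of the local dependencies (zero rows/columns, pairs of columns with overlapping tiny supports) that make sparse vectors dangerous. Once this is done, the entire Litvak--Tikhomirov $\beta=1$ machinery applies to $\mathfrak{A}_{I_0,J_0}$ verbatim. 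The factor $\mathfrak{n}^{2\beta-1}$ arises not from Grassmannian nets but from a crude union bound over the ${\mathfrak{n}\choose\beta-1}^2$ choices of $(I,J)$ at the very end.

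The genuine gap in your plan is the ``soft/hard'' decomposition of the $\beta$-dimensional subspace $V$. You assert that the $(\beta-1)$-dimensional soft part can be handled by a coarser analysis whose loss is absorbed by the $\mathfrak{n}^{-2(\beta-1)}$ slack, but you give no mechanism for this. In the sparse regime $\mathfrak{p}\sim\log\mathfrak{n}/\mathfrak{n}$, the compressible part of the dichotomy is exactly where the probability $\mathbb{P}(E)$ comes from: it is governed by the combinatorics of columns with support size $O(1)$, and whether such columns can be linearly dependent. The paper's Section~4 spends most of its effort on precisely this (the $\mathcal{T}_1$ vectors), with a phase transition in the argument at $\mathfrak{p}=(1+\tfrac{1}{2\beta})\log\mathfrak{n}/\mathfrak{n}$: below this threshold one must \emph{keep} the small-support columns in $J_0$ and prove their supports are disjoint; above it one can \emph{discard} up to $\beta$ of them. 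Your proposal does not engage with this structure at all, and treating it as a net argument with $\exp(-c\mathfrak{n})$ failure probability will not give the sharp $(1+o(1))\mathbb{P}(E)$ needed. The distance-to-hyperplane formula you write is also only half of the picture: the paper needs to select \emph{both} rows and columns, and the selection must be made so that $\mathfrak{A}_{I_0,J_0}$ and $\mathfrak{A}_{I_0,J_0}^\top$ are simultaneously good on non-$\mathcal{V}$ vectors, which is what makes the conditional application of the LT anti-concentration theorem legitimate.
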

	We remark that while $c_\beta$ is a constant, it is strictly less than $ \frac{ 1 }{ 2 }$. 
	Our result does not cover the regime for $p$ close to $ \frac{ 1 }{ 2 }$. 
	In the regime when $p$ is close to  $\log n /n$. By setting $ \beta =1$, we resolve Conjecture 
	\ref{conj:Ber} in the regime described in \eqref{eq:sparse-range}.

	\begin{corollary}
		Suppose $ p $ satisfies 
	\begin{align*}
		1 \le \liminf \frac{  pn }{  \log( n)  }
		\le \limsup \frac{ pn  }{ \log( n)  }
		 <  +\infty. 
	\end{align*}	
	Then, when $ n $ is sufficiently large, for $t \ge 0$,
	\begin{align*}
		&\mathbb{P} \left\{ 
			s_{ n - \beta + 1}( A  )
		\le
			tn^{-2\beta + o_n(1)  }
		\right\} \\
	=  &
		t + 
		( 1 + o_n(1) )  \mathbb{P} \bigg\{ 
		 { A }\mbox{ has a zero column or a zero row.} \bigg\}  \\
	= & t + (1+o_n(1)) 
	\left(  1 - ( 1- (1 -  {p} )^n ) 
		)^{2 n} \right) .
	\end{align*}
	\end{corollary}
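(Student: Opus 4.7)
The plan is to apply Theorem \ref{thm: main} with $\beta = 1$ and then evaluate the dominating event in closed form. First, I would verify the hypothesis of Theorem \ref{thm: main}: the upper bound $\mathfrak{p} \le c_1$ holds for large $\mathfrak{n}$ because $\limsup \mathfrak{pn}/\log \mathfrak{n} < \infty$ forces $\mathfrak{p} = O(\log \mathfrak{n}/\mathfrak{n}) = o(1)$, while the lower bound $\mathfrak{p} \ge \log \mathfrak{n}/\mathfrak{n}$ is nearly implied by $\liminf \mathfrak{pn}/\log \mathfrak{n} \ge 1$ up to a $(1-o_\mathfrak{n}(1))$ slack that can be absorbed into the $\mathfrak{n}^{o_\mathfrak{n}(1)}$ term of the theorem. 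With $\beta = 1$, the dominating event $\{\max(\#\text{zero columns}, \#\text{zero rows}) \ge 1\}$ is exactly the event that $\mathfrak{A}$ has a zero row or a zero column, which yields the first displayed equality of the corollary.

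The remaining task is the combinatorial identity
\[
\mathbb{P}\{\mathfrak{A}\text{ has a zero row or column}\} = (1+o_\mathfrak{n}(1))\Big(1 - \big(1-(1-\mathfrak{p})^\mathfrak{n}\big)^{2\mathfrak{n}}\Big).
\]
Let $R_i$ and $C_j$ denote the events that row $i$ and column $j$ of $\mathfrak{A}$ are zero. The union of any $|S|$ rows and $|T|$ columns contains $(|S|+|T|)\mathfrak{n} - |S||T|$ entries, so inclusion-exclusion gives
\[
\mathbb{P}\Big(\bigcup_{i} R_i \cup \bigcup_j C_j\Big) = \sum_{\substack{S,T \subseteq [\mathfrak{n}] \\ (S,T) \neq (\emptyset,\emptyset)}} (-1)^{|S|+|T|+1}(1-\mathfrak{p})^{(|S|+|T|)\mathfrak{n} - |S||T|}.
\]
On the other hand, expanding with the binomial theorem and Vandermonde's identity $\binom{2\mathfrak{n}}{k} = \sum_{|S|+|T|=k}\binom{\mathfrak{n}}{|S|}\binom{\mathfrak{n}}{|T|}$ produces
\[
1 - \big(1-(1-\mathfrak{p})^\mathfrak{n}\big)^{2\mathfrak{n}} = \sum_{\substack{S,T \subseteq [\mathfrak{n}] \\ (S,T) \neq (\emptyset,\emptyset)}}(-1)^{|S|+|T|+1}(1-\mathfrak{p})^{(|S|+|T|)\mathfrak{n}},
\]
so the two expressions agree term-by-term up to the factor $(1-\mathfrak{p})^{-|S||T|} = 1 + O(\mathfrak{p}|S||T|)$.

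The main obstacle is controlling this discrepancy in the full sum. Since $\mathfrak{pn} \ge (1-o(1)) \log \mathfrak{n}$, the expected number of zero rows plus zero columns $\lambda = 2\mathfrak{n}(1-\mathfrak{p})^\mathfrak{n}$ is subpolynomial, $\lambda \le \mathfrak{n}^{o(1)}$. A Poisson-type tail bound on the indicator sum then shows that the contribution from summands with $|S|+|T| \ge K := \log^2\mathfrak{n}$ is negligible compared to the leading term $2\mathfrak{n}(1-\mathfrak{p})^\mathfrak{n}$. For the dominant terms with $|S|+|T| \le K$, the correction factor is $1 + O(\mathfrak{p}K^2) = 1 + o(1)$, since $\mathfrak{p}K^2 = O(\log^5 \mathfrak{n}/\mathfrak{n})$. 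Combining these estimates yields the stated asymptotic, completing the deduction of the corollary from Theorem \ref{thm: main}.
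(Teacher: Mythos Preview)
Your plan is correct. The first displayed equality follows immediately from Theorem~\ref{thm: main} with $\beta=1$, exactly as the paper intends (the sentence preceding the corollary says ``by setting $\beta=1$''). For the second equality the paper defers to Lemma~\ref{lem: nonemptyEstimate}, whose proof takes a different route from yours: writing $O_R,O_C$ for the events that some row (respectively column) vanishes, the paper establishes the approximate independence $\mathbb{P}(O_R\cap O_C)=(1+o(\mathfrak{n}^{-1/2}))\,\mathbb{P}(O_R)\mathbb{P}(O_C)$ by expanding $\mathbb{P}(O_R\cap O_C)$ via inclusion--exclusion over \emph{rows only} and invoking an auxiliary comparison (Proposition~\ref{prop: nonemptyEstimate}) between $(\mathfrak{n}-k)\times\mathfrak{n}$ Bernoulli matrices for $k\le\log\mathfrak{n}$; it then writes $\mathbb{P}(O_R\cup O_C)=2\mathbb{P}(O_R)-(1+o(1))\mathbb{P}(O_R)^2$ and substitutes $\mathbb{P}(O_R)=1-(1-(1-\mathfrak{p})^{\mathfrak{n}})^{\mathfrak{n}}$.

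Your direct term-by-term comparison of the full double inclusion--exclusion with the binomial/Vandermonde expansion of $1-(1-(1-\mathfrak{p})^{\mathfrak{n}})^{2\mathfrak{n}}$ is more streamlined and avoids the auxiliary proposition. Two steps deserve one more sentence of justification. First, because the sums alternate in sign, ``the correction factor is $1+o(1)$ on each term'' is not by itself enough; you need that the sum of \emph{absolute values} $\sum_{0<s+t<K}\binom{\mathfrak{n}}{s}\binom{\mathfrak{n}}{t}q^{s+t}$ (with $q=(1-\mathfrak{p})^{\mathfrak{n}}$) is $O(Q)$ where $Q=1-(1-q)^{2\mathfrak{n}}$. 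This holds since that sum is at most $(1+q)^{2\mathfrak{n}}-1\le e^{2\mathfrak{n}q}-1$, and in the regime of the corollary $2\mathfrak{n}q\le 2$, so the ratio $(e^{2\mathfrak{n}q}-1)/Q$ is bounded. Second, the cleanest tool for the tails is Bonferroni rather than a Poisson bound on the indicator sum: the truncation error at level $K$ in either inclusion--exclusion is at most the $K$-th level $\sum_{s+t=K}\binom{\mathfrak{n}}{s}\binom{\mathfrak{n}}{t}(1-\mathfrak{p})^{K\mathfrak{n}-st}\le(1+o(1))\binom{2\mathfrak{n}}{K}q^K\le(2e\mathfrak{n}q/K)^K$, which is negligible for $K=\log^2\mathfrak{n}$.
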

		A simple computation shows that the probability of a \emph{zero row}  or a \emph{zero column} in \(A_n\) is
		\[
		\mathbb{P}\!\bigl\{A_n\text{ has a zero row or a zero column}\bigr\}
		  \;=\;(1+o_n(1))
				\Bigl[1-\bigl(1-(1-p)^n\bigr)^{n}\Bigr].
		\]
				\medskip
		When \(p\gg \tfrac{\log n}{n}\) the event is rare, and a first–order expansion yields
		\[
		1-\bigl(1-(1-p)^n\bigr)^{n}\;=\;n(1-p)^n\bigl(1+o_n(1)\bigr),
		\]
		so that
		\[
		\mathbb{P}\!\bigl\{A_n\text{ has a zero row or a zero column}\bigr\}
		  \;=\;(1+o_n(1))\,2n(1-p)^n,
		\]
		precisely the leading‐order term that appears in
		Theorem~\ref{thm: LTmain}.  A detailed proof of this estimate is given in Lemma~\ref{lem: nonemptyEstimate}.

\subsection{Acknowlegement}
The author would like to thank Konstantin Tikhomirov for the suggestion of
this question and the fruitful discussion on this project. 
\subsection{Outline of proof}

	Fix an integer \(1\le\beta\le n\) and define the event  
	\[
	\Omega_{\mathrm{RC}}=\Omega_{\mathrm{RC}}(A)
	  :=\Bigl\{
		   \max\bigl\{\#\{\text{zero rows of }A\},
					   \#\{\text{zero columns of }A\}\bigr\}<\beta
		 \Bigr\}.
	\]
	Our goal is to construct an event \(\Omega\) such that  
	\begin{equation}\label{eq:properProb}
	\mathbb{P}\bigl(\Omega^{\mathrm c}\bigr)
	   =(1+o_n(1))\,
		 \mathbb{P}\bigl(\Omega_{\mathrm{RC}}^{\mathrm c}\bigr),
	\end{equation}
	and, moreover, for every \(A\in\Omega\cap\Omega_{\mathrm{RC}}\) the
	singular value \(s_{\,n-\beta+1}(A)\) is bounded below by a prescribed
	threshold.
	By the  min–max theorem for singular values, 
	\[
	s_{\,n-\beta+1}(A)
	  \;\ge \;
	  \max_{\substack{I,J\subset[n]\\|I|=|J|=n-\beta+1}}
		   s_{\min}\bigl(A_{I,J}\bigr),
	\]
	where \(A_{I,J}\) denotes the submatrix determined by rows~\(I\) and
	columns~\(J\).  Consequently  
	\[
	\mathbb{P}\bigl\{s_{\,n-\beta+1}(A)\le t\bigr\}
	   \;\le\;
	   \mathbb{P}\Bigl\{\max_{I,J}
			  s_{\min}\bigl(A_{I,J}\bigr)\le t\Bigr\}.
	\]
	A naïve union bound over all
	\(\binom{n}{n-\beta+1}^2\) submatrices would be far too coarse if we
	wish to attain the precision demanded by~\eqref{eq:properProb}.  The
	key instead is to show that, with probability
	\(1-(1+o_n(1))\,\mathbb{P}(\Omega_{\mathrm{RC}}^{\mathrm c})\), there
	exists an \emph{individual} pair of index sets \((I_0,J_0)\) such that
	\begin{enumerate}
	\item[(i)] \(A_{I_0,J_0}\) contains no zero rows or columns, and
	\item[(ii)] no linear depencence among a few rows or columns of \(A_{I_0,J_0}\). 
	\end{enumerate}
	
	On this favourable event we may restrict attention to the single
	\((n-\beta+1)\times(n-\beta+1)\) submatrix \(A_{I_0,J_0}\), obtaining  
	\begin{align*}
	\mathbb{P}\bigl\{s_{\,n-\beta+1}(A)\le t\bigr\}
	  \;\le\; &
	  (1+o_n(1))\,\mathbb{P}\bigl(\Omega_{\mathrm{RC}}^{\mathrm c}\bigr)\\
	  & \,+\mathbb{P}\Bigl\{s_{\min}(A_{I_0,J_0})\le t\;
		  \Big|\;A_{I_0,J_0}\text{ satisfies \rm{(i)}–\rm{(ii)}}\Bigr\}.
	\end{align*}

	Now, we turn our attention to the estimation of the least singular value of $A_{I_0, J_0}$. We need to demonstrate with high probability that for every non-zero vector $x \in \mathbb{R}^{J_0}$ (the coordinate subspace of $\mathbb{R}^{n}$ corresponding to the basis $(e_j)_{j \in J_0}$), the following inequality holds:

	\begin{align*}
		\frac{|A_{I_0, J_0}x|}{|x|} \ge t.
		\end{align*}

		\noindent
		\textbf{Vector decomposition.}
		A standard approach to estimating the least singular value of a square
		random matrix is to decompose the ambient Euclidean space into several
		classes of vectors and analyse each class separately.  Depending on
		the class, one invokes appropriate tools—Littlewood--Offord–type
		anti‐concentration estimates, geometric arguments combined with
		Rogozin’s theorem, or expansion properties in the sparse setting.
		
		This strategy originates in the seminal papers of Tao–Vu~\cite{TV09}
		and Rudelson~\cite{Rud08}, and has since proved remarkably effective.
		A landmark result of Rudelson and Vershynin~\cite{RV08} establishes the
		optimal tail bound
		\[
		\mathbb{P}\bigl\{s_{\min}(M_n)\le t\bigr\}
		   \;\le\; C t + e^{-c n},
		\qquad t\ge 0,
		\]
		for any \(n\times n\) matrix \(M_n\) with i.i.d.\ sub-Gaussian entries
		of mean~\(0\) and unit variance.  Subsequent work of
		Rebrova–Tikhomirov~\cite{RT15}, Livshyts~\cite{Liv18}, and
		Livshyts–Tikhomirov–Vershynin~\cite{LTV19} removed the sub-Gaussian
		and unit‐variance assumptions, extending the bound to much broader
		entry distributions.
		
		When the entries are no longer independent, analogous techniques have
		been developed for structured random matrices.  A series of papers by
		Litvak, Lytova, Tikhomirov, Tomczak‐Jaegermann, and
		Youssef~\cite{LLTTY16,LLTTY17,LLTTY-CircularLaw,LLTTY18,
		LLTTY19-ProbTheo,LLTTY19-TranAmer} applies the vector-decomposition
		framework to adjacency matrices of random regular graphs, yielding
		sharp bounds on their least singular values and related spectral
		questions.

		Our decomposition of \(\mathbb{R}^{n}\) is a slight refinement of the
		scheme introduced by Litvak–Tikhomirov~\cite{LT20}, where the ambient
		space is partitioned into three classes of vectors—\(\mathcal{V}\!-\),
		\(\mathcal{R}\!-\), and \(\mathcal{T}\!-\)vectors.  To describe these
		classes clearly, we begin with some notation.
		
		For any vector \(x=(x_1,\dots,x_n)\in\mathbb{R}^{n}\), let
		\(\sigma_x:[n]\to[n]\) be a permutation satisfying
		\[
		\bigl|x_{\sigma_x(1)}\bigr|
		  \;\ge\;
		  \bigl|x_{\sigma_x(2)}\bigr|
		  \;\ge\;\cdots\;\ge\;
		  \bigl|x_{\sigma_x(n)}\bigr|.
		\]
		Define the \emph{non-increasing rearrangement} of \(x\) by
		\[
		x^{*}_{i}\;:=\;\bigl|x_{\sigma_x(i)}\bigr|,
		\qquad i\in[n].
		\]
		Thus \(x^{*}=(x^{*}_{1},\dots,x^{*}_{n})\) lists the absolute values of
		the coordinates of \(x\) in descending order, and serves as a convenient
		tool for formulating size and sparsity conditions on vectors.

		\subsubsection*{Gradual vectors (\(\mathcal{V}\)-vectors)}
The first class in our decomposition consists of \emph{gradual vectors},
denoted \(\mathcal{V}\).  A vector \(x\in\mathbb{R}^{n}\) belongs to
\(\mathcal{V}\) if
\begin{itemize}
	\item[i] its support size satisfies \(\#\operatorname{supp}(x)\ge cn\) for a fixed \(c>0\); and
	\item[ii]  the \emph{profile} \(i\mapsto x^{*}_{i}/x^{*}_{\lceil cn\rceil}\) varies slowly as \(i\searrow1\).
\end{itemize}
 
\noindent
Litvak and Tikhomirov established the following partial estimate for
\(\mathcal{V}\)-vectors:

\textit{Let \(B\) be an \(n\times n\) matrix with i.i.d.\
\(\operatorname{Bernoulli}(p)\) entries.  Then}
\begin{equation}\label{eq:introLTpartial}
\mathbb{P}\!\Bigl\{
  \forall\,x\in\mathcal{V}:\ \|Bx\|\ge t\|x\|
  \,\Bigm|\, 
  \forall\,y\notin\mathcal{V}:\ \|B^{\!\top}y\|>0
\Bigr\}
\;\ge\; 1-Ct-c_{p,n}.
\end{equation}

The bound holds whenever \(p\ge\tfrac{\log n}{n}\) and
\(c_{p,n}\le e^{-Cpn}\).  Adapting the “invertibility via distance’’
argument (see Theorems 2.1–2.2 of~\cite{LT20}), one can choose
\(c_{p,n}=o\!\bigl(\mathbb{P}\{\Omega_{\mathrm{RC}}^{\mathrm
c}(\beta)\}\bigr)\) for fixed \(\beta\) and sufficiently large~\(n\).
Hence~\eqref{eq:introLTpartial} is applicable to the submatrix
\(A_{I_{0},J_{0}}\) by tacitly identify each \emph{gradual vector}
\(x\in\mathbb{R}^{J_{0}}\) with its extension in \(\mathbb{R}^{n}\) by
zero outside \(J_{0}\).
This leads to one condition for the row and column index set \(I_{0}\) and \(J_{0}\): 
\begin{enumerate}
\item[\textup{(i)}] \(\displaystyle
      \forall\,x\notin\mathcal{V}\ \text{with}\ 
      \operatorname{supp}(x)\subseteq J_{0}:\ 
      \|A_{I_{0},J_{0}}x\|\;>\;c'_{p,n}\|x\|;\)
\item[\textup{(ii)}] \(\displaystyle
      \forall\,x\notin\mathcal{V}\ \text{with}\ 
      \operatorname{supp}(x)\subseteq I_{0}:\ 
      \|A_{I_{0},J_{0}}^{\!\top}x\|\;>\;c'_{p,n}\|x\|.\)
\end{enumerate}
The second condition is needed to ensure it meets the conditioned event described in \eqref{eq:introLTpartial}.

\medskip
Note that \(\mathbb{R}^{n}\setminus\mathcal{V}\) contains all
\emph{sparse} vectors.  Hence if \(A_{I_{0},J_{0}}\) exhibited a local
linear dependence among a few columns, some non-zero
\(x\in\mathbb{R}^{n}\setminus\mathcal{V}\) would satisfy
\(A_{I_{0},J_{0}}x=0\), contradicting~(i).  Similarly, the family
\(\mathbb{R}^{n}\setminus\mathcal{V}\) detects zero columns: if \(A\)
had \(\beta\) zero columns we could take a non-zero \(x\) supported on
those columns, forcing \(Ax=0\).

\subsubsection*{Steep vectors (\(\mathcal{T}\)-vectors)}

Our main task is to control the class of \emph{steep vectors},
denoted \(\mathcal{T}\).  A vector \(x\in\mathbb{R}^{n}\) is placed in
\(\mathcal{T}\) if there exist indices
\(1\le n_{1}<n_{2}\ll n\) such that
\begin{equation}\label{eq:IntroSteepCond}
  x^{*}_{n_{1}}
    >C_{n_{1},n_{2}}\;x^{*}_{n_{2}},
\end{equation}
where \(C_{n_{1},n_{2}}>1\) is (much) larger than~1.  In words,
\(\mathcal{T}\)-vectors have a sharp drop in magnitude between the
\(n_{1}\)-th and \(n_{2}\)-th largest coordinates.

For a positive integer \(k\) we write \([k]=\{1,\dots,k\}\).
Given an \(m_{1}\times m_{2}\) matrix \(M\), let
\(\mathbf{R}_{i}(M)\) and \(\mathbf{C}_{j}(M)\) denote its \(i\)-th row
and \(j\)-th column, respectively.

For \(x\) satisfies~\eqref{eq:IntroSteepCond},  if there is a row
\(\mathbf{R}_{i}(A_{n})\) and an index
\(j_{0}\in\sigma_{x}\!\bigl([n_{1}]\bigr)\) such that
\[
a_{ij_{0}}=1
\quad\text{and}\quad
a_{ij}=0\quad
\text{for all }j\in\sigma_{x}\!\bigl([n_{2}]\bigr)\setminus\{j_{0}\},
\]
then
\begin{equation}\label{eq:introSteepx}
  (Ax)_{i}\;=\;
  \underbrace{x_{j_{0}}}_{\text{dominant term}}
  \;+\;
  \sum_{j\notin\sigma_{x}([n_{2}])}a_{ij}x_{j}.
\end{equation}
For sufficiently large \(C_{n_{1},n_{2}}\) the first term dominates
and \(|(Ax)_{i}|\gtrsim x^{*}_{n_{1}}\), guaranteeing that
\(\|Ax\|\) is bounded away from~\(0\).

The chance that such a row exists becomes delicate when
\(p\) approaches the critical threshold
\(\tfrac{\log n}{n}\), especially for small \(n_{1},n_{2}\).  This is
the main obstacle that prevented Litvak–Tikhomirov~\cite{LT20} from
pushing their singularity bound further into the sparse regime.
Basak–Rudelson~\cite{BR18} faced an analogous difficulty.  Their key
observation is that the relevant probability depends strongly on the
\emph{support size} of the columns indexed by
\(\sigma_{x}\!\bigl([n_{1}]\bigr)\).  

Consider the toy case
\(n_{1}=2\).  If we condition on
\[|\operatorname{supp}\mathbf{C}_{1}(A)|=
|\operatorname{supp}\mathbf{C}_{2}(A)|=pn\] (the typical size), the
probability that the two columns coincide is
\(\binom{n}{pn}^{-1}\simeq\exp(-Cpn\log(1/p))\).  However, if both
columns are conditioned to have support size~1, which has probability not negligible comparing to $\mathbb{P}\bigl(\Omega_{\mathrm{RC}}^{\mathrm c}\bigr)$, the same event occurs
with probability \(1/n=\exp(-\log n)\), sacrificing a factor
\(\log(1/p)\) in the exponent when \(p\asymp\frac{\log n}{n}\).

Our treatement for such vectors are developed based on the insight from Basak-Rudelson
\cite{BR18} while aiming at both the precise probability bound and the goal to 
showing the existence of $(I_0, J_0)$ which the argument could 
work for $  A _{ I_0,J_0 }$ and $  A_{ I_0, J_0 }^\top$ 
at the same time. 

For \(k\in[n]\) define
\begin{align*}
\mathcal{J}_{A}(k)
  :=&\Bigl\{j\in[n] : |\operatorname{supp}\mathbf{C}_{j}(A)|\le k\Bigr\}, \quad \mbox{and}\\
\,
\mathcal{I}
  :=& \Bigl\{i\in[n] :
          \exists\,j\in\mathcal{J}_{A}(s_{0}),\;a_{ij}=1\Bigr\},
\end{align*}
where \(s_{0}=s_{0}(p,\beta)\) will be specified later and
\(\mathcal{J}:=\mathcal{J}_{A}(s_{0})\).

Reordering indices so that \(\mathcal{I}\) and \(\mathcal{J}\) come
first, \(A\) assumes the block form
\[
A \;=\;
\begin{pmatrix}
  H & G \\[2pt]
  0 & W
\end{pmatrix},
\qquad
H=A_{\mathcal{I},\mathcal{J}},\;
G=A_{\mathcal{I},\,[n]\setminus\mathcal{J}},\;
W=A_{[n]\setminus\mathcal{I},\,[n]\setminus\mathcal{J}}.
\]
(The missing block
\(A_{[n]\setminus\mathcal{I},\,\mathcal{J}}\) is identically zero by
construction.)

\newpage

\paragraph{Choice of the parameter \(\boldsymbol{s_0}\).}
At first glance one might wish to pick \(I_{0},J_{0}\) so that the
submatrix \(A_{I_{0},J_{0}}\) retains the “good’’ block \(W\) and avoids
the “bad’’ block \(H\) introduced earlier.
From a technical perspective, however, the optimal choice of the
threshold
\[
  s_0=s_0(p,\beta)
\]
depends in a subtle way on both \(p\) and \(\beta\); in fact it
undergoes a phase transition at
\(p\asymp\tfrac{\log n}{n}\).  We distinguish two regimes.

\noindent
\underline{Critical window}
Assume
\[
  \frac{\log n}{n}\;\le\;p\;\le\;
  \Bigl(1+\frac{1}{2\beta}\Bigr)\frac{\log n}{n}.
\]
Here we take \(s_0\) to be a \(\beta\)-dependent \emph{constant}.  Even
though \(A\) may then possess up to \(\operatorname{polylog}(n)\)
columns whose supports have size at most \(s_0\), the probability that
these columns become linearly dependent is still
\(
  o\!\bigl(\mathbb{P}\{\Omega_{\mathrm{RC}}^{\mathrm c}\}\bigr)
\).
To see this, let \(J\) be the set of column indices with support size
exactly \(1\).  For any threshold \(u\ge1\),
\begin{align*}
  & \mathbb{P}\!\bigl\{\exists\,j_1,j_2\in J:
         \mathbf{C}_{j_1}(A)=\mathbf{C}_{j_2}(A)\bigr\}\\
  \le&
    \underbrace{\mathbb{P}\{|J|\ge u\}}_{=:p_1}
    +
    \underbrace{\mathbb{P}\{|J|<u\}\,
      \mathbb{P}\!\bigl\{\exists\,j_1,j_2\in J:
        \mathbf{C}_{j_1}(A)=\mathbf{C}_{j_2}(A)
        \,\bigm|\,|J|<u\bigr\}}_{=:p_2}.
\end{align*}
When \(p\) is near the critical value, \(p_1\) ceases to be negligible
for fixed \(u\); choosing \(u=\operatorname{polylog}(n)\) restores the
bound \(p_1=o\!\bigl(\mathbb{P}\{\Omega_{\mathrm{RC}}^{\mathrm
c}\}\bigr)\).  The second term satisfies \(p_2\le
n^{-1+o(1)}\) if one applied the union bound, provided \(u\le\operatorname{polylog}(n)\), hence it is
also negligible.

Consequently \(J_0\) must \emph{contain} the set \(J\) (omitting only
zero columns), and \(I_0\) must contain
\[
  \bigl\{\,i\in[n] : \exists\,j\in J \text{ with } a_{ij}=1\bigr\}.
\]
Thus \(A_{I_0,J_0}\) necessarily contains the block \(H\) (minus its
zero columns).

\noindent
\underline{Above the critical window}
Now suppose
\[
  p\;>\;\Bigl(1+\frac{1}{2\beta}\Bigr)\frac{\log n}{n}.
\]
In this regime the previous bound on \(p_2\) fails, because
\[
  \mathbb{P}\!\bigl\{\mathbf{C}_1(A)=\mathbf{C}_2(A),\;
    |\mathbf{C}_1(A)|=|\mathbf{C}_2(A)|=1\bigr\}
    \not=o\!\bigl(\mathbb{P}\{\Omega_{\mathrm{RC}}^{\mathrm c}\}\bigr).
\]
The remedy is to enlarge \(s_0\) so that
\(
  |\mathcal{J}_A(s_0)|\le\beta
\)
with probability
\(1-o\!\bigl(\mathbb{P}\{\Omega_{\mathrm{RC}}^{\mathrm c}\}\bigr)\).
We then choose \(J_{0}\) to intersect \(\mathcal{J}_A(s_0)\) in at most
one index.  Any vector supported on \(J_0\) is therefore a linear
combination of at most one narrow column together with columns of large
support, preventing the cancellations that could otherwise force
\(Ax=0\).

With this construction we have ensured that for every
\(x\) supported on \(J_{0}\)
\[
  Ax=\sum_{j\in J_0}\mathbf{C}_{j}(A)\,x_j
\]
contains no non–trivial linear combination of columns with small
support.  This rules out the most dangerous obstructions and completes
the analysis of steep (\(\mathcal{T}\)) vectors, thereby finalising the
treatment of vectors outside the gradual class \(\mathcal{V}\).

\noindent
{\bf Overview and Structure of paper}

To summarize, to prove the main theorem, we break $\mathbb{R}^n$ into $\cal T$, $\cal R$, and $\cal V$ vectors, and treating bounding $\|A_{I_0,J_0}x\|$ (or $\|Ax\|$) away from $0$ quantitatively for each type of vectors. 

Roughly speaking, {\bf only the lower bound of $\|Ax\|$ for $ x \in \cal T$ is sensitive to a small number of zero columns of $A$.} For the other types of vectors, due to the mass of the vectors are not concentrated on a small number of components, the estimate for bounding $\|Ax\|$ away from $0$ does not change much whether $A$ has a constant number of zero columns or not. Therefore, the treatment for $\cal R$ and $\cal V$ vectors are essentially the same as shown in the work of Litvak-Tikhomirov \cite{LT20}. 

For the above reason, the treatment for $\cal T$ vectors is the main contribution of the paper. Following from this, after we introduce some basic notations and tools (Section 2), we break the paper into two parts:
\begin{enumerate}
	\item \textbf{Part I}: The first part (Section 3-5) is about how we handle steep vector $\cal T$. Formally speaking, we aim to prove a theorem of the following type:  

	For $x \in \cal T$, there exists $n_1=n_1(x),n_2=n_2(x) \in [n]$ such that $x^*_{n_1} \ge C pn x^*_{n_2}$. We will prove the theorem of the following form: With probability at least $1- (1+o_n(1))\mathbb{P}\{ \Omega_{\rm RC}^c(\beta)\}$, there exists $I,J\subseteq[n]$ with $|I|=|J|= n-\beta+1$ such that for each $x \in \cal T$, 
	\begin{itemize}
		\item if supp$(x) \subseteq J$, $\|A_{I,J}x\| \ge cx^*_{n_1}$. 
		\item 
	 if supp$(x) \subseteq I$, $\|A_{I,J}^\top x\| \ge cx^*_{n_1}$. 
	\end{itemize}
	The reason we also work with $A_{I,J}^\top x$ is that we will apply \eqref{eq: introLTpartial} with $B=A_{I,J}$, and we want to show the event that we are conditioned on has non-trivial probability. The concrete statement theorem and definition are Theorem \ref{thm: T_1s} and Theorem \ref{thm: T_1+}. 

	\item \textbf{Part II}: The second part \hh{(Section 6-11)} follows from the framework of Litvak-Tikhomirov \cite{LT20}, and we prove the main theorem. There is some modification we needed due to the estimate go from least singular value estimate to rank estimate, and the difference of the technical details. We also move some of the parts into appendix, whenever the proof does not essentially differ from the work of Litvak-Tikhomirov \cite{LT20}.   
\end{enumerate}
 
%
%
%
%
%
%
%
%
%
%
%
\section{Notations and  standard Probability Estimates}
Let \([m]\) denote the set \(\{1, 2, \dots, m\}\) for a positive integer \(m\). For \(a < b\), let \([a, b]\) denote the discrete interval:
\begin{align*}
    [a, b] := \{ x \in \mathbb{Z} : a \le x \le b \}.
\end{align*}

By $Y\sim \opn{Bin}(n,p)$, 
we mean the random variable $Y$ is distributed 
as a Binomial random variable with parameter $n$ and $p$.

For an \(m_1 \times m_2\) matrix \(A = (a_{ij})_{i \in [m_1], j \in [m_2]}\), let \(\mathbf{R}_i(A)\) and \(\mathbf{C}_j(A)\) denote its \(i\)th row and \(j\)th column, respectively. For subsets \(I \subset [m_1]\) and \(J \subset [m_2]\), define the submatrix
\[
A_{IJ} := (a_{ij})_{i \in I, j \in J}
\]
as the matrix of \(A\) corresponding to rows \(I\) and columns \(J\). Additionally, we adopt the notation
\[
A_J := A_{[m_1], J} \quad \text{and} \quad A_{i_0J} := (a_{i_0j})_{j \in J}
\]
for \(i_0 \in [m_1]\). 
For a non-negative integer \(k \ge 0\), define the column statistic of \(A\) as
\begin{align} \label{eq:columnStat}
	 \mathcal{J}_A (k) : = \left| 
		\left\{  i \in [m_2]\,:\, 
			|\mbox{supp}\left( {\bf C}_i(A) \right)  | \le k \right\} 
	\right|.
\end{align}
Further, for a $n \times n$ random matrix $A$ and a positive integer $\beta$, recall the event we introduced in the first section: 
\begin{align}
\label{eq: omegaRC}
 	\Omega_{\rm RC}(\beta)
=
	\Omega_{\rm RC}(\beta,A) 
:=
	\big\{ \max\{ |\cal J_A(0)|, |\cal J_{A^\top}(0)|\} < \beta\big\}.
\end{align}

Now, we will introduce necessary results to prove the main theorem.  
First, we cite a norm estimate for sparse Bernoulli matrices 
from 
Litvak-Tikhomirov \cite{LT20}:
\begin{lemma} \label{lem: OperatorNorm}
	For every $s>0$ and $R\ge 1$, there exists a 
	constant $C\ge 1$ depending on $s,R$ 
	with the following property. 
	Let $n \ge \frac{16}{s}$ be large enough and 
	$p \in (0,1)$ satisfies $s\log n  \le pn$. Let 
	$A$ be a $n\times n$ Bernoulli($p$) matrix. 
	Then, 
	\begin{align*}
		\mathbb{P}\left\{ \left\| A-\mathbb{E}A 
			\right\| \ge C  \sqrt{ pn } 
		\mbox{ or } \left\| A \right\| \ge 
		C \sqrt{ pn } + pn
		\right\}  \le \exp(-Rpn).
	\end{align*}
\end{lemma}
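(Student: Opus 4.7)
The plan is to deduce both halves of the statement from a single tail bound on the centered matrix:
\begin{align*}
\mathbb{P}\bigl\{\|A-\mathbb{E}[A]\| \ge C_{\mathrm{norm}}\sqrt{pn}\bigr\} \le \exp(-Rpn).
\end{align*}
Indeed, $\mathbb{E}[A] = p\,\mathbf{1}_n\mathbf{1}_n^{\top}$ has operator norm exactly $pn$, so the triangle inequality $\|A\| \le \|A - \mathbb{E}[A]\| + pn$ makes the $\|A\|$-bound in the statement an immediate consequence of the centered bound, and it suffices to produce the displayed estimate.

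To this end I would use the standard $\tfrac14$-net argument: choose a net $\mathcal{N} \subset S^{n-1}$ with $|\mathcal{N}| \le 9^n$ so that
\begin{align*}
\|A-\mathbb{E}[A]\| \le 2 \sup_{x,y \in \mathcal{N}} |\langle (A-\mathbb{E}[A])x,y\rangle|,
\end{align*}
and for each pair apply Bernstein's inequality to $\sum_{i,j}(a_{ij}-p)x_j y_i$, a sum of independent centered bounded terms with total variance at most $p(1-p)\le p$. The central difficulty is that in the sparse regime $pn \asymp \log n$ a direct Bernstein tail competes with a $9^{2n}$ union bound and loses a factor of $\Theta(n)$ in the exponent, far larger than the $Rpn$ budget.

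The standard remedy is to split each net vector as $x = x^{\flat} + x^{\sharp}$, with $x^{\flat}$ supported on coordinates of magnitude at most $(pn)^{-1/2}$ and $x^{\sharp}$ living in an $O(pn)$-dimensional coordinate subspace (and similarly for $y$), and then treat the four cross-terms separately. For the flat--flat term the per-entry bound $(pn)^{-1}$ puts Bernstein in its variance-dominated regime and yields $\exp(-\Omega(Rpn))$ tails strong enough to beat the $9^{2n}$ union bound. For the three terms involving a $\sharp$-part, the union bound is instead taken over choices of peak-support, a combinatorial factor of $\binom{n}{O(pn)}^{2} = \exp(O(pn\log(1/p)))$, which is absorbed into $\exp(-Rpn)$ provided the implicit constants and the threshold $s$ are chosen so that $\log(1/p)$ is dominated by the Bernstein exponent; within each fixed peak-support, Chernoff bounds on the restricted row and column sums of $A$ supply the required tail. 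The main obstacle is exactly this balance between net cardinality and sparse-regime tail at the flat--flat step; once the truncation is in place the remaining estimates are routine bookkeeping, and choosing $C_{\mathrm{norm}}$ large in terms of $R,s$ closes all four bounds simultaneously.
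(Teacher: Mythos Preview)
The paper does not prove this lemma at all; the sentence introducing it says it is cited from Litvak--Tikhomirov \cite{LT20}, and no argument appears in the paper. So there is no ``paper's proof'' to compare against, and your sketch must stand on its own.

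On its own merits, the flat--flat step has a genuine gap. You assert that truncating coordinates at $(pn)^{-1/2}$ ``puts Bernstein in its variance-dominated regime and yields $\exp(-\Omega(Rpn))$ tails strong enough to beat the $9^{2n}$ union bound.'' In the sparse range $pn\asymp\log n$ both claims fail. With per-summand bound $M=(pn)^{-1}$, total variance $\sigma^{2}\le p$, and deviation $t\asymp\sqrt{pn}$, the variance-dominated condition $\sigma^{2}\gtrsim Mt$ reads $p\gtrsim(pn)^{-1/2}$, i.e.\ $pn\gtrsim n^{2/3}$, which is violated; Bernstein then sits in its sub-exponential regime and delivers only $\exp\bigl(-\Theta(t/M)\bigr)=\exp\bigl(-\Theta((pn)^{3/2})\bigr)$, hopeless against $9^{2n}$. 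Even were one in the Gaussian regime, the tail would be $\exp(-\Theta(t^{2}/\sigma^{2}))=\exp(-\Theta(n))$, not $\exp(-\Theta(pn))$, and it is the former that is needed to beat a net of size $\exp(\Theta(n))$; writing ``$\exp(-\Omega(Rpn))$ tails strong enough to beat the $9^{2n}$ union bound'' is wrong on both counts. The truncation changes $M$ but does nothing to shrink the net, so the $9^{2n}$ cost remains for the flat--flat piece. Separately, for $p\sim\log(n)/n$ the peak-support union bound $\binom{n}{O(pn)}^{2}=\exp\bigl(\Theta(pn\log(1/p))\bigr)=\exp\bigl(\Theta(\log^{2}n)\bigr)$ already dwarfs the target $\exp(-Rpn)=\exp(-R\log n)$, so it cannot be ``absorbed into $\exp(-Rpn)$''; you would need a per-support tail of order $\exp\bigl(-\Omega(\log^{2}n)\bigr)$, and ``Chernoff bounds on the restricted row and column sums'' does not obviously supply that without a further argument bounding the operator norm of the restricted block.
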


Next, we also need Theorem of Rogozin on anticoncentration. 
Let us introduce the following notion of anticoncentration on a random variable: For any random variable $X$ and a positive constant $c>0$, we define 
\begin{align}
	\cal L(X, c) := \liminf_{x \in \R} \Prob\{ |X - x| \le c \}.
\end{align}

The theorem of Rogozin states quantitatively that sum of independent random variables can have stronger anticoncentration, comparing to each individual random variable: 

\begin{theorem} \label{thm:Rogozin}
	There exists a universal constant $C_{\rm rgz}\ge 1$ so that the following holds. 
	Consider independent random variables $X_1,\dots, X_n$ and  
	positive coefficients $\lambda_1,\dots,\lambda_n>0$. For any $\lambda
	>\max_{i\in [n]} \lambda_i$, we have 
	\begin{align*}
		 \mathcal{L}\big(\sum_{i\in[n]}X_i, \lambda \big) 
		\le \frac{C_{\rm rgz} \lambda}{\sqrt{\sum_{i\in [n]}
			\lambda_i^2 ( 1 - \mathcal{L}(X_i,\,\lambda_i))
		}}.
	\end{align*}
\end{theorem}
We rely the theorem to bound $(Ax)_j$ away from $0$ for a fixed vector $x$ and a Benroulli random matrix $A$: 
For $x\in \mathbb{R}^n$ and 
 $\xi_1,\dots,\xi_n$ be i.i.d. Bernoulli random variables with parameter $p$.
For $I\subset [n]$ and $ \lambda>\left\| x_I \right\|_\infty $,
we have 
\begin{align} \label{eq:RogozinIprod}
	\mathcal{L} \big(\sum_{i\in [n]}x_i\xi_i, \lambda \big)
	\le \mathcal{L} \big( \sum_{i\in I}x_i\xi_i, \lambda \big)
	\le \frac{C_{\rm{rgz}}\lambda}{
		\sqrt{\sum_{i\in I} x_i^2 p}
	} \le \frac{C_{\rm{rgz}}\lambda}{
		\sqrt{p}\left\| x_I \right\| 
	} . 
\end{align}

Here we include some standard tail estimates for Binomial and Hypergeometric Distribution. 
\begin{proposition} \label{prop: Binomial}
	Let $Y$ be a Binomial random variable with parameter  
	$n$ and $p\in (0,\frac{1}{2})$. 
	The following tail estimates hold:
	\begin{align}
	\label{eq:BnmUp}
		 k &\ge 2pn &
		 \mathbb{P} \left\{ Y\ge k \right\} 
		&\le 2\left(\frac{enp}{k(1-p)}\right)^k
		(1-{p})^{n}, \\
	\label{eq:BnmLow} 
		 k &\le \frac{1}{2}pn &
		 \mathbb{P} \left\{ Y\le k \right\}
		&\le 2\left(\frac{enp}{k(1-p)}\right)^k
		(1-{p})^{n} .
	\end{align}
\end{proposition}

\begin{proposition} \label{prop: HyperGeom}
	Assume $n$ is sufficiently large. Let $0<k\le m$ be a pair of positive integers such that $m\le \frac{n}{2}$ and 
	$4k^2 \le n $. 
	
	Consider a random subset $U\subset [n]$ distributed uniformly among all subsets of size $k$. Then,
	\begin{align} \label{eq:HyperGeomTail}
		\forall l \ge 0, \quad  
	\mathbb{P}\left\{ |U \cap [m] | \ge l \right\}
	\le C_{\rm{hg}} \left(\frac{3mk}{ln}\right)^l.
\end{align}
for a universal constant $C_{\rm{hg}}\ge 1$.
\end{proposition}
{\bf Remark}: While the inequlaity holds for all $l$, but the bound 
is trivial when $ l < \frac{ 3mk }{ n }$. 

\bigskip

\part{Treatment for Steep Vectors}
\section{Statistics of Sparse Rows and Columns and Expansion Property of $A$}

\subsection{Estimate of $|\cal J(s)|$ and $\Prob\{ \Omega_{\rm RC}^c(\beta)\}$}

For \( s \in [0, n] \), let
\begin{align*}
    p_s = p_{A, s} := \mathbb{P}\left\{ |\operatorname{supp}(\mathbf{C}_1(A))| \le s \right\} = \mathbb{P}_{Y \sim \opn{Bin}(n,p)}\{ Y \le s \},
\end{align*}

Following from Proposition \ref{prop: Binomial},  
 \begin{align} \label{eq: p_kEstimate}
	\forall s \in [0, \frac{1}{2}pn],\,\,\, 
     \left( \frac{epn}{s(1-p)} \right)^s (1-p)^n
 \le 
     p_s
 \le 
     2 \left( \frac{epn}{s(1-p)} \right)^s (1-p)^n.
 \end{align}

%
%
With this description, the size \( |\mathcal{J}(s)| \sim  \opn{Bin}(n,p)\). 

\begin{proposition} \label{prop: zeroColumns}
    For any positive integer \( \beta \),
    \begin{align*}
		\Prob\{ \Omega_{\rm RC}^c(\beta)\} \ge 
        \mathbb{P}\left\{ \left| \mathcal{J}_A(0) \right| \ge \beta \right\} \ge (1 + o_n(1)) \frac{1}{e\beta!} \left(n(1 - p)^n \right)^{\beta}
    \end{align*}
    when \( n \) is large enough.
	For a rough bound, there exists a universal constant $C>1$ so that 
	$$
		\Prob\{ \Omega_{\rm RC}^c(\beta)\} \ge \exp( - C\beta pn).
	$$
Further, for \( p = o(n^{-1/2}) \), it can be simplified to
\begin{align} \label{eq: J0smallp}
	\Prob\{ \Omega_{\rm RC}^c(\beta)\} \ge 
    \mathbb{P}\left\{ \left| \mathcal{J}_A(0) \right| \ge \beta \right\} \ge (1 + o_n(1)) \frac{1}{e\beta!} \exp\left( -\left( pn - \log n  \right) \beta \right).
\end{align}
\end{proposition}
\begin{proof}

	Since $|\cal J(0)|\sim \opn{Bin}(n,p_0)$, we have 
	\begin{align}
	\label{eq: JA0}
		\mathbb{P} \left\{ \left|  \mathcal{J}_A(0) \right| 
		= \beta  \right\} 
	= 
		{n \choose \beta} 
		\left( p_0 \right)^\beta (1-p_0)^{n-\beta} 
	=
		(1+o_n(1)) \frac{(p_0n)^\beta}{\beta!}(1-p_0)^{n-\beta}. 
	\end{align}
	where we applied $  \frac{ \log n  }{ n } \le p \le \frac{ 1 }{ 2 }$. \\

	\medskip

	Recall that $p_0 = (1-p)^n$. With the assumption that $p \ge \log n /n$ and the fact that $1+x \le e^x$ for $x\in \R$, 
	we have 
	\begin{align}
		\label{eq: p_0}
		p_0 n \le \exp\big(-(pn - \log n ) \big) \le 1
	\end{align}
	and  
	$$
		p_0 = o_n(1). 
	$$
	
	Next, by $ ( 1+x ) = \exp( x + O(x^2))$ for $x \in \R$ and \eqref{eq: p_0}, 
	\begin{align*}
		( 1- p_0 )^{n-\beta} 
	= &
		\exp\big( -p_0 (n-\beta) + O(p_0^2 (n-\beta) ) \big)  \\
	= &
		( 1+o_n(1) ) \exp(-p_0n)
	\ge 
		( 1+o_n(1) ) \frac{1}{e}. 
	\end{align*}
	Substituting the above bound into \eqref{eq: JA0} we have  
	\begin{align*}
		\mathbb{P} \left\{ \left|  \mathcal{J}_A(0) \right| \le \beta  \right\} 
	\ge
		\mathbb{P} \left\{ \left|  \mathcal{J}_A(0) \right| = \beta  \right\} 
	\ge
		( 1 + o_n(1) ) \frac{ 1 }{ e \beta! } 
		(n(1-p)^n)^\beta 
		,
	\end{align*}
	which is the first statement of the Proposition. 

	Further, from some standard estimate, there exists $C \ge 1$ such that  
	$(1-p) \ge \exp(-Cp)$ for $p \in [0,1/2]$. Thus, 
	\begin{align*}
		( 1 + o_n(1) ) \frac{ 1 }{ e \beta! } 
		(n(1-p)^n)^\beta 
	\ge 
		\underbrace{\frac{1}{e} \Big(\frac{n}{\beta}\Big)^\beta}_{\ge 1} (1-p)^{n\beta}
	\ge 
		\exp(-Cpn), 
	\end{align*}
	and the second statement follows. 
	Finally, the last statement follows since 
	$$ 1-p = \exp \big( -p + O(p^2) \big) \quad \mbox{and} 
	\quad p^2n = o_n(1).$$ 
\end{proof}

\begin{proposition} \label{prop: L_sboundSmallp}
	Suppose $\frac{\log n }{n} < p \le \big(1+\frac{1}{2\beta}\big) \frac{\log n }{n}$. 
	Then,
	\begin{align} 
	\label{eq: L_sboundSmallp}
		&\mathbb{P}\left\{ 
			\forall s \in \Big[1, \frac{1}{2}pn \Big], \,\,		
		\left| \mathcal{J}_A(s) \right| \ge \log^2(n) \left( \frac{e np}{s(1-p)} \right)^s \exp(-pn) n \right\} \\
	\nonumber
	&\phantom{AAA AAA AAA AAA AAA AAA AAA}\le 
	pn\exp(-\log n ^2) 
		= 
o(\Prob\{ \Omega_{\rm RC}^c(\beta)\}).
	\end{align}
\end{proposition}

\textbf{Remark:} 
The term  $\left( \frac{e np}{s(1-p)} \right)^s \exp(-pn)$ is a rough estimate of the expected size of $\mathcal{J}_A(s) $. 
The factor $\log^2(n)$ was introduced so that the probability estimate in \eqref{eq: L_sboundSmallp} is negligible comparing to $\Prob\{ {\cal J}_A(0) \ge \beta\}$ (See Proposition \ref{prop: zeroColumns}). 

\begin{proof}
	Since $|  \mathcal{J}_A(s) | \sim \opn{Bin}(n,p_s)$, we can apply the Binomial tail estimate  \eqref{eq:BnmUp} from Proposition \ref{prop: Binomial} 
	\begin{align} \label{eq: L_A(s)}
	\forall k \ge 2p_s n ,\,\,\,
		\mathbb{P} \left\{ 
			 | \mathcal{J}_A(s)| \ge k 
		 \right\} 
	\le
		2 \left( \frac{ep_sn}{k} \right)^k.
	\end{align}

	Given that $p_s = \Prob_{Y \sim \opn{Bin}(n,p)} \{ Y \le s\}$
	and the assumption $s \le \frac{1}{2}pn$, we apply Proposition \ref{prop: Binomial} again to get 
	\begin{align} \label{eq: psn}
		p_s n 
	\le  2 \left( \frac{enp}{s(1-p)} \right)^s (1-p)^n n
	\le  2 \left( \frac{enp}{s(1-p)} \right)^s \exp(-pn)n.
	\end{align}
	where we relied on  
	$ 1+x \le e^x $ for $x\in \mathbb{R}$.

	Evaluating \eqref{eq: L_A(s)} with 
	$$k = 
		\log^2(n) \left( \frac{e np}{s(1-p)} \right)^s\exp(-pn)n 
		\ge e^2 p_s n,
	$$
	we obtain
	\begin{align*}
		\mathbb{P} \left\{ 
			 | \mathcal{J}_A(s)| \ge k 
		 \right\} 
		<
		e^{-k} \le 2\exp\big(- \log^2(n)
			\big).
	\end{align*}
	Then, the inequality in \eqref{eq: L_sboundSmallp} follows from the union bound for $s \in \big[ 1, \frac{1}{2}pn\big]$. Finally, the equality in \eqref{eq: L_sboundSmallp} follows from Proposition \ref{prop: zeroColumns}.

\end{proof}

\begin{proposition} \label{prop: L_kBeta}
	Let $\beta$ be a positive integer and $c >0$ be a fixed constant. With the additional assumption on $p$ that
		$ (  1+c) \frac{ \log n  }{ n } \le p \le 1/2$.
	Then, 
	there exists $\lambda = \lambda(\beta, c) \in (0, \frac{1}{2})$ 
	such that 
\begin{align*}
	\mathbb{P} \left\{ \left|  \mathcal{J}_A( \lambda pn ) \right| 
	\ge \beta + 1 \right\} 
\le 
	{\big( ( 1-p )^n n \big)^{\beta + 3/4} }
= 
	o(\Prob\{ \Omega_{\rm RC}^c(\beta)\}),
\end{align*}
when $n$ is large enough. 
\end{proposition}

\begin{proof}
	By \eqref{eq:BnmUp} from Proposition \ref{prop: Binomial} and $p_0 = (1-p)^n$,
	\begin{align*}
		p_s 
	\le 
		2  \Big(\frac{enp}{s(1-p)}\Big)^s  (1-p)^n 
	\le 
		2  \Big(\frac{2enp} {t_2}\Big)^s p_0,
	\end{align*}
	where the last inequality follows from $p \le \frac{1}{2}$. 
	
	By setting $s = \lambda pn$ for $\lambda \in (0,\frac{1}{2})$, 
	the inequality can be rewritten in the form 
	\begin{align*}
		p_{\lambda pn} n
	\le 
		2  \exp\left(  
			\lambda \log \Big(\frac{2e}{\lambda} \Big) pn
		 \right) p_0 n. 
	\end{align*}

	From the additional assumption that $ p \ge ( 1+c ) \frac{ \log n  }{ n }$,
	\begin{align*}
		p_0n 
	= 
		(1-p)^nn \le \exp(-pn + \log n ) 
	\le 
		\exp \Big(-  \frac{c}{1+c}pn \Big).
	\end{align*}

	Since $\lambda \log( \frac{ 2e }{ \lambda })$ converges to $0$ as $\lambda$ approaches $0$, there exists $\lambda \in ( 0,1/2 )$ satisfying 
	$$
		\lambda \log\Big( \frac{2e}{\lambda} \Big) 
	\le \frac{c}{1+c} \frac{1}{5(\beta+1)}.
	$$
	With such choice of $\lambda$, we have 
	\begin{align}
		\label{eq: p_lambda_pn}
		p_{\lambda pn } 
	\le  
		(p_0n)^{1 - \frac{1}{5(\beta+1)}}.
	\end{align}

	Next, recall that  $\left|  \mathcal{J}_A( \lambda pn ) \right| \sim \opn{Bin}(n,p_{\lambda pn})$. Together with \eqref{eq: p_lambda_pn} that $ 2 p_{\lambda pn} n  \le 1 \le \beta +1$ when $n$ is sufficiently large, we apply Binomial tail bound \eqref{eq:BnmUp} from Proposition \ref{prop: Binomial}:
	\begin{align} \label{eq: LlambdaBeta}
		\mathbb{P} \left\{ \left|  \mathcal{J}_A( \lambda pn ) \right| 
		\ge \beta + 1 \right\} 
	\le &
		2\left(\frac{enp_{\lambda pn}}{(\beta+1)(1-p_{\lambda pn})}\right)^{\beta +1}
			(1-{p_{\lambda pn}})^{n} \\
	\nonumber
	\le &
		2 \Big(\frac{e}{\beta+1}\Big)^{\beta +1} \big( p_{ \lambda pn }n \big)^{\beta+1}\\
	\le &
		2 \Big(\frac{e}{\beta+1}\Big)^{\beta +1}
		(p_0n)^{\beta+4/5}
	\le (p_0n)^{\beta+3/4},
	\nonumber
	\end{align}
	where the last inequality holds since $p_0n = o_n(1)$. 

	Finally, the statement of the Proposition follows from Proposition \ref{prop: zeroColumns}.

%

\end{proof}

\subsection{Expansion Property}
Consider an $m_1 \times m_2$ Bernoulli($p$) random matrix
$B$ with i.i.d. Bernoulli entries with parameter $p$. We will use a vector $$ b=(b_1,\dots,\, b_{m_2}) \in \{0,\, 1,\, \dots m_1\}^{m_2}$$ 
to denote the columns' support sizes of $B$. 
Let 
\begin{align}
\label{eq: supportProfile}	
	\Omega_{b}(B)
:=
	\Big\{ B\,:\, \forall i \in [m_2],\, |\opn{supp}({\bf C}_i(B))| = b_i  \Big\}.
\end{align}

Notice that, conditioning on $\Omega_{b}(B)$, 
$ \left\{ {\bf C}_i(B) \right\}_{i\in [m_2]}  $ 
are independent. For each $i \in [m_2]$, 
the support of ${\bf C}_i(B)$ is chosen uniformly 
among subsets of $[m_1]$ with size $b_i$. 

For $J_1\subset J_2 \subset [m_2]$, 
we want to collect each row of $B$ which has exactly non-zero entry 
in $J_1$ and zero entries in $J_2$. We define
\begin{align} \label{eq: expansionDefine}
	I_B(J_1,J_2) := \big\{ i \in [m_1]\,:\,
		\exists j_1 \in J_1 \mbox{ s.t. }
		a_{ij_1}=1 \mbox{ and } a_{ij}=0 
		\mbox{ for } j\in J_2 \backslash \{j_0\}
	\big\}.
\end{align}

The goal of this subsection to show that conditioning on the event $\Omega_b(B)$, the set $I_B(J_1,J_2)$ is not small with high probability, under suitable parameter assumptions on the triple $(J_1,J_2,b)$:
\begin{lemma} \label{lem:I(J_1,J_2)}
	Let $ B $ be an $ m_1 \times m_2 $ Bernoulli$ (p) $ matrix,  and $J_1'\subset J_2' \subset [ m_2 ]$ be two subsets. Further, 
	let $b \in \{0,1,\dots, m_1\}^{m_2}$ and $1 \le r \le  \min_{\alpha \in J_1'} b_\alpha$.
	If 
	\begin{enumerate}
		\item $2\|b\|_\infty^2 \le m_1$, and
		\item $ r \ge |J_2'|\frac{24  \|b\|_\infty^2}{m_1}$,
	\end{enumerate}	
	then 
	\begin{align*}
	    \mathbb{P} \Big\{ 
			|I_B(J_1', J_2')|  <  \frac{ |J_1'|r}{4} \Big\vert\, 
 			\Omega_{b}(B) 
		\Big\}  \le  
		C_{\rm hg}^{|J_1'|} \exp \left( -  
			\log \left(  \frac{ rm_1 }{ 24 |J_2'| \|b\|_\infty^2} 
			\right) \frac{r |J_1'|}{8} 
		\right).	
	\end{align*}		
	where $C_{\rm{hg}} > 1$ is a universal constant.
\end{lemma}

%
%
%
%

\begin{proof} 	
Without lose of generality, we assume $ J_1'=[k]$.
	
	\underline{Step 1. Reduction} 
	\smallbreak

	For $\alpha \in [k]$, let
	$$	
		I_\alpha = I_B\big([\alpha], J_2' \backslash [\alpha+1,k] \big).
	$$	
	Notice that $I_k = I_B([k],J_2')$, which is the set that we are interested. We break 
	$$
		|I_k| = |I_k| - |I_{k-1}| + |I_{k-1}| - |I_{k-2}| \ldots -|I_1|+|I_1|,
	$$
	and we will estimate $|I_\alpha| - |I_{\alpha -1}|$ instead.   

	Now, for $\alpha \in [k]$, let 
\begin{itemize}
	\item $ U_\alpha:= \opn{supp}({\bf C}_\alpha(A))$, 
	\item $U_{\le \alpha} = \bigcup_{\beta \in [\alpha]} U_{\le \alpha-1}$, and 
	\item $V:=  \bigcup_{ j \in J_2' \backslash [k] } 
	\opn{supp}({\bf C}_j(B))$. 
\end{itemize}	

		Observe that 
	\begin{align}
		\label{eq: Ialpha}
		I_\alpha 
	=& 
		I_B\big([\alpha-1], J_2' \setminus [\alpha+1,k] \big)
		\sqcup I_B\big(\{\alpha\}, J_2' \setminus [\alpha+1,k] \big)\\
	\nonumber
	=&
		\Big( I_{\alpha-1} \setminus U_\alpha\Big)
		\sqcup 
		\Big( U_\alpha \setminus U_{\le \alpha} \Big), 
	\end{align}
	and 
	$$
		I_k = I_B([k],J_2'),
	$$
	which is the desired set we want to bound. 

	First, we have 
	\begin{align*}
		|I_1| =  |U_1 \backslash V | = b_1 - | U_1 \cap V |.	
	\end{align*}

	Second, for $\alpha \in [2,k]$, from \eqref{eq: Ialpha} we have 
	\begin{align}
		\label{eq: Ialpha2}
		|I_\alpha| & 
	=   
		( |I_{\alpha-1}| -  |I_{\alpha-1} \cap U_\alpha| ) 
		+  ( |U_\alpha| - |U_\alpha \cap (V \cup U_{\le \alpha-1})| ). 
	\end{align}

	With 
	$$I_{\alpha-1} \subseteq U_{\le \alpha-1}
	\subseteq V \cup U_{ \le \alpha-1},$$
	we have 
	$$
		|I_{\alpha -1} \cap U_\alpha| 
	\ge 
		|U_\alpha \cap (V \cup U_{\le \alpha-1})|.
	$$
	Substituting the above bound into \eqref{eq: Ialpha2} we obtain 
	$$
		|I_\alpha| - |I_{\alpha-1}| 
	\ge 
		b_\alpha - 2 \underbrace{|U_\alpha \cap (V \cup U_{\le \alpha-1})|}_{:=X_\alpha}.
	$$
	Therefore, we have 
	$$
		|I_B(J_1',J_2') 
	\ge 
		|I_k| 
	\ge
		\sum_{\alpha \in [k]}b_\alpha - 2\sum_{\alpha \in [k]}X_\alpha
	\ge 
		rk - 2 \sum_{\alpha \in [k]} X_\alpha,\,
	$$
	and thus 
	\begin{align*}
    	\mathbb{P} \Big\{ 
			|I_B([k], J_2')|  <  \frac{ kr}{2} \Big\vert\, 
 			\Omega_{b}(B) 
		\Big\}
	\le & 
		\mathbb{P} \Big\{ 
		2\sum_{i=1}^k X_i \ge rk - \frac{rk}{2}	
		 \Big\vert\, 
 			\Omega_{b}(B) 
			\Big\} \\
	= &
		\mathbb{P} \Big\{ 
		\sum_{i=1}^k X_i \ge \frac{rk}{4}	
		 \Big\vert\, 
 			\Omega_{b}(B) 
			\Big\}.
	\end{align*}

	\underline{Step 2: Replacing $\{X_\alpha\}$ by independent random variables $\{Y_\alpha\}$}
	\smallbreak

	If $\{X_\alpha\}_{\alpha \in [k]}$ are independent after condition on $\Omega_b(B)$, then we could compute the exponential moments of their sum and derive the desired bound. Since they don't have the independence, the task is to replace 
	$ \left\{X_i\right\} _{i\in [k]}$ 
	by a suitable collection of independent random variables. 

	For $\alpha\in [k]$, let $\sigma_\alpha$ be a permutation on 
	$[m_1]$ determined by 
	$V \cup U_{\le \alpha-1}$ satisfying 
	$ \sigma_\alpha\big(\big[ |V \cup U_{\le \alpha-1}| \big]\big) 
	= V \cup U_{\le \alpha-1}$.
	Observe that, condition on $\Omega_b(B)$, 
	$(V,U_1,\dots,U_k)$ and $(V_0, \sigma_1(U_1),\dots, \sigma_k(U_k)$ have the same distribution. 
	(which can be verified inductively.)

	Now, we set 
	$$ \tilde X_\alpha 
	= 
		\Big| \big[ |V \cup U_{\le \alpha-1}| \big]
		\cap U_\alpha \Big|,
	$$
	and let 
	$$
		Y_\alpha = \big| \big[  |s_\alpha | \big] \cap U_\alpha \big|,
	$$ 
	where $s_\alpha = |V| + \sum_{\beta \in [k]}|U_\beta|$. 
	Clearly, we have $X_\alpha \le Y_\alpha$ for $\alpha \in [k]$, which in turn implies  
	\begin{align*}
		\mathbb{P} \Big\{ 
			\sum_{\alpha=1}^k X_\alpha \ge \frac{r k}{4} \Big\vert\, 
 			\Omega_{b}(B) 
		\Big\}
	= &
		\mathbb{P} \Big\{ 
			\sum_{\alpha=1}^k \tilde X_\alpha \ge \frac{r k}{4} \Big\vert\, 
 			\Omega_{b}(B) 
		\Big\} \\
	\le  &
	\mathbb{P} \Big\{ 
			\sum_{\alpha=1}^k Y_\alpha \ge \frac{r k}{4} \Big\vert\, 
 			\Omega_{b}(B) 
		\Big\}.
	\end{align*}

	\underline{Step 3: Exponential Moments of $\sum_{\alpha \in [k]}Y_\alpha$ and Tail Estimate}
	\smallbreak

	Now, for each $\alpha \in [k]$, $Y_\alpha$ is a hypergeometric distribution.	By the Hypergeometric tail bound \eqref{eq:HyperGeomTail} from Proposition \ref{prop: HyperGeom},
	for $\alpha \in [k]$, 
	\begin{align*}
		\mathbb{P} \left\{  Y_\alpha = t \right\} 
	\le
		\mathbb{P} \left\{  Y_\alpha \ge t \right\}  
	\le 
		C_{\rm{hg}} & \left( \frac{ 3s_\alpha b_\alpha }{tm_1} \right)^t\\
	&\le 
		C_{\rm{hg}} \left( \frac{ 3s_k \|b\|_\infty }{tm_1} \right)^t
	\le 
		C_{\rm{hg}} \left( \frac{ 3|J_2'|\|b\|^2_\infty }{tm_1} \right)^t
	\end{align*}
	for $t \ge 1$. 

	Now, let 
	$$\lambda(u):=
	\log \Big(\frac{um_1}{3|J_2'|\|b\|^2_\infty } \Big)>0.$$
	For $t\ge 1$, we have
	\begin{align*}
		\mathbb{P} \left\{ Y_j=t \right\} \exp(\lambda(u)t) 
		\le C_{\rm{hg}}\Big( \frac{u}{t} \Big)^t,
	\end{align*}
	and hence,
	\begin{align*}
		\mathbb{E} \exp(\lambda(u)Y_j) 
		\le 
		1+\sum_{t=1}^\infty
		C_{\rm{hg}}\Big( \frac{u}{t} \Big)^t,
	\end{align*}
	To estimate the sum, notice that 
	the function $t \rightarrow (\frac{u}{t})^t$ reaches its 
	maximum when $t = \frac{u}{e}$ with value $e^{\frac{u}{e}}$.
	And for $t\ge 2u$, it is bounded by $ \left( \frac{1}{2} \right)^{t}$.
	Therefore, 

	\begin{align*}
		\mathbb{E} \exp(\lambda(u)Y_j)	
		\le 1+ 2uC_{\rm{hg}}\exp(u/e) + 
		2C_{\rm{hg}}\left( \frac{1}{2} \right)^{2u}
		\le C \exp(2u/e),
	\end{align*}
	for some universal constant $C\ge 1$. 

	Since exponential moments are known, we may apply the Makrov's
	inequality to derive the tail bound of the summations:
	\begin{align*}
	\mathbb{P} \Big\{ 
			\sum_{\alpha=1}^k Y_\alpha \ge \frac{r k}{4} \Big\vert\, 
 			\Omega_{b}(B) 
		\Big\}
	\le &
		\mathbb{E} \exp \Big( \lambda(u)\sum_{\alpha=1}^kY_\alpha \Big) \exp\Big(-\lambda(u)\frac{rk}{4} \Big)\\
	\le &
		C^k
		\exp \Big( \lambda(u) k\Big(\frac{2}{e}u- \frac{r}{4}\Big) \Big).
	\end{align*}

	By setting $ u = \frac{r}{8}$, 
$$
	C^{k} \exp \Big( \lambda(u) k\Big(\frac{2}{e}u- \frac{r}{4}\Big) \Big)
\le 
	C^{k} 
	\exp\Big( 
			-\log\Big(\frac{1}{24} 
			\frac{m_1}{|J_2'|\|b\|_\infty} 
			\frac{r}{\|b\|_\infty}\Big)
	 \frac{r}{8}k \Big).
$$
\end{proof}

\section{Typical Realization of $A$ for 
$\frac{ \log( n )  }{ n  }
\le 	p 
\le 
	( 1 + \frac{ 1 }{ 2 \beta } ) 
	\frac{ \log( n )  }{ n  } $
}
\label{sec: T*}

In this section, we assume $ \frac{\log n }{n} \le p \le \big( 1+ \frac{1}{2\beta}\big) \frac{\log n }{n}$. For a given pair of constants $C \ge 1$ and $c \in (0,1)$, let  
\begin{align*}
	\cal T^*  =  \cal T^*(C,\gamma)  \subseteq \R^n
\end{align*}
be the collection of $x \in \R^n$ such that 
$$
	x_{n_1}^* \ge Cpn x^*_{n_2}  
$$
for a pair of $(n_1,n_2) = (n_1(x),n_2(x)) \in \N^2$ satisfying one of the following two conditions:
\begin{enumerate}
	\item [(i)]$1 \le n_1 
	\le  \Big\lceil  \exp \Big( \frac{  \log n   }{ 
	\log^2( \log n  ) } \Big) \Big\rceil  \mbox{ and } n_2 \le 1000 n_1$. 
	\item[(ii)] $	\frac{1}{10} \Big\lceil \exp\Big( \frac{ \log n  }{ 
				\log^2(  \log n   ) } \Big) \Big\rceil
				\le  n_1 \le  
				\frac{1}{ \gamma  p } \mbox{ and }				
				n_1 < n_2 \le 
				\min \Big\{
					\frac{  \log n  }{  \log^3 ( \log n  )}n_1, 
					\frac{1}{ \gamma   p } 
				\Big\}$.
\end{enumerate}
Consider the event 
	\begin{align*}
		\Omega_{\cal T^*}(\beta&, C,\gamma, c) := \\
		\Big\{ 
		\exists &I,J  \subseteq [n] \mbox{ with } |I| =|J|=n-\beta+1 \mbox{ such that }		\\
		& \forall x \in \cal T^*(C,\gamma) \mbox{ with } \opn{supp}(x) \subseteq [J], \quad
			 \|A_{I,J}x\| \ge c x^*_{n_1(x)} \mbox{ and }
		 \\
		& \forall x \in \cal T^*(C,\gamma) \mbox{ with } \opn{supp}(x) \subseteq [I], \quad
			 \|A_{I,J}^\top x\| \ge c x^*_{n_1(x)}
		\phantom{ AAA AA} \Big\} .
	\end{align*}

The main goal is to establish the following theorem. 
\begin{theorem} \label{thm: T_1s}
	For a positive integer $\beta$, 
	there exists $c \in ( 0, 1/2 ),  C>1$, and $\gamma >1$
	so that the following holds: Suppose $ p $ 
	satisfies 
	$$
		\frac{ \log( n ) }{ n } 
	\le
		p 
	\le
		\big( 1+ \frac{1}{2\beta}\big) \frac{\log n }{n}.
	$$
	Then, 
	\begin{align*}
		\mathbb{P}\left\{ \Omega^c_{\cal T^*}(\beta, C,\gamma, c) \right\}  
	=
		( 1 + o_{ n}(1))
		\mathbb{P} \left\{ \Omega_{\rm RC}^c(\beta)\right\} .
	\end{align*}
\end{theorem}

The technial part of the above theorem is the following: 

\begin{theorem} \label{thm: SteepExpansion} 
	For a fixed positive integer $\beta$, there exists $ C \ge 1, \gamma \ge 1$, $c \in (0,1)$ and 
	$s_0 \ge \beta $ so that the following holds. 
	Suppose $ p $ satisfies 
	$$
		\frac{ \log( n ) }{ n } 
	\le
		p 
	\le
		\Big(  1+ \frac{ 1 }{ 2\beta } \Big)
		\frac{ \log( n ) }{ n } . 
	$$

	There exists an event $\Omega_{\ref{thm: SteepExpansion}} $
	with $$\mathbb{P}( \Omega_{\ref{thm: SteepExpansion}}^c) 
	= 	o \big(
		\mathbb{P}\{ \Omega_{\rm RC }^c(\beta) \}  \big)
	$$ satisfying the following 
	description: 
	For any sample $A \in \Omega_{\ref{thm: SteepExpansion}} $, 
	let $\mathcal{J} = \mathcal{J}_A(  s_0)$, 
	and 
	\begin{align*}
		\mathcal{I} := \left\{ i \in [n] 
			\, :\, 
			\exists j \in \mathcal{J} \mbox{ s.t. }
			a_{ij} = 1
		\right\}.
	\end{align*}
	For $x \in \cal T^*(C)$ satisfying $\opn{supp}(x) \cap \cal J(0) = \emptyset$, one of the following holds:	
	\begin{itemize}
		\item Either $\exists i \in \cal I$ such that $ |(Ax)_i|  
				\ge c x^*_{n_1}$, or
		\item 	$ | \left\{ 
				i \in [ n] \,: \, |(Ax)_i| \ge
			c x^*_{n_1}	
		 \right\}  | > \frac{  s_0}{8}$.
	\end{itemize}
\end{theorem}

This section consist 3 parts: 
\begin{enumerate}
	\item We construct the high probability event $\Omega_{\ref{thm: SteepExpansion}}$ described in the theorem. 
	\item Then, we derive the event $\Omega_{\ref{thm: SteepExpansion}}$ fails with probability which is negligible comparing to $\Prob\{ {\cal J}_A(0) \ge \beta \}$.  
	\item The proof of Theorem \ref{thm: SteepExpansion} and Theorem \ref{thm: T_1s}.
\end{enumerate}

In this regime, following from Proposition \ref{prop: zeroColumns} and \eqref{eq: J0smallp} we have 
\begin{align} \label{eq: J0smallp01}
	 \Prob\{ \Omega_{\rm RC}^c(\beta) \} \ge \Prob\{ {\cal J}_A(0) \ge \beta\} \ge (1+o_n(1)) \frac{1}{e\beta!}n^{-1/2}. 
\end{align}
Let $$ s_0 \ge \max\{\beta, 8\} \mbox{ be a fixed constant to be determined later and set } \cal {J} :=  \mathcal{J}( s_0 ).$$ 
Next, let 
\begin{align} \label{eq: defnIsparse}
	{\cal I} := \bigcup_{j \in {\cal J}} \opn{supp}({\bf C}_j(A)).
\end{align}
We decompose $A$ into block form according to the indices ${\cal I}$ and ${\cal J}$:
$$
	A  = 
\begin{pmatrix}
	H &  & G &  &  \\
		     &  &   &  &  \\
	0 &  & W &  &  \\
	  &  &   &  & 
\end{pmatrix},
$$
where $H:= A_{{\cal I}, {\cal J}}$, $G:= A_{{\cal I}, [n]\setminus {\cal J}}$, and $W:= A_{[n] \setminus {\cal I}, [n] \setminus {\cal J}}$. 

\medskip

Now, we will introduce some high probability events. Let $\phi$ denote a constant which we will be chosen later, and set  
\begin{align*}
	\Omega_1
= 
	\Omega_1(\phi)
:=	\bigg\{
		A\,:\, &
		\forall s \in \big[1, \frac{1}{2}pn \big],\, 
 		\left|  \mathcal{J}(s) \right| < 
			\underbrace{\log^2( n ) 
			\left( \frac{e  pn  } {t_2} \right)^ {t_2}
			\exp( -  pn  ) n}_{\text{see Proposition } \ref{prop: L_sboundSmallp}} \\
		&	
		 \phantom{AAA AAA AAA AAA AAA AAA}
		\quad \text{and}
		\quad \mathcal{J}( \phi  pn ) = [n]
	\bigg\}.
\end{align*}
Further, let 
$$
	\Omega_{\rm row}
=
	\Omega_{\rm row}(\phi) 
:= 
	\big\{ 
		A\,:\, {\cal J}_{A^\top}(\phi pn) = [n]
	\big\}.
$$

\begin{lemma} \label{lem: Omega_1}
	There exists $\phi >1 $ 
	such that  
	\begin{align*}
		\max\{ \mathbb{P} \left\{ \Omega_1^c \right\} ,\, \mathbb{P} \left\{ \Omega_{\rm row}^c  \right\} \}
		=
	o(\Prob\{ \Omega_{\rm RC}^c(\beta)\}).
		\end{align*}
	
\end{lemma}

\begin{proof}
	First of all, by \eqref{eq:BnmUp}, for $j\in [n]$ 
	we have
	$ \mathbb{P} \left\{ j \notin  \mathcal{J}(\phi pn ) 
		\right\}  
	\le 
		\exp(- 3  pn )$
	if $\phi >1$ is sufficiently large. 	
	Together with the union bound 
	argument, we conclude 
	$ \mathbb{P} \left\{  
		 \mathcal{J}(\phi  pn ) \neq [n] 
	\right\} 
	\le \exp(-3  pn  ) $.
	By Proposition \ref{prop: L_sboundSmallp} 	
	and the union bound argument, 
	\begin{align*}
		\mathbb{P} \left\{ \Omega_1^c \right\}  
	\le 
		\exp(- 2  pn  ).
	\end{align*}	

	Due to $  A  $ and $  A ^\top $
	have the same distribution, 
	the estimate for $\Omega_{\rm row}^c$ follows from the 
	above computations. 

\end{proof}

Next, we will describe some typical subevents of $\Omega_1$ 
corresponding to $H, G$, and $W$.

Let 
\begin{align*}
	\Omega_H 
:=& 
	\Big\{ A\,:\,
	 \forall j_1,j_2 \in {\cal J}, \,
	 \opn{supp}({\bf C}_{j_1}(  A ))
		\cap \opn{supp} ({\bf C}_{j_2}(  A ) ) =\emptyset
	\Big\}, \quad \text{and}\\
	\Omega_G
:=&
	\Big\{ A\,:\,
	 \forall j \in [n] \setminus {\cal J}, \,
	 |\opn{supp}({\bf C}_{j}(  A ))| \le 2
	\Big\}. \\
\end{align*}
As for $W$, recall the definition from \eqref{eq: expansionDefine}: For $J_1 \subseteq J_2 \subseteq [n] \setminus {\cal J}$, 
\begin{align*}
	I_{ W }(J_1,J_2) := \Big\{ i \in [n] \backslash \mathcal{I}\,:\,
		\exists j_0 \mbox{ s.t. }
		a_{ij_0}=1 \mbox{ and } a_{ij}=0 
		\mbox{ for } j\in J_2 \backslash \{j_0\}
	\Big\}.
\end{align*}
Let $$ \Omega_{  W  }$$ be the event that 
\begin{itemize}
	\item For $1  \le n_1 \le  
		 \big\lceil \exp \big( \frac{ \log n  }{ 
		\log^2(  \log n   ) } \big) \big\rceil
		$ and 
		any subsets $J_1 \subset J_2 \subset 
		[n] \backslash \mathcal{J}$
		with $|J_1|=n_1$ and $|J_2| \le 1000 n_1$, 
		$ \left| I_{ W }(J_1,J_2) \right|  > s_0 /8 $.

	\item For $ \frac{1}{10}
		\big\lceil \exp \big( \frac{ \log n  }{ 
		\log^2(  \log n   ) } \big) \big\rceil
		\le n_1 \le \lfloor \frac{1}{ \gamma  p}
		\rfloor $, 
		and any subsets $J_1 \subset J_2 \subset 
		[n] \backslash \mathcal{J}$
		with $|J_1|=n_1$ and $|J_2| \le \max \{
			\lfloor \frac{1}{ \gamma  p} \rfloor ,\,
			 \frac{  pn  }{ \log^{ 3} ( pn )} n_1
		\}$, 
		$ \left| I_{ W }(J_1,J_2) \right| > s_0 /8 $.
\end{itemize}
Before we proceed, let's clarify our definition of $\Omega_W$. Notice that, the larger the ratio $\frac{|J_2|}{|J_1|}$, the larger the collection of 'steep-type' vectors for which we can bound $|Ax|$ away from zero. However, as $\frac{|J_2|}{|J_1|}$ increases, we also encounter a larger union bound to contend with, particularly when $|J_1|$ is relatively small. Due to these technical considerations, we introduce a truncation based on the size parameter $n_1 = |J_1|$.
In the end, let 
\begin{align} \label{eq: eventSteepExpansion}
\Omega_{\ref{thm: SteepExpansion}} = \Omega_1 
\cap ( \Omega_{ H }
	\cap \Omega_{  G  }
	\cap \Omega_{  W  } ) \cap \Omega_{\rm row} ,
\end{align}
which will be the event 
described in Theorem \ref{thm: SteepExpansion}. 
To proof this theorem, we shall first estimate 
$ \mathbb{P}( \Omega_{\ref{thm: SteepExpansion} }^c)$:

\begin{lemma} \label{lem: Omega_2}
	With the assumption that $ \frac{\log n }{n} \le p \le \big( 1 + \frac{1}{2\beta}\big) \frac{\log n }{n}$. 

	For any  
	$\phi >1$, we have
	\begin{align*}
		\mathbb{P} \left\{ \Omega^c_{\ref{thm: SteepExpansion} }
		\right\}  
	\le 	
		\exp\big(  4 s_0 \log(\log n ) - \log n 
		\big)
	\overset{\eqref{eq: J0smallp01}}{=}
	o(\Prob\{ \Omega_{\rm RC}^c(\beta)\}).
	\end{align*}	
\end{lemma}

\subsection{ Probability Estimate for $\Omega_{\ref{thm: SteepExpansion} }^c$. }

To prove Lemma \ref{lem: Omega_2}, 
{ 
	we need to estimate the probabilities of events related to 
}
$\Omega_{H}$, 
$\Omega_{G}$, and $\Omega_{W}$. 

\begin{proposition} \label{prop: Omega_H}
	The following probablity bound holds:
	\begin{align*}
		\mathbb{P} \left\{ \Omega_{H}^c \,|\,
			\Omega_1   
		\right\} 
	\le 	
		\exp\big( 
			3s_0 \log(\log n )
		-	\log n 
		\big)
	= 
		o(\Prob\{ \Omega_{\rm RC}^c(\beta)\}).
	\end{align*}	
\end{proposition}

\begin{proof}	
	{Recall from \eqref{eq: supportProfile}, for
	$b = (b_1,\dots, b_{n}) 
		\in \left\{ 0,1,\dots,n\right\}^{n}$,
	$\Omega_{b}( A )$ is the event that  
	\begin{align} \label{eq: defOmegabA}
	 \forall i \in [n],\, |\mbox{supp}({\bf C}_i( A ))| = b_i. 
	\end{align}
	} 

	Observe that $\Omega_1$ can be partitioned into 
	subevents of the form $\Omega_{ b }(  A )$. 

	{Now we condition on an event }
	$ \Omega_{ b }(  A ) \subset \Omega_1$.
	Notice that, the columns are still jointly independent. 
	The set $\mathcal{J}$ is determined by $ b $. 
	And for $j \in [ n]$, 
	$ \mbox{supp}({\bf C}_j(A)) $ is uniformly chosen among subsets 
	of $[n]$ with size $b_j$. 

	For $j_1, j_2 \in \mathcal{J}$, by 
	the Hypergeometric distribution tail \eqref{eq:HyperGeomTail} 
	{from Proposition \ref{prop: HyperGeom}}
	, 
	\begin{align*}
		\mathbb{P} \left\{ 
			\left| 
				\opn{supp}({\bf C}_{j_1}(  A )) \cap 
				\opn{supp}({\bf C}_{j_2}(  A ))	
			\right| \ge 1 \,\vert\, \Omega_{b}(A)
		 \right\} 
	\le 
		C_{\rm hg} \frac{3b_{j_1}b_{j_2}}{n}
	\le
		3C_{\rm{hg}} \frac{ s_0^2 }{ n },
	\end{align*}
	{where $C_{\rm{hg}} >1 $ is an universal constant appeared in 
	Proposition \ref{prop: HyperGeom}.}

	As a subevent of $\Omega_1$, 
	\begin{align*}
		| \mathcal{J} | = |  \mathcal{J}(  s_0 ) |
	\le
		\log^2( n ) 
			\left( \frac{e  pn  }{  s_0 } 
			\right)^{  s_0  } 
			\exp( - pn ) n
	 \le 
		\log^2( n ) 
			\left( \frac{e  pn  }{  s_0 } 
			\right)^{  s_0  }
	\end{align*}
	where the last inequality  {holds} due to $  pn  \ge \log n $. 
	Applying the union bound we have 
	\begin{align*}
			&\mathbb{P} \left\{ 
				\Omega_{\mathcal{J}} \, |\, 
				\Omega_{ b }(  A )
			 \right\} 
	\le 
		{ 
			\log^2( n ) 
			\left( \frac{e  pn  }{  s_0 } 
			\right)^{  s_0  }
			\choose 2 
		}	
		3C_{\rm{hg}}\frac{  s_0^2 }{ n } \\
	&\hspace{2cm} \le
		\log^4(n) 
		\big( \frac{ e  pn  }{  s_0  } \big)^{ 2 s_0  }
		3C_{\rm{hg}}	\frac{  s_0 ^2 }{n}
	\le 
		(\log( n))^{ 3 s_0  } n^{-1}.
	\end{align*}
	Since this inequality holds for each 
	$\Omega_{ b }(  A ) \subset \Omega_1$ and 
	$\Omega_1$ can be partitioned into disjoint union of $\Omega_{b}(A)$, the proof follows.  
\end{proof}

\begin{lemma} \label{lem: Omega_G}
	For any $\phi>1$,the following probability bound holds:
	\begin{align*}
		\mathbb{P} \left\{ \Omega_{ G }^c 
			\,| \,	\Omega_1
		\right\} 
	\le 	
		n^{-1}
	\overset{\eqref{eq: J0smallp01}}{=}
		o(\Prob\{ \Omega_{\rm RC}^c(\beta)\})
	\end{align*}	
	when $n$ is sufficiently large. 
\end{lemma}

{
	For $  b  \in \{0,1,\dots, n\}^{ n}$ and 
	$ I \subset [ n]$, let 
	\begin{align} \label{eq: defOmegabI}
		 \Omega_{ b , I}
		\mbox{ be the subevent of } \Omega_{ b } 
		\mbox{ (described in \eqref{eq: defOmegabA}) that } 
	\end{align}
	\begin{enumerate}
	\item	$\forall j \in [ n],\,  \nonumber
		| \mbox{supp}( {\bf C}_j(  A ))| = b_j, \mbox{ and }$
	\item 	 $\mathcal{I} = I, \mbox{ where }  
		\mathcal{I} = \left\{ 
		i\in [ n]\, :\, \exists j 
		\in \mathcal{J} \mbox{s.t.}\, a_{ij}=1
		 \right\}.$
	\end{enumerate}
}

\begin{proof}
	Observe that, $\Omega_1$ can be partitioned into subevents 
	of the form $\Omega_{b,  I }$. From now on, 
	we fix a subevent $\Omega_{b,  I } \subset \Omega_1$.
	
	Conditioning on $\Omega_{b, I}$, $\{ {\bf C}_j(  A )\}_{j \in [n] \backslash \mathcal{J}}$ 
	are independent. 
	For each $j \in [ n] \backslash \mathcal{J}$,
	the expected size of 
	$ \mathcal{I} \cap \opn{supp}({\bf C}_j(A)) $ is 
	$ \frac{|\mathcal{I}| |\opn{supp}({\bf C}_j(A))|}{n}$.
	As a subevent of $\Omega_1$, both 
	$ | I | \le \phi  pn  |\mathcal{J}| $ 
	and $ |\mbox{supp}({\bf C}_j(A))| \le  \phi pn $ 
	are of order in polynomial of $ \log n $.
	By the Hypergeometric distribution tail 
	\eqref{eq:HyperGeomTail} {from Proposition \ref{prop: HyperGeom}},
	for $j \in [n] \backslash \mathcal{J}$,
	\begin{align} \label{eq: Omega_W Hyper}
		\mathbb{P} \left\{ \left| \mathcal{I} 
			\cap \mbox{supp}({\bf C}_j(A)) \right| \ge 
			l \, | \, \Omega_{b, I  }( A )
		\right\} 
	\le	
		C_{\rm hg} \left( \frac{3|\mathcal{I}||\mbox{supp}({\bf C}_j(A))|
			}{nl} \right)^l 
	\le  
		n^{-0.9 l}.
	\end{align}	
	Setting $l=3$ and taking the union bound over all 
	columns, we obtain
	\begin{align*}
		\mathbb{P} \left\{ 
			\exists j \in [ n] \backslash \mathcal{J} , \, 
			\left| \mathcal{I} 
			\cap \mbox{supp}({\bf C}_j(A)) \right| \ge 
			3 \, | \, \Omega_{b,  I }
			( A )
		\right\} 
	{\le 
		n \cdot n^{-2.7} }
	\le 	n^{-1}.
	\end{align*}

	Since this bound does not depend on the choice of 
	$\Omega_{ b ,  I  }(  A ) \subset \Omega_1$, 
	the proof is completed. 
\end{proof}

\begin{lemma} \label{lem: Omega_W} For any 
	$\phi >1$,  if $ s_0 \ge 1$ 
	and $\gamma >1$
	are large enough, we have
	\begin{align*}
		\mathbb{P} \left\{ \Omega_{ W }^c \,|\,
			\Omega_1 \cap \Omega_{ G }
		\right\}  \le n^{-2}
\overset{\eqref{eq: J0smallp01}}{=}	
o(\Prob\{ \Omega_{\rm RC}^c(\beta)\}). 
	\end{align*}	
\end{lemma}
\subsubsection{Proof of Lemma \ref{lem: Omega_W}}
Given a pair $b',b'' \in [0,n]^n$ and $I \subseteq [n]$, 
we define $\Omega_{b',I,b''}(A)$ to be the event that 
the event that
\begin{enumerate}
	\item $I = {\cal I}(A)$
	\item $\forall j \in [n]$, $| \opn{supp}( {\bf C}_j(A))| = b_j'$ and  
	$| \opn{supp}( {\bf C}_{j}(A) \cap I ) | = b''_j$.  
\end{enumerate}

Notice that, the event $\Omega_1 \cap \Omega_G$ can be partitioned into disjoint union of $\Omega_{b',I,b''}$. 

From now on, we fix a subevent $\Omega_{b',I,b''} \subseteq \Omega_1 \cap \Omega_G$. 

For a given pair of subsets $J_1\subset J_2 \subseteq [n] \backslash {\cal J}$, we want to establish a lower bound on the size of 
\begin{align*}
	&I_W(J_1,J_2) \\
:=& 
	\Big\{ i \in [n] \backslash I\,:\,
		\exists j_0 \in [n] \setminus {\cal J} \mbox{ s.t. }
		a_{ij_0}=1 \mbox{ and } a_{ij}=0 
		\mbox{ for } j\in J_2 \backslash \{j_0\}
	\Big\},
\end{align*}
for every pair of $J_1 \subset J_2 \subseteq [n] \backslash {\cal J}$ with a given size profile $(n_1,n_2)$ such that $|J_1|=n_1$ and $|J_2|=n_2$.  For each pair $(n_1,n_2)$, the number of pairs $J_1 \subseteq J_2 \subseteq [n] \backslash {\cal J}$ with the given size profile can be coarsely bounded by 
\begin{align} \label{eq: n2n1size}
	{ n_2 \choose n_1} {n \choose n_2} 
\le 
	2^{n_2} \left(\frac{en}{n_2}\right)^{n_2}
\le 
	\exp\left(
		\log\left( \frac{2en}{n_2}\right) n_2
	\right).
\end{align}

Our goal is provide an uniform upper estimate for $\Prob\{ I_W(J_1,J_2)\,\vert\, \Omega_{b',I,b''}\}$ for every pair $J_1\subseteq J_2 \subseteq [n] \setminus {\cal J}$ with $|J_1|=n_1$ and $|J_2|=n_2$ using Lemma \ref{lem:I(J_1,J_2)}. Then, we will apply the union bound argument over all possible $(J_1,J_2)$ and all possible $(n_1,n_2)$ to derive the estimate for $\Prob\{ \Omega_W\,\vert\, \Omega_1 \cap \Omega_G\}$. 

Before we proceed, we need a coarse lower bound on the size of $[n] \backslash I$:

\begin{lemma} \label{lem: nminusIsize}
For each $\Omega_{b',I,b''}(A) \subseteq \Omega_1 \cap \Omega_G$, if $n$ is sufficiently large,   
$$
	\big| [n] \setminus I\big| = n - |I| \ge n/2. 
$$	
\end{lemma}
\begin{proof}
Within the event $\Omega_{b',  I , b''}(  A ) \subseteq \Omega_1 \cap \Omega_G$, 
$$ I = {\cal I} = \bigcup_{j \in {\cal J}} \opn{supp}({\bf C}_j(A)).$$
Given that $s_0$ is an universal constant. If $n$ is sufficiently large, 
\begin{align*}
	|I| 
\le& 
	\sum_{ j \in {\cal J}} |\opn{supp}({\bf C}_j(A))|
\overset{\Omega_1(A)}{\le}
	\log^2(n) \Big( \frac{epn}{s_0}\Big)^{s_0} \exp(-pn)n  \cdot s_0 \\
& \phantom{AAA AAA AAA AAA AAA}
\overset{ pn \ge \log n }
= O({\rm polylog}(n)).
\end{align*}
Therefore, the proof follows. 
\end{proof}

Now, we are ready to derive Lemma \ref{lem: Omega_W}.
\begin{proof}[Proof of Lemma \ref{lem: Omega_W}]

Conditioning on an subevent $\Omega_{b',I,b''} \subseteq \Omega_1 \cap \Omega_G$. For $j \in [n] \backslash {\cal J}$, the support of ${\bf C}_j(W)$ is uniformly distributed among subsets $[n] \backslash I$ of size $b''_j$. Hence, we may apply Lemma \ref{lem:I(J_1,J_2)} with $ B =  W $, $b = b''|_{[n] \backslash J}$, 
$$
	J_1' = \big\{ j \in J_1\,:\, |\opn{supp}({\bf C}_j(W))| \ge r \big\}
\mbox{ and } 
	J_2' = J_2,
$$
for some $r \ge \min_{j \in [n] \setminus {\cal J} } b_j''$. 
In order to apply the lemma, we need to show 
\begin{enumerate}
	\item $\frac{2\|b''\|_\infty^2}{ n - |I|} \le 1$, and
	\item $r \ge \frac{24\|b\|^2_\infty}{n-|I|} |J_2|$.
\end{enumerate}
Let us establish the first condition since it does not depend on the choice of $r$. By Lemma \ref{lem: nminusIsize} and 
$$
	\|b''\|_\infty \overset{\Omega_1}{\le} \phi pn,
$$
where $\phi$ is a constant which depends only on $\beta$, we conclude that 
\begin{align} \label{eq: omegaW00}
	\frac{2\|b''\|_\infty^2}{ n - |I|}
=
	\frac{4 \phi^2 \log^2(n)}{n}
\le 
	1
\end{align}
when $n$ is sufficiently large. 
As for the second condition, we need to verify it and apply the Lemma case by case, due to the value of $r$ depends on the given pair $(n_1,n_2)$:

\underline{Case 1:} $ 1 \le n_1 \le  \lceil  \exp( \frac{  \log n   }{ 
	\log^2( \log n  ) } ) \rceil  $ and $ n_2 \le 1000 n_1$.
	Here we simply take 
	$$r:= \min_{[n] \setminus J}b_j'' 
	\overset{\Omega_G}{\ge} b_j' - 2
	\overset{\Omega_1}{\ge} s_0 - 2 \ge s_0/2.$$ 
	In particular, we have 
	$$J'_1 = J_1.$$
	Notice that $|J_2| = n_2 \le 1000 n_1 \le \frac{n}{\log^{10}(n)}$ when $n$ is sufficiently large. Together with \eqref{eq: omegaW00},
	\begin{align*}
		\frac{24 \|b\|_\infty^2 }{n-|I|} |J_2|	
	\le 
		\frac{48\phi^2 \log^2(n)}{n} \cdot \frac{n}{\log^{10}(n)}
	\le 
		1
	\le 
		r, 
	\end{align*}
	which verifies the second condition in Lemma \ref{lem:I(J_1,J_2)}.
	Now, we can apply the lemma to get 
\begin{align}
\nonumber 
	\mathbb{P} \Big\{ 
		|I_W(J_1, J_2)|  <  \frac{ |J_1|s_0}{8} \Big\vert\, 
			\Omega_{b',I,b''}(A) 
	\Big\}
\le&
	\mathbb{P} \Big\{ 
		|I_W(J_1, J_2)|  <  \frac{ |J_1|r}{4} \Big\vert\, 
			\Omega_{b',I,b''}(A) 
	\Big\} \\
\le &
	C^{|J_1|} \exp \left( -  
		\log \left(  \frac{ r (n-|I|) }{ 24 |J_2| \|b\|_\infty^2} 
		\right) \frac{r |J_1|}{8} 
	\right).
\end{align}

From the of $n-|I|$ from Lemma \ref{lem: nminusIsize}, $r \ge s_0/2$, and $pn \le \big(1 + \frac{1}{2\beta}\big) \log n  \le 2\log n $, 
\begin{align*}
	\log \left(  \frac{ r (n-|I|) }{ 24 |J_2| \|b\|_\infty^2} 
			\right)
\ge 
	\log  \left(  \frac{ s_0/2 \cdot  n/2 }{ 24 |J_2| (\phi \log n )^2} 
			\right)
\ge
	\log \left( \frac{n}{|J_2| \log^3(n)} \right)
\end{align*}
and hence, 
\begin{align*}
	    \mathbb{P} \Big\{ 
			|I_W(J_1, J_2)|  <  \frac{ |J_1|s_0}{8} \Big\vert\, 
 			\Omega_{b',I,b''}(A) 
		\Big\}  \le&  
		C^{|J_1|} \exp \left( -  
			\log \left(  \frac{n}{|J_2|\log^3(n)} 
			\right) \frac{r |J_1|}{8} 
		\right) \\
		\le& 
 \exp \left( -  
			\log \left(  \frac{2en}{|J_2|} 
			\right) \frac{s_0 |J_1|}{32} 
		\right),
\end{align*}
where the last inequality follows from the fact that $ \frac{n}{|J_2|} \ge \log^{10}n$. 
Finally, if $s_0$ is chosen large enough, then 
$$
	s_0 |J_1|/32 = \frac{s_0}{32} n_1 \ge 2000 n_1 \ge 2n_2. 
$$ 
Now we apply the union bound over all pairs of $J_1 \subseteq J_2 \subseteq [n] \setminus {\cal J}$ with $|J_1|=n_1$ and $|J_2|=n_2$, together with \eqref{eq: n2n1size} we conclude that 
	\begin{align*}
		& \mathbb{P} \bigg\{  
			\exists J_1 \subset J_2 \subset 
			[ n] \setminus \mathcal{J} 
		\,:\,  \\
		& \phantom{AAA AAA}
			|J_1| = n_1, \, |J_2| = n_2 
			\mbox{ and } |I(J_1, J_2)| < 
			\frac{ |J_1| s_0 }{8}
		\biggm \vert
			\Omega_{b',  I , b''}
			(  A )	 \bigg\} \\
	\le &
		\exp \left( 
			\log \left(  \frac{2e n }{ n_2 } 
			\right) 
			\left( n_2 - \frac{s_0}{32}n_1 
		\right) 
		\right)
	\le 
		\exp \left( - n_2 \right) 
    \le   n^{-1000}.
	\end{align*}

	\underline{Case 2:} $
		\frac{1}{10}	
		\lceil  
			\exp( \frac{  \log n   }{ 
			\log^2( \log n  ) } ) 
		\rceil  
	\le 
		n_1 
	\le 
		\lceil \frac{ n }{ \log^{12}(n) } \rceil
	$ and 
	$	n_2 
	\le 
		  \frac{  \log n  }{ \log^{ 3 }(  \log n   ) } n_1
	$.

	In this case, we set $ r = \frac{ 100  \log n  }{ 
	\log^3(  \log n   ) }$. Comparing to Case 1, the value of $r$ has been increased, while the same estimate for $\frac{24\|b\|^2_\infty}{n-|I|} |J_2|$ is still valid. Therefore, the conditions for applying the Lemma \ref{lem:I(J_1,J_2)} still hold.
	
	On the other hand, in this case, $J_1'$ does not necessary equal $J_1$. To estimate the size of $J_1'$, recall that as we have conditioned on a sub-event of $\Omega_1$, 
	$ \left|  \mathcal{J}(s) \right| < 
			\log^2( n ) 
			\left( \frac{e  pn  } {t_2} \right)^s
	$ for $1 \le  s \le \frac{1}{2}  pn $. It is straightforward to 
	verify that 
	\begin{align*}	
		\left|  \mathcal{J}(r)\right|
	< \frac{1}{10} n_1.
	\end{align*}

	Notice that 
	$$
		J_1' = J_1 \setminus {\cal J}(r), 
	$$
	which in turn implies 
	$$
		|J_1'| \ge n_1 - \frac{1}{10}n_1 \ge \frac{9}{10}n_1. 
	$$

	Now, we apply Lemma \ref{lem:I(J_1,J_2)} to get 
\begin{align}
\nonumber 
	\mathbb{P} \Big\{ 
		|I_W(J_1, J_2)|  <  \frac{ |J_1|s_0}{8} \Big\vert\, 
			\Omega_{b',I,b''}(A) 
	\Big\}
\le&
	\mathbb{P} \Big\{ 
		|I_W(J_1, J_2)|  <  \frac{ |J_1|s_0}{8} \Big\vert\, 
			\Omega_{b',I,b''}(A) 
	\Big\} \\
\nonumber
\le &
	\mathbb{P} \Big\{ 
		|I_W(J_1', J_2)|  <  \frac{ |J_1'|r}{4} \Big\vert\, 
			\Omega_{b',I,b''}(A) 
	\Big\} \\
\nonumber
\le &
	C^{|J_1'|} \exp \left( -  
		\log \left(  \frac{ r (n-|I|) }{ 24 |J_2| \|b\|_\infty^2} 
		\right) \frac{r |J_1'|}{8} 
	\right).
\end{align}
With essentially the same estimate as in Case 1 with the difference that $r \le \log n $ in this time, we can show 
$$
	\log \left(  \frac{ r (n-|I|) }{ 24 |J_2| \|b\|_\infty^2} 
		\right)
\ge 
	\frac{1}{2} \log\Big( \frac{2en}{n_2}\Big),
$$ 
and hence, 
\begin{align}
\nonumber 
	\mathbb{P} \Big\{ 
		|I_W(J_1, J_2)|  <  \frac{ |J_1|s_0}{8} \Big\vert\, 
			\Omega_{b',I,b''}(A) 
	\Big\}
\le&
	\exp \left( -  
		\log \left( \frac{2en}{n_2} 
		\right) \frac{100\log n }{\log^3(\log n )} \frac{n_1}{32}. 
	\right).
\end{align}
Similarly, here we rely on
$$
	\frac{100\log n }{\log^3(\log n )} \frac{n_1}{32} 
\ge 
	2 n_2
$$
and \eqref{eq: n2n1size} to estimate the union bound to conclude  
\begin{align*}
		& \mathbb{P} \bigg\{  
			\exists J_1 \subset J_2 \subset 
			[ n] \setminus \mathcal{J} 
		\,:\,  \\
		& \phantom{AAA AAA}
			|J_1| = n_1, \, |J_2| = n_2 
			\mbox{ and } |I(J_1, J_2)| < 
			\frac{ |J_1| s_0 }{8}
		\biggm \vert
			\Omega_{b',  I , b''}
			(  A )	 \bigg\} \\
	\le &
		\exp \left( - n_2 \right) 
    \le   n^{-1000}.
\end{align*}	

\underline{Case 3:}
$ 
	\lceil \frac{ n }{ \log^{12}(n) } \rceil
		\le n_1 \le 
		\lfloor \frac{1}{ \gamma p} \rfloor $ and 
	$n_2 \le \max \Big\{ 
		\lfloor \frac{1}{ \gamma p} \rfloor, \,
		 \frac{\log n }{\log^3(\log n )} n_1
	\Big\}$.

	In this case, we set  $r = \lfloor \frac{1}{2}pn \rfloor$. 

	Since the upper bound for $|J_2|=n_2$ is also changed, we need to verify the second condition in Lemma \ref{lem:I(J_1,J_2)}.

	Now, by \eqref{eq: omegaW00}, we have 
	\begin{align*}
		\frac{2\|b''\|_\infty^2}{ n - |I|} |J_2|
	\le 
		\frac{4 \phi^2 \log^2(n)}{n} \frac{1}{\gamma p}
	\le 
		\frac{4 \phi^2 \log n }{\gamma}.
	\end{align*}
	On the other hand, 
	$$
		r = \lfloor \frac{1}{2} pn \rfloor \ge \frac{1}{3}\log n . 
	$$

	Therefore, if we impose {\bf the first assumption} on $\gamma$ that $ 12 \phi^2 \le \gamma$, then the second condition of Lemma \ref{lem:I(J_1,J_2)}
	holds.  

	Before we apply the lemma, let us estimate the size of $|J_1'|$. Recall that 
	\begin{align*}
		\left|  \mathcal{J}(r) \right| 
	\overset{\Omega_1}{\le}
		\log^2( n) 
		\exp\left( 
			 \frac{\log(2e)}{2}   pn 
			 -  pn \right)  n
	\le
		\exp\left( 
			 -0.1  pn 
		 \right) n
	\le 
		n^{0.9}.
	\end{align*}
	Therefore, 
	$n_1 > 
	100 | \mathcal{J}(r)   
	| $, which in term implies
	$$
		|J_1'| = |J_1 \setminus {\cal J}(r) |
		\ge 0.99 n_1. 
	$$	
	Now, we apply the Lemma to get 	
	\begin{align}
	\nonumber 
		\mathbb{P} \Big\{ 
			|I_W(J_1, J_2)|  <  \frac{ |J_1|s_0}{8} \Big\vert\, 
				\Omega_{b',I,b''}(A) 
		\Big\}
	\le &
		\mathbb{P} \Big\{ 
			|I_W(J_1', J_2)|  <  \frac{ |J_1'|r}{4} \Big\vert\, 
				\Omega_{b',I,b''}(A) 
		\Big\} \\
	\nonumber
	\le &
		C^{|J_1'|} \exp \left( -  
			\log \left(  \frac{ r (n-|I|) }{ 24 |J_2| \|b\|_\infty^2} 
			\right) \frac{r |J_1'|}{8} 
		\right).
	\end{align}

	If we impose the {\bf second assumption on } $\gamma$ that $24 e \phi^2 \le \gamma$, then 
	$$
		\log \left(  \frac{ r (n-|I|) }{ 24 |J_2| \|b\|_\infty^2} 
			\right) \ge e,
	$$
	and hence, 
	\begin{align}
	\nonumber 
		\mathbb{P} \Big\{ 
			|I_W(J_1, J_2)|  <  \frac{ |J_1|s_0}{8} \Big\vert\, 
				\Omega_{b',I,b''}(A) 
		\Big\}
	\le &
		\exp \left( - \frac{r n_1}{32} \right)
	\le 
		\exp \Big( - \log^2(n) n_2\Big).
	\end{align}
	Finally, we apply the union bound together with the estimate \eqref{eq: n2n1size}, we conclude that 
	\begin{align*}
		& \mathbb{P} \bigg\{  
			\exists J_1 \subset J_2 \subset 
			[ n] \setminus \mathcal{J} 
		\,:\,  \\
			& \phantom{AAA }|J_1| = n_1, \, |J_2| = n_2 
			\mbox{ and } |I(J_1, J_2)| < 
			\frac{ |J_1| s_0 }{8}
		\biggm \vert
			\Omega_{b',  I , b''}
			(  A )	 \bigg\} 
    \le   n^{-1000}.
	\end{align*}

	Now, since we have treated all possible cases of the pair $(n_1,n_2)$, and for each case we obtain 
	\begin{align*}
		& \mathbb{P} \bigg\{  
			\exists J_1 \subset J_2 \subset 
			[ n] \setminus \mathcal{J} 
		\,:\,  \\
	& \phantom{AAA }
			|J_1| = n_1, \, |J_2| = n_2 
			\mbox{ and } |I(J_1, J_2)| < 
			\frac{ |J_1| s_0 }{8}
		\biggm \vert
			\Omega_{b',  I , b''}
			(  A )	 \bigg\} 
    \le   n^{-1000}.
	\end{align*}
	Now we apply the union bound over all pairs $(n_1,n_2)$, we conclude that 
	\begin{align*}
		& \mathbb{P} \left\{  
			\Omega_W
					\biggm \vert
			\Omega_{b',  I , b''}
			(  A )	 \right\} 
    \le   n^2 \cdot n^{-1000}
\overset{\eqref{eq: J0smallp01}}{=}
o(\Prob\{ \Omega_{\rm RC}^c(\beta)\}).
	\end{align*}
	And since this bound is valid for all 
	$ \Omega_{b',  I , b'' }(  A )
	\subset \Omega_1 \cap \Omega_{ W }$, 
	the proof is completed. 
	\end{proof}

At this point, we are ready to prove Lemma \ref{lem: Omega_2}:

\begin{proof}
	By definition, 
	\begin{align*}
		\Omega_{\ref{thm: SteepExpansion}}^c 
	& = 
		\Omega_1^c \cup 
		\Omega_{H}^c \cup
		\Omega_{G}^c \cup
		\Omega_{W}^c \cup
		\Omega_{row}^c	 \\
	& = 
		\Omega_1^c \cup 
		(\Omega_{H}^c \cap \Omega_1) \cup
		(\Omega_{G}^c \cap \Omega_1) \cup
		(\Omega_{W}^c \cap 
			(\Omega_1 \cap \Omega_{ G })
		) \cup
		\Omega_{\rm row}^c.\\
	\end{align*}

	By Lemma \ref{lem: Omega_1}, Proposition \ref{prop: Omega_H}, and Lemma \ref{lem: Omega_W}, we have 
	\begin{align*}
		&\max \Big\{ 
			\mathbb{P}\left\{ \Omega_1^c \right\} ,\,
			\mathbb{P} \{ \Omega_{H}^c \,| \, \Omega_1 \},\,
			\mathbb{P} \left\{ \Omega_{G}^c 
				\,|\, \Omega_1  \right\},\,
			\mathbb{P} \left\{ \Omega_{ W }^c \,|\, 
				\Omega_1 \cap \Omega_{ G } \right\} ,\,
			\mathbb{P} \left\{ \Omega_{\rm row}^c \right\} 
		 \Big\} 	\\
	= &
		o(\Prob\{ \Omega_{\rm RC}^c(\beta)\}).
	\end{align*}

	Together with the standard inequality $ \mathbb{P} \left\{ O_1 \cap O_2 \right\}  \le \mathbb{P} \left\{ O_1 \vert O_2 \right\} $ for any probability events $O_1$ and $O_2$ with $\Prob\{ O_2\}>0$, combining these we get 
	\begin{align*}
		\mathbb{P} \left\{  \Omega_{\ref{thm: SteepExpansion}}^c \right\} 
	&= 
o(\Prob\{ \Omega_{\rm RC}^c(\beta)\}).
	\end{align*}	
\end{proof}

\subsection{ Proof of Theorem \ref{thm: SteepExpansion} and Theorem \ref{thm: T_1s}}

Let $P_{\mathcal{J}^c}$ be the orthogonal projection to 
the span of $ \left\{  e_j \right\}_{ 
j \in [n] \backslash \mathcal{J}}$. 

\begin{lemma} \label{lem: Hvector}
	For any $\phi >1$, suppose $  s_0 \ge 1$ and $ \gamma> 1$ 
	are large enough. There exists $C\ge 1$ and $c \in (0,1)$ so that the following holds: Fix any sample $A$ from the event $\Omega_{\ref{thm: SteepExpansion}}$. 
	Suppose a vector $x \in \R^n$ satisfying 
	\begin{enumerate}
		\item $ \opn{supp}(x) \cap {\cal J}(0) = \emptyset$, and
		\item there exists $J \subseteq \cal J$ so that  
			$$ \min_{j \in J} |x_j| > C pn x^*_{6|J|}.$$  
	\end{enumerate}
	Then, either 
	\begin{enumerate}
		\item there exists $i \in {\cal I}$ such that 
		$|(Ax)_i| \ge \frac{1}{4} \min_{j \in J} |x_j|$, or 
		\item the vector $y = P_{\cal J^c} x$ satisfies 
		\begin{align*}
		y^*_{ \max\{ \frac{|J|}{2} , 1\} } & \ge  
		2\phi  pn 
		y^*_{ 6|J| }, &
		y^*_{ \max\{  \frac{|J|}{2} ,  1\} } & \ge  
		c	
		\min_{j \in J } |x_{j}|.
	\end{align*}
	\end{enumerate}

%
%
\end{lemma}

\begin{proof}
	The values of $ s_0 \ge 1$ and $ \gamma >1$ will be chosen 
	large enough so that Lemma \ref{lem: Omega_2} holds. 
	Let $ J_2 := \sigma_x(6|J|) $.
	Let $$ \cal I(x) = \left\{ i \in \mathcal{I} \,:\,  
	\exists j \in J \mbox{ s.t. } a_{ij}=1 \right\} .$$  
	
	Due to 	
	$ \mbox{supp}(x)\cap  \mathcal{J}_A(0) = \emptyset$, 
	${\bf C}_j( A) \neq \vec{0}$ for $j\in J$. 
	Hence, $|\mathcal{I}(x)| \ge |J|$.
	For $i \in \mathcal{I}(x)$, let $j_0 \in J $ be an index 
	such that $ a_{ij_0} = 1$. 
	Recall that 
	$ \big\{ \mbox{supp}({\bf C}_{j}(A)) \big\}_{j\in \mathcal{J}}$
	are mutually disjoint due to 
	$A$ is a sample $ \Omega_{\ref{thm: SteepExpansion}} \subset 
	\Omega_{H}$
	,  and hence $j_0$ is 
	unique. In particular, $  a_{ij} = 0$ for 
	$j \in \mathcal{J} \backslash \{ j_0 \}$. 
	
	Now we fix $ i \in \mathcal{I}(x)$,
	\begin{align*}
		|\left( A x \right)_i| & = 
		\Big| \sum_{j \in [ n ]} a_{ij} x_j \Big| 
	 \ge 
		|x_{j_0}| 
		- \Big| \sum_{ j \in J_2 \backslash \mathcal{J} } a_{ij}x_j \Big|
		- \Big| \sum_{ j \in [ n] \backslash ( J_2 \cup \mathcal{J} ) }
		a_{ij} x_j \Big|.
	\end{align*}

	As a subevent of $\Omega_{\rm row}$, we know $|\opn{supp}({\rm R}_i(A))| \le \phi pn$. Further, by the assumption of $x$, $|x_j| \le \frac{1}{Cpn}|x_{j_0}|$ for $j \notin J_2$. 
	Combining these two estimates we have 
	\begin{align*}
	\Big| \sum_{ j \in [ n] \backslash ( J_2 \cup \mathcal{J} ) }
		a_{ij} x_j \Big|
	\le 
		\phi pn \cdot \frac{1}{Cpn}|x_{j_0}|
	\le 
		\frac{\phi}{C} |x_{j_0}|, 
	\end{align*}	
	and hence,
	\begin{align*}
		|\left( A x \right)_i| &   
	 \ge 
		\Big( 1- \frac{\phi}{C} \Big) |x_{j_0}| 
		- \Big| \sum_{ j \in J_2 \backslash \mathcal{J} } a_{ij}x_j \Big|.
	\end{align*}

	If $ | \sum_{ j \in J_2 \backslash \mathcal{J} } a_{ij}x_j | 
	\le \frac{1}{4} \min_{j \in J } |x_{j}|	$, then 
	\begin{align*}
		| (Ax)_i | 
	\ge 
		\frac{1}{4} \min_{j \in J } |x_{j}|,
	\end{align*}
	which is the first part of the statement in the Lemma. 

	\medskip

	Now suppose 
	$| \sum_{ j \in J_ 2\backslash \mathcal{J} }  A _{ij}x_j | 
	> \frac{1}{4} \min_{j \in J } |x_{j}|		
	$ for all $ i \in  \mathcal{I}(x)$.
	For $ j \in J_2 \backslash \mathcal{J}$, every column of $ G $ has support size at most $2$ due to $A$ is a sample from $\Omega_{ G }$. Thus,
	\begin{align}
		\label{eq: Hvector00}
		\left| \left\{  ( i , j ) \in \mathcal{I}(x) \times 
			( J_2  \backslash \mathcal{J} ) \, : \,
			a_{ij} = 1
		\right\}  \right|  \le 
		2|J_2| . 
	\end{align}

	Let 
	$$
		{\cal I}'(x) := \Big\{ i \in {\cal I}(x)\,:\,  
			| \left\{  j\in J_2 \backslash \mathcal{J}\,:\,  
	a_{ij} = 1 \right\} | \le \frac{ 2|J_2| }{ \frac{1}{2} |\mathcal{I}(x)| }\Big\}.
	$$	
	We {\bf claim} that $|\cal I'(x)| > \frac{1}{2} |\cal I(x)|$. Suppose the claim fails. Consider the complement of $\cal I'(x)$ is at least half of the size of ${\cal I}(x)$: 
	$|\cal I(x) \setminus \cal I'(x)| \ge \frac{1}{2} |\cal I(x)|$.
	Then, 
	$$
	\left| \left\{  ( i , j ) \in \mathcal{I}'(x) \times 
			( J_2  \backslash \mathcal{J} ) \, : \,
			a_{ij} = 1
		\right\}  \right|
	>
		\frac{1}{2} |\cal I(x)| \cdot  \frac{2|J_2|}{\frac{1}{2}|\cal I(x)|}
	= 
		|J_2|,
	$$
	which contradicts to \eqref{eq: Hvector00}. Therefore, the claim holds. 

	Now, for each $ i \in \mathcal{I}'(x)$, 
	we pick $j_i \in J_2 \backslash \mathcal{J}$
	such that 
	$$
	|x_{j_i}| \ge \frac{1}{ 24 } \cdot \frac{ 1}{ 4} 
	\min_{j \in J } |x_{j}|
	:= c \min_{j \in J } |x_{j}|. $$
	and let $J_1$ be the collection of such $j_i$:
	\begin{align*}
		J_1 := \left\{ j_i \,:\, 
			i \in \mathcal{I}'(x)
		\right\}.
	\end{align*}
	Notice that it is possible that for $i_1 \neq i_2$, 
	$ j_{i_1} = j_{i_2}$. Nevertheless, as a subevent of $\Omega_{ G }$, 
	for $j \in [ n ] \backslash \mathcal{J}$, 
	$$ | \{ i \in \mathcal{I}'(x) \, : \,  j_i = j \} | \le 2.$$
	Therefore, 
	$ |J_1| \ge \max \{ \frac{|J|}{2}, \, 1 \}$.

	Now, let  $y = P_{\mathcal{J}^c}x$. We have $ |y_j| \ge 
	c\min_{j' \in J } |x_{j'}|  $ for 
	$j \in J_1$. And for $ j \notin J_2 $, 
	we have $  C pn |y_j| <  \min_{j' \in J } |x_{j'}|$. 
	Now we conclude 
	$$ y^*_{ \min \{ \frac{|J|}{2} ,\, 1 \} } 
	\ge 2\phi  pn  y^*_{6|J|}$$ when 
	$ C \ge \frac{2\phi}{c} $.
\end{proof}

\begin{proof}[Proof of Theorem \ref{thm: SteepExpansion}] 
	By Lemma 
	\ref{lem: Omega_2}, we have 
	$$
		\mathbb{P} ( \Omega_{\ref{thm: SteepExpansion} }^c ) 
	= 
o(\Prob\{ \Omega_{\rm RC}^c(\beta)\}).
	$$

	Now we fix a sample $A$ from $\Omega_{\ref{thm: SteepExpansion}} \cap \Omega_{\rm RC}(\beta) $. 
	And due to $A$ is a sample of $\Omega_1$ and $\Omega_{row}$, 
	$$ | \mathcal{I}| \le  s_0 |\mathcal{J}|
	\le  \log^{2s_0}(n).$$

	Further, let $C$ denote the constant $C$ introduced in Lemma 
	\ref{lem: Hvector}.
	Let $x \in \cal T^*$ with $\opn{supp}(x) \cap {\cal J}(0) = \emptyset$. We will derive the theorem according to the pair $(n_1,n_2)=(n_1(x),n_2(x))$ such that 
	$$
		x_{n_1}^* \ge Cpn x_{n_2}^*.
	$$

	\underline{Case 1}: $ 1 \le n_1 \le 
	\lceil \exp( \frac{ \log n  }{ 
		\log^2(  \log n   ) } ) \rceil
	$
	and $n_1 < n_2 \le 3 n_1$. 
	
	Suppose $ \left| \sigma_x([n_1]) \cap \mathcal{J} \right| \ge \frac{n_1}{2} $.
	Let 
	$ J = \sigma_x([n_1]) \cap \mathcal{J}$. Then, $n_2 \le 6 |J|$. 
	By Lemma \ref{lem: Hvector}, one of the following is true:
	Either there exists $i \in \cal I$ such that $  |(Ax)_i|  
	> \frac{1}{4} x^*_{n_1}$, or $y = P_{\mathcal{J}^c}x$ 
	satisfies 
	\begin{align*}
		y^*_{ \max \{ \frac{n_1}{ 4 } , 1 \} } & >
		2 \phi  pn  y^*_{ n_2 } & 
		y^*_{ \max \{ \frac{n_1}{ 4 } , 1 \} } & >	
		c_{\ref{lem: Hvector}} x^*_{n_1},
	\end{align*}
	where $c_{\ref{lem: Hvector}}$ is the constant $c$ introduced in Lemma \ref{lem: Hvector}. If the first condition happens, then the theorem follows. 
	
	\medskip 

	Now let us assume the statement about $y$ holds. 
	Let $J_1 = \sigma_y( [ \max \{ \frac{n_1}{ 4 } , 1 \} ] ) \subset 
	[ n ] \backslash \mathcal{J}$ and 
	$J_2 =  \sigma_y( [n_2] ) \subset 
	[ n]\backslash \mathcal{J}$.  
	
	As a subevent of  $ \Omega_{  W  }$, 
	$ I_{ W }(J_1,J_2) > \frac{  s_0 }{8}$. 
	Fix $i \in I_{ W }(J_1,J_2)$ and 
	let $j_0  \in J_1$ be the unique index such that 
	$ a_{ij_0} = 1$. 
	
	Due to $i \in [ n] \backslash \mathcal{I}$, 
	we have $a_{ij} = 0$ for $ j \in \mathcal{J}$ and thus 
	$ (Ax)_i = (Ay)_i$. 

	Next,
	\begin{align*}
		 &| (Ay)_i |  
	=
		| a_{ij_0} y_{j_0}  + \sum_{j \notin J_2} a_{ij}y_j | 
	\ge  
		|y_{\max \{ \frac{n_1}{4} , 1 \}}| -  |\sum_{j \notin J_2 } a_{ij}y_j | \\
	& \hspace{1cm} \ge 
		y^*_{\max \{ \frac{n_1}{4} , 1 \}}
		- \frac{\phi  pn }{2\phi  pn } y^*_{n_1} 
	\ge
	 	\frac{1}{2} c_{\ref{lem: Hvector}} x^*_{n_1}. 
	\end{align*}
	where in the second inequality we used $ | \mbox{supp} ({\bf R}_i(A) ) | 
	\le \phi  pn $ due to $A$ is a sample from $\Omega_{row}$. 
	Therefore, we conclude that 
	\begin{align} \label{eq: (Ax)_i count}
		| \left\{  i \in [n] \,:\,
			| (Ax)_i | > \frac{1}{2} c_{\ref{lem: Hvector}} x^*_{n_1}. 
		\right\} | > \frac{ s_0}{8}.
	\end{align}	
	
	In the case 
	$ \left| \sigma_x([n_1]) \cap \mathcal{J} \right| \le \frac{n_1}{2} $, let 
	$ y = P_{\mathcal{J}}x$. Then, we have 
	\begin{align*}
		y^*_{ \lceil \frac{n_1}{2} \rceil } 
	& \ge 
		 C pn 
		y^*_{ n_2  }, & 
		y^*_{ \lceil \frac{n_1}{2} \rceil } 
	& \ge 
		x^*_{ n_1}. 
	\end{align*}
	Applying the same argument we obtain \eqref{eq: (Ax)_i count}.

	\medskip

	\underline{Case 2} $
		\frac{1}{10} \lceil \exp( \frac{ \log n  }{ 
		\log^2(  \log n   ) } ) \rceil
		\le  n_1 \le  
		\frac{1}{ \gamma  p } $ 
	and $
		n_1 < n_2 \le 
		\min \{
			\frac{  pn }{  \log^3 ( pn )}n_1, 
			\frac{1}{ \gamma   p } 
		\} $. 
	In this case, we simply consider $y = P_{\mathcal{J}^c} x$. 
	Due to $|\mathcal{J} | \le 
		\frac{1}{2} 	\lceil \exp( \frac{ pn }{ 
		\log^2(  pn  ) } ) \rceil $,
	we have 
	$y^*_{n_1 / 2} >  C  pn  y^*_{n_2}$ 
	and $y^*_{n_1/2 } \ge x^*_{n_1}$. 
	Again, by a similar approach we obtain \eqref{eq: (Ax)_i count}.
\end{proof}
\begin{proof}[Proof of Theorem \ref{thm: T_1s}]
	Let $\Omega( A )$ and 
	$\Omega(  A ^\top)$ be the events described in 
	Theorem \ref{thm: SteepExpansion} for $ A $ and 
	$  A ^\top$ respectively
	with $ s_0 > 8 \beta$. 	

	We will show that $\Omega_{\cal T^*}$ is an subevent of   
	$\Omega_{\rm RC}(\beta) \cup \Omega(  A ^\top) \cup 
	\Omega( A )
	$.  
	From now on, we fix a sample $A$ in  $ \Omega_{\rm RC}(\beta) \cup \Omega(  A ^\top) \cup 
	\Omega( A )$. We have the following two sets: 
	\begin{align*}
		\mathcal{I}_A =& \left\{ 
				i \in [ n] \, :\, 
				\exists j \in  \mathcal{J}_A(  s_0 ) 
				\mbox{ s.t. } a_{ij} = 1 
		 \right\}, \\	
		\mathcal{I}_{A^\top}  =& \left\{ 
				i \in [ n] \, :\, 
				\exists j \in  \mathcal{J}_{A^\top} (  s_0 ) 
				\mbox{ s.t. } a_{ij} = 1 
		 \right\}.
	\end{align*}
	Clearly, we have 
	$$
		\cal I_A \cap J_{A^\top}(0) = \cal I_{A^\top} \cap J_A(0) = \emptyset. 
	$$
	Now, we pick $I$ and $J$ with $|I|= |J| = n - \beta + 1$ in the following way:
	\begin{align*}
		\mathcal{I}_A \subset &I \subset [ n ] \backslash 
		 \mathcal{J}_{A^\top}(0), \\
		\mathcal{I}_{A^\top} \subset &J \subset [ n ] \backslash 
		 \mathcal{J}_{A}(0).
	\end{align*}
	Such $I$ and $J$ exists because $| \mathcal{J}_A(0)| < \beta$ 
	due to $\Omega_{\rm RC}(\beta)$ and  
	\hh{$| \mathcal{I}(A) |, | \mathcal{I}(A^\top) | <
		(\log( n))^{ 2  s_0}
	$} due to $\Omega( A )$ and $\Omega( A ^\top)$.

	Next, let $x \in \mathcal{T}^*$ with $\mbox{supp}(x) \subset J$. 	
	By Theorem \ref{thm: SteepExpansion} and our choice of $s_0$, 
	at least one of the following holds:
	\begin{itemize}
		\item Either $\exists i \in \cal I(A)$ such that $ |(Ax)_i|  
				\ge c x^*_{n_1}$, or
		\item 	$ | \left\{ 
				i \in [ n] \,: \, |(Ax)_i| \ge
			c x^*_{n_1}	
		 \right\}  | > \beta $.
	\end{itemize}
	In the first case, we have 
	\begin{align*}
	  \left\| A_{I,J}x \right\| \ge \left\| A_{\mathcal{I}(A),J}x \right\|  
	  = \left\| A_{\mathcal{I}(A), [n]}x \right\|  
	\ge cx^*_{n_1}
	\end{align*}
	due to $ I(\mathcal{A}) \subset I $ and $\mbox{supp}(x) \subset J$. 

	When second condition holds, there exists 
	$$i_0 \in I \cap \left\{ 
				i \in [ n] \,: \, |(Ax)_i| \ge
			n^{-1 + o_{n}(1)}\left\| x \right\| 
		 \right\} $$
	since  $ |I| + | \left\{ 
				i \in [ n] \,: \, |(Ax)_i| \ge
			n^{-1 + o_{n}(1)}\left\| x \right\| 
		 \right\}  | > n$. 
		 Then, 
	\begin{align*}
		\left\| A_{I,J}x \right\| 
	\ge 
		|(A_{I,J}x)_{i_0}| 
	=
		|(Ax)_{i_0}|
	\ge 
		c x^*_{n_1}	,
	\end{align*}
	where the equality holds due to $ \mbox{supp}(x) \subset J$. By symmetry, the same argument shows 
	\begin{align*}
	\left\| A^\top_{I,J}x \right\| 
	\ge 
		c x_{n_1}^*
	\end{align*}
	for $x \in \mathcal{T}^*$ with $\mbox{supp}(x) \subset I$. Hence, we conclude 
	that $\Omega( \mathcal{T}_1) 
	\subset  \Omega_{\rm RC}(\beta) \cup \Omega(  A ^\top) \cup 
	\Omega( A )
	$.	
	\end{proof}

\bigskip

\section{ Typical $ A $ for $ p \ge 
(  1 + \frac{ 1 }{ 2\beta } )\frac{ \log( n ) }{ n   }$. }
The objective and the structure of this section is similar to that from the previous section, except some of the technical parts are different. 
\label{sec: T*+}
For a given pair of constants $C \ge 1$ and $c \in (0,1)$, let  
	\begin{align*}
	&	\cal T^*  = \cal T^*(C,\gamma)
	:=&  \\
	 &	\bigg\{ x \in \R^n\,:\, 
		x_{n_1}^* \ge Cpn x^*_{n_2} \\ 
	\nonumber
& \phantom{AAA}
		\mbox{ for a pair of }(n_1,n_2) = (n_1(x),n_2(x)) \in \N^2 \mbox{ satisfying either }\\
	\nonumber
	& \phantom{AAA AA} (1)  \quad n_1 = 1,\, n_2 = 2,  \\
	\nonumber
	& \phantom{AAA AA} (2) \quad
		2   \le  n_1 \le \frac{1}{\gamma p}, \mbox{ and }
		 n_1 <  n_2 \le 
		\max \Big\{
			 \frac{1}{ \gamma  {p}}  ,\,
			\frac{  pn  }{ \log^3( pn )} n_1
		\Big\}.
		\bigg\},
	\end{align*}
	and consider the event 
	\begin{align*}
		& \Omega_{\cal T^*}(C,\gamma,c) \\
	:= &\Big\{ 
		\exists I,J \subseteq [n] \mbox{ with } |I| =|J|=n-\beta+1 \mbox{ such that }		\\
		& \phantom{AAA} \forall x \in \cal T^*(C) \mbox{ with } \opn{supp}(x) \subseteq [J], \quad
			 \|A_{I,J}x\| \ge c x^*{n_1(x)} \mbox{ and }
		 \\
		& \phantom{AAA} \forall x \in \cal T^*(C) \mbox{ with } \opn{supp}(x) \subseteq [I], \quad
			 \|A_{I,J}^\top x\| \ge c x^*{n_1(x)}
		\phantom{ AAA AA} \Big\} .
	\end{align*}
	We remark that the definition of $\cal T^*$ has been modified comparing to the definition of $\cal T^*$ in the case when $\frac{\log n }{n} \le p \le \Big( 1 + \frac{1}{2\beta}\Big) \frac{\log n }{n}$, while $\Omega_{\cal T^*}(C,\gamma, c)$ has the same definition as before.  
	The main goal is to establish the following theorem. 
\begin{theorem} \label{thm: T_1+}
	For a positive integer $\beta$, 
	there exists $c_\beta, c \in ( 0, 1/2 ),  C>1$, and $\gamma >1$
	so that the following holds: Suppose $ p $ 
	satisfies 
	$$
		\Big( 1 + \frac{1}{2\beta} \Big) 
		\frac{ \log( n ) }{ n } 
	\le
		p 
	\le
		c_\beta.
	$$
	Then, 
	\begin{align*}
		\mathbb{P}\left\{ \Omega^c_{\cal T^*}(C,\gamma, c) \right\}  
	=
		( 1 + o_{ n}(1))
		\mathbb{P} \left\{ \Omega_{\rm RC}^c(\beta) \right\} .
	\end{align*}
\end{theorem}

\begin{theorem} \label{thm: SteepExpansion tau>10} 
	For a fixed positive integer $\beta$, 
	there exists $c_\beta, c \in ( 0, 1/2 )$, 
	$ C \ge 1$, 
	$\gamma \ge 1$, and $\phi_0>0$ so that the following holds. 
	Suppose $ p $ satisfies 
	\begin{align*}
		( 1 + \frac{ 1 }{ 2\beta } ) 
		\frac{ \log( n ) }{ n  }
	\le
		p 	
	\le 
		c_\beta.
	\end{align*}

	There exists an event 
	$ \Omega_{\ref{thm: SteepExpansion tau>10}} $ with
	$$ \mathbb{P} \left\{  \Omega_{\ref{thm: SteepExpansion tau>10}}^c \right\}  
	= o_{ n} ( \mathbb{P}\{\Omega_{\rm RC}^c(\beta)\} )
	$$
	satisfying the following description:
	Fix any sample $A \in \Omega_{\ref{thm: SteepExpansion tau>10}} \cap \Omega_{\rm RC}(\beta)$, 
	we have $  \mathcal{J}_A(\phi_0  pn ) \le \beta$. 
	Further, for $x \in \cal T^*$, 
	\begin{enumerate}
		\item If $n_1(x) = 1$ and $\sigma_x(1) \in {\cal J}(\phi_0 pn)$, then $\exists i \in \opn{supp}({\bf C}_{\sigma_x(1)}(A))$ such that 
	$$
		|(Ax)_i| \ge cx_{n_1}^*.
	$$
		\item Otherwise, 
	\begin{align*}
		|\left\{ i \in [n] \,:\, 
		 | (Ax)_i | > c x_{n_1}^*  
		\right\} | > \beta.
	\end{align*}

	\end{enumerate}	
	

\end{theorem}

Similar to the previous section, we will still use $\mathcal{J}, \mathcal{I}$, and 
the corresponding submatrices $H,G,$ and $W$. 
However, the way we define $\mathcal{J}$ and the corresponding high probability events are different. 

For a fixed positive integer $\beta$, 
Let $\phi_0 \in ( 0, \frac{ 1 }{ 2\beta } )$ be a sufficiently small constant and set 
$$\mathcal{J} =   \mathcal{J}(\phi_0  pn).$$

Let $\Omega_1 = \Omega_1(\phi_0,\phi)$ 
be the event that  
$|\mathcal{J}| \le \beta$ and $   \mathcal{J}(\phi  pn)  = [n]$.
Let $\Omega_{\rm row}$ be the event that 
$  \mathcal{J}_{ A ^\top }( \phi  pn ) = [ n ]
$.
\begin{proposition} \label{prop: BadColumnC}
	For a fixed positive integer $\beta$,  when $  p 
	\ge  ( 1 + \frac{ 1 }{ 2\beta } ) \frac{ \log( n ) }{ n  }$,
	there exists $0 < \phi_0 < \frac{ 1 }{ \beta } < 1 < \phi_1$ which depends only on $\beta$ 
	so that the following estimate holds: 
	\begin{align*}
		\mathbb{P} \left\{  \Omega_1^c
			\cup \Omega_{\rm row}^c	
		\right\} 
	 \le
	 \big( ( 1- p  )^{ n } n \big)^{ \beta +\frac{1}{2}}
	=
		o(\Prob\{ \Omega_{\rm RC}^c(\beta)\})	.
	\end{align*}
\end{proposition}

\begin{proof}
	We apply Proposition \ref{prop: L_kBeta} with $c = \frac{ 1}{ 2\beta }$. 
	There exists $\phi'>0$ depending on $\beta$ such that for any $ 0 < \phi_0 < \phi'$
	\begin{align*}
		\mathbb{P} \left\{ 
			| \mathcal{J}_{\phi_0  pn }| > \beta
		 \right\} 	
	\le 
		\big(( 1- p  )^{ n } n \big)^{\beta + 3/4}. 
	\end{align*}
	From now on, we fix $ \phi_0 = \min\{ \phi', \frac{ 1 }{ 2 \beta }\}$. 

	By the Binomial tail bound \eqref{eq:BnmUp} from Proposition \ref{prop: Binomial} and the union bound argument, 	
	if $\phi\ge e^3$, 
	\begin{align*}
		\mathbb{P} \left\{ 
			 \mathcal{J}(   \phi  pn  ) <  n
		 \right\} 	
	\le 
		n \cdot 2
		\left( \frac{e}{\phi} \right)^{\phi  pn }
	\le  
		2 \exp \left( 
			-  2\phi { pn } 
			+\log( n)
		 \right) 
	\le
		2 \exp( - \phi pn). 
	\end{align*}
	By choosing $\phi$ to be at least proportional to $\beta$, 
	we may conclude that 
	$$
		\Prob\{ {\cal J}(\phi pn) < n \}
	\le 
		\exp(-\phi pn)
	\le 
		\exp( - 4pn \beta)
	\le 
		((1-p)^n)^{2\beta}
	\le 
		((1-p)^n n)^{2\beta}.
	$$

	Now taking the union bound of the above events we obtain the statement of this proposition.
\end{proof}

Next, we introduce the high probability event that is related to the submatrices $G$ and $W$. 
Let 
\begin{align*}
	\Omega_G = \Omega_G(\phi_0) := \Big\{ j \in [n] \backslash {\cal J}\,:\, 
			|\opn{supp}( {\bf C}_j (  G 	))	|
	  \le \frac{1}{2}\phi_0  pn 	
		\Big\}
\end{align*}
and 
\begin{align*}
 \Omega_W =& \Omega_W(\phi_0)	\\
:= &\Big\{ 
		\forall n_1 \in [1, 1/\gamma p], 
		\forall J_1 \subseteq J_2 \subseteq [n] \setminus \cal J 
		\mbox{ with } |J_1|=n_1, \\
		& \phantom{AAA} |J_2|\le \max \big\{ 1/\gamma p, \log n  / 3\log^3(\log n ) \big\} \mbox{ and } I_W(J_1,J_2)|>\beta 
		\Big\}.
\end{align*}

In the end, we define the event $\Omega_{\ref{thm: SteepExpansion tau>10}} =  
	\Omega_1 
	\cap \Omega_{ G }  \cap 
	\Omega_{  W  }  \cap \Omega_{\rm row} $.

\begin{lemma} \label{lem: Omega_2 tauLarge}
	For a fixed positive integer $\beta$, and $(\phi_0, \phi)$ 
	from Proposition \ref{prop: BadColumnC} which satisfies 
	$0 < \phi_0 < \frac{ 1}{ 2\beta } < \phi$. There exists 
	$c_\beta>0 $ and $ \gamma \ge 1$  which depends on $\beta, \phi_0$, and 
	$\phi$ such that for 
	\begin{align*}
		( 1 + \frac{ 1}{ 2\beta } )\frac{ \log( n ) }{ n   } 
	\le 
		p 
	\le 
		c_\beta,
	\end{align*}
	the following probability estimate holds:
	\begin{align*}
		\mathbb{P} \left\{ \Omega_{\ref{thm: SteepExpansion tau>10}}^c \right\} 
	    	= 
o(\Prob\{ \Omega_{\rm RC}^c(\beta)\}).
	\end{align*}
\end{lemma}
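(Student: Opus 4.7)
The plan is to decompose
\begin{align*}
\Omega_{\ref{thm: SteepExpansion tau>10}}^c \subseteq \Omega_1^c \cup \Omega_{row}^c \cup (\Omega_{\mathfrak{W}}^c \cap \Omega_1) \cup (\Omega_{\mathfrak{D}}^c \cap \Omega_1 \cap \Omega_{\mathfrak{W}})
\end{align*}
and bound each piece separately. The first two terms are already handled by Proposition \ref{prop: BadColumnC}, whose bound is dominated by the desired target once $\mathfrak{p}\ge (1+\tfrac{1}{2\beta})\log(\mathfrak{n})/\mathfrak{n}$ and \eqref{eq: C4.1} is invoked. For the remaining two sub-events I will reuse the conditioning strategy of Lemmas \ref{lem: Omega_W} and \ref{lem: Omega_D}: partition $\Omega_1$ (resp.\ $\Omega_1\cap\Omega_{\mathfrak{W}}$) into sub-events of the form $\Omega_{\vec b,\mathcal{I}_0}(\mathfrak{A})$ (resp.\ $\Omega_{\vec b',\mathcal{I}_0,\vec b''}(\mathfrak{A})$) that freeze the column-support statistics and $\mathcal{I}$. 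Conditional on these, the columns outside $\mathcal{J}$ are independent with uniformly random supports of prescribed size, so Proposition \ref{prop: HyperGeom} is applicable.

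For $\Omega_{\mathfrak{W}}^c\cap\Omega_1$: on $\Omega_1$ we have $|\mathcal{J}|\le \beta$ and every column of $\mathfrak{A}$ outside $\mathcal{J}$ has support bounded by $\phi\mathfrak{pn}$, so $|\mathcal{I}|\le \beta\phi\mathfrak{pn}$. For each $j\in[\mathfrak{n}]\setminus\mathcal{J}$, the intersection $|\mathcal{I}\cap\mathrm{supp}(\mathbf{C}_j(\mathfrak{A}))|$ is hypergeometric with mean of order $\mathfrak{p}^2\mathfrak{n}$. Since $\mathfrak{p}\le c_\beta$ can be made arbitrarily small, choosing $l=\phi_0\mathfrak{pn}/2$ in \eqref{eq:HyperGeomTail} yields conditional probability at most $(C c_\beta)^{\phi_0\mathfrak{pn}/2}\le \exp(-10\beta C_{\ref{thm: T_1}}\mathfrak{pn})$; a union bound over the $\mathfrak{n}$ columns then gives a total much smaller than $(1-\mathfrak{p})^{2\beta\mathfrak{n}}$ by \eqref{eq: C4.1}.

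For $\Omega_{\mathfrak{D}}^c\cap\Omega_1\cap\Omega_{\mathfrak{W}}$: after conditioning on a sub-event $\Omega_{\vec b',\mathcal{I}_0,\vec b''}(\mathfrak{A})$, the matrix $\mathfrak{D}$ has row set $[\mathfrak{n}]\setminus\mathcal{I}_0$ of size at least $(1-o(1))\mathfrak{n}$ and independent columns with uniformly chosen supports of sizes $b_j=b_j'-b_j''$ satisfying $\phi_0\mathfrak{pn}/2\le b_j\le \phi\mathfrak{pn}$. For each admissible $(n_1,n_2)$ with $1\le n_1\le \lfloor 1/(\gamma\mathfrak{p})\rfloor$ and $n_2$ as in the definition of $\Omega_\mathfrak{D}$, I apply Lemma \ref{lem:I(J_1,J_2)} with $r=\phi_0\mathfrak{pn}/2$; the hypothesis $r\ge 24 n_2\|b\|_\infty^2/m_1$ reduces to $n_2(\phi\mathfrak{pn})^2/\mathfrak{n}\lesssim \phi_0\mathfrak{pn}$, which holds once $c_\beta\gamma$ is sufficiently small (using $n_2\le 1/(\gamma\mathfrak{p})$). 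The lemma yields conditional probability $\le C_{\mathrm{hg}}^{n_1/2}\exp(-\log(1/(C c_\beta\gamma))\cdot r n_1/8)$ of $|I_\mathfrak{D}(J_1,J_2)|\le \beta$, which after combining with the combinatorial factor \eqref{eq: J_1J_2count} and summing over all $(n_1,n_2,J_1,J_2)$ becomes a quantity dominated by $(1-\mathfrak{p})^{2\beta\mathfrak{n}}$ once $\gamma$ is taken large enough that $\log(1/(c_\beta\gamma))\cdot\phi_0\mathfrak{pn}$ overwhelms both the $\log C_{\mathrm{hg}}$ term and the combinatorial $\log(\mathfrak{n}/n_2)\cdot n_2$ term.

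The main obstacle lies in tuning the four parameters $(\phi_0,\phi,\gamma,c_\beta)$ consistently. On one hand, $\phi_0$ must be small enough that the bound $|\mathcal{J}|\le\beta$ from Proposition \ref{prop: L_kBeta} holds with probability $o_{\mathfrak{n}}(\mathbb{P}\{\Omega_0^c\})$, and $\phi$ must be large enough that $\mathscr{L}(\phi\mathfrak{pn})=[\mathfrak{n}]$ is very likely. On the other hand, the exponential gain in Lemma \ref{lem:I(J_1,J_2)} only wins the union bound if $\log(r/(24 n_2\|b\|_\infty^2/m_1))\gtrsim\log(\mathfrak{n}/n_2)$, which forces $c_\beta\gamma$ to be small; but $\gamma$ also appears in the regime of $\mathcal{T}_1$ vectors (which determines the range $1\le n_1\le 1/(\gamma\mathfrak{p})$) and hence in Proposition \ref{prop: xNorm tauLarge}. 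These constraints can be satisfied simultaneously by first fixing $\beta$, then $\phi_0,\phi$ via Proposition \ref{prop: BadColumnC}, then choosing $\gamma$ large and $c_\beta$ small (both depending on $\beta,\phi_0,\phi$).
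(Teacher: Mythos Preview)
Your decomposition and the treatment of each piece match the paper's proof exactly: Proposition~\ref{prop: BadColumnC} handles $\Omega_1^c\cup\Omega_{row}^c$, and the two conditional arguments you describe are precisely Lemmas~\ref{lem: Omega_W tauLarge} and~\ref{lem: Omega_D tauLarge} (same conditioning on $\Omega_{\vec b,\mathcal I_0}$ and $\Omega_{\vec b',\mathcal I_0,\vec b''}$, same choice $r=\tfrac12\phi_0\mathfrak{pn}$, same appeal to Lemma~\ref{lem:I(J_1,J_2)}).

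One computational slip to correct in the $\Omega_{\mathfrak D}$ step: the ratio inside the logarithm of Lemma~\ref{lem:I(J_1,J_2)} is $r m_1/(24\,n_2\|b\|_\infty^2)\gtrsim \gamma\phi_0/\phi^2$ (using $n_2\le 1/(\gamma\mathfrak p)$, $\|b\|_\infty\le \phi\mathfrak{pn}$, $m_1\ge \mathfrak n/2$), so the exponential gain is driven by taking $\gamma$ \emph{large}, as you correctly state in your last paragraph. Your displayed bound $\exp(-\log(1/(Cc_\beta\gamma))\cdot r n_1/8)$ and the phrase ``$c_\beta\gamma$ sufficiently small'' have this backwards; with $\gamma$ large that logarithm is negative and the estimate becomes vacuous. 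Once the orientation of that ratio is fixed, your argument goes through exactly as in the paper.
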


\subsection{Probability Estimate for $\Omega_{\ref{thm: SteepExpansion tau>10}}^c$.}

The proof of Lemma \ref{lem: Omega_2 tauLarge} 
will be break into the following two lemmas. 
\begin{lemma} 
	\label{lem: Omega_G tauLarge}
	For any fixed positive integer $\beta$ and $0<\phi_0< \frac{ 1 }{ 2\beta }<\phi$, 
	there exists a $c_\beta \in ( 0, \frac{ 1 }{ 2 } )$ which depends on $ \beta, \phi_0$, 
	and $\phi$ such that  for
	\begin{align*}
		( 1 + \frac{ 1}{ 2\beta } )\frac{ \log( n ) }{ n   } 
	\le 
		p 
	\le 
		c_\beta,
	\end{align*}
	the following holds when $ n $ is sufficiently large:
	\begin{align*}
		\mathbb{P} \left\{  \Omega_{  G }^c \, |\, 
			\Omega_1 
		\right\} 		
	=
		o(\Prob\{ \Omega_{\rm RC}^c(\beta)\}).
	\end{align*}
\end{lemma}

\begin{proof}
	The proof is similar to that of Lemma \ref{lem: Omega_W} with 
	the exception that $\beta$ is involved in this case. 

	First, the event $\Omega_1$ can be partitioned into subevents 
	of the form $\Omega_{b,  I }$. Now we
	condition on a subevent $\Omega_{  b ,  I } 
	\subset \Omega_1$. As a subevent of $\Omega_1$, 
	\begin{align*}
		|  I  | \le \phi_0  pn  \beta \le \frac{ n }{ 2 },
	\end{align*}
	where the last inequlity is due to $ \phi_0 \le \frac{ 1 }{ 2\beta }$. 
	By the Hypergeometric distribution tail 
	\eqref{eq:HyperGeomTail} from Proposition \eqref{prop: HyperGeom},
	for $j \in [n] \backslash \mathcal{J}$,
	\begin{align} \label{eq: Omega_W Hyper}
		\mathbb{P} \left\{ \left|  I  
			\cap \opn{supp}({\bf C}_j(  A )) \right| 
		\ge 
			l \, \big \vert  \, \Omega_{  b , I  }( A )
		\right\} 
	\le	
		C_{\rm hg} \left( \frac{3 | I | |\opn{supp}({\bf C}_j(A))|
			}{nl} \right)^l 
	\end{align}	for $l \ge 1$. 
	By setting $l = \frac{1}{2}\phi_0  pn $,
	\begin{align*}
		 \frac{3 | I | |\mbox{supp}({\bf C}_j(A))|
		}{nl}
	\le
		\frac{  3\phi_0 { pn } \beta  \phi_1 { pn }   
		}{ n \frac{ 1 }{ 2 } \phi_0 { pn }   }
	=
		6 \beta \phi_1 p
	\le 
		6 \beta \phi_1 c_\beta.
	\end{align*}

	Taking the union bound over all columns we obtain 
	\begin{align*}
		\mathbb{P} \Big\{ 
			\Omega_{ W }^c
			\biggm\vert \Omega_{  b ,  I }
			( A )
		\Big\} 
    	\le  
   	 	n C_{ \rm hg }	
		\big( 6\beta \phi_1 c_\beta \big)^{ \frac{ 1 }{ 2 }\phi_0 pn }.
    	\end{align*}

	Now, we can set $c_\beta>0$ to be sufficiently small so that  
	\begin{align*}
	 n C_{\rm hg}\big( 6\beta \phi_1 c_\beta \big)^{ \frac{ 1 }{ 2 }\phi_0 pn}
	\le&  n C_{\rm hg} \exp( - 6 \beta pn) 
	\overset{pn \ge \log n }{\le} 	\exp( - 4\beta pn )\\
	& \phantom{AAA AAA}\le 
		((1-p)^n)^{2\beta}
	\overset{\text{Prop.} \ref{prop: zeroColumns}}{=}
	o(\Prob\{ \Omega_{\rm RC}^c(\beta)\}).
	\end{align*} 
\end{proof}

\begin{lemma} \label{lem: Omega_W tauLarge}
	For any fixed positive integer $\beta$, $0 < \phi_0 < \frac{ 1 }{ 2\beta } < \phi$, and $c_\beta \in ( 0, \frac{ 1 }{ 2 } )$, 
	there exists $ \gamma $ which depends on these constants such that 
	for
	\begin{align*}
		\Big( 1 + \frac{ 1 }{ 2\beta } \Big)
		\frac{ \log( n ) }{ n   } 
	\le 
		p 
	\le 
		c_\beta,
	\end{align*}
	the following holds when $ n $ is sufficiently large:
	\begin{align*}
		\mathbb{P} \left\{ \Omega_{ W }^c \,|\,
			\Omega_1 \cap \Omega_{ G }
		\right\}  
	= 	
		o(\Prob\{ \Omega_{\rm RC}^c(\beta)\}).
	\end{align*}	
\end{lemma}

\begin{proof}
	The proof is the same as the proof of Lemma \ref{lem: Omega_W},
	except some technical details. We will only outline the 
	proof and the specific difference. 

	From now on we condition on a subevent 
	$\Omega_{b',  I , b''}(  A )	
	\subset \Omega_1 \cap  \Omega_{G}$.
	Since it is a subevent of $\Omega_1 \cap  \Omega_{G}$,
	for  $j \in [ n] \backslash \mathcal{J}$, 
	\begin{align}	
	  \frac{ \phi_0 pn }{2}  
	\le b_j \le \phi  pn , \mbox{ and  }  |  I | \le \frac{  n  }{ 2 }.
	\end{align}

		Now we fix $n_1$ and $n_2$ (which represents the size 
	of $J_1$ and $J_2$) where $n_1$ satisfies the 
	assumption in the lemma and 
	$n_2 \le \max \{
	\lfloor \frac{1}{ \gamma  \mathfrak{p}} \rfloor ,\,
	\frac{  pn  }{ 3 \log^3( pn )} n_1
	\}$. 
	For a fixed pair of $J_1 \subset J_2$, we want to apply Lemma \ref{lem:I(J_1,J_2)}
	with an appropriate choice of $r$. To apply the lemma,  
	we need to verify that 
	\begin{align*}
		r \ge |J_2| \frac{ 24 \big(\max_{ n \in [ n ] \backslash \mathcal{J} }
		\{ |\mbox{supp}({\bf C}_{j}(  W )|\}\big)^2 }{ n - |  I  |   }.
	\end{align*}
	Notice that 
	\begin{align*}
		|J_2| \frac{ 24 \big(\max_{ n \in [ n ] \backslash \mathcal{J} }
		\{ |\mbox{supp}({\bf C}_{j}(  W )|\}\big)^2 }{ n - |  I  |   }
	\le 
		n_2 \frac{ 24 (  \phi { pn })^2  }{ \frac{  n }{ 2}  }
	\le 
		\frac{ 48 { pn }  }{ \gamma }. 
	\end{align*}

	Let us impose the {\bf first assumption on $\gamma$} that 
	$ \gamma \le \frac{ 96 }{ \phi_0 }$. Then, we can apply Lemma \ref{lem:I(J_1,J_2)} with $ r = \frac{ 1 }{ 2 }\phi_0  pn$. 
	
	\medskip

	By Lemma \ref{lem:I(J_1,J_2)}, we have
	\begin{align*}
		 & \mathbb{P} \left\{  | I_{  W  }( J_1, J_2 ) | > \frac{ n_1 }{8} \phi_0 { pn } 
		\biggm\vert
			\Omega_{b',  I , b''} 
		\right\}  \\
	\le &
		C_{\rm{hg}}^{ n_1 } 	
		\exp \left( -
		 	\log\left(  
				\frac{\frac{1}{2}\phi_0 pn \cdot n/2}{ 24 n_2 (\phi pn)^2 }	
			\right) \frac{  n_1 }{ 16 } \phi_0{ pn } 
		\right) \\
	= &
		C_{\rm{hg}}^{ n_1 } 	
		\exp \left( -
		 	\log\left(  
				\frac{2en}{n_2} \cdot 
				\frac{\phi_0}{192e \phi^2 pn}
		\right) \frac{  n_1 }{ 16 } \phi_0{ pn } 
		\right) \,.
	\end{align*}

	Next, we impose the {\bf second assumption on $\gamma$}, requiring it to be sufficiently large, which may depend on $\beta$, $C_{ \rm hg }, \phi_0, $ and $\phi$, so that 
	\begin{align*}
		\frac{2en}{n_2} \ge 2e \gamma pn   		
	\ge 
		2 \cdot \frac{192 e\phi^2 pn}{\phi_0},
	\end{align*}
	which in turn implies that 
	$$
		\log\left(  
				\frac{2en}{n_2} \cdot 
				\frac{\phi_0}{192e \phi^2 pn}
		\right)
	\le 
		\frac{1}{2} \log\left( \frac{2en}{n_2} \right).
	$$
	Then, 
	\begin{align*}
		 \mathbb{P} \left\{  | I_{  W  }( J_1, J_2 ) | > \frac{ n_1 }{8} \phi_0 { pn } 
		\biggm\vert
			\Omega_{b',  I , b''} 
		\right\}  
	\le &			
		\exp \left( -
		 	\log\left(  
				\frac{2en}{n_2} 		\right) \frac{  n_1 }{ 32 } \phi_0{ pn } 
		\right),
	\end{align*}
	when $n$ is sufficiently large. 

	To apply the union bound over all possible $J_1$ and $J_2$ with the given size profile $(n_1,n_2)$, let us first recall the estimate: 	
	\begin{align*} 
		\left| \left\{  J_1 \subset J_2 \subset [n]
			\backslash  \mathcal{J} \, : \, 
		|J_1| = n_1 \mbox{ and } |J_2| = n_2  \right\} \right|
	\le 
		\exp \left( 
			\log( \frac{ 2e n }{ n_2 } )
			n_2
		\right) . 
	\end{align*}
	Given that the assumption $n_2 
	\le \frac{  \log n  }{ \log^3( \log n  )  }n_1$, when $ n $ is sufficiently large, we have 
	\begin{align*}
		 & \mathbb{P} \Big\{   
			\exists J_1 \subset J_2 \subset 
			[ n] \backslash \mathcal{J} 
		\,:\,  \\
		& \phantom{AAA}	|J_1| = n_1, \, |J_2| = n_2 
			\mbox{ and } |I(J_1, J_2) | > \frac{ n_1 }{ 8 } \phi_0 { pn } 
		\,|\,
			\Omega_{b',  I , b''}
		\Big\} \\
	\le &
		\exp \left( 
			\log( \frac{ 2e n }{ n_2 } )
			n_2
		 	-\log\left(  
				\frac{2en}{n_2} 		\right) \frac{  n_1 }{ 32 } \phi_0{ pn } 
		\right)\\
	\le 	&	
		\exp \left( 
		 	-\log\left(  
				\frac{2en}{n_2} 		\right) \frac{  n_1 }{ 64 } \phi_0{ pn } 
		\right)
	\le 
		\exp \left( 
		 	-\log\left(  2e\gamma pn
				 		\right) \frac{  n_1 }{ 64 } \phi_0{ pn } 
		\right).
	\end{align*}		
	Finally, we impose the {\bf third assumption on $\gamma$}, requiring it to be sufficiently large so that  
	\begin{align*}	
		\exp \left( 
		 	-\log\left(  2e\gamma pn
				 		\right) \frac{  n_1 }{ 64 } \phi_0{ pn } 
		\right)
	\le 
		\exp( - 100\beta pn n_1)
	\le& 
		n^{-2} ((1-p)^n)^{2\beta}	 \\
	=&
		o\big(n^{-2} \Prob\{ {\cal J}(0) \ge \beta \}\big).
	\end{align*}

	If we further take the union bound over all possible choices of $n_1, n_2$ described 
	in $ \Omega_{ W }$, we obtain 
	\begin{align*}
		\mathbb{P} \left\{ \Omega_{ W }^c \, |\, 		
		\Omega_{b',  I , b''} \right\}
	=
		o\big(\Prob\{ {\cal J}(0) \ge \beta \}\big).
	\end{align*}
		
	Since the estimate does not depends on our choice of 
	$\Omega_{b',  I , b''} 
		\subset \Omega_1 \cap \Omega_{  G  }$, 
	the proof is completed. 
\end{proof}

\begin{proof}[ Proof of Lemma \ref{lem: Omega_2 tauLarge}]
	This is a consequence of 
	Proposition \ref{prop: BadColumnC}, 
	Lemma  \ref{lem: Omega_G tauLarge}, and
	Lemma \ref{lem: Omega_W tauLarge}.
\end{proof}

\subsection{Proof of Theorem \ref{thm: SteepExpansion tau>10} and Theorem \ref{thm: T_1+}}

\begin{proof}[Proof of Theorem \ref{thm: SteepExpansion tau>10}]
	We will choose $\phi_0, \phi_1, c_\beta,$ and $\gamma$ so that 
	the probability estimate of Lemma \ref{lem: Omega_2 tauLarge} holds. Notice that 
	these constants depend on $\beta$, but not $ p $ nor $ n $. 

	Now fix a sample $A$ from $\Omega_{\ref{thm: SteepExpansion tau>10}} \cap \Omega_{\rm RC}(\beta) $ and let $x \in \cal T^*$ be a vector satisfying $ \opn{supp}(x) \cap  \mathcal{J}_A(0) = \emptyset $.  

	

	\underline{Case 1: $n_1 >1$}
	In this case, we take  
	$y:=P_{ \cal J^c }x$. Then, we have 
	\begin{align*}
		y^*_{ \lfloor \frac{n_1}{2} \rfloor } & 
	\ge  C  pn y^*_{ n_2 },  \,\,\,\,\,\,\,\,
	\mbox{                   and }& 
		y^*_{ \lfloor \frac{n_1}{2} \rfloor } & 
	\ge x^*_{n_1}.
    \end{align*}
	Next, we set $J_1 = \sigma_y( \lfloor \frac{n_1}{2} \rfloor)$
	and $J_2 = \sigma_y( n_2)$. Notice that $ |J_1| \le 
	\frac{  pn }{ \log^2(   pn  ) }	
	|J_2|$ from the definition of $n_1$ and $n_2$.
	Thus, from the above choice of $J_1$ and $J_2$ we have 
	$|I_{  W  }(J_1,J_2) | > \beta$ due to $A \in \Omega_{ W }$, 

	Now, for $i \in I_{  W }(J_1,J_2)$, we have 
	$ i \in 
	[ n]  \backslash \mathcal{J}$ and thus  
	$(Ax)_i  = (Ay)_i$.  Let $j_0$ be the unique index in $J_1$
	which $a_{ij_0} = 1$. We have 
	\begin{align*}
		|  (Ax)_i | 
	\ge 
		|(Ay)_i| \ge |y_{j_0}| - 
		\sum_{ j \in [n] \backslash \mathcal{J} }
		a_{ij}|y_j|
	\ge 
		y^*_{ \lfloor \frac{n_1}{2} \rfloor }	
		- \phi  pn  
		\frac{1}{ C  pn } 
		y^*_{ \lfloor \frac{n_1}{2} \rfloor }.
	\end{align*}
	By setting $  C \ge 2\phi$, together with 
	$y^*_{ \lfloor \frac{n_1}{2} \rfloor } \ge x^*_{n_1}$,
	we may conclude that 
	\begin{align*}
		| ( Ax )_i | \ge 	
		\frac{1}{2}x^*_{n_1}.
	\end{align*}
	Again, the choice of $\phi$ depends only on $\beta$, and thus 
	$ C$ is a $\beta$-dependent value. 

	\underline{Case 2: $n_1=1$}
	In this case, we have $n_2=2$. 
	If $\sigma_x(1) \notin \mathcal{J}$, the simple modification of the above argument shows
	\begin{align*}
	\left| \left\{ 
		i \in [ n ] \,:\, 
		|(Ax)_i | > \frac{1}{2} x^*_1 
	 \right\}  \right|  > \beta.
	\end{align*}

	Now suppose $\sigma_x(1) \in \mathcal{J}$.
	Notice that $ 
	\mbox{supp}( {\bf C}_{\sigma_x(1)}A )	
	\neq \emptyset$ due to $\mbox{supp}(x) \cap
	 \mathcal{J}_A(0) = \emptyset $.

	For each $ i \in \mbox{supp}( {\bf C}_{\sigma_x(1)}A)$, 
	\begin{align*}
		|\left( A x \right)_i| & = 
		| \sum_{j \in [ n ]} a_{ij} x_j | 
	 \ge 
		x^*_{1}
		- | \sum_{ j \neq \sigma_x(1)  } a_{ij}x_j |
	 \ge 
		x^*_{1} - \frac{ \phi pn }{
			 C  pn 
		} x^*_1
     \ge \frac{1}{2} x^*_1.
	\end{align*}


\end{proof}

\begin{proof}[Proof of Theorem \ref{thm: T_1+}]
	Let $\Omega( A )$ and 
	$\Omega(  A ^\top)$ be the events described in 
	Theorem \ref{thm: SteepExpansion tau>10} for $ A $ and 
	$  A ^\top$ respectively.

	\underline{Case 1:}
	Suppose there exists 
	$$j_0 \in  \mathcal{J}_A(\phi_0  pn ) \backslash 
	 \mathcal{J}_A(0)\quad \mbox{and} \quad i_0 \in  \mathcal{J}_{A^\top}(\phi_0  pn ) \backslash 
	 \mathcal{J}_{A^\top}(0)$$ such that $$a_{i_0j_0}=1\,.$$
	Now, we pick $I$ and $J$ with $|I|= |J| = n - \beta + 1$ in the following way:
	\begin{align*}
		j_0 \in &J \subset [ n ] \backslash 
		 \mathcal{J}_A(0),  \mbox{ and }  J \cap 
		 \mathcal{J}_A(\phi_0  pn ) = j_0\\
		i_0 \in &I \subset [ n ] \backslash 
		 \mathcal{J}_{A^\top}(0), 
		\mbox{ and }  I \cap  \mathcal{J}_{A^\top}(\phi_0  pn ) = i_0.
	\end{align*}
	The existence of $J$ is due to $  \mathcal{J}_A(0) 
	\subset  \mathcal{J}_A(\phi_0  pn ) $ and 
	$  \mathcal{J}_A( \phi_0  pn ) \le \beta$. 
	For the same reason, such index set $I$ also exists. 

	Let $x \in \cal T^*$ with  $\opn{supp}(x) \subseteq J$. Here we will apply Theorem \ref{thm: SteepExpansion tau>10} to $A$.
	In the case where $n_1(x)=1$ and $\sigma_x(1) \in {\cal J}(\phi_0 pn)$, then necessarily $\sigma_x(1)=j_0$ due to 
	$$
		\opn{supp}(x) \cap {\cal J}_A(\phi_0 pn) \subseteq J \cap {\cal J}_A(\phi_0 pn) = \{j_0\}. 
	$$
	Thus, by Theorem \ref{thm: SteepExpansion tau>10} and $a_{i_0j_0}=1$, 
	$$
		\|A_{I,J}x\| \ge 
		|(Ax)_{i_0}|  \ge c x_{n_1}^*. 
	$$
	In the case where $x$ does not satisfies both $n_1(x)=1$ and $\sigma_x(1) \in {\cal J}(\phi_0 pn)$, the theorem shows the existence of     
	$$i \in I \cap \left\{ 
				i \in [ n] \,: \, |(Ax)_i| \ge
				c x_{n_1}^*
		\right\}, $$ 
	due to the set on the R.H.S. has size strictly greater than $\beta$. Thus, in both cases, there exists $i \in I$ 
	\begin{align} \label{eq: AIJx}
		\left\| A_{IJ}x \right\|_2  \ge |(Ax)_i| 
	\ge 
		c_1 x_{n_1}^*.
	\end{align}
	If we take $x \in \cal T^*$ with $\mbox{supp}(x) \subseteq I$, then following the same argument we have 
	\begin{align*} 
		\left\| A_{IJ}^\top x \right\|_2 
	\ge 
		c_1 x_{n_1}^*.
	\end{align*}
	\medskip 
	\underline{Case 2:}
	Next, we assume such pair $(i_0,j_0)$ described above does not exists. Let us consider the following:  
	\begin{enumerate}
		\item If $ | \mathcal{J}_A(\phi_0  pn )|= \beta$,  fix
			any $j_0 \in  \mathcal{J}_A(\phi_0  pn ) \backslash 
				 \mathcal{J}_A(0)$ and \\$i_{j_0} \in 
				\left\{ i \in [ n] \,:\,
					a_{ij_0}=1	
				\right\} $. 
		\item If $ | \mathcal{J}_{A^\top}(\phi_0  pn )|= \beta$,  fix
			any $i_0 \in  \mathcal{J}_{A^\top }(\phi_0  pn ) 
				\backslash 
				 \mathcal{J}_{ A^\top }(0)$ and $j_{i_0} \in 
				\left\{ j \in [ n] \,:\,
					a_{i_0j}=1	
				\right\} $. 
	\end{enumerate}	
	We remark that 	$i_{j_0} \notin  \mathcal{J}_{A^\top}(\phi_0  pn )$
	if $i_{j_0}$ exists. 
	Similarly, $j_{i_0} \notin  \mathcal{J}_{A}(\phi_0  pn )$ if 
	$j_{i_0}$ exists. 

	Now, it is not hard to verify that 
	we can pick $I$ and $J$ satisfying the following conditions:
	\begin{align*}
		J \cap  \mathcal{J}_A( \phi_0  pn ) &=
		\begin{cases}
			\left\{ j_0 \right\}  & \mbox{if }
			| \mathcal{J}_A(\phi_0  pn )|= \beta ,\\
			 \emptyset & \mbox{otherwise.}
		\end{cases}	 &
		\mbox{ and } j_{i_0} &\in J \mbox{ if }
		| \mathcal{J}_{A^\top} (\phi_0  pn )|= \beta; \\
		I \cap  \mathcal{J}_{A^\top}( \phi_0  pn ) &=
		\begin{cases}
			\left\{ i_0 \right\}  & \mbox{if }
			| \mathcal{J}_{A^\top} (\phi_0  pn )|= \beta ,\\
			 \emptyset & \mbox{otherwise.}
		\end{cases}	 &
		\mbox{ and } i_{j_0} &\in I \mbox{ if }
		| \mathcal{J}_{A} (\phi_0  pn )|= \beta.
	\end{align*}

	Then, $A_{I,J}$ will be the matrix which satisfies the condition of the event $\Omega_{\cal T^*}$. 
	The verification is the same as the that in the previous case. 
	We will skip it here. 
	Hence, we conclude that  
	$ \Omega_{\mathcal{T}^*}^*$ is a subevent of $ \Omega_{\rm RC}(\beta) \cup 
		\Omega(  A ) \cup \Omega(  A ^\top)$.
	And the probability estimate provided in Theorem 
	\ref{thm: SteepExpansion tau>10} implies we have the desired 
	bound for $\mathbb{P}(\Omega^c_{\mathcal{T}^*})$. 

\end{proof}

\bigskip

\part{Vector Decomposition and Proof of main theorem}
\section{Decomposition of $ \mathbb{R}^{n}$: Overview}

As stated in the introduction, here we will decompose $\R^n$ into three types of vectors: $\cal V$-vector, $\cal T$-vector, and $\cal R$-vector. For each type of vectors, we will also state the corresponding results about bounding $\|Ax\|$ or $\|A_{I,J}x\|$ quantitatively away from $0$ for $x$ in each type of vectors. 

Due to technical reason, we construct the vectors in the following orders: $\cal T$, $\cal R$, and $\cal V$. Also due to the technical complexity of their definition, it will be non-trivial to show that the union of $\cal T$, $\cal R$, and $\cal V$ type of vectors covers all non-null vectors in $\mathbb{R}^n$. 

Before that, to give a better picture of the decomposition, let us give a precise definition of $\cal V$ vector without fixing the corresponding parameters.  

Let us begin with a minimal definition of a non-sparse vector. 
\begin{definition}
Consider a parameter	
\begin{align}
\label{eq: tau}	
 \tau \in (0,1).
\end{align}
We define 
\begin{align}
	\label{eq: defY}
	\mathcal{Y}( \tau ) 
&:= 
	\big\{ 
		x \in \mathbb{R}^{n}\,:\,
		x^*_{ \lfloor \tau n \rfloor } = 1
	\big\}.
\end{align}

Clearly, for each vector $x \in \R^n$ with support at least $\lfloor \tau n \rfloor$, we can "normalize" it in the way that $ \frac{x}{x^*_{\lfloor \tau n \rfloor}} \in \mathcal{Y}(\tau)$. 
Further, let  
\begin{align}
	\label{eq: rho}
	\rho \in (0,1),
\end{align}
be another parameter, and consider the subcollection of $\cal Y(\tau)$, 
\begin{align}
	\label{eq: defAC} 
    \mathcal{AC}( \tau, \rho) &:=
    	\Big\{ x \in \mathcal{Y}(\tau) 	\,:\, 
       \big|  \big\{ i\in [ n ] \,:\, \big|x_i - x^*_{\lfloor \tau n \rfloor} \big| < 
            \rho \big\}  \big|  >  n- 
            \lfloor \tau n \rfloor    
        \Big\}. 
\end{align}
\end{definition}
In other words, this is the subcollection of those vectors in $\cal Y(\tau)$ with the majority of its components are close to $1$. These are vectors that looks almost like a constant if we ignore a small portion of its components. \\

Roughly speaking, a vector $x$ belongs to the complement of "sparse vectors" from the framework of Litvak-Tikhomirov is a vector with support at least $\lfloor \tau n\rfloor$ ($x\in \cal Y(\tau)$) and the growth of $x^*_i$ is stable as $i \searrow 1$. Here is the technical definition:    

\begin{definition} \label{def: g}
A {\bf growth function} $  g  $ is a non-decreasing function from $ [1,\infty) $ into $ [1,\infty) $.
For $\tau >0, \delta>0, \rho>0$, and a growth function ${ g }$, 
we define the gradual non-constant vectors with 
these parameters to be the set
\begin{align}
	\label{eq: defV}
	\cal V =& \mathcal{V}(\tau ,{ g }, \delta, \rho) \\ 
	\nonumber
	:= &
	\bigg\{  x \in \mathcal{Y}( \tau )\,:\,
		\forall i \in [ n ],\,x^*_i \le 
			{ g }( n /i) 	
		\mbox{ and }
		\exists Q_1,Q_2 \subseteq [ n ]
		\mbox{ such that }  \\
		& \phantom{AAA AAA AAA AAA AA}|Q_1|,|Q_2|\ge \delta  n 
		\mbox{ and }
		\max_{i\in Q_2}x_i \le \min_{i\in Q_1}x_i-\rho
	\bigg\}. \nonumber
\end{align}

\end{definition}


The following is the concrete statement of \eqref{eq: introLTpartial} introduced in the Introduction, which can be obtained from Litvak-Tikhomirov \cite{LT20} under suitable modification: 
\begin{theorem}  \label{thm: LT}
 	For given $\tau,\delta, \rho \in (0,1)$, $s>0$
 	, and $R, K\ge 1$. There are $n_0 \in \mathbb{N}$ and $ C\ge 1$ 
 	depending on $\tau ,\delta,\rho,R,K$ such that 
 	the following holds. Let $n\ge n_0$ and  
	$\frac{\log n }{n} \le p\le C^{-1}$.
 	Let $ { g }:[1,\infty) \rightarrow 
 	[1, \infty)$ be an increasing growing 
 	function satisfying 
 	\begin{align} \label{eq: growthCondition}
 		\forall a \ge 2, \, \forall t\ge 1\,\,\,
 		{ g } (at) \ge { g }(t)+a 
 		\mbox{ and } 
 		\prod_{j=1}^\infty { g }(2^j)^{j2^{-j}}\le K.
 	\end{align}
 
 	Let $A$ be an $n\times n$ Bernoulli($p$) random 
 	matrix. Let $\Omega( \mathcal{V}^c )$ be the event that 
	$ \forall x \notin \cup_{\lambda \ge 0} \lambda \mathcal{V}_{n}( \tau, 
	{ g }, \delta, \rho)$, 
	$ \left\| A^\top x \right\| > 0 $. 
	Then,  
 	\begin{align*}
		& \mathbb{P} \left\{  
			\exists x \in \mathcal{V}( \tau, 
	{ g }, \delta, \rho) \mbox{ s.t. }
			\left\| Ax \right\| \le t b_n
		\biggm \vert
			\forall y \notin \cup_{ \lambda \ge 0} \lambda
			\mathcal{V}( \tau, 
	{ g }, \delta, \rho),\, 
			\left\| A^\top y \right\| > 0
 		\right\} \\
	\le &
	\frac{ CR }{ r^R ( \mathbb{P} \left\{  \Omega( \mathcal{V}^c) \right\}  )^2  }
	\left( 
	\exp( - R pn )  + \frac{ b_n}{ \sqrt{pn} }t \right),
 	\end{align*}
	where $b_n = \sqrt{\sum_{i=1}^n g(\frac{n}{i})^2}$. 

 \end{theorem}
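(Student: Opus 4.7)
The plan is to derive Theorem \ref{thm: LT} as a conditional version of the unconditional least-singular-value estimates for gradual vectors from Theorems~2.1 and~2.2 of \cite{LT20}. In their unconditional form, those theorems yield an inequality roughly of the shape
\begin{align*}
\mathbb{P}\{\exists x \in \mathcal{V}(\mathfrak{r}, \mathbf{g}, \delta, \rho) : \|Ax\| \le t b_n\} \le C_0\, \mathfrak{r}^{-R}\bigl(t + \exp(-Rpn)\bigr),
\end{align*}
but only on a sub-event where the invertibility-via-distance framework is valid, namely the $\mathcal{V}^c$-nondegeneracy encoded in $\Omega(\mathcal{V}^c)$. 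Our task is to formulate the theorem conditionally and account for the conditioning cost.

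I would proceed as follows. First, I would recall the invertibility-via-distance reduction: if $\|Ax\| \le t b_n$ for some $x \in \mathcal{V}$, then up to an absolute loss one finds a coordinate $j$ in the bulk of $x$ such that $\operatorname{dist}({\bf C}_j(A),\, \mathrm{span}\{{\bf C}_i(A): i \neq j\})$ is of order $t b_n / \|x\|$. Writing the distance as $|\langle w, {\bf C}_j(A)\rangle|$ with $w$ the unit normal, observe that on $\Omega(\mathcal{V}^c)$ the vector $w$ must itself lie (up to scaling) in $\mathcal{V}$, since otherwise $A^\top w$ would vanish, contradicting $\|A^\top w\| > 0$. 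The gradualness of $w$ then unlocks Rogozin's inequality \eqref{eq:RogozinIprod} applied to $\langle w, {\bf C}_j(A)\rangle$, which yields anticoncentration at the rate $C_{\mathrm{Rgz}}/\sqrt{pn}$; a net argument over $x$ and $j$ using the summability condition in \eqref{eq: growthCondition} then upgrades this to the uniform main term $(b_n/\sqrt{pn})\, t$.

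Second, I would handle the passage from unconditional to conditional probability. By the usual identity
\begin{align*}
\mathbb{P}\{\mathcal{E} \mid \Omega(\mathcal{V}^c)\} \le \frac{\mathbb{P}\{\mathcal{E} \cap \Omega(\mathcal{V}^c)\}}{\mathbb{P}\{\Omega(\mathcal{V}^c)\}},
\end{align*}
one factor of $\mathbb{P}(\Omega(\mathcal{V}^c))$ in the denominator is immediate. The second factor arises because the Litvak-Tikhomirov argument internally invokes the $\mathcal{V}^c$-dichotomy a second time on the transpose side (in order to rule out structured normal vectors $w$ coming from minors of $A$), and this second layer of conditioning incurs the second $\mathbb{P}(\Omega(\mathcal{V}^c))$ in the denominator.

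The main obstacle will be verifying that the constants $R$ and $C_0$ hidden inside Theorems~2.1 and~2.2 of \cite{LT20} admit arbitrarily large $R$ at the price only of the prefactor $\mathfrak{r}^{-R}$. This matters because in our applications we need $\exp(-Rpn) = o(\mathbb{P}(\Omega_{RC}^c))$ for any fixed $\beta$, and $\mathbb{P}(\Omega_{RC}^c)$ can be as small as $(1-\mathfrak{p})^{\beta \mathfrak{n}} \mathfrak{n}$. Inspecting the covering/net construction and Lemma~7.4 in \cite{LT20} to confirm that the original proof indeed tolerates an arbitrary exponent $R$, and then tracking this exponent through the anticoncentration estimate, will be the most technical part of the argument.
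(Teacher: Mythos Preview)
Your overall architecture matches the paper's: invertibility via distance to reduce to a single column, then the Litvak--Tikhomirov anticoncentration (Theorem~\ref{thm: LT Partial}) for the inner product of that column with the normal vector, which on $\Omega(\mathcal{V}^c)$ is forced into $\mathcal{V}$. The two factors of $\mathbb{P}(\Omega(\mathcal{V}^c))$ in the denominator also arise for essentially the reasons you indicate, although in the paper the second factor comes not from a second ``transpose-side dichotomy'' but from bounding the \emph{conditional} failure probability of the anticoncentration event $\Omega_{LT}(m)$ by its unconditional probability $\exp(-Rpn)$ divided by $\mathbb{P}(\Omega(\mathcal{V}^c))$.

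There is, however, a genuine gap exactly where you flag ``the main obstacle.'' You propose to obtain an arbitrary exponent $R$ by \emph{inspecting} the net construction and Lemma~7.4 of \cite{LT20}. This will not work as stated: the standard one-column invertibility-via-distance lemma produces an additive error of order $\mathbb{P}\{|\mathrm{supp}({\bf C}_1(A))| \notin [\tfrac18 pn, 8pn]\}\approx \exp(-cpn)$ with a \emph{fixed} universal $c$, and the anticoncentration input from \cite{LT20} does not improve this term. The paper's remedy is to \emph{modify} Lemma~7.4 into a $k$-column version (Lemma~\ref{lem: invertibilityDistance}): one averages over ordered $k$-tuples $(i_1,\dots,i_k)$ of bulk indices and applies Markov to the indicator that \emph{all} $k$ distances are small. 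Since the event that all $k$ chosen columns have atypical support factorises, its probability is $\exp(-ckpn)$; the averaging over $k$-tuples costs the prefactor $r^{-k}$. Taking $k=R$ then yields $\exp(-Rpn)$ at the price of $r^{-R}$, which is precisely the statement of Theorem~\ref{thm: LT}. Without this $k$-fold modification your argument would be stuck at a fixed exponent and could not beat $(1-\mathfrak p)^{\beta\mathfrak n}$ for large~$\beta$.
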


\textbf{The choice of parameters $\tau,\rho,g,\delta$}, or the collection of $\cal V$ will be determined after other types of vectors: the steep vectors $\cal T$ (a better partition of $\cal T^*$ type vectors), the collection of vectors $\cal R$ where we can apply Rogozin's theorem (Theorem \ref{thm:Rogozin}). 
We will first construct $\cal T$, following by  $\cal R$, and then $\cal V$. The parameters will be chosen so that $\R^n$ can be decomposed into the above 3 types of vectors, at the same time satisfying the conditions stated in the theorem above.  

\section{ Definition of $\mathcal{T}$-vectors and $\cal R$-vectors}
The $\mathcal{T}$-vectors will be breaked into 3 subcollection: $\mathcal{T}_1$, $\mathcal{T}_2$, and $\mathcal{T}_3$. 

We will start with $ \mathcal{T}_1$ vectors. Basically, we are partitioning vectors of $\cal T^*$ type described in the previous part, according to $(n_1(x),n_2(x))$. 

For a fixed positive integer $\beta$, the construction of 
$ \mathcal{T}_1$-vectors are different when 
$ \frac{ \log( n ) }{ n }
\le p  \le 
( 1 + \frac{ 1 }{ 2\beta } )\frac{ \log( n ) }{ n }$
and when 
$ p \ge  
( 1 + \frac{ 1 }{ 2\beta } )\frac{ \log( n ) }{ n }$.

\subsection{ $ \mathcal{T}_1$ for
	$\frac{ \log( n )  }{ n  }
\le 	p 
\le 
	( 1 + \frac{ 1 }{ 2 \beta } ) 
	\frac{ \log( n )  }{ n  } $ }

\begin{definition} \label{def: T_1smallp}
	
For a fixed positive integer $\beta$, 
let $ \gamma ,  C_{\mathcal{T}_1} \ge 1$ be two parameters. 
For $ 	\frac{ \log( n )  }{ n  }
\le 	p 
\le 
	( 1 + \frac{ 1 }{ 2 \beta } ) 
	\frac{ \log( n )  }{ n  } $,
let $t_1$ and $ {t_2} $ be 
the positive integers defined by the following formulas:
First, $t_1$ is the unqiue integer satisfying  
$$3^{t_1-1} 
		<
			\lceil 
				\exp(  \frac{  \log n   }{ \log^2(  \log n  ) } )
			\rceil
		\le 
			3^{t_1}\,.$$
Once $t_1$ is determined, we define $t_2$ to be the unique integer satisfying	 
\begin{align*}
	\left\lceil 
		\exp\left( \frac{ \log n }{ \log^2(\log n) } \right)
	\right\rceil&
	\left(  
		\frac{ \log n }{ \log^3(\log n) }
	\right)^{t_2 - t_1 - 1} \\
	& \phantom{AAA AA} < 
	\left\lceil \frac{1}{\gamma p} \right\rceil \\
	&\phantom{AAA AAA}\le 
	\left\lceil 
		\exp\left( \frac{ \log n }{ \log^2(\log n) } \right)
	\right\rceil
	\left(  
		\frac{ \log n }{ \log^3(\log n) }
	\right)^{t_2 - t_1}\,.
\end{align*}

Next, let $n_0 = 1$. For $1\le j \le  {t_2} $, we define
\begin{align*}  
	j &< t_1  &  
	n_j & = 3^{j} 
\\
	j &= t_1  &  
	n_{ t_1 } & = 
		\Big\lceil 
			\exp \Big(  \frac{  \log n   }{ \log^2(  \log n  ) } \Big)
		\Big\rceil 
\\ 
	t_1+1 \le j & \le   {t_2}  -1 & 
	n_j &=
		\Big\lceil 
			\exp \Big(  \frac{  \log n   }{ \log^2(  \log n  ) } \Big)
		\Big\rceil 
		\left(  
				\frac{  \log n   }{ \log^3(  \log n   ) } 
		\right)^{ j - t_1 } 
\\ 
	j & =   {t_2}   &  
	n_{  {t_2}  } &=  
	\Big\lceil \frac{1}{ 
		\gamma  p  
	}  \Big\rceil
\end{align*}

We define inductively $\cal T_{1j}$ for $j$ from $1$ to $t_2$:
\begin{align}  \label{eq: T_1smallp}
	\mathcal{T}_{1j} 
&= 
	\mathcal{T}_{ 1j }( \gamma ,  C_{\mathcal{T}_1} ) 
: = 
	\left\{ x\in \mathbb{R}^n \,:\,
	x\notin \cup_{i=1}^{j-1} \mathcal{T}_{1i} 
	\mbox{ and } x_{ n_{j-1} }^* >  C_{\mathcal{T}_1}  pn  
	x_{ n_{j}  }^*\right\},
\\
	\mathcal{T}_1 &  = \mathcal{T}_1( \gamma ,  C_{\mathcal{T}_1} )
	 := 
	\cup_{j=1}^{ {t_2}}\mathcal{T}_{1j}.
\end{align}

\end{definition}

\noindent
\textbf{Remark: }Notice that for $ x \in \cal T_{1j}$, while the ratio 
$x^*_{n_{j-1}}/x^*_{n_j}$ is big, the gap $x^*_{n_{i-1}}/x^*_{n_i}$ is bounded by $C_{\cal T_1} pn$ for $i < j$. This allow us to show  
$$
	x_{n_{j-1}}^* \ge n^{-1-o_n(1)} \|x\|.
$$  
\textbf{Remark: } Using $  \frac{1}{ \gamma  p  } \le n$, the following 
bounds on $t_1, t_2 - t_1$, and $t_2$ hold:

\begin{align} \label{eq: t_0t_1sEstimate}
	t_1  
\le 
	\frac{  \log n   }{ \log^2(  \log n   ) }, \qquad
	t_2 - t_1 
\le &
	(1+o_{n}(1)) \frac{ \log( n )}{ \log(  \log n  )  }, 
	\mbox{ and }\\
	\nonumber
	 & \phantom{AAA AAA AAA} {t_2}  
\le 
	(1+o_{n}(1)) 
	\frac{ \log( n )}{ \log( \log n  )  }. 
\end{align}

\begin{proposition} \label{prop: xNorm}
	Suppose $ \frac{ \log( n ) }{ n   } \le p \le 
	(  1 + \frac{ 1 }{ 2\beta } ) \frac{ \log( n ) }{ n   }$.
	For $ j \in [  {t_2} ]$ and $x \in \mathcal{T}_{1j}$,
	\begin{align}
		\frac{  \left\| x \right\| 	}{
			x^*_{ n_{j-1}}	
			}
	\le 
		n^{ 1 + o_{n}(1) }.
	\end{align}
	And for $ x \notin \mathcal{T}_1$, 
	\begin{align}
		\frac{ \left\| x \right\| }{
			x^*_{ n_{  {t_2}}}	}
	& \le 
		n^{ 1 + o_{n}(1) } , &
		\frac{ x^*_{  1} }{
			x^*_{ n_{   {t_2}  }}
			}
	& \le 
	n^{ 1 + o_{n}(1) }.
	\end{align}

\end{proposition}

\begin{proof}
	We will prove the first statement. As we will see later, the second statement follows from the same argument.
	
	Now we fix $ j \in [   {t_2}   ]$ and $ x \in \mathcal{T}_{1j}$.
	Due to the fact that $ x \notin \mathcal{T}_{1i}$ for 
	$i \in [j-1] $, 
	\begin{align*}
		\frac{ x^*_{ n_{i-1} } }{
			x^*_{ n_{j-1}} 
		} 
	\le 
		(  C_{\mathcal{T}_1}  pn  )^{ j - i }.
	\end{align*}

	Using the estimate $ n_{i} \le 
	\left( \frac{  \log n   }{ \log^3(  \log n   ) } \right)^i$, we get 
	
	\begin{align*}
		\sum_{l = 1}^{ n_j } (x^*_l)^2
	& \le 
		\sum_{i = 1}^{j} n_i (x^*_{ n_{i-1} } )^2
	\\ 
	& \le 
	\sum_{i = 1}^j 
		\left( \frac{  \log n   }{ \log^2(  \log n   ) } \right)^i
		\left(   C_{\mathcal{T}_1}  pn  \right)^{2 ( j- i) } 
		(x^*_{ n_{j-1} })^2
	\\
	& = 
		\left(   C_{\mathcal{T}_1}  pn  \right)^{2j  } 	
		(x^*_{ n_{j-1} })^2
		\left(  \sum_{ i = 1 }^j 
		\left( \frac{  \log n   }{ \log^2(  \log n   ) } \right)^i
		\left(   C_{\mathcal{T}_1}  pn  \right)^{-2 i } 	
		\right) 
	\\
	& \le 
		\left(   C_{\mathcal{T}_1}  pn  \right)^{2j  } 	
		(x^*_{ n_{j-1} })^2
	\end{align*}
	where the second to last inequality relies on the geometric decay of the 
	summation
	$$ \left(  \sum_{ i = 1 }^j 
		\left( \frac{  \log n   }{ \log^3(  \log n   ) } \right)^i
		\left(   C_{\mathcal{T}_1}  pn  \right)^{-2 i } 	
		\right) 
		\overset{\log n  \le pn}{\le}
		\sum_{i=1}^j \left( 
			\frac{ 1}{\log^3( \log n  )  C_{\mathcal{T}_1}^2  pn  }
		 \right)^i = o(1).
	$$
	For the remaining terms,  using $x^*_l \le x^*_{ n_j} 
	\le \frac{1}{  C_{\mathcal{T}_1}  pn   } x^*_{ n_{j-1}}$ 
	for $l \ge n_{j-1}$, 
	\begin{align*}
		\sum_{ l = n_j + 1}^{ n } 
		(x^*_l)^2
	& \le 
		n \frac{ 1 }{ (  C_{\mathcal{T}_1}  pn  )^2  } 
		(x^*_{ n_{j-1} })^2.
	\end{align*}
	
	Combining these two inequalities we obtain 
	\begin{align} \label{eq: normxbound}
		\left\| x \right\| ^2 
	& \le	\left(  		 
			\left(  C_{\mathcal{T}_1}  pn  \right)^{2j } +
			\frac{ n }{ (  C_{\mathcal{T}_1}  pn  )^2 } 
		\right)  
		(x^*_{ n_{j-1} })^2.
	\end{align}
	Using $ j \le   {t_2}   \le 
		( 1 + o_{n} (1) )  
		\frac{ \log( n ) }{ 
		\log( \log n  ) }$, we can bound the term 
	\begin{align*}
		\left(  C_{\mathcal{T}_1}  pn  \right)^{2j }
	\le 
		\left(  2C_{\mathcal{T}_1}  \log n   \right)^{2j }
	\le &
		\exp\left( 
			(2+ o_{n}(1) ) \log( 2C_{\mathcal{T}_1}  \log n  )
			\frac{\log n }{\log( \log n  )}
		 \right) \\
	\le&
		\exp \left( 
			(2+ o_{n}(1) )
			\left( 1+ \frac{\log( 2C_{\mathcal{T}_1})}{\log( \log n  )} \right) 
			\log n 	
		 \right) \\
	\le& 
		n^{2 + o_{n}(1)}
	\end{align*}
	and therefore, \eqref{eq: normxbound} becomes
	\begin{align*}
		\left\| x \right\| ^2 
	& \le  
		n^{ 2+ o_{n}(1) }
		(x^*_{ n_{j-1} })^2.
	\end{align*}
\end{proof}

\subsection{$ \mathcal{T}_1$ for $ p  
\ge ( 1 + \frac{ 1 }{ 2\beta } )\frac{ \log( n ) }{ n }$}

\begin{definition} \label{def: T1+}

For a fixed positive integer $\beta$, 
let $ \gamma,  C_{\mathcal{T}_1} \ge 1$ be two parameters. 
For  $$ ( 1 + \frac{ 1 }{ 2\beta } )\frac{ \log( n ) }{ n } \le
p \le \frac{ 1 }{ 2\gamma },$$
let $  {t_2}   $ be the positive integer defined by the following inequalites: 
\begin{align*}
			2
			\left(  
				\frac{  pn  }{ \log^3(  pn  ) } 
			\right)^{   {t_2}   - 2}
		<
			\Big\lceil \frac{1}{ \gamma  p  } \Big\rceil
		\le 
			2
			\left(  
				\frac{  pn  }{ \log^3(  pn  ) } 
			\right)^{   {t_2}   - 1 }.
\end{align*}

Let $ n_0 = 1$, $ n_1 = 2$, and 
\begin{align*}
	2 \le j & <   {t_2}   &
	n_j & = 2 \left(  
				\frac{  pn  }{ \log^3(  pn  ) } 
			\right)^{ j - 1}
\\ 
	j &=   {t_2}   &  
	n_{  {t_2}  } &=  \Big\lceil \frac{1}{ \gamma  p  }  \Big\rceil
\end{align*}

For $1 \le j \le   {t_2}  $, let 

\begin{align*}
	\mathcal{T}_{1j} &= \mathcal{T}_{ 1j }( \gamma ,  C_{\mathcal{T}_1} ) 
	 : = \left\{ x\in \mathbb{R}^n \,:\,
	x\notin \cup_{i=1}^{j-1} \mathcal{T}_{1i} 
	\mbox{ and } x_{ n_{j-1} }^* >  C_{\mathcal{T}_1}  pn  
	x_{ n_{j}  }^*\right\},
\\
	\mathcal{T}_1 &= \mathcal{T}_1( \gamma ,  C_{\mathcal{T}_1} ) : = \cup_{j=1}^{  {t_2}  }\mathcal{T}_{1j}.
\end{align*}
\end{definition}
{ \bf Remark:} Notice that 
\begin{align} \label{eq: slargep}
	 {t_2}   
\le 
	( 1+ o_{ n ( 1 ) } )\frac{ \log( \frac{ 1 }{ \gamma p  } ) }{ { \log( pn) }   }	+ 2.
\end{align}
And for $ p >> n^{-1/2}$, then  $  {t_2}   = 2$ since ${t_2}$ is an integer. The following proposition is an analogue of Proposition \ref{prop: xNorm}.

\begin{proposition} \label{prop: xNorm tauLarge}
	For $ j \in [   {t_2}   ]$ and $x \in \mathcal{T}_{1j}$,
	\begin{align}
		\frac{  \left\| x \right\| 	}{
			x^*_{ n_{j-1}}	
			}
	\le 
		n^{ 1 + o_{n}(1) }.
	\end{align}
	And for $ x \notin \mathcal{T}_1$, 
	\begin{align}
		\frac{ \left\| x \right\| }{
			x^*_{ n_{   {t_2}  }}	}
	& \le 
		n^{ 1 + o_{n}(1) } , &
		\frac{ x^*_{ 1 } }{
			x^*_{ n_{   {t_2}  }}
			}
	& \le 
	n^{ 1 + o_{n}(1) }.
	\end{align}

\end{proposition}

\begin{proof}
	We skip the proof here, since it is essentially the same as that of 
	Proposition \ref{prop: xNorm}. 
\end{proof}

\subsection{ $ \mathcal{T}_2, \mathcal{T}_3, $ and $ \mathcal{T}$ 
(for $ p \ge \frac{ \log( n ) }{ n } $) }

Once $ \mathcal{T}_1$ is constructed, the way we define $ \mathcal{T}_2$,
$ \mathcal{T}_3$, and $\mathcal{T}$ is the same regardless of the value 
of $ p $. Let 
\begin{align*}
	n_{   {t_2}   + 1 }  =  
	\sqrt{ \frac{ n }{  p  } } 
\quad \mbox{and} \quad
	n_{   {t_2}   + 2 }  =
	\tau n 
\end{align*}
where $ \tau \in (0,1)$ is the constant introduced in the definition of $\cal Y (\tau)$ and $\mathcal{V}(\tau ,{ g }, \delta, \rho)$ (See Definition \ref{def: g}). We remark that 
$$
	n_{t_2} < n_{t_2+1} < n_{t_2+2}
$$  
due to $pn \ge \log n $. Now we define
\begin{align*}
	\mathcal{T}_{2} 
&= 
	\mathcal{T}_{2}( \gamma ,  C_{\mathcal{T}_1}, C_{  \mathcal{T}_2 }, \tau  )
:= 
	\left\{ x\in \mathbb{R}^n \,:\,
	x\notin  \mathcal{T}_1
	\mbox{ and } x_{n_{  {t_2}  }}^* > 
		C_{\mathcal{T}_2}\sqrt{ pn  }
	x_{n_{  {t_2}  +1}}^*\right\}, 
\\
	\mathcal{T}_{3} 
&=
	\mathcal{T}_3( \gamma ,  C_{\mathcal{T}_1}, C_{  \mathcal{T}_2 }, \tau  )
: = 
	\left\{ x\in \mathbb{R}^n \,:\,
	x\notin  \mathcal{T}_1\cup \mathcal{T}_2
	\mbox{ and } x_{n_{  {t_2}  +1}}^* > 
	C_{  \mathcal{T}_2}\sqrt{ pn  }
	x_{n_{  {t_2}  +2}}^*\right\}, 
\\
	\mathcal{T} 
&=
	\mathcal{T}( \gamma ,  C_{\mathcal{T}_1}, C_{  \mathcal{T}_2 }, \tau  )
:= 
	\cup_{i=1}^3 \mathcal{T}_i.
\end{align*}

\begin{proposition}  \label{prop: TnormBound}
	For $x \notin \mathcal{Y}( \tau ) \backslash \mathcal{T}$, 
	we have 
	\begin{align*} 		
		x^*_{ n_{  {t_2}   + 2} } & = 1 &
		x^*_{ n_{   {t_2}   + 1} } &
	\le 
		C_{\mathcal{T}_2} \sqrt{  pn  } &
		x^*_{ n_{   {t_2}   } } &
	\le 
		C_{\mathcal{T}_2}^2  pn  \\
		x^*_{ n_{   {t_2}   - j } }  &
	\le 
		C_{\mathcal{T}_2}^2  pn 
		(  C_{\mathcal{T}_1}  pn  )^j
		\mbox{   for   } j \in [   {t_2}   ] &
		\left\| x \right\|_\infty & 
	\le 
		n^{ 1 + o_{n}(1) } { pn } 
	&
		\left\| x \right\|& 
	\le 
		n^{ 1 + o_{n}(1) }  { pn } 
	\end{align*}
\end{proposition}
While $\cal T_2$, $\cal T_3$ are similar to vectors in the form of $\cal T^*$ described in the Section \ref{sec: T*} and Section \ref{sec: T*+}, the technical tools to show  $\|Ax\| / \|x\|>0$ for $x \in \cal T_2$ or $x\in \cal T_3$ are different. 

\begin{proof}
	This is an immediate consequence of Proposition \ref{prop: xNorm}, 
	\ref{prop: xNorm tauLarge}, and the definition of 
	$ \mathcal{T}_2, \mathcal{T}_3, $ and $ \mathcal{T}$.
\end{proof}

\subsection{ $ \mathcal{R} $-vectors}

For $n_{  {t_2}  } \le k \le \frac{n}{
\log^2( pn )}$, let 
\begin{align*}
	\mathcal{R}_k^1 &:= \bigg\{ 
		 x \notin \mathcal{T}\backslash AC( \tau,\rho)
		 \,:\, x^*_{  \tau n  }=1, \, 
		 \frac{\left\| x_{\sigma_x([k,n])} \right\| }{
			 \left\| x_{\sigma_x([k,n])} \right\|_\infty 
		 } \ge \frac{2C_{\rm {Rgz}}}{\sqrt{p}} \mbox{ and } \\
		 & \phantom{AAA AAA AAA AAA AAA AAA AAA} 
		 \sqrt{ \frac{ n }{2} } \le \left\| x_{\sigma_x([k,n])} \right\| 
		 \le C_\mathcal{T}\sqrt{{pn^2}}
	 \bigg\} \\
	\mathcal{R}_k^2 &:= \bigg\{ 
		 x \notin \mathcal{T}		 
		 \,:\, x^*_{  \tau n  }=1, \, 
		 \frac{\left\| x_{\sigma_x([k,n])} \right\| }{
			 \left\| x_{\sigma_x([k,n])} \right\|_\infty 
		 } \ge \frac{2C_{\rm {Rgz}}}{\sqrt{p}} \mbox{ and }\\
		 & \phantom{AAA AAA AAA AAA AAA AAA AAA A} 
		 \frac{ 2 \sqrt{n} }{ \tau } 
		 \le \left\| x_{\sigma_x([k,n])} \right\| 
		 \le C^2_{  \mathcal{T}_2}  p 
		 n^{\frac{3}{2}}
	 \bigg\}  \\
	 \mathcal{R}_k &:= \mathcal{R}_k^1 
	 	\cup \mathcal{R}_k^2,
\end{align*}
where $C_{\rm{rgz}}>0$ is the constant appeared in 
Theorem \ref{thm:Rogozin}. In the end, we set 
\begin{align*}
	\mathcal{R} 
: = 
	\cup_{ n_{ {t_2} }}
		\le k 
		\le 	\frac{n}{
			\log^2( pn )} 
	\mathcal{R}_k .
\end{align*}

\section{Statement about $\cal T$- and $\cal R$-vectors}

For $ I, J \subset [n]$ with $ |I| = |J| = n- \beta +1$,
	let $\Omega_{I,J}(\mathcal{T},\mathcal{R})$ be the event that 

	\begin{align*}
		&\Omega_{I,J}(\mathcal{T},\mathcal{R})\\
	:=&
		\Big\{ \,\forall x \in \cal T \cup \cal R 
		\mbox{ with } \opn{supp}(x) \subseteq J, \quad
		\left\|  A _{I,J}x \right\|  
		 \ge n^{-1+ o_{ n}(1)} ( pn )^{-1}\left\| x \right\| 
		 , \, \mbox{and} \\
		&\phantom{\Big\{ \,}
		\forall x \in \cal T \cup \cal R
		\mbox{ with } \opn{supp}(x) \subseteq I, \quad
		\left\|  A _{I,J}^\top x \right\|   
		 \ge n^{-1+ o_{ n}(1)} ( pn )^{-1}\left\| x \right\| 
		 \phantom{AAA} \Big\}.
			\end{align*}
	Further, let 
	$$\Omega(\mathcal{T},\mathcal{R}) = \cup_{I,J} 
	\Omega_{I,J}(\mathcal{T},\mathcal{R})$$ 
	where the union is taking over all possible $I,J$ of size $n- \beta +1$.
	The goal of this section is to show 
	\begin{theorem} \label{thm: mainMerge}
		For fixed $\beta$, there exists 
		$ C_{\mathcal{T}_1}, \gamma , C_{\mathcal{T}_2}> 1$ and 
		$\tau, \rho, \delta, c_{ \beta } \in (0,1)$ which
		depends on $\beta$ so that the following holds: 
		For $ p $ satisfying 
		\begin{align*}
			\frac{ \log( n ) }{ n  }		 
		\le
			p 		
		\le 
			c_\beta,
		\end{align*}
		let ${ g }$ be the growth function defined in 
		Definition \ref{def: g}. 
			If $ n$ is sufficiently large, 
		\begin{align*}
			\mathbb{P}\{ \Omega(\mathcal{T}, \mathcal{R})^c \} 
		= 
			(1 + o_{n}(1))  
			\mathbb{P}\{ \Omega^c_{\rm RC} \} . 
		\end{align*}

	\end{theorem}

\begin{remark} \label{rem: T1T*}
Notice that from the definition of $\cal T_1$ and $\cal T^*$, we have  
$$\cal T_{1}(C_{\cal T_1}) \subseteq \cal T^*(C_{\cal T_1}) .$$
\end{remark}

Let us state a corollary of Theorem \ref{thm: T_1s}, Theorem \ref{thm: T_1+}, and Proposition \ref{prop: xNorm tauLarge}, we have  
\begin{corollary} \label{cor: T1}
For a positive integer $\beta$, there exists $c_\beta$, $C_{\cal T_1} \ge 1$, and $\gamma >1$ so that the following holds: 
For $p$ satisfying 
$$
	\frac{\log n }{n} \le p \le c_{\beta}, 
$$	
we have 
\begin{align*}
		&\Prob\bigg\{ 
		\mbox{There is no } I,J \subseteq [n] \mbox{ with } |I| =|J|=n-\beta+1 \mbox{ such that }		\\
		&\phantom{AA AAA} \forall x \in \cal T_1 \mbox{ with } \opn{supp}(x) \subseteq [J], \quad
			 \|A_{I,J}x\| \ge n^{-1-o(1)}\|x\| \mbox{ and }
		 \\
		&\phantom{AA AAA} \forall x \in \cal T_1 \mbox{ with } \opn{supp}(x) \subseteq [I], \quad
			 \|A_{I,J}^\top x\| \ge n^{-1-o(1)}\|x\|	\phantom{AAA AA} \bigg\} \\
= &o(\Prob\{ \Omega_{\rm RC}^c(\beta) \}).
\end{align*}
\end{corollary}

Next, let us states two results about $\cal T_1$, $\cal T_2$, and $\cal R$ vectors. 

\begin{theorem} \label{thm:T_2T_3-vector}
	For any fixed $\beta
	 \ge 1$, $ C_{\mathcal{T}_1} >1 $ and $ \gamma > 1$, 
	suppose $C_{\mathcal{T}_2}>1$ are sufficiently large, 
	$ c'_{ \beta }, \tau >0$ are sufficiently small. 
	For 
	\begin{align*}
		\frac{ \log( n ) }{ n  } \le p 
		\le  c'_{ \beta },
	\end{align*}
	and sufficiently large $ n $,  we have 
	\begin{align*}
		\mathbb{P}\left\{ \exists x\in \mathcal{T}_2 
			\mbox{ s.t. } 
			\left\|  A x \right\| 
		\le  
			n^{ - \frac{1}{2} - o_{n} (1) }
			( { pn }  )^{-1}
			\left\| x \right\| 
		\right\}  
	& \le 
		( 1- p )^{ 2 \beta n }
	=o(\Prob\{ \Omega_{\rm RC}^c(\beta)\}), 
	\\
		\mathbb{P}\left\{ \exists x\in \mathcal{T}_3 
			\mbox{ s.t. } \left\|  A  x \right\| 
		\le  
			n^{ - \frac{1}{2} - o_{n} (1) }
			( pn  )^{-1}
			\left\| x \right\| 
		\right\}  
	& \le 
		( 1- p )^{ 2 \beta n }
	=o(\Prob\{ \Omega_{\rm RC}^c(\beta)\}). 
	\end{align*}
\end{theorem}

\begin{theorem} \label{thm:R-vector}
	For any fixed $\beta
	 \ge 1$, $ C_{\mathcal{T}_1} >1 $ and $ \gamma > 1$, 
	suppose $C_{\mathcal{T}_2}>1$ are sufficiently large, 
	$ c'_{ \beta }, \tau , \rho \in ( 0,1 )$ are sufficiently small. 
	Then, for
	\begin{align*}
		\frac{ \log( n ) }{ n  } \le p 
		\le  c'_{ \beta }. 
	\end{align*}
	and sufficiently large $ n $, we have 
	\begin{align*}
		\mathbb{P} \left\{  
			\exists x \in \mathcal{R} \mbox{ s.t. }
			\left\|  A  x \right\| \le  
			n^{ - \frac{1}{2} - o_{n} (1) }
			(  { pn }  )^{-1/2}
			\left\| x \right\| 
		\right\} \le 			
		  ( 1- p )^{ 2 \beta n }
		=
o(\Prob\{ \Omega_{\rm RC}^c(\beta)\}).
	\end{align*}
\end{theorem}

We remark that $  C_{\mathcal{T}_1}, \gamma , C_{\mathcal{T}_2}$, and 
$\tau>0$ are parameters appeared in the definition of 
$\mathcal{T}$ vectors and $\mathcal{R}$ vectors. 

The proof of the above two theorems follows the work of Litvak-Tikhomirov \cite{LT20}. Due to the difference of definition vectors, we are not able to directly cite their results. Instead, we include the proof of these two theorems in the appendix. 

\begin{proof}[Proof of Theorem \ref{thm: mainMerge}]
The theorem is a corollary of 
Corollary \ref{cor: T1}, 
Theorem \ref{thm:R-vector}, 
Theorem \ref{thm:T_2T_3-vector}, and 
Theorem \ref{thm: vecPartition}.
\end{proof}

\section{ $\mathcal{V}$-vectors and Partition of $\R^n$}
Given the definition of $\cal T$ and $\cal R$ vectors in the previous section. The goal of this section is the following: We will define correspondingly the $\cal V$ vectors with suitable parameter choices so that 
\begin{enumerate}
	\item $\cal V$ satisfies the conditions described in Theorem \ref{thm: LT} (See Proposition \ref{prop: specificGrowthFunction}), and 
	\item $\R^n$ can be decomposed into these three types of vectors (See Theorem \ref{thm: vecPartition}).
\end{enumerate}

Recall that for $r>0, \delta>0, \rho>0$ and a growth 
function ${ g }$, 
\begin{align*}
	& \mathcal{V}_n(r,{ g }, \delta, \rho)  \\
	= &\Biggm\{ x\in \mathcal{Y}(r)\,:\,
		\forall i \in [n],\,x^*_i \le { g }(n/i) 	
		\mbox{ and }\\
		& \phantom{AAA}\exists Q_1,Q_2 \subset [n]
		\mbox{ such that } 
		|Q_1|,|Q_2|\ge \delta n 
		\mbox{ and }
		\max_{i\in Q_2}x_i \le \min_{i\in Q_1}x_i-\rho
	\Biggm\}.
\end{align*}

We will define our function ${  g }$ piecewisely on 
the intervals
$$ 
	[ 1,\, \frac{ n }{ n_{   {t_2}   + 2} } ) ,\quad
 [ \frac{ n }{ n_{   {t_2}   + 2} }, \, 
	\frac{ n }{ n_{   {t_2}   + 1} } ),\quad
 [ \frac{ n }{ n_{   {t_2}   + 1} }, \, 
	\frac{ n }{ n_{   {t_2}   } } ),\quad
 [ \frac{ n }{ n_{   {t_2}   } }, \, 
	\frac{ n }{ n_{   {t_2}  -1 } } ),\quad
 \dots \,, 
 [ \frac{ n }{ n_{ 1 } }, \, 
\infty) \,.$$

\begin{definition} \label{def: g}
	Consider the partition of $[1,\infty)$ into the following $t_2+2$  
	\begin{align*}
		U_{-2} :=& \Big[ 1, \frac{n}{n_{t_2+1}}\Big), \\
		U_{j} :=& \Big[ \frac{n}{n_{t_2-j}}, \frac{n}{n_{t_2-j-1}}\Big)
		\mbox{ for index } j \in \{-1,0,1, \ldots, t_2-2\}\\
		U_{t_2-1} :=&  \Big[ \frac{n}{n_{1}}, \infty \Big)
	\end{align*}

	Let ${  g  }$ be the function defined piecewisely on these intervals 
	\begin{align*}
		g(t) := \begin{cases}
			2t^{3/2} & t \in U_{-2},\\
			2t^3	& t \in U_{-1}, \\
			\frac{t}{n/n_{t_2-j}} (C_{\cal T_1} pn)^j (pn)^4  &
			t \in U_j \mbox{ for } j \in \{0,1,\dots, t_2-1\}. 
		\end{cases}	
	\end{align*}

	In particular, $ {  g  }$ is a function depending on 
	$n,  p , \tau,  C_{\mathcal{T}_1}, \gamma $ 
	and $ C_{\mathcal{T}_2}$.
\end{definition}

\begin{proposition} \label{prop: specificGrowthFunction}
	There exists a universal constant $K_3>0$ such that when $n$ is sufficiently large,  
	the following holds:
	\begin{enumerate}
		\item The function ${  g  }$ defined in Definition \ref{def: g}
	is a {\bf growth function} satisfying \eqref{eq: growthCondition} :
	\begin{align*}
		\forall a \ge 2, \, \forall t\ge 1\,\,\,
		{ g } (at) \ge { g }(t)+a 
		\mbox{ and } 
		\prod_{j=1}^\infty { g }(2^j)^{j2^{-j}}\le K_3.
	\end{align*}

	\item For $ x \in \mathcal{Y}_n( \tau ) \backslash 
	\mathcal{T}$, 
	\begin{align*}
		x^*_i \le { g } \Big( \frac{n }{ i } \Big)
	\end{align*}
	for $ \frac{ n }{ n_{   {t_2}   + 1} }
	\le i \le n$.

	\item We have $ b_n := \sqrt{\sum_{ i \in [ n ] } 
	(  { g } ( \frac{ n  }{ i }  ))^2}  = 
	n^{ 1 + o_{n} (1) }(  pn  )^7$, 
	which is the upper bound of $ \left\| x \right\|  $  
	for all $x \in \mathcal{V}$. 
	\end{enumerate}
\end{proposition}

\begin{proof}
	First, it is clear that $g$ is a piecewisely increasing function on each interval $U_i$. To show that ${ g }$ is a non-decreasing funciton, it is sufficient to show that 
	\begin{align} \label{eq: g GapMatch}
		\lim_{ s \nearrow n 
		/ n_{   {t_2}   - j } }
		{ g }(s) 
	\le 
		{ g } (n 
		/ n_{   {t_2}   - j  } ) .
	\end{align}
	for $j \in \{ -1,\, 0,\, 1,\, \dots,   {t_2}   - 1 \}$.

	First of all, this is immediate when $j = - 1$. 
	When $j = 0$, due to $n_{   {t_2}  } = \big\lceil \frac{1}{ \gamma   p } \big\rceil$
	we have
	\begin{align*}
		\lim_{ s \nearrow n 
			/ n_{   {t_2}    } }
		{ g }(s) 
	& = 
		2 ( \gamma  pn  )^3 	
		\le (  pn  )^4 \le 
		{ g  } \Big( \frac{ n }{ n_{   {t_2}   } }  \Big).
	\end{align*}

	For $j \in [   {t_2}   - 1 ]$, since  
	$
		\frac{ n_{   {t_2}   - j +1 
		}}{ n_{   {t_2}   - j } } \le  pn  
		\le  C_{\mathcal{T}_1}  pn 
	$ (See Definition \ref{def: T_1smallp} and Definition \ref{def: T1+}) and therefore
	\begin{align*}
		\lim_{ i \nearrow n 
			/ n_{   {t_2}   - j } }
		{ g }(i) 
	& = 
		\frac{ n_{   {t_2}   - j + 1}}{ n_{   {t_2}   - j } } 
		(  C_{\mathcal{T}_1}  pn  )^{ j - 1}
		( pn )^{4}
		\le {  g } \Big(  \frac{ n }{ n_{   {t_2}   - j }} \Big).
	\end{align*}
	Hence, we conclude that ${ g }$ is a non-decreasing function. 

	\medskip 

	Next, observe that ${  g }$ satisfies ${  g  }(at) \ge a{  g } (t)$ 
	with $a \ge 1$ in each interval which ${ g }$ defined piecewisely.
	And this property automatically extended to the whole domain 
	$ [1 , \infty )$ due to \eqref{eq: g GapMatch}. 
	We conclude that for $a \ge 2$ and $ s \ge 1$,
	\begin{align*}
		{ g }(as) \ge a { g }(s) \ge 
		{ g }(s) + a,
	\end{align*}
	where the second inequality relies on $ { g }(s) \ge 2$ for $s \ge 1$.
	For the second condition of \eqref{eq: growthCondition}, 
	it is not hard to see that $ g $ is growing like a polynomial.
	Specifically, we {\bf claim} that  
	$$ {  g  }(s) \le 2s^{10}.$$
	The claim follows immediately for $t \in U_{-2} \sqcup U_{-1}$ from its definition. 

	Next, notice that  
	$$
		\Big(\frac{n_{t_2+1}}{n_{t_2}}\Big)^5 
	\ge
		(\gamma \sqrt{pn})^{2.5}
	\ge 
		(C_{\cal T_1} pn)^2,
	$$	
	and for $j \in \{2,3,\dots, t_2-t_1\}$, 
	we have the rough estimate 
	$$
		\Big(\frac{n_{t_2-j}}{n_{t_2-j+1}}\Big)^5
	\ge	
		(pn)^3
	\ge 
		(C_{\cal T_1} pn)^2,
	$$
	though is not necessarily true when $j = 1$. 
	Hence, $j \in \{0,1,\ldots, t_2-t_1\}$ and $s \in U_{j}$, 
	\begin{align*}
		s^6 
	\ge 
		\Big(\frac{n}{n_{t_2-j}}\Big)^6 
	= &
		\Big(\frac{n}{n_{t_2+1}}\Big)^6 
		\Big(\frac{n_{t_2+1}}{n_{t_2}}\Big)^6 
		\Big(\frac{n_{t_2-j}}{n_{t_2}}\Big)^6 \\
	\ge &
		1^6 \cdot (C_{\cal T_1}pn)^2 \cdot 	(C_{\cal T_1} pn)^{2(j-1)}\\
	\ge&
		(C_{\cal T_1}pn)^{2j}\,.
	\end{align*}
	Therefore, the claims hold for $s \in U_j$ with $j\in \{0,1,\ldots, t_2-1\}$, 
	$$
		g(s) \le \frac{s}{n / n_{t_2-j}} \cdot s^5 \cdot s^4 \le s^{10}.  
	$$
	Therefore, the claim holds. 
	
	\medskip 	

	Following from the claim, for $j \ge 1$, 
	\begin{align*}
		\log( {  g } ( 2^j )^{ j 2^{-j} } )
	\le 
		\log\Big( 2^{11j^22^{-j}} \Big)
	&\le
		11 j^2 2^{-j}
	\end{align*}
	and therefore,  $ \sum_{j=1}^\infty {  g } ( 2^j )^{ j 2^{-j} } < K_3$
	for some sufficiently large $K_3>1$.

	It remains to show the comparison $ x^*_i \le { g }( \frac{ n }{ i} )$
	for $ x \in \mathcal{Y}(\tau ) \backslash \mathcal{T}$. 
	First of all, by Propsition \ref{prop: TnormBound},
	\begin{align*}
		x^*_{ n_{ {t_2} }  } \le C_{ \mathcal{T}_2 }^2  pn .
	\end{align*}
	
	For $ n_  {t_2}   \le i \le n_{   {t_2}   + 1 }$, 
	\begin{align*}
		 {  g  }\Big( \frac{n }{i} \Big) 
	\ge 
		{  g } \Big( \frac{ n }{ n_{   {t_2}   +1 } } \Big)
	= 
		{  g } \Big( \sqrt{  pn  } \Big)
	\ge 	
		2 (  pn  )^{3/2} 
	\ge 
		C_{ \mathcal{T}_2}  pn 
	\ge 
		x^*_{i}.
	\end{align*}

	Next, consider the regime 
	$ n_{   {t_2}   - j } \le i < n_{   {t_2}   - j + 1 }$
	for $j \in [   {t_2}   ]$.
	By Proposition \ref{prop: TnormBound}, $x^*_{ n_{   {t_2}   - j } }
	\le C_{ \mathcal{T}_2}^2  pn  (  C_{\mathcal{T}_1}  pn  )^j$.
	Thus, 
	\begin{align*}
		 {  g  } \Big( \frac{n }{i} \Big) 
	\ge 
		{  g } \Big( \frac{ n }{ n_{   {t_2}   - j + 1 } } \Big)
	=
		(  C_{\mathcal{T}_1}  pn  )^{j-1} (  pn  )^4	
	\ge 	
		x^*_{ n_{  {t_2}   - j } }
	\ge 
		x^*_{i}.
	\end{align*}

	It remains for us to estimate $ \sum_{ i=1 }^{ n } 
	g( \frac{ n  }{ i })^2$. 
	First, 
	\begin{align*}
		\sum_{ i= n_{  {t_2}  +1 } }^{ n } 
		g \Big( \frac{ n  }{ i } \Big)^2
	\le &
	n \cdot 4 (  pn  )^{3/2}, \mbox{ and  }  & 
		\sum_{ i= n_{  {t_2}   } 
		}^{ n_{   {t_2}  +1 } } 
		g \Big( \frac{ n  }{ i } \Big)^2
	\le &
		\sqrt{ \frac{ n }{ p   } }
		\cdot	4( \gamma  pn  )^6. & 
	\end{align*}

	Next, 
	\begin{align*}
		\sum_{ i=1 }^{ n_{   {t_2}   } }		
		g \Big( \frac{ n  }{ i } \Big)^2 
	\le &
		\sum_{ j=0 }^{  {t_2}  -1 } 
		\sum_{ i = n_{   {t_2}  - j -1 }  }^{
		n_{  {t_2}  -j } }
			g \Big( \frac{ n  }{ i } \Big)^2\\
	\le &
		\sum_{ j=0 }^{  {t_2}  -1 } 
		n_{  {t_2}  -j }	
		g\Big( \frac{ n  
		}{ n_{  {t_2}  -j -1 } } \Big)^2\\
	\le&
		\sum_{ j=0 }^{  {t_2}  -1 } 
		(  pn  )^{  {t_2}  -j }
		(  C_{\mathcal{T}_1} pn  )^{ 2j+2} 
		(  pn )^{8} \\
	= &
		\sum_{ u=1 }^{  {t_2}   } 
		(  pn  )^{ u }
		(  C_{\mathcal{T}_1} pn  )^{ 2(  {t_2}  -u )+2} 
		(  pn )^{8}\\
	\le &
		(   C_{\mathcal{T}_1} pn  )^{2  {t_2}   + 10 }
		\sum_{ u=1 }^{  {t_2}  } 
		(  pn )^{ -u } \\
	\le &
		(   C_{\mathcal{T}_1} pn  )^{2  {t_2}   + 10 }.
	\end{align*}
	For $( 1 + \frac{ 1 }{ 2\beta } )\frac{ \log( n ) }{ n } \le
	p \le \frac{ 1 }{ 2\gamma }$, applying \eqref{eq: slargep} we have 
	\begin{align*}
		(   C_{\mathcal{T}_1} pn  )^{2  {t_2}   + 10 }
	\le  &
	(  C_{ \mathcal{T}_1 } pn)^{14} \cdot 
	\exp\bigg( 2( 1+o_n(1)) \log( C_{ \mathcal{T}_1 } pn )
	\frac{ \log( 1/\gamma p  )}{ \log( pn)  } \bigg)\\
	\le &
		(  C_{ \mathcal{T}_1 } pn)^{14}(  \frac{ 1 }{ \gamma p  } )^{2(1+ o_n(1)} \\
	\le &
		n^{2 + o(1) } (  pn  )^{ 14 }. 
	\end{align*}
	 For $\frac{ \log( n ) }{ n } 
	\le  p 
	\le ( 1 + \frac{ 1 }{ 2\beta } )\frac{ \log( n ) }{ n }$, we could apply \eqref{eq: t_0t_1sEstimate}
	to get the same result. 
	Together we conclude that 
	$$ \sum_{ i=1 }^{ n } 
		g\Big( \frac{ n  }{ i } \Big)^2 
	\le 
	n^{2 + o_{  n  }(1) } (  pn  )^{14}. $$
	\end{proof}

	In the end, we will wrap up this section with the last theorem:
\begin{theorem} \label{thm: vecPartition}
	For a sufficiently large $ n$, we have the following:

	Let $ \tau \in (0, \frac{1}{10})$, $\delta 
	\in (0, \frac{ \tau}{3})$, $\rho\in (0, \frac{1}{10})$. Then, 
	\begin{align*}
		\mathbb{R}^n \backslash 
		\big(\cup_{\lambda >0} \lambda \mathcal{V}
		( \tau, { g }, \delta, \rho)\big) 
	\subset 
		(\cup_{\lambda >0} \lambda \mathcal{R})
		\cup \mathcal{T} \cup \{ \vec{0} \} 
	\end{align*}
	where ${ g }$ is the growth function defined in 
	Definition \ref{def: g}.
\end{theorem}

\begin{proof}
	First of all, recall that
	$\mathcal{Y}( \tau ) := \left\{ 
		x \in \mathbb{R}^{n}\,:\,
		x^*_{ \lfloor  \tau n  \rfloor } = 1
	\right\}$.
	For
	$ x\notin \cup_{\lambda >0} \lambda \mathcal{Y}( \tau )$,
	$ x \in \mathcal{T}\cup \left\{ \vec{0} \right\} $.
	So we only need to consider 
	$ x\in \cup_{\lambda >0} \lambda \mathcal{Y}( \tau )$.
	To prove the theorem, it is sufficient to show the following statement:
	$$ 
	\mathcal{Y}_{ n }( \tau) 
	\backslash (\mathcal{V}_n
		(r, { g }, \delta, \rho)  \cup \mathcal{T})
		\subset \mathcal{R}. $$

	Now fix $x \in \mathcal{Y}_{ n }( \tau) 
	\backslash (\mathcal{V}_n
		(r, { g }, \delta, \rho)  \cup \mathcal{T})$.
	Since $ x \notin \mathcal{V}_n
		(r, { g }, \delta, \rho)  $, either 
	\begin{enumerate}
		\item 	there 
	exists no $Q_1,Q_2 \subset [n]$ with 
	$|Q_1|,|Q_2| \ge \delta n $ such that 
	$\max_{i\in Q_2} \le \min_{i\in Q_1} x_i-\rho$, or 
		\item 	$x^*_{i_0} > { g }(\frac{ n }{i_0})$
	for some $i_0 \in [ n ]$. 
	\end{enumerate}

	\medskip

	\underline{Case 1}: 
	In this case, as an intermediate step, we will first show $x \in \cal{AC}(\tau, \rho)$, and prove $x \in \cal R$ later.  
	
	First, there exists a subset $I$ of 
	size $ n - 2 \delta n$
	such that for $i,j\in I$, $ |x_i-x_j|\le \rho $.
	From the assumption $\delta < \frac{\tau}{3}$ stated in the theorem, there exists $j \in \sigma_x
	(\lfloor  \tau n  \rfloor)\cap I$ and $j' \in I \backslash \sigma_x
	(\lfloor  \tau n  \rfloor)$.\\

	Suppose $x_j>0$. Following from 
	$x^*_j \ge x^*_{\lfloor  \tau n  \rfloor}=1$, we have $x_j > 1$. 
	As a consequence, for $i \in I$, 
	$$x_i \ge x_j-\rho \ge 1-\rho.$$
	In particular, the above inequality applied to $i=j'$ as well: $ x_{j'} \ge 1-\rho$. Hence, for $i\in I$, $$x_i \le x_{j'}+\rho \le 1+\rho.$$ 
	Therefore, $x\in \mathcal{AC}( \tau,\rho)$. 
	If $x_j<0$, it can be treated similarly to show 
	$x\in \mathcal{AC}( \tau,\rho)$. We have 	
	$$\left\| x^*_{
        [ \lfloor  \tau n  \rfloor, n]
    } \right\| \ge 
    \sqrt{  ( n -\lfloor  \tau n  \rfloor - 2 \delta n)}(1-\rho) 
    \ge \frac{1}{2}\sqrt{ n },$$
	where we relied on $\tau,\rho < \frac{1}{10}$.

	Further, for  $ k>n_{  {t_2}  +1} $, relying on $x^*_{n_{   {t_2}  +1}} \le \gamma \sqrt{ pn }$ from $x \notin \mathcal{T}$, we have 
	$$\left\| x^*_{[k, n]} \right\|
    \le C_\mathcal{T} \sqrt{pn} \sqrt{n} .$$ 

	To summarize, we have shown that 
	\begin{align*}
		x \in \mathcal{AC}( \tau,\rho) \quad \mbox{ and } \quad
		\frac{1}{2}\sqrt{n} \le 
		\left\| x^*_{[k, n]} \right\|
		\le C_\mathcal{T} \sqrt{pn} \sqrt{n} 
		\mbox{ for }  
			n_{   {t_2}  +1} 
			\le k \le 
			\lfloor  \tau n  \rfloor.
	\end{align*}
	If we can show that there exists $k$
	with $n_{   {t_2}  +1} 
			\le k \le 
			\lfloor  \tau n  \rfloor$
	such that 
	$\frac{ \left\| x^*_{[k, n ]} \right\| 
		}{ \left\| x^*_{[k, n]} \right\|_\infty } \ge 
	\frac{2C_{\rm {Rgz}}}{\sqrt{  p  }}$, 
	then $x \in \mathcal{R}_{k}^1 \subset \mathcal{R}$. 
	Let $m_0:= \frac{n}{\log^2(pn)}$. Notice that 
	we have $m_0 \ge 2n_{  {t_2}  +1} $ when $n$ is sufficiently 
	large. Here we will argue according to the value of $x_{m_0}^*$: 	
	\begin{enumerate}
		\item $x^*_{m_0} \le \log^2(  pn  )$

			First, 
			$\left\| x^*_{[m_0, n ]} \right\| \ge 
			\left\| x^*_{[ \lfloor  \tau n  \rfloor, n]} \right\|  
			\ge \frac{1}{2}\sqrt{n}$. 	
			Second, $\left\| x^*_{[m_0, n]} \right\|_\infty
				\le  \log^2( pn ).
			$ 
			
			Then, 
			$\frac{ \left\| x^*_{[m_0, n ]} \right\| 
		}{ \left\| x^*_{[m_0, n]} \right\|_\infty }  \ge 
		\frac{\sqrt{ n }}{2\log^2(  pn  )} \ge
		\frac{2C_{\rm{Rgz}}}{\sqrt{  p  }}$. Therefore, 
		$x \in \mathcal{R}_{m_0}^1$.

		\item $x^*_{m_0} > \log^2(  pn )$
	
			First, 
			$\left\| x^*_{[ n_{  {t_2}  +1}, n ]} \right\| 
			\ge 
			 \left\| x^*_{[ n_{  {t_2}  +1}, m_0 ]} \right\| 
			\ge
			 \sqrt{\frac{1}{2}m_0 } x^*_{m_0}
			\ge 
				\sqrt{\frac{n}{2}} \log( pn )$.
				
			Second, $\left\| 
				x^*_{[ n_{  {t_2}   + 1}, n]} 
			\right\|_\infty
			\le  \gamma \sqrt{ pn }$.

			Then, 
			$\frac{ \left\| x^*_{[n_{  {t_2}   + 1}, n ]} \right\| 
				}{ \left\| 
				x^*_{[n_{  {t_2}   + 1}, n]} 
			\right\|_\infty }  \ge 
				\frac{ \log( pn )}{ \sqrt{2} \gamma \sqrt{p}}	
				\ge
			\frac{2C_{\rm {Rgz}}}{\sqrt{  p  }}$. Therefore, 
			$x \in \mathcal{R}_{ n_{  {t_2}  +1}}^1$.
	\end{enumerate}
		
	\medskip

	\underline{Case 2:}	
	Now, let us turn to the second case that 
	$x^*_{i_0} > { g }(\frac{ n }{i_0})$
	for some $i_0 \in [ n ]$. If there are multiple indices
	satisfying this inequality, let $i_0$ be the 
	smallest index among them. 
	First of all, since $x^*_{\lfloor  \tau n  \rfloor}=1$, 
	$i_0 \le \lfloor  \tau n  \rfloor$.
	On the other hand, we also have a lower bound of $i_0$ by Proposition \ref{prop: specificGrowthFunction}: 
	$i_0 > n_{   {t_2}   + 1}$.
	Our goal is to show $x \in \cal R_k^2 \subseteq \cal R$ for some suitable value $k \le i_0/2$.

	Now, for $ k \le \frac{i_0}{2}$,
	\begin{align*}
		\left\| x^*_{[k , n]} \right\|  \ge 
		\left\| x^*_{[\frac{i_0}{2},i_0]} \right\|  \ge 
		\sqrt{\frac{i_0}{2}}{ g }(n/ i_0) =  	
		\sqrt{\frac{i_0}{2}} \Big(2\frac{n}{i_0} \Big)^{3/2} 
		\ge 
			\frac{2}{\tau} \sqrt{n},
	\end{align*}	
	and for $k \ge n_{  {t_2}  }$, by Proposition \ref{prop: TnormBound}, 
	$$\left\| x^*_{[k , n]} \right\|  \le 
	\sqrt{n} C_{\mathcal{T}_2}^2  pn  =
		C_{\mathcal{T}_2}^2  p  n^{3/2}.$$ Hence, 
	for $ k \in [n_{t_2}, \min\{ i_0/2, n/\log^2(pn)\}]$, $x$ satisfies the second condition of the collection $\cal R_k^2$:
	$$
		\frac{2}{\tau} \sqrt{n} \le \left\| x^*_{[k , n]} \right\|  \le C_{\cal T_2}^2 pn^{3/2}. 
	$$ 
	It remains to show there exists $k \in [n_{t_2}, \min\{ i_0/2, n/\log^2(pn)\}]$ satisfying 
	$
		\frac{\left\| x^*_{[k,n]} \right\| 
		}{ \left\| x^*_{[k,n]} \right\|_\infty } 
	\ge 
		\frac{2C_{\rm{Rgz}}}{\sqrt{ p }}$
	for some $ n_{  {t_2}} 
	\le k \le \frac{n}{\log^2( pn )}$
	to conclude that $ x \in \mathcal{R}_k^2 \subset \mathcal{R}$.

	Suppose $n_{  {t_2}  _0+1} \le i_0 
	\le 2 \frac{n}{\log^2( pn )}$, 
	let $k = \frac{i_0}{2} \ge \frac{1}{2}n_{  {t_2}  _0+1} 
	\ge n_{  {t_2}  _0}$.
	We have 
	\begin{align*}
	    \frac{\left\| x^*_{[k,n]} \right\| }{
			\left\| x^*_{[k,n]} \right\|_\infty} 
	\ge 
		\sqrt{\frac{i_0}{2}}
		\frac{{ g }(\frac{n}{i_0}) }{ 
			{ g }(\frac{ 2 n}{i_0})}
	= 
		\sqrt{2i_0} 
	\ge 
		 \sqrt{2 \frac{ n }{  p  }} 
	\ge 
		\frac{2C_{\rm {Rgz}}}{\sqrt{ p }}.
	\end{align*}
	In the case $ 2 \frac{n}{\log^2( pn )} 
	\le  i_0 \le \tau n$, 
	let $k = \frac{n}{\log^2( pn )}$.
	We have 	
	\begin{align*}
	    \frac{\left\| x^*_{[k,n]} \right\| }{
			\left\| x^*_{[k,n]} \right\|_\infty} 
	\ge 
		\sqrt{\frac{i_0}{2}}
		\frac{{ g }(\frac{n}{i_0}) }{ 
			{ g }(\frac{n}{k})}
	\ge 
		\sqrt{\frac{i_0}{2}}(\frac{k}{i_0})^{3/2} 
	\ge 
		\frac{k^{3/2}}{\sqrt{2}  \tau n  }
	\ge 
		\frac{\sqrt{ n }
		}{ \sqrt{2} \tau \log^3( pn ) }
	\ge 
		\frac{2C_{\rm{Rgz}}}{\sqrt{ p }}.
	\end{align*}
	
\end{proof}

\section{Proof of Theorem \ref{thm: LT}}
In this section, we will fix constants $(r, \delta, \rho ) \in ( 0,1)$,
$R \ge 1$, and $K \ge 1$, and for each positive integer $n$, we fix an increasing 
function ${ g }: [1, \infty ) \mapsto [1, \infty)$ satisfying 
\begin{align*}
		\forall a \ge 2, \, \forall t \ge 1 : \, \, 
		{ g }(at) \ge { g }(t) + a \quad
		\mbox{ and } \quad  
		\prod_{j=1}^\infty { g } (2^j)^{j2^{-j}}  \le K.
\end{align*}

The following is a partial result of 
	Litvak-Tikhomirov \cite{LT20} which address gradual non-constant 
	vectors (Combination of Theorem 2.1 and Theorem 2.2):

\begin{theorem} \label{thm: LT Partial}
	There are $n_0 \in \mathbb{N}$, $c_\beta >0$  
	depending on $\tau , \delta, \rho, R,  K$ such that 
	the following holds. 
	Let $n \ge n_0$, $p \le c_\beta$, and $\log n  \le pn$. Let 
	$ { g } : [1 , \infty) \mapsto [1,\infty) $ be a growth function 
	(see Definition \ref{def: g}) satisfying 
	\begin{align*}
		\forall a \ge 2, \, \forall t \ge 1 : \, \, 
		{ g }(at) \ge { g }(t) + a 
		 \quad \mbox{ and } \quad  
		\prod_{j=1}^\infty { g } (2^j)^{j2^{-j}}  \le K. 
	\end{align*}	
	Then, with probability at least $1 - \exp(-Rpn)$, one has 
	\begin{align*}
		& \left\{ \mbox{Set of normal  vectors to } 
			{\bf C}_2(A_n), \dots , {\bf C}_n(A_n)
		\right\} 	 
		 \cap \mathcal{V} (\tau , { g }, \delta, \rho) \\
	& \subset 
		\left\{ 
			x\in \cal{Y}(\tau) \,:\, 
			\mathcal{L}\left( 
				\sum_{i=1}^n Y_i x_i, \sqrt{m} t 
			\right) \le C' \max\{t, \exp(-Rpn)\}
			\mbox{ for all } t\ge 0
		\right\} ,
	\end{align*}
	where $C'>0$ may depend on $K$, 
	and $ Y = (Y_1, \dots, Y_n)$ be a random $0/1$-vector 
	in $\mathbb{R}^n$ uniformly distributed on 
	the set of vectors with $m$ ones and $n-m$ zeroes. 
\end{theorem}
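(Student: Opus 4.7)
The plan is to prove the statement by the classical epsilon‑net plus tensorization scheme, adapted to sparse Bernoulli columns in the spirit of Rudelson–Vershynin and Litvak–Tikhomirov. The first observation is that a vector $x$ is normal to ${\bf C}_2(A), \ldots, {\bf C}_n(A)$ precisely when $\langle {\bf C}_j(A),x\rangle = 0$ for every $j = 2, \ldots, n$. So I want to show that with probability at least $1-\exp(-Rpn)$, no vector $x \in \mathcal{V}_n(r,{\bf g},\delta,\rho)$ whose Lévy concentration function $\mathscr{Q}(\sum Y_i x_i,\sqrt{m}t)$ ever exceeds $C'\max\{t,\exp(-Rpn)\}$ simultaneously satisfies all these $n-1$ orthogonality relations.

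The second step is a single‑column small‑ball estimate for any fixed $x \in \mathcal{V}_n$. The defining property of $\mathcal{V}_n$ furnishes $Q_1,Q_2 \subset [n]$ of size $\geq \delta n$ with $\max_{Q_2} x_i \le \min_{Q_1} x_i - \rho$. Writing $\langle {\bf C}_j(A),x\rangle$ conditionally on $|{\rm supp}({\bf C}_j(A))|=m$ turns the column into a uniform $m$-sparse $0/1$ vector $Y$. The $Q_1,Q_2$ spread condition implies that after symmetrization the individual Bernoulli contributions have concentration function bounded away from $1$ on a linearly large subset, and Rogozin's theorem (Theorem~\ref{thm:Rogozin}) then delivers a bound of the desired shape $\mathscr{Q}(\langle Y,x\rangle,\sqrt{m}t) \lesssim t/\sqrt{\delta}$. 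This is exactly the conclusion one wants; the remaining task is to argue that if the concentration function is \emph{larger} than the claimed right‑hand side, then $x$ cannot be almost orthogonal to $n-1$ independent columns.

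The third step is the net argument. Using $x^*_i \le {\bf g}(n/i)$ and the summability condition $\prod_j {\bf g}(2^j)^{j2^{-j}} \le K$, one can construct a net $\mathcal{N}_\eta$ for $\mathcal{V}_n$ at scale $\eta\,b_n$ of cardinality at most $(CK/\eta)^n$, i.e.\ exponential only in $n$ with a constant depending on $K$. Because the columns of $A$ are jointly independent, for any fixed $x$ the probability that $|\langle {\bf C}_j(A),x\rangle| \le \sqrt{pn}\,\tau$ for every $j=2,\ldots,n$ tensorizes to roughly $(C_{\rm Rgz}\mathscr{Q}(\langle Y,x\rangle,\sqrt{m}\tau))^{n-1}$; if $x$ were in the "bad" set the tensorized probability could be ruled out only for an appropriate range of $\tau$. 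One covers $\mathcal{V}_n$ with $\mathcal{N}_\eta$, applies the union bound over the net, and controls the approximation error using the operator norm estimate from Lemma~\ref{lem: OperatorNorm} (to transfer the estimate from net points back to arbitrary $x$) together with a standard rounding of $x$ to the net which respects the $\mathcal{V}_n$‑structure up to constants.

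The main obstacle will be calibrating the net scale $\eta$ and the cutoff $t_0:=\exp(-Rpn)$ so that the net cardinality $(CK/\eta)^n$ is absorbed by the tensorized small‑ball factor $\tau^{n-1}$ while also paying only $\exp(-Rpn)$ overall; the presence of the subexponential threshold $\exp(-Rpn)$ in the right‑hand side is precisely what lets one push $\tau$ down to this scale before the net count wins. A secondary difficulty is handling the sparse regime $pn$ close to $\log n$: the concentration function of a single sparse column is close to $1$ unless one uses the gradual structure of $\mathcal{V}_n$ on a full linear fraction of coordinates, so the bound on $\mathscr{Q}$ in step two must be carefully derived using all of $\delta n$ spread coordinates rather than a short support. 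Once these two are balanced, the union bound closes and yields the stated inclusion with failure probability $\exp(-Rpn)$.
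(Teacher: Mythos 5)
First, a point of reference: the paper does not prove this statement at all --- it is quoted verbatim from Litvak--Tikhomirov \cite{LT20} as a combination of their Theorems 2.1 and 2.2, so there is no in-paper proof to compare against. Judged on its own merits, your proposal has a genuine gap at its core.

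The gap is in your step two. You claim that the spread condition ($Q_1,Q_2$ of size $\delta n$ separated by $\rho$) plus Rogozin's theorem yields $\mathscr{Q}(\langle Y,x\rangle,\sqrt{m}\,t)\lesssim t/\sqrt{\delta}$, and that this ``is exactly the conclusion one wants.'' It is not, and it cannot be: Rogozin gives anticoncentration only at the single scale set by the spread of the coefficients, roughly $\mathscr{Q}(\langle Y,x\rangle,\rho)\le C/\sqrt{\delta m}$, whereas the theorem demands the multiscale bound $\mathscr{Q}(\langle Y,x\rangle,\sqrt{m}\,t)\le C'\max\{t,\exp(-Rpn)\}$ all the way down to exponentially small $t$. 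This stronger bound is \emph{false} for generic $x\in\mathcal{V}_n$: take $x$ gradual and non-constant with coordinates in $\{0,1,2\}$; then $\langle Y,x\rangle$ is integer-valued, so $\mathscr{Q}(\langle Y,x\rangle,\sqrt{m}\,t)\ge c/\sqrt{m}$ for every $t>0$, which violates the claimed bound at $t=m^{-2}$, say. The conclusion is a property of \emph{normal vectors} only, obtained by conditioning on a high-probability event of the matrix; no deterministic small-ball estimate for all of $\mathcal{V}_n$ can substitute for it. Consequently your fallback --- cover all of $\mathcal{V}_n$ by a volumetric net of cardinality $(CK/\eta)^n$ and tensorize --- also cannot close: the tensorized small-ball factor available for an arbitrary gradual vector is only $(C\rho)^{n-1}$ at the spread scale, which does not beat $(CK/\eta)^n$ for the small $\eta$ you would need, and there is no calibration of $\eta$ and $t_0$ that fixes this.

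What is actually required is an inverse Littlewood--Offord mechanism: one introduces a structure parameter (the least common denominator in the Rudelson--Vershynin framework, or the ``degree of unstructuredness'' $\mathbf{UD}$ in \cite{LT20}, which is what makes the argument work in the sparse regime $pn\asymp\log n$), shows that the sublevel sets of this parameter admit nets whose cardinality is controlled by the anticoncentration itself (via a randomized rounding construction, not a volumetric count), and only then tensorizes over the $n-1$ independent columns to exclude structured normal vectors on an event of probability $1-\exp(-Rpn)$. The final inclusion then follows from the inverse relation between the structure parameter and $\mathscr{Q}$. Your proposal is missing this entire layer, which is the substance of Theorems 2.1 and 2.2 of \cite{LT20}.
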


We also need a modified version of the "invertibility via distance" lemma in 
Litvak-Tikhomirov \cite{LT20} (Lemma 7.4). For a $n\times n$ random matrix $B$, $i \in [n]$ and $t>0$, consider the event 
\begin{align*}
	\Omega_B(i,t) := \left\{ 
	\mbox{dist}( H_{i}(A), {\bf C}_i(A) ) \le t b_n 
\right\} ,
\quad \text{where} \quad  H_i(A) := \opn{span}\big( \{{\bf C}_j(A)\}_{j \neq i}\big)
\end{align*}
and 
$$
	b_n 
:= 
	\sqrt{\sum_{i=1}^n { g }(\frac{n}{i})^2}
\ge 
	\max_{x \in \cal V} \|x\|.
$$
 
\begin{lemma} \label{lem: invertibilityDistance}
 	Fix a positive integer $k$. For a sufficiently large $n$, 
	let $B$ be an $n \times n$ random marix. 
	Then, for any $t > 0$ we have 
 	\begin{align*}
 		\mathbb{P} \big\{ 
 			\exists x \in \mathcal{V} \mbox{ s.t. }
 			\left\| Bx \right\| \le t \left\| x \right\| 
 		 \big\}  
 	\le 
		\frac{2}{ ( \tau n )^k } \sum_{ \bf i }
		\mathbb{P} \{ \Omega_{B}({\bf i}, t)\},
 	\end{align*}
 	where the sum is taken over all sequence 
	$ {\bf i} = (i_1, i_2, \dots, i_k)$ 
 	with distinct values $i_1,\dots, i_k \in [n]$, and
 	\begin{align*}
		\Omega_B({\bf i}, t) 
		= 	\Omega_{B}( (i_1,\dots,i_k), t ) 
	:=  \cap_{l=1}^k \Omega_B(i_l, t).
	\end{align*}
\end{lemma}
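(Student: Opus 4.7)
The plan is the standard ``invertibility via distance'' reduction: show that whenever some $x \in \mathcal{V}_n$ witnesses $\|Mx\| \le t\|x\|$, many individual columns of $M$ must be close to the span of the remaining columns, and then average over the resulting collection of $k$-tuples.

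First, I would fix a realization of $M$ for which there exists $x \in \mathcal{V}_n = \mathcal{V}_n(r, {\bf g}, \delta, \rho)$ with $\|Mx\| \le t\|x\|$. For any $i \in [n]$, writing $Mx = x_i {\bf C}_i(M) + \sum_{j \neq i} x_j {\bf C}_j(M)$ and rearranging gives
\[
 \mbox{dist}({\bf C}_i(M), H_i(M)) \le \frac{\|Mx\|}{|x_i|} \le \frac{t\|x\|}{|x_i|}.
\]
Since $x \in \mathcal{V}_n$, two facts hold: $x^*_{\lfloor rn \rfloor} = 1$, so $|x_i| \ge 1$ for every $i \in J_x := \sigma_x([\lfloor rn \rfloor])$; and $x^*_i \le {\bf g}(n/i)$ for all $i$, so $\|x\| \le \sqrt{\sum_i {\bf g}(n/i)^2} = b_n$. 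Combining these, $ \mbox{dist}({\bf C}_i(M), H_i(M)) \le t b_n$ for every $i \in J_x$, which is precisely the event $O_M(i, t)$.

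Next, I would turn this pointwise observation into a counting bound. The set $J_x$ has size $\lfloor rn \rfloor$, so the number of ordered $k$-tuples ${\bf i} = (i_1,\dots,i_k)$ of distinct indices drawn from $J_x$ equals $\lfloor rn \rfloor (\lfloor rn \rfloor - 1)\cdots(\lfloor rn \rfloor - k + 1)$. For fixed $k$ and all sufficiently large $n$, this product is at least $(rn)^k/2$. Moreover, for every such ${\bf i} \subset J_x$ one has $O_M({\bf i}, t) = \bigcap_{l=1}^k O_M(i_l, t)$ by the previous step. Hence, on the event $\{\exists x \in \mathcal{V}_n : \|Mx\| \le t\|x\|\}$,
\[
 \sum_{\bf i} \mathbbm{1}_{O_M({\bf i}, t)} \;\ge\; \frac{(rn)^k}{2},
\]
where the sum runs over all ordered $k$-tuples of distinct indices in $[n]$. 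Taking expectations and dividing by $(rn)^k/2$ yields the claimed inequality.

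This argument is essentially bookkeeping rather than probabilistic: the only ``hard'' input is the structural fact that every $\mathcal{V}_n$-vector has $\lfloor rn \rfloor$ coordinates of modulus at least one while simultaneously having $\ell_2$-norm bounded by $b_n$, which is baked into the definition of $\mathcal{V}_n$ and the growth function ${\bf g}$. Consequently, no additional randomness or anti-concentration input is needed at this stage; the content is transferred entirely to controlling the individual distance events $O_M({\bf i}, t)$, which in the paper's setup will be handled via the conditional Rogozin/Littlewood--Offord estimate supplied by Theorem \ref{thm: LT Partial}.
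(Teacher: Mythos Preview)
Your proposal is correct and follows essentially the same argument as the paper's proof: both derive $|x_i|\,\mbox{dist}({\bf C}_i(M),H_i(M))\le \|Mx\|\le t b_n$ from $x\in\mathcal{V}_n$, use $x^*_{\lfloor rn\rfloor}=1$ to find at least $\lfloor rn\rfloor$ indices with $|x_i|\ge 1$, count ordered $k$-tuples of such indices (at least $(rn)^k/2$), and finish with Markov's inequality on $\sum_{\bf i}\mathbbm{1}_{O_M({\bf i},t)}$.
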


\begin{proof}
	Fix $t>0$. 
	Let ${\bf 1_i}$ be the indicator function of $\Omega_B({\bf i}, t)$. 
 	The condition $ \left\| Bx \right\| \le t \left\| x \right\| $ 
	for some $x \in \mathcal{V}_n$ implies that for every $i \in [n]$, 
 	\begin{align*}
 		|x_i|
 		 \mbox{dist}\left( H_i(B), {\bf C}_i(B) \right)
	=
		\|P_i Bx\|
 	\le
		\left\| Bx \right\|  
 	\le 
 		tb_n,
 	\end{align*}
	where $P_i$ is the orthogonal projection to $H_i(B)^\perp$, and 
 	the last inequality follows from the definition of 
 	$\mathcal{V}$ and $b_n$. Since $x^*_{\lfloor \tau n \rfloor } = 1$ from $x \in \cal Y(\tau)$, 
 	there are 
	$$\prod_{j=0}^{k-1} ( \lfloor \tau n \rfloor
 	- j ) \ge \frac{1}{2} ( \tau n )^k$$ 
	ordered sequence ${\bf i} =(i_1,\dots, i_k)$ with distinct values 
 	such that $ {\bf 1_i} = 1$. As a consequence, 
 	\begin{align*}
		\mathbb{P}\left\{ 
 			\exists x \in \mathcal{V}_n	\mbox{ s.t. }
 			\left\| Ax \right\| \le t \left\| x \right\|   
 		\right\} 
 	\le  
		\mathbb{P}\left\{ 
 			\sum_{{\bf i}} {\bf 1_i} \ge 
 			\frac{1}{2} ( \tau n )^k
 		\right\} .
 	\end{align*}
 
 	Then, apply the Markov's inequality in the right hand side 
 	we obtain the statement of the Lemma. 
\end{proof}

To prove the theorem, there are 3 steps. First, we will reduce 
the probability estimate from the tail of 
$ \min_{x \in \mathcal{V}} \left\|  Ax \right\| $
to the tail of distance to subspaces. In this step, we 
rely on Lemma \ref{lem: invertibilityDistance}. 
Second, we further simplify the estimate to the probability estimate of the tail 
of $| \langle X ,\, Y  \rangle |$ where $X$ 
is a normal vector to span of $ \left\{ {\bf C}_j(A) \right\}_{j\neq 1}  $ 
and $Y = {\bf C}_1(A)$.
And finally, we can apply Theorem \ref{thm: LT Partial} to 
finish the proof.

\begin{proof}
Let $\Omega( \mathcal{V}^c )$ be the event that 
$ \forall x \notin \cup_{\lambda \ge 0} \lambda \mathcal{V}$, 
$ \left\| A^\top x \right\| > 0 $. 
Applying Lemma \ref{lem: invertibilityDistance} with $B$ being 
$(A \big \vert \Omega( \mathcal{V}^c))$, we obtain  

\begin{align*}
	&\mathbb{P} \left\{  
		\exists x \in \mathcal{V} \mbox{ s.t. }
		\left\| Ax \right\| \le t b_n
	\biggm \vert
		\forall y \notin \cup_{ \lambda \ge 0} \lambda \mathcal{V},
		\left\| A^Tx \right\| \ge t \left\| x \right\| 
 	\right\}  \\
\le &
	\frac{2}{ ( rn )^k} \sum_{\bf i} \mathbb{P}\{ \Omega_B({\bf i}, t) \} \\
\le&
	\frac{2}{r^k} \mathbb{P} \{\Omega_B( (1,2,\dots, k) , t) \} \\
=&
	\frac{2}{r^k} 
	\mathbb{P} \left\{ \Omega_A( (1,2,\dots, k), t )
	\biggm \vert \Omega(\mathcal{V}^c) \right\},  \\
\end{align*}
where the second inequality follows from the oberservation that the 
distribution of $M$ is invariant under permutation of columns.

Next, we fix ${\bf i} = ( 1,2,\dots, k )$. For $i \in [n]$, let $U(i)$ be the 
event that $  \frac{1}{8} pn \le |\mbox{supp}({\bf C}_i(A)| \le 8 pn  $. 
Then, 
\begin{align} \label{eq: ProofOFLT OA}
	& \mathbb{P} \left\{ 
		\Omega_A( {\bf i}, t) \biggm \vert \Omega(\mathcal{V}^c) 
	\right\} \\
\nonumber
\le &
	\mathbb{P} \left\{ \cap_{l=1}^k U(l)^c 
		\biggm \vert \Omega(\mathcal{V}^c) 
	\right\} +
	\mathbb{P} \left\{ 
		\cup_{l=1}^k U(l) \cap 	
		O_A({\bf i}, t) \biggm \vert \Omega(\mathcal{V}^c) 
	\right\}.  
\end{align}

For the first summand of \eqref{eq: ProofOFLT OA}, 
\begin{align*}
	\mathbb{P} \left\{ \cap_{l=1}^k U(l)^c 
		\biggm \vert \Omega(\mathcal{V}^c) 
	\right\} 
\le &
	\frac{ 1 }{\mathbb{P} \left\{ \Omega(\mathcal{V}^c) \right\}   } 
	\mathbb{P} \left\{ \cap_{l=1}^k U(l)^c \right\} \\
= &
	\frac{ 1 }{\mathbb{P} \left\{ \Omega(\mathcal{V}^c) \right\}   } 
	\mathbb{P} \left\{ U(1)^c \right\}^k \\
\le &
	\frac{ 1 }{\mathbb{P} \left\{ \Omega(\mathcal{V}^c) \right\}   }
	\exp( -ck pn)
\end{align*}
where the last inequality follows by Proposition \ref{prop: Binomial} and 
$c>0$ is an universal constant. 

For the second summand of \eqref{eq: ProofOFLT OA}, 
\begin{align*}
	\mathbb{P} \left\{ 
		\cup_{l=1}^k U(l) \cap 	
		O_A({\bf i}, t) \biggm \vert \Omega(\mathcal{V}^c) 
	\right\}
\le & 
	\sum_{l=1}^k \mathbb{P} \left\{ 
		 U(l) \cap 	
		O_A({\bf i}, t) \biggm \vert \Omega(\mathcal{V}^c) 
	\right\} \\
\le &
	\sum_{l=1}^k \mathbb{P} \left\{ 
		U(l) \cap 	
		O_A(l, t) \biggm \vert \Omega(\mathcal{V}^c) 
	\right\} \\
= &
	k \mathbb{P} \left\{ 
		U(1) \cap 	
		O_A(1,t) \biggm \vert \Omega(\mathcal{V}^c) 
	\right\},
\end{align*}
where the last equality follows from the fact that the distribution of $A$ 
is invariant under permutation of of columns. 

Let $\Omega( \mathcal{V}^c, 1)$ be the event that 
$ \forall x \notin \cup_{\lambda \ge 0} \lambda \mathcal{V}$, 
$ \left\| (A_{[n],[2,n]})^\top x \right\| \ge t \left\| x \right\| $.
Notice that if the event $\Omega( \mathcal{V}^c, 1)$ holds, 
then any normal vector of $H_1(A)$ is contained in  
$ \cup_{ \lambda \ge 0 }\lambda \mathcal{V}$. 
Further, ${\bf C}_1(A)$ is independent from $\Omega( \mathcal{V}^c, 1)$ and $  \Omega( \mathcal{V}^c) \subset \Omega( \mathcal{V}^c, 1)$.
\begin{align*}
	\mathbb{P} \big\{ U(1) \cap \Omega_A(1,t)
		\big \vert 
		\Omega( \mathcal{V}^c) 	
	\big\} 	
\le &
	\frac{ \mathbb{P} \left\{ U(1) \cap \Omega_A(1,t) \cap 
		\Omega( \mathcal{V}^c,1 )\right\}  
	}{  
		\mathbb{P} \left\{ \Omega( \mathcal{V}^c)\right\}  
	} \\
= &
	\frac{ \mathbb{P} \left\{ \Omega( \mathcal{V}^c, 1 )\right\}  
 	}{ \mathbb{P} \left\{ \Omega( \mathcal{V}^c)\right\}  } 	
	\mathbb{P} \Big\{  U(1) \cap \Omega_A(1,t)
		\Big \vert 
		\Omega( \mathcal{V}^c, 1) 	
	\Big\} \\
\le & 
	\frac{ 1
 	}{ \mathbb{P} \left\{ \Omega( \mathcal{V}^c)\right\}  } 	
	\mathbb{P} \Big\{  U(1) \cap \Omega_A(1,t)
		\Big\vert 
		\Omega( \mathcal{V}^c, 1) 	
	\Big\}.
\end{align*}
Further,
\begin{align*}
	& \mathbb{P} \left\{  U(1) \cap \Omega_A(1,t)
		\biggm \vert 
		\Omega( \mathcal{V}^c, 1) 	
	\right\} \\
\le & 
	\mathbb{P} \left\{  \Omega_A(1,t)
		\biggm \vert 
		\Omega( \mathcal{V}^c, 1) 	
		\cap U(1)
	\right\} \\
\le &
	\max_{ \frac{ 1 }{ 8 } pn \le m \le 8 pn }
		\mathbb{P} \left\{  \Omega_A(1,t)
			\biggm \vert 
			\Omega( \mathcal{V}^c, 1) \cap 
			\left\{ | \mbox{ supp }({\bf C}_1(A)) | = m \right\}
		\right\}. 
\end{align*}

To summarize, at this point we reach 
\begin{align*}
& \mathbb{P} \left\{  
		\exists x \in \mathcal{V} \mbox{ s.t. }
		\left\| Ax \right\| \le t b_n
	\biggm \vert
		\forall y \notin \cup_{ \lambda \ge 0} \lambda \mathcal{V},
		\left\| A^\top x \right\| \ge t \left\| x \right\| 
 	\right\}  \\
\le &
	\frac{ 2 }{ \tau^k	\mathbb{P} \left\{  \Omega( \mathcal{V}^c) \right\} }
	\bigg(
 		\exp(-ck pn) +\\
& \phantom{AAA}
	+
		k \max_{ \frac{ 1 }{ 8 } pn \le m \le 8 pn }
		\mathbb{P} \left\{  \Omega_A(1,t)
			\biggm \vert 
			\Omega( \mathcal{V}^c, 1) \cap 
			\left\{ | \mbox{ supp }({\bf C}_1(A)) | = m \right\}
		\right\}
	\bigg).
\end{align*}

Let us make the following \textbf{claim:}
For any $R>1$, if $n$ is sufficiently large, 
\begin{align*}
& \max_{ \frac{ 1 }{ 8 } pn \le m \le 8 pn }
	\mathbb{P} \left\{  \Omega_A(1,t)
		\biggm \vert 
		\Omega( \mathcal{V}^c, 1) \cap 
			\left\{ | \mbox{ supp }({\bf C}_1(A)) | = m \right\}
		\right\} \\ 
\le & C\frac{tb_n}{\sqrt{pn}} +	
		\frac{ C }{ \mathbb{P} \left\{  \Omega( \mathcal{V}^c) \right\} } 
		 \exp(-Rpn) \,,
\end{align*}
where $C>1$ is a constant that could possibly depends on $K$ and $\tau$. 
Then,  we obtain 
\begin{align*}
& \mathbb{P} \left\{  
		\exists x \in \mathcal{V} \mbox{ s.t. }
		\left\| Ax \right\| \le t b_n
	\biggm \vert
		\forall y \notin \cup_{ \lambda \ge 0} \lambda \mathcal{V},
		\left\| A^Tx \right\| \ge t \left\| x \right\| 
 	\right\}  \\
\le &
	\frac{ 2 }{ \tau^k ( \mathbb{P} \left\{  \Omega( \mathcal{V}^c) \right\}  )^2  }
	\left( \exp(-ck pn) +  
	C\frac{ktb_n}{\sqrt{pn}} + k\exp(-Rpn) \right)
\end{align*}
In the end, by choosing $k=R$ and increase the value of $C$, we obtain 
the statement of the theorem. 

\medskip

Now, it remains to prove the {\bf claim}. 
Let us fix $m \in \Big[\frac{ 1 }{ 8 } pn , 8 pn\Big]$ and let 
$\Omega_{\rm LT}(m)$ be the event described in Theorem 
\ref{thm: LT Partial} that  
\begin{align*}
	& \left\{  
		\mbox{ Sets of normal vectors to }
			{\bf C}_j( A) 
		\mbox{ with $j \neq 1$ } 
	\right\} \cap \mathcal{V} \\
\subset &
	\left\{ 
		x\in \cal {Y}(\tau) \,:\, 
		\mathcal{L}\left( 
			\sum_{l=1}^n Y_l x_l, \sqrt{m} s 
		\right) \le C' \max\{s, \exp(- Rpn  )\}
		\mbox{ for all } s\ge 0
	\right\} 
\end{align*}
{
	wehre $ Y = (Y_1, \dots, Y_n)$ be a random $0/1$-vector 
	in $\mathbb{R}^n$ uniformly distributed on 
	the set of vectors with $m$ ones and $n-m$ zeroes, 
	and $C'$ is the constant described in Theorem 
	\ref{thm: LT Partial} which is compatible 
	with our chocie of growth function ${ g }$ and $K$. 
}

We remark that the event 
$ \Omega_{\rm LT}(m) \cap \Omega( \mathcal{V}^c, 1) $
is independent from ${\bf C}_1(A)$. 
Further, condition on $ \Omega(\mathcal{V}^c, 1)$, 
normal vectors of $H_1(A)$ 
are contained in $\cup_{\lambda \ge 0} \lambda \mathcal{V}$. 
From now on, we condition on 
	$ \Omega_{\rm LT}( m) \cap 
	\Omega( \mathcal{V}^c, 1)$,
	let $X \in \mathcal{V} $ be a measurable (random) vector which 
	is normal to $ \left\{ {\bf C}_j(A) \right\}_{j \neq 1}  $. 
	
	Then, $X$ and $\big( {\bf C}_1(A) \big\vert 
		 |\mbox{supp}({\bf C}_1(A))|=m  \big)$
	 are jointly independent.
	Further, $$\big( {\bf C}_1(A) \big\vert 
		 |\mbox{supp}({\bf C}_1(A))|=m  \big)$$
		fits the description of $Y$ in the event 
	$\Omega_{\rm LT}(m)$. 
	Together with 
		$ | \langle X ,\, {\bf C}_1(A) \rangle | 
		\le
		\mbox{ dist} ( H_1(A), {\bf C}_1(A) ) $, 
	\begin{align*}
		& \mathbb{P} \left\{  \Omega_A(1,t)
		\biggm \vert 
		\Omega_{\rm LT}( m) \cap 
		\Omega( \mathcal{V}^c, 1) \cap 
			\left\{ | \mbox{supp}({\bf C}_1(A)) | = m \right\}
	\right\} \\
	\le &
		\mathbb{P} \left\{ 
			| \langle X ,\, {\bf C}_1(A) \rangle | \le tb_n
	\biggm \vert 
	\Omega_{\rm LT}( m) \cap 
		\Omega( \mathcal{V}^c, 1) \cap 
			\left\{ | \mbox{supp}({\bf C}_1(A)) | = m \right\}
		\right\} \\
	\le & 
	C'\max\Big\{ \frac{ t b_n }{ \sqrt{pn}  } ,\, \exp(-Rpn) \Big\},
	\end{align*}
	where the last inequality follows form the definition of $\Omega_{\rm LT}(m)$. 
	Now, we apply Theorem \ref{thm: LT Partial} to get
	\begin{align*}
	\mathbb{P} \left\{ 
		\Omega_{\rm LT}( m)
	\biggm \vert 
		\Omega( \mathcal{V}^c, 1) \cap 
			\left\{ | \mbox{supp}({\bf C}_1(A)) | = m \right\}
		\right\} 
	= &
		\mathbb{P} \left\{ 
			\Omega_{\rm LT}( m)
	\biggm \vert 
		\Omega( \mathcal{V}^c, 1) 
		\right\} \\
	\le & 
	\frac{ 1 }{\mathbb{P} \left\{ \Omega( \mathcal{V}^c, 1) \right\} }  
 	\mathbb{P} \left\{ \Omega_{\rm LT}( m) \right\} \\
	\le &
		\frac{ 1 }{\mathbb{P} \left\{ \Omega( \mathcal{V}^c) \right\} }  
		\exp(-Rpn)
	\end{align*}
	where the first equality is due to the independence of 
	$ \Omega_{\rm LT}( m)$ and  \\
	$\left\{ | \mbox{supp}({\bf C}_1(A)) | = m \right\}$, 
	the last inequality is due to 
	$ \mathbb{P} \left\{ \Omega( \mathcal{V}^c, 1) \right\} \ge 
		\mathbb{P} \left\{ \Omega( \mathcal{V}^c) \right\}$.
	Finally, we conclude that 
	\begin{align*}
	& \mathbb{P} \left\{  \Omega_A(1,t)
		\biggm \vert 
		\Omega( \mathcal{V}^c, 1) \cap 
			\left\{ | \mbox{supp}({\bf C}_1(A)) | = m \right\}
	\right\} \\
	\le &
		\mathbb{P} \left\{ 
			\Omega_{\rm LT}( m)
	\biggm \vert 
		\Omega( \mathcal{V}^c, 1) \cap 
			\left\{ | \mbox{supp}({\bf C}_1(A)) | = m \right\}
		\right\} \\
	    & +
	\mathbb{P} \left\{  \Omega_A(1,t)
		\biggm \vert 
		\Omega_{\rm LT}( m) \cap 
		\Omega( \mathcal{V}^c, 1) \cap 
			\left\{ | \mbox{supp}({\bf C}_1(A)) | = m \right\}
		\right\} \\
	\le &
	C\frac{ tb_n }{ \sqrt{pn} } + 
	\frac{ C }{ \mathbb{P} \left\{ \Omega( \mathcal{V}^c) \right\}  } 
	\exp( -Rpn ),
	\end{align*}
	where  $C>1 $ is a fixed constant which depends on $C'$.

\end{proof}

\section{ Proof of Main Theorem }
Recall that, for a fixed positive integer $\beta$, recall that 
$\Omega_{\rm RC}(\beta)$ is the event that 
\begin{align*}
	  \max \{|  \mathcal{J}_{ A }(0) |,\,
	  |  \mathcal{J}_{ A ^\top }(0) |\}
<
	\beta.
\end{align*}

Suppose $ \tau, \delta, \rho \in ( 0,1 )$ and a growth funciton 
${ g }$ have been chosen from Definition \ref{def: g}. 
From Theorem \ref{thm: vecPartition}, 
for $ I, J \subset [n]$ with $ |I| = |J| = n- \beta +1$, the event $\Omega_{I,J}(\mathcal{T},\mathcal{R})$ can also be written as  
	\begin{align*}
		&\Omega_{I,J}(\mathcal{T},\mathcal{R})\\
	:= &
		\Big\{ \,\forall x \notin \cup_{\lambda \ge 0} \lambda \mathcal{V}
		\mbox{ with } \opn{supp}(x) \subseteq J, \quad
		\left\|  A _{I,J}x \right\|  
		 \ge n^{-1+ o_{ n}(1)} ( pn )^{-1}\left\| x \right\| 
		 ,\, \mbox{and} \\
		& \phantom{\Big\{ \,} \forall x \notin \cup_{\lambda \ge 0} \lambda \mathcal{V}
		\mbox{ with } \opn{supp}(x) \subseteq I, \quad
		\left\|  A _{I,J}^\top x \right\|   
		 \ge n^{-1+ o_{ n}(1)} ( pn )^{-1}\left\| x \right\| 
		 \phantom{AA} \Big\},
			\end{align*}
	where $\mathcal{V} : = \mathcal{V}( \tau, 
	{ g }, \delta, \rho)$. 
	We also define $\Omega_{I,J}(\mathcal{V}, t)$ to be the event that 
\begin{align*}
	\forall x \in \mathcal{V} \cap \mathbb{R}^J,\, 
	\left\|  A _{I, J}x  \right\| 	 
\ge  
	t \left\| x \right\| .
\end{align*}

Notice that if the parameters $ \tau , \delta, \rho$ satisfies 
$$ \tau \in (0, \frac{1}{10}),\, 
	\delta \in (0, \frac{ \tau}{3}),\, 
	\rho\in (0, \frac{1}{10}).$$
Then, by Theorem \ref{thm: vecPartition},
\begin{align*}
	  \mathbb{R}^{J} \backslash \{\vec{0}\} 
= 
	\big( (\cup_{\lambda >0} \mathcal{V}) \cup \mathcal{T} 
		\cup (\cup_{\lambda>0} \mathcal{R}) \big) \cap \mathbb{R}^J.
\end{align*}

If we condition on the event 
$ \Omega_{I,J}( \mathcal{T},\, \mathcal{R}) \cap 
\Omega_{I,J}( \mathcal{V}, \, t)$ with 
$ t \le n^{-1 + o_{n}(1)}( pn )^{-1}$, 
we have 
$$ s_{ n - \beta +1}(  A _{I,J}) 
\ge t.$$ 

(We remark that 
$s_{ n - \beta +1}(  A _{I,J})$
is the least singular value of $ A _{I,J}$
since $ A _{I,J}$ is an $(n - \beta +1) \times 
(n-\beta +1)$ matrix. 
)

Further, by the min-max theorem for singular values, 
it implies that 
\begin{align}
	\label{eq: singularValueComparison}
	s_{ n -\beta+1}(  A  ) 
\ge 
	\min_{ x \in \mathbb{R}^J \backslash \{ \vec{0} \} }
	\frac{\left\|  A _{[ n ] J}x \right\|  
	}{\left\| x \right\| }
\ge
	s_{ n - \beta + 1 }(  A _{I,J} ) 
\ge 
	t,
\end{align}
provided that $t \le n^{ -1 + o_{ n }(1) }( pn )^{-1}$.

\begin{proof} [Proof of Theorem \ref{thm: main}]

Fix $t \le n^{ -1 + o_{n}(1)}$, 
\begin{align}
	 \mathbb{P} \left\{    
		s_{ n- \beta + 1}(  A  ) 
	\le 
		t 
	\right\}
 \le &
	\mathbb{P} \big \{ \forall
		J \subset [ n] \mbox{ with } 	
		|J| =  n - \beta + 1 , 
		s_{min}( A _{[ n] ,J})  
		\le t 
	 \big \} \nonumber 	\\
\le &
	\mathbb{P} \big \{ \forall
		I, J \subset [ n] \mbox{ with } 	
		|I|=|J| =  n - \beta + 1 , 
		s_{min}( A _{ I ,J})  
		\le t 
      \big \}.  \label{eq: thmMainEq1}
\end{align}

Next, we split the estimate accronding to $\Omega( \mathcal{T}, \mathcal{R} )$ to get 
\begin{align*}
	  \eqref{eq: thmMainEq1}
 \le &
 	\mathbb{P} \{ \Omega( \mathcal{T}, \mathcal{R} )^c \} 
  \\
 &+
 	\mathbb{P} \bigg\{ 
		\left\{   \forall I,J \subset [ n] 
	\mbox{ with } 	
		|I| = |J| =  n - \beta + 1 , 
		s_{min}( A _{I,J})  \le 
		t 
		\right\} \cap 
		\Omega(\mathcal{T}, \mathcal{R})
	 \bigg\} 	\\
 \le &
	(1 + o_{n}) \mathbb{P}( \Omega_{\rm RC}^c)\\
&+
	\mathbb{P} \bigg\{ 
		\left\{   \forall I,J \subset [ n] 
	\mbox{ with } 	
		|I| = |J| =  n - \beta + 1 , 
		s_{\rm min}( A _{I,J})  \le 
		t 
		\right\} \cap 
		\Omega(\mathcal{T}, \mathcal{R})
	 \bigg\} .	
\end{align*}
For the second summand, we could simplify it in the following way 
\begin{align}
	& \mathbb{P} \bigg\{ 
		\left\{   \forall I,J \subset [ n] \mbox{ with } 	
		|I| = |J| =  n - \beta + 1 , 
		s_{\rm min}( A _{I,J})  \le t 
		\right\} \cap 
		\Omega(\mathcal{T}, \mathcal{R})
	\bigg\} \nonumber \\
\le &
	\sum_{I',J' \subset [n]} 
	\mathbb{P} \bigg\{ 
		\left\{   \forall I,J \subset [ n] \mbox{ with } 	
		|I| = |J| =  n - \beta + 1 , 
		s_{\rm min}( A _{I,J})  \le t 
		\right\} \cap 
		\Omega_{I',J'}(\mathcal{T}, \mathcal{R})
	\bigg\} \nonumber \\
\le &
	\sum_{I',J' \subset [n]} 
	\mathbb{P} \bigg\{ 
		\left\{   
		s_{\rm min}( A _{I',J'})  \le t 
		\right\} \cap 
		\Omega_{I',J'}(\mathcal{T}, \mathcal{R})
	\bigg\} \nonumber \\
\le & 
	{n \choose \beta-1}^2 \mathbb{P} \bigg\{ 
		\left\{   
		s_{\rm min}( A _{I_0,J_0})  \le t 
		\right\} \cap 
		\Omega_{I_0,J_0}(\mathcal{T}, \mathcal{R})
	\bigg\}.
\end{align}	
where the summantion is taking over all possible 
$I',J' \subset [n]$ with 
$|I'| = |J'| = n - \beta +1$ and 
$I_0 = J_0 = [ n - \beta + 1]$ .

Next, 
\begin{align*}
	& \mathbb{P} \big\{ 
		\left\{   
		s_{\rm min}( A _{I_0,J_0})  \le t 
		\right\} \cap 
		\Omega_{I_0,J_0}(\mathcal{T}, \mathcal{R})
	\big \} \\
= &
	 \mathbb{P} \big \{ 
		\left\{   
			\exists x \in \mathbb{R}^{J_0} \cap  \mathcal{V} ,\, 
			\left\|  A_{I_0,J_0}x \right\| \le t \left\| x \right\| 
		\right\} \cap 
		\Omega_{I_0,J_0}(\mathcal{T}, \mathcal{R})
	\big \} \\
\le & 
	\mathbb{P} \big \{ 
		\left\{   
			\exists x \in \mathbb{R}^{J_0} \cap  \mathcal{V} ,\, 
			\left\|  A_{I_0,J_0}x \right\| \le t b_n
		\right\} \cap 
		\Omega_{I_0,J_0}(\mathcal{T}, \mathcal{R})
	\big \}
\end{align*}
where $b_{ n } := 
	\sqrt{
		\sum_{i=1}^{ n } g( \frac{ n  }{ i } )^2
	} \ge \max_{x \in \mathcal{V}} \left\| x \right\|  $ .
Next, we will adjust the second event in the inclusion,
\begin{align*}
	& \mathbb{P} \left\{ 
		\left\{   
			\exists x \in \mathbb{R}^{J_0} \cap  \mathcal{V} ,\, 
			\left\|  A _{I_0,J_0}x \right\| \le t b_n
		\right\} \cap 
		\Omega_{I_0,J_0}(\mathcal{T}, \mathcal{R})
	\right\}\\
\le &
	\mathbb{P} \left\{ 
		\left\{   
			\exists x \in \mathbb{R}^{J_0} \cap  \mathcal{V} ,\, 
			\left\|  A _{I_0,J_0}x \right\| \le t b_n
		\right\} \cap 
		\left\{ 
			\forall y \in \mathbb{R}^{I_0} \backslash 
				\cup_{\lambda \ge 0} \lambda\mathcal{V} ,\, 
				\left\|  A^{\top}_{I_0, J_0}y  \right\| >0
		\right\} 
	\right\} \\
\le &
	\mathbb{P} \left\{ 
		\left\{   
			\exists x \in \mathbb{R}^{J_0} \cap  \mathcal{V} ,\, 
			\left\|  A _{I_0,J_0}x \right\| \le t b_n
		\right\} 
	\biggm \vert
		\left\{ 
			\forall y \in \mathbb{R}^{I_0} \backslash 
				\cup_{\lambda \ge 0} \lambda\mathcal{V} ,\, 
				\left\|  A^{\top}_{I_0, J_0}y  \right\| >0
		\right\} 
	\right\}. 
\end{align*}

The next step is to apply Theorem \ref{thm: LT} for the matrix $  A_{ I_0,J_0 }$. 
Let $ \delta' = \frac{ \delta }{ 2 }$,
and set $ \tilde { g }(t):= { g }( \frac{ n}{ n  - \beta  } t )$. 

We apply Theorem \ref{thm: LT} with $R$ to be some large constant multiple of 
$C\beta$ to get 
\begin{align}
	\label{eq: thmMainAfterThmLT}
	& \mathbb{P} \left\{ 
		\left\{   
			\exists x \in \mathbb{R}^{J_0} \cap  \mathcal{V} ,\, 
			\left\|  A _{I_0,J_0}x \right\| \le t b_n
		\right\} 
	\biggm \vert
		\left\{ 
			\forall y \in \mathbb{R}^{I_0} \backslash 
				\cup_{\lambda \ge 0} \lambda\mathcal{V} ,\, 
				\left\|  A^{\top}_{I_0, J_0}y  \right\| >0
		\right\} 
	\right\} \\
	\le &
	\exp(-10C \beta pn) + n^{1 +o_{ n }(1) } (  pn )^7
	t, \nonumber
\end{align}
where the inequality holds due to $ b_n \le n^{1 + o_{ n  }(1)} (  pn  )^7$ 
from Proposition \ref{prop: specificGrowthFunction} and 
\begin{align*}
	\mathbb{P}\{ \Omega( \mathcal{V}^c ) \} = 1 - o_{  n  }(1). 
\end{align*}
The above equality holds is due to 
$ \mathbb{P}\{ \Omega( \mathcal{V}^c ) \} \ge  \mathbb{P}\{\Omega_{  A_{ I_0, J_0 } }( \mathcal{T}, \mathcal{R} )\} $, where
$\Omega_{  A_{ I_0, J_0 } }( \mathcal{T}, \mathcal{R} )$ is the event with the same description as $ \Omega( \mathcal{T}, \mathcal{R})$ 
but applied to $  A _{ I_0, J_0 }$ and $ \beta = 1$.

With the standard estimate for Binoimal coefficients
${ n  \choose \beta -1 }^2 \le \exp(2 ( \beta -1) \log( en) )$, 
we conclude from \eqref{eq: thmMainAfterThmLT}, for $ 0 \le t \le n^{-1 + o_{ n  }(1)}$,
\begin{align*}
	& \mathbb{P} \left\{    
		s_{ n- \beta + 1}(  A  ) 
	\le 
		t 
	\right\} 
 \le 
	(1 + o_{n}) \mathbb{P}( \Omega_{\rm RC}^c)
	+  n^{2\beta-1 + o_{ n }(1) } (  pn  )^{7} t,
\end{align*}
since $ \exp( - C pn ) = o(\mathbb{P}( \Omega_{\rm RC }^c )  )$. 
Now, by a change of variable on $t$ we obtain the statement of the theorem. 

\end{proof}

\appendix
\section{Probability Estimate}
\subsection{Non-invertibility of sparse Bernoulli Matrix with $p$ close to $\log n /n$}
\begin{lemma} \label{lem: nonemptyEstimate}
	Suppose $p= p_n \in (0,1)$ satisfies 
	$ \log n  \le pn \le C\log n  $ where $C>1$ is an arbitrary 
	constant. 

	Let $A$ be an $n\times n$ Bernoulli($p$) matrix.
	Then, when $n$ is sufficiently large,
	\begin{align*}
	&	\mathbb{P} \left\{ 
		\exists i \in [n] \mbox{ s.t. } {\bf R}_i(A) = \vec{0}
		\mbox{ or } 
		\exists j \in [n] \mbox{ s.t. } {\bf C}_j(A) = \vec{0}
		\right\} 	\\
	=& 
		(1 + o_n(1) )
		\left( 1 - \left(  1 - (1- p)^p
		\right)^{2 n} \right) .
	\end{align*}
	Furthermore, 
	$ 
		\left( 1 - \left(  1 - (1- p)^p
		\right)^{2 n} \right) 
	= 
		O_n( n(1-p)^n ) $.
\end{lemma}

To prove Lemma 2.5, we will need the following Proposition: 

\begin{proposition} \label{prop: nonemptyEstimate}
	Let $C>1$ be any constant. 
	Suppose $m$ is a sufficiently large positive integer, 
	and $p$ is a parameter satisfying 
	$ \log(m) \le pm \le C\log(m)$.
	For $ 0 \le k \le \log(m)$, let $A_k$ be a 
	$( m - k ) \times m$ Bernoulli($p$) matrix. We have
	\begin{align*}
		\mathbb{P} \left\{   \mathcal{J}_{A_k}(0) > 0 \right\} 	
	&= 
		( 1 + o_m( m^{-1/2} ) )	
		\mathbb{P} \left\{   \mathcal{J}_{A_0}(0) > 0 \right\} 	.
	\end{align*}
 
\end{proposition}

\begin{proof}
	Let 
	\begin{align*}
		p_k : = \mathbb{P} \left\{  
			{\bf C}_1 (A) = \vec{0}
		 \right\} = (1-p)^{m-k}.
	\end{align*}
	From the constraints of $p$ and $k$, we have 
	$ p_k = p_0 (1 + O(pk)) \le \frac{2}{m}$.

	Then, 
	\begin{align*}
		\mathbb{P} \left\{  \mathcal{J}_{A_k}(0) > 0 \right\} 	
	&=
		\sum_{ j = 1 }^m 
		{ m \choose j } p_k^j ( 1 - p_k )^{ m - j }.
	\end{align*}

	Our goal is to show that the summation is dominated by 
	the first $\log(m)$ terms. 

	For $j \ge \log(m)$, 
	\begin{align*}
		\frac{ { m \choose j+1 } p_k^{ j + 1 }( 1 - p_k )^{ m - j - 1} }{
		{ m \choose j } p_k^j ( 1 - p_k )^{ m - j } }
	& = 
		\frac{m-j}{ j + 1} 
		\frac{ p_k }{ 1-p_k}
	 \le 
		\frac{m}{ j } p_k
	\le
		1/2
	\end{align*}
	due to $ p_k = ( 1- p)^m (1-p)^{-k} 
	\le  \frac{1}{m} (1 + O(pk) ) \le \frac{2}{m}$.
	Thus, the tail of the sequence 
	$ \left\{  { m \choose j } p_k^j ( 1 - p_k )^{ m - j }
	\right\}_{j\ge 1} $ decays geometrically at the rate $\frac{1}{2}$ 
	when $j \ge \log(m)$.

	Then, we compare the $j$th term to the $1$st term with 
	$j\ge \log(m)$:
	\begin{align*}
		\frac{ { m \choose j } p_k^{ j  }( 1 - p_k )^{ m - j } }{
		{ m \choose 1 } p_k ( 1 - p_k )^{ m - 1 } }
	& \le
		\frac{1}{m} \left( \frac{em}{j} \right)^j p_k^{j-1} 
	\le 
		\left( \frac{2e}{j} \right)^j  
    \le \frac{1}{2m},
	\end{align*}
	where $ p_k \le \frac{2}{m}$ is applied.
	Therefore, we have 
	\begin{align*}
		\mathbb{P} \left\{  \mathcal{J}_{A_k}(0) > 0 \right\} 	
	& \le  
		\sum_{ j = 1 }^{ \lceil \log(m) \rceil } 
		{ m \choose j } p_k^j ( 1 - p_k )^{ m - j } 
	+ 
		\frac{1}{m}
		\mathbb{P} \left\{  \mathcal{J}_{A_k}(0) > 0 \right\}. 
	\end{align*}

	Next, we will compare the first $\log(m)$ summands to those 
	in the case when $k=0$.	
	For $j, k \le \log(m)$, the following estimates hold:
	\begin{align*}
		pk & = o_m( m^{-1/2} ), &
		p_k pk m & = o_m( m^{-1/2}), \mbox{       and } &
		pk j & = o_m(m^{-1/2}).
	\end{align*}
	
	With $p_k = p_0 (1-p)^{-k}$, when $ 1 \le j \le \log(m)$, 
	\begin{align*}
		{ m \choose j } p_k^j ( 1 - p_k )^{ m - j } 
	=
		{ m \choose j } p_0 (1 - p_0)^{ m - j } (1 + o( m^{-1/2} ) ).
	\end{align*}

	Therefore, we conclude that 
	\begin{align*}
		\mathbb{P} \left\{   \mathcal{J}_{A_k}(0) > 0 \right\} 	
	&= 
		( 1 + o_m( m^{-1/2} ) )	
		\mathbb{P} \left\{   \mathcal{J}_{A^0}(0) > 0 \right\} 	.
	\end{align*}

\end{proof}

Now we are ready to prove Lemma \ref{lem: nonemptyEstimate}.

\begin{proof}[ Proof of Lemma \ref{lem: nonemptyEstimate}]
	Let $O_R$ be the event that 
	\begin{align*}
		\exists i \in [n] \mbox{ s.t. } {\bf R}_i(A) = \vec{0}
	\end{align*}
	and $O_C$ be the event that 
	\begin{align*}
		\exists j \in [n] \mbox{ s.t. } {\bf C}_i(A) = \vec{0}.
	\end{align*}

	For $ S \subset [ n ]$, let 
	$ P_S $ be the probability that
	\begin{align*}
		\forall i \in S, {\bf R}_i(A) = \vec{0}
		\mbox{ and } 
		\exists j \in [n] \mbox{ s.t. }  {\bf C}_j(A) = \vec{0}.
	\end{align*}
	By the standard inclusion-exclusion formula, we have 
	\begin{align*}
		\mathbb{P} \left\{ O_R \cap O_C		\right\} 
	& = 
		\sum_{ S \subset [ n ], S \neq \emptyset  } (-1)^{|S|+1} P_S.
	\end{align*}

	Due to the distribution of $A$ is invaraiant under 
	row permutations, $P_S$ depends only on $|S|$. 

	For $k \in [n]$, let $A_k$ be an $n-k$ by $k$ Bernoulli($p$)
	matrix. Suppose $S\subset [n]$ is a subset of size $k$. 
	Let $O_S$ be the event that 
	$\forall i \in S$, ${\bf R}_i(A) = \vec{0}$.
	Conditioning on $O_S$, the submatrix of $A$ 
	obtained by restricting its rows to $[n] \backslash S$
	has the same distribution as that of $A_k$. Thus, 
	we have 
	\begin{align*}
		P_S = (1-p)^{nk} 
		\mathbb{P} \left\{  \mathcal{J}_{A_k}(0) >0 \right\}
	\end{align*}
	and we conclude that 
	\begin{align*}
		\mathbb{P} \left\{ 	O_R \cap O_C	\right\} 
	& = 
		\sum_{ k \in  [ n ] } 
		(-1)^{k+1} 
		{n \choose k }
		(1-p)^{nk}
		\mathbb{P} \left\{  \mathcal{J}_{A_k}(0) >0 \right\}.
	\end{align*}

	Notice that by the same argument we can deduce that 
	\begin{align*}
		\mathbb{P} \left\{ O_C \right\} 
	& = 
		\sum_{ k \in  [ n ] } 
		(-1)^{k+1} 
		{n \choose k }
		(1-p)^{nk}.
	\end{align*}

	Roughly speaking, if
	$
	\mathbb{P} \left\{  \mathcal{J}_{A_k}(0) >0 \right\} = (1+o_n(1) )
	 \mathbb{P} \left\{  O_R \right\} $ for all values of $k$. Then, 
	 it implies that 
	 $ 
		\mathbb{P} \left\{ O_R \cap O_C \right\}  
	=	
		(1 + o_n(1) ) \mathbb{P}\{O_R\} \mathbb{P} \{ O_C \}
	 $. In other words, the two events are approximately independent. 
	However, it cannot be true when $k$ is large. 

	What we will do is to  split the summation into two parts: 
	$k < \log n  $ and $k \ge \lceil \log n  \rceil$.

	For $ k \le \log n $, we will show that 
	$\mathbb{P} \left\{  \mathcal{J}_{A_k}(0) >0 \right\} = (1+o_n(1) )
	 \mathbb{P} \left\{  O_R \right\} $. This is presented 
	 separately in Proposition \ref{prop: nonemptyEstimate}.
	And for $k > \lceil \log n  \rceil $, we will show the 
	summation in this part is negligible comparing to 
	the summation for $ k < \log n  $. Due to 
	the signs are alternating in the summations, 
	 we need to argue in a careful way: 
	First, show the leading term of the summation is 
	comparable to the summation for $ k < \log n  $. Second, 
	show that the summation for $k \ge \log n $ is negligible 
	comparing to the leading term. 

	By Proposition \ref{prop: nonemptyEstimate}, 
	\begin{align*}
			\mathbb{P}\{  \mathcal{J}_{A_k}(0) >0 \} 
		=
			(1+o_n( n^{-1/2} ) ) 
			\mathbb{P}\{  O_R \} 
	\end{align*}
	for $k \le \lceil \log n  \rceil$. Hence, 
	\begin{align*}
	&	
		\sum_{ k \in  [ \lceil \log n  \rceil ] } 
		(-1)^{k+1} 
		{n \choose k }
		(1-p)^{nk}
		\mathbb{P} \left\{  \mathcal{J}_{A_k}(0) >0 \right\}
	\\
	=&
		\sum_{ k \in  [ \lceil \log n  \rceil ] } 
		(-1)^{k+1} 
		{n \choose k }
		(1-p)^{nk}
		\mathbb{P} \left\{ O_R \right\}
	\\
	& +
		o( n^{-1/2} )
		\sum_{ k \in  [ \lceil \log n  \rceil ] } 
		{n \choose k }
		(1-p)^{nk}
		\mathbb{P} \left\{ O_R \right\}.
	\end{align*}
	
	For $k \ge 2$, 
	\begin{align} \label{eq: colrowEstimate}
		\frac{ { n \choose k } (1-p)^{nk} }{
		{ n \choose k-1} (1-p)^{n(k-1)} }	
	=
		\frac{n-k+1}{k} (1-p)^n
	\le
		\frac{1}{k} n\exp(-pn)
	\le 
		\frac{1}{k},
	\end{align}
	where the last inequality holds due to $pn \ge \log n $.
	As an alternating sequence whose absolute values are 
	decreasing,
	\begin{align*}
	&	\sum_{ k \in  [ \lceil \log n  \rceil ] } 
		(-1)^{k+1} 
		{n \choose k }
		(1-p)^{nk}
		\mathbb{P} \left\{ O_R \right\}
	\\
	\ge &
		{ n \choose 1} (1-p)^{n} 
		\mathbb{P} \left\{ O_R \right\}
	-
		{ n \choose 2} (1-p)^{2n} 
		\mathbb{P} \left\{ O_R \right\}
	\\
	\ge &
		\frac{1}{2} 
		{ n \choose 1} (1-p)^{n} 
		\mathbb{P} \left\{ O_R \right\}
	\end{align*}

	Notice that, \eqref{eq: colrowEstimate} also implies 
	\begin{align*}
		\frac{ { n \choose k } (1-p)^{nk} }{
		{ n \choose 1} (1-p)^{n} } \le \frac{1}{k!}
	\end{align*}
	and thus 
	\begin{align*}
		\sum_{ k \in [ \lceil \log n  \rceil ] } 
		{n \choose k }
		(1-p)^{nk}
		\mathbb{P} \left\{ O_R \right\}
	\le
		e
		{ n \choose 1} (1-p)^{n} 
		\mathbb{P} \left\{ O_R \right\}.
	\end{align*}

	Therefore, we conclude that 
	\begin{align*}	
		\sum_{ k \in  \lceil \log n  \rceil } 
		(-1)^{k+1} 
		{n \choose k }
		(1-p)^{nk}
		\mathbb{P} \left\{  \mathcal{J}_{A_k}(0) >0 \right\}
	& \ge 
		\frac{1}{3}
		{ n \choose 1} (1-p)^{n} 
		\mathbb{P} \left\{ O_R \right\}.
	\end{align*}
	
	Now we turn to the summation for $k \ge \lceil \log n  \rceil $, 
	\begin{align*}
	&	\sum_{ k \ge  \lceil \log n  \rceil  } (-1)^{k+1}
			{n \choose k }
			(1-p)^{nk}
			\mathbb{P} \left\{  \mathcal{J}_{A_k}(0) >0 \right\}
	\\
	\le &
		{n \choose 1}(1-p)^n
		\sum_{ k \in [ \lceil \log n  \rceil ] } \frac{1}{k!}
	\\ 
	\le & 
		{n \choose 1}(1-p)^n
		\exp( - \frac{1}{2} \log( \log n ) \log n  ) .
	\end{align*}

	Next, $ \mathbb{P}( {\bf C}_j(A) = \vec{0} ) = (1-p)^n$
	for $j \in [ n] $ and the independence of columns implies 
	$ \mathbb{P} \left\{ O_R \right\} = 
		1 - ( 1 - (1-p)^n )^n$. 

	For $ x \in (0, 1/2 )$, we have 
	\begin{align*}
		1-x = \exp( -x+ O(x)^2).
	\end{align*}
	Conversely, for $ 0 \le x \le 1.5$, 
	$1 - \exp( -x )  = O(x)$. 

	Applying these inequalites to the estimate of 
	$ \mathbb{P} \left\{ O_R \right\}$, we obtain
	\begin{align} \label{eq: OREstimate}
		\mathbb{P} \left\{ O_R \right\}
	& = 
		1 - \exp\left( - 
			(1 + o_n(1) ) 
			\exp(-pn) n \right)
	=  
		O( \exp(-pn) n )
	>
		\exp( - C \log n  ),
	\end{align}
	where we rely on $ \exp(-pn)n \le 1$.

	And it allows us to compare the summation for 
	$ k \ge \lceil \log n  \rceil$ to the first term of 
	of the summation:
	\begin{align*}	
		\sum_{ k \ge  \lceil \log n  \rceil  } (-1)^{k+1}
			{n \choose k }
			(1-p)^{nk}
			\mathbb{P} \left\{  \mathcal{J}_{A_k}(0) >0 \right\}
	&=
		o_n( n^{-1/2} ) 
		{ n \choose 1} (1-p)^{n} 
		\mathbb{P} \left\{  O_R \right\}.
	\end{align*}

	Therefore, we can conclude that 
	\begin{align*}
		\mathbb{P} \left\{ O_R \cap O_C \right\} 
	&=
		( 1 + o_n( n^{-1/2} ) 
		\mathbb{P} \left\{ O_R \right\}.
		\left(  
			\sum_{ k \in [ \lceil \log n  \rceil ] } (-1)^{k+1}
			{n \choose k }
			(1-p)^{nk}
		\right).
	\end{align*}

	The same approach could show that 
	\begin{align*}
		\mathbb{P} \left\{ O_C \right\} 
	&=
		( 1 + o_n( n^{-1/2} ) 
		\left(  
			\sum_{ k \in [ \lceil \log n  \rceil ] } (-1)^{k+1}
			{n \choose k }
			(1-p)^{nk}
		\right).
	\end{align*}
	Finally, we obtain
	\begin{align*}
		\mathbb{P} \left\{ O_R \cap O_C \right\} 
	& = 
		( 1 + o_n( n^{-1/2} ) ) 
		\mathbb{P}\left\{ O_R \right\}
		\mathbb{P}\left\{ O_C \right\}
	=
			( 1 + o_n( n^{-1/2} ) ) 
		(\mathbb{P}\left\{ O_R \right\} )^2  .
	\end{align*}

	At this point we are ready to estimate 
	$ \mathbb{P} \left\{ O_R \cup O_C \right\} $ :
	\begin{align*}
		\mathbb{P} \left\{ O_R \cup O_C \right\}
	&=
		\mathbb{P} \left\{ O_R \right\} 
	+	
		\mathbb{P} \left\{ O_C \right\} 
	-
		\mathbb{P} \left\{ O_R\cap O_C \right\} 
	\\
	& =
		2 \mathbb{P} \left\{ O_R \right\} 
		- ( 1 + o_n(n^{-1/2} )) \mathbb{P} \left\{ O_R \right\}^2
	\\
	& = 
		( 1 + o_n(n^{-1/2} \mathbb{P}\left\{ O_R \right\}  ))
		(2 \mathbb{P} \left\{ O_R \right\}  
		- \mathbb{P} \left\{ O_R \right\}^2).
	\end{align*}
	Substituting $ \mathbb{P} \left\{ O_R \right\} = 
		1 - ( 1 - (1-p)^n )^n	
	$, we obtain the statement of the lemma: 
	\begin{align*}
		\mathbb{P} \left\{ O_R \cup O_C \right\}
	& = 
		( 1 + o(n^{-1/2} \exp(-pn)n ) )	
		\left( 1 - \left( 1 - (1-p)^n \right)^{2n}
		\right) .
	\end{align*}

	It remains to show that $ \mathbb{P} \left\{  O_R \cup O_C \right\} 
	= O_n( n\exp(-pn))$, but it is immediate due to $ \mathbb{P}\left\{ O_R \right\} 
	= \mathbb{P} \left\{ O_C \right\} $ and $ \mathbb{P} \left\{ O_R \right\} 
	= O_n( n\exp(-pn))$ from \ref{eq: OREstimate}.
	
\end{proof}

\subsection{Tail Estimates for Binomial and Hypergeometric distribution.}
\begin{proof}[Proof Proposition \ref{prop: Binomial}]
	Let 
	\begin{align*}
		p_k := \mathbb{P}\{ Y = k \} = {n \choose k} p^k (1-p)^{n-k}.
	\end{align*}
	
	The expected value of \( Y \) is \( pn \). For \( k \) sufficiently smaller than \( pn \), we aim to demonstrate that 
	\( \mathbb{P}\{ Y \le k \} \) is dominated by \( p_k \). Conversely, for \( k \) sufficiently large, \( \mathbb{P}\{ Y \ge k \} \) is dominated by \( p_k \). Once it is established, then it is sufficient to bound \( p_k \) from above.
	
	For \( k \le \frac{1}{2}pn \),
	\begin{align*}
		\frac{q_{k-1}}{q_{k}} = \frac{k}{n-k+1} \frac{1-p}{p} \le \frac{\frac{1}{2}pn}{n-\frac{1}{2}pn+1} \frac{1-p}{p} \le \frac{1}{2},
	\end{align*}
	where the first inequality follows from the fact that \( x \mapsto \frac{x}{n+1-x} \) is monotone increasing for \( x \in [0, n+1) \). Thus, we can conclude that for \( 0 \le k \le \frac{1}{2}pn \),
	\[
		\mathbb{P}\{ Y \le k \} \le \sum_{s=0}^k q_{k-s} \le p_k \sum_{s=0}^\infty \left(\frac{1}{2}\right)^s \le 2p_k.
	\]
	
	Similarly, for \( k \ge 2pn \),
	\begin{align*}
		\frac{q_{k+1}}{q_{k}} = \frac{n-k}{k+1} \frac{p}{1-p} \le \frac{n-2pn}{2pn+1} \frac{p}{1-p} \le \frac{1}{2},
	\end{align*}
	which in turn implies that \( \mathbb{P}\{ Y \ge k \} \le 2p_k \).
	
	It remains to estimate \( p_k \). By the standard estimate \( {n \choose k} \le \left(\frac{en}{k}\right)^k \),
	\begin{align} \label{eq:BinomialExact}
		p_k \le \left( \frac{enp}{k(1-p)} \right)^k (1-p)^n,
	\end{align}
	and the statement of the Proposition follows.
\end{proof}	

\begin{proof}[Proof of Proposition \ref{prop: HyperGeom}]
	By a counting argument, 
	we have, for \( l \in [k] \),
	\begin{align*}
		\mathbb{P}\{ |U \cap [m]| = l \} = \frac{{m \choose l} {n - m \choose k - l}}{{n \choose k}}.
	\end{align*}
	
	Recall the standard estimate for combinations:
	\[
		\left( \frac{a}{b} \right)^b \le {a \choose b} \le \left( \frac{ea}{b} \right)^b
	\]
	for positive integers \( a \ge b \ge 1 \). If we simply use this bound the derive the tail bound, then the gap \( e^b \) is too large for us. We need a slightly better bound. For a positive integer \( s \ge 1 \),
	\[
		\sqrt{2\pi s} \left( \frac {t_2}{e} \right)^s 
	\le 
		s! \le e \sqrt {t_2} \left( \frac {t_2}{e} \right)^s.
	\]

	Applying the bounds to estimate \( {n \choose k} \), we have
	\begin{align*}
		{n \choose k} 
	 = 
		\frac{n!}{k! (n-k)!}
	\ge  &
		\frac{\sqrt{2\pi}}{e^2} \sqrt{\frac{n}{k(n-k)}} \left( \frac{n}{k} \right)^k \left( \frac{n}{n-k} \right)^{n-k} \\
	\ge &
		\frac{\sqrt{2\pi}}{e^2\sqrt{k}}  
		\left( \frac{n}{k} \right)^k \left( \frac{n}{n-k} \right)^{n-k}.
	\end{align*}
	
	The last term \( \left( \frac{n}{n-k} \right)^{n-k} \) can be simplified into the following form:
	\begin{align*}
		\left( \frac{n}{n-k} \right)^{n-k} 
	= 
		\left(1 + \frac{k}{n-k}\right)^{n-k} 
	\ge 
		e^k \exp\left(-\frac{k^2}{n-k}\right) 
	= &
		e^k \exp\left(-\frac{2k^2}{n}\right) \\
	& \phantom{AAA AAA}\ge 
	 	e^{k-1},
	\end{align*}
	where we used \(\log(1 + x) > x - x^2\) for \(x > 0\) and the assumption \( 2k^2 \le n \). Therefore, we have
	\begin{align*}
		{n \choose k} 
	\ge 
		\frac{\sqrt{2\pi}}{e^3 \sqrt{k}} \left( \frac{en}{k} \right)^k.
	\end{align*}
	By a similar computation, for \(1 \le l < k\), we have
	\begin{align*}
		{n - m \choose k - l} \le \frac{1}{\sqrt{2\pi} \sqrt{k - l}} \left( \frac{e(n - m)}{k - l} \right)^{k - l},
	\end{align*}
	where \( \left( \frac{n - m}{n - m - k + l} \right)^{k - l} \le 	
	\left( \frac{n/2}{n/2-k+l} \right)^{k - l} \le 
	\left( \frac{n/2}{n/2-n/4} \right)^{k - l} 
	\le e^{k - l} \) is used.
	
	Combining these two estimates and \( {m \choose l} \le \left( \frac{em}{l} \right)^l \) to get
	\begin{align*}
		\mathbb{P}\{ |U \cap [m]| = l \} 
	= & 
		 \frac{{m \choose l} {n - m \choose k - l}}{{n \choose k}}\\
	\le &
		C \sqrt{\frac{k}{k - l + 1}} \left( \frac{mk}{ln} \right)^l \left( \frac{k}{k - l} \right)^{k - l} \left( \frac{n - m}{n} \right)^{k - l} \\
	\le &
		C \sqrt{\frac{k}{k - l + 1}} \left( \frac{mk}{ln} \right)^l \left( 1 + \frac{l}{k - l} \right)^{k - l} \cdot 1 \\
	\le& 
		C \sqrt{\frac{k}{k - l + 1}} \left( \frac{emk}{ln} \right)^l,
	\end{align*}
	where \(C > 0\) is some universal constant. 
	
	Due to \( l \mapsto C \sqrt{\frac{k}{k - l + 1}} \left( \frac{emk}{ln} \right)^l \) decaying geometrically, we can find a suitable \(C_{\text{hg}} > 1\) and replace \(e\) by \(3\) to obtain the statement of the Proposition.
	
\end{proof}

\section{Proof of Theorem \ref{thm:R-vector}, and Theorem \ref{thm:T_2T_3-vector} for $\cal R$, $\cal T_2$, and $\cal T_3$ vectors}
In this section, we will prove Theorem \ref{thm:R-vector} and Theorem \ref{thm:T_2T_3-vector}. 

The proof of Theorem \ref{thm:R-vector} and \ref{thm:T_2T_3-vector}
are essentially the same as the proofs appeared in Litvak-Tikhomirov 
\cite{LT20} with minor modification due to the definition of $ \mathcal{T}$ 
and $ \mathcal{R}$ vectors and the involvement of $\beta$. 

Both theorems can be proved by a net-argument approach. 
Briefly speaking, for each subcolelction $\cal R, \cal T_2$, and $\cal T_3$, we will find a net 
$ \mathscr{N}$. Then, we show that for each $x \in \mathscr{N}$, we can bound $ \left\| Ax \right\| $ quantitatively away from $0$ with high probability, say $1-q$ with $q = o(1)$.   Then, with probability at most 
$q|N|$, there exists $y$ in the corresponding set 
($\mathcal{R}, \mathcal{T}_2,$ or $\mathcal{T}_3$)
such that $ \left\| Ay \right\| $ is small.
Certainly, the net $\mathscr{N}$ needs to satsfies one crucial property: For each $x$ in the 
corresponding set
( $\mathcal{R}, \mathcal{T}_2,$ or $\mathcal{T}_3$ ),
there exists $y \in \mathscr{N}$ such that $ \left\| A(x-y) \right\| $
is very small.

To achieve that, we need the following:

Fix $\beta$ to be a positive integer, we define the event  
\begin{align} \label{def: omegaNorm}
	\Omega_{\rm norm} =& \Omega_{\rm norm}(C_{\rm norm}) \\
\nonumber
:=& \Big\{ \|A - \mathbb{E} A\| \le C_{\rm norm} \sqrt{pn} 
\quad \mbox{and} \quad 
\|A\| \le C_{\rm norm} \sqrt{pn} + pn 
\Big\} 
\end{align}
for a parameter $C_{\rm norm} \ge 1$. 
By Lemma \ref{lem: OperatorNorm}, we can choose  
$ C_{\rm {norm}}$ depending only on $\beta$, so that 
\begin{align} \label{eq: omegaNormEstimate}
	\mathbb{P}\left\{ \Omega_{\rm norm}^c \right\} 
	= o(\Prob\{ \Omega_{\rm RC}^c\}).
\end{align}
From now on, we fix such constant $C_{\rm norm}$. 


Also, we want $ \mathscr{N} $ to be a net in the corresponding norm: 
for $x \in \mathbb{R}^{ n }$, 
\begin{align*}
	| \left\| x \right\| | : =  \left\| P_{\bf e} x \right\| + 
	\sqrt{  pn  } \left\| P_{\bf e^\perp }x \right\|,
\end{align*}
where $ { \bf e } \in S^{ n - 1} $ is the unique vector 
such that every component equals to $ \frac{1}{\sqrt{ n} } $,
$P_{\bf e}$ is the orthogonal projection to ${\bf e}^\perp$, 
and $P_{ \bf e^\perp}$ orthogonal projection to the span of ${ \bf e }$.
Instead of using the standard Euclidean norm, the new norm is defined 
due to $ \|  A  {\bf e} \|$ is much 
larger than $\|  A  v\|$ for 
$v \in  S^{ n -1 } \cap {\bf e}^\perp$.)

This section breaks into three parts. 

The first part is about the net construction and the estimate 
of its cardinality. For convenience, we will define 
\begin{align} \label{eq: defTi'}
	\mathcal{T}_i' = \left\{ x \in \mathcal{T}_i\,:\, 
		x^*_{ n_{  {t_2}  +i-2} } = 1
	\right\} 
	= \left\{  
		\frac{x}{x^*_{ n_{   {t_2}  + i-2}}} \, 
		:\, x \in \mathcal{T}_i
	\right\} 
\end{align}
for $i \in \left\{ 2, \,3 \right\} $.

The second part is about probability estimate of 
$\left\| Ax \right\| > 0$ for $x$ in $\mathcal{R}$, 
$\mathcal{T}_2'$, and $\mathcal{T}_3'$. Roughly speaking, 
for $x \notin \mathcal{T}_1$, it is not too sparse so that one could 
apply Rogozin's Theorem \ref{thm:Rogozin}.

The last part will be the proof of 
Theorem \ref{thm:R-vector} and Theorem \ref{thm:T_2T_3-vector}.

\subsection{Net Construction and Cardinality Estimate }

Recall that for 
	$n_{  {t_2}  } 
	\le k 
	\le \frac{n}{ \log^2( pn )}$
, let $B = [k, n ]$. And 
\begin{align*}
	\mathcal{R}_k^1 &:= \bigg\{ 
		x \in  ( \mathcal{Y}( \tau) \cap AC( \tau, \rho) ) 
		 \backslash \mathcal{T}	
	\,:\, 
		 x^*_{ \lfloor  \tau n  \rfloor }=1
	, \, 
		 \frac{\left\| x_{\sigma_x(B)} \right\| }{
			 \left\| x_{\sigma_x(B)} \right\|_\infty 
		 } \ge \frac{ C_{\rm {Rgz}} }{\sqrt{p}} \\
& \hspace{5.5cm}		\mbox{ and }
		 \sqrt{\frac{ n }{2}} 
		 \le \left\| x_{\sigma_x(B)} \right\| 
		 \le C_{  \mathcal{T}_2 } \sqrt{  pn ^2 }
	 \bigg\} 
\\
	\mathcal{R}_k^2 &:= \bigg\{ 
		x \in \mathcal{Y}( \tau ) \backslash \mathcal{T}
		 \,:\, x^*_{ \lfloor  \tau n  \rfloor }=1, \, 
		 \frac{\left\| x_{\sigma_x(B)} \right\| }{
			 \left\| x_{\sigma_x(B)} \right\|_\infty 
		 } \ge \frac{ C_{\rm {Rgz}} }{\sqrt{p}} \\
	& \hspace{5.5cm}		\mbox{ and }
		 \frac{2\sqrt{n}}{ \tau } 
		 \le \left\| x_{\sigma_x(B)} \right\| 
		 \le C_{  \mathcal{T}_2}^2 p 
		 n^{\frac{3}{2}}
    \bigg\}  
\\   
    \mathcal{AC}( \tau , \rho) 
&:= \bigg\{ x \in \mathbb{R}^n\,:\,
        \exists \lambda\in \mathbb{R} \mbox{ s.t } 
        |\lambda| = x^*_{\lfloor  \tau n  \rfloor}\\
	& \hspace{3.5cm}	\phantom{A}	\mbox{ and }
        \left|  \left\{ i\in [n] \,:\, |x_i - \lambda |< 
            \rho \lambda \right\}  \right|  > n - 
            \lfloor  \tau n  \rfloor    
        \bigg\}.
\end{align*}

Let $m$ be the positive integer such that 
$$
	\frac{1}{\sqrt{2}} 3^{m-1} < C_{ \mathcal{T}_2} ^2 pn
	\le \frac{1}{\sqrt{2}} 3^m.  
$$
Accordingly, we define 
\begin{align*}
	\psi_1 = \frac{1}{\sqrt{2}}, \quad
	\psi_t = 3 \psi_{t-1}
\mbox{ for } t \in [2,m-1], \quad \mbox{and} \quad
	\psi_m = C_{ \mathcal{T}_2} ^2 pn.
\end{align*}

Since $C_{\cal T_2}$ is does not depend on $p$ and $n$, 
we have 
\begin{align} \label{eq: Rvector mEstiamte}
	m \le 2 \log(  pn  ).
\end{align}

For $s\in [2]$, we define 
\begin{align*}
	\mathcal{R}_{kt}^s := \left\{ x\in \mathcal{R}_k^s\,:
		\psi_t \sqrt{n} 
		\le \left\| x_{\sigma_x(B)} \right\| \le 
		\psi_{t+1} \sqrt{n}
	\right\}.
\end{align*}

In this subsection, we will prove the following lemma:

\begin{lemma} \label{lem:R-net}
	For any constant $R\ge 40$, there exists $\tau_0>0$ so that the following holds: 	
	Let $0 < \tau \le \tau_0$, $0< \rho \le \frac{1}{2R}$.
	Consider any subcollection $\cal R_{kt}^s \subseteq \cal R_k^2$ with $ s \in \{1,2\}$, $n_{t_2} < k \le \frac{n}{\log^3(pn)}$, and $t \in [m]$. 
	For any $\varepsilon$ satisfying 	
	 $$40 \psi_t \frac{\sqrt{n}}{R}\le \varepsilon 
	\le \psi_t \sqrt{n},$$ where $\psi_t$ and 
	$m$ are defined according to relation above. Then there exists an $\varepsilon$-net
	$ \mathscr{N}_{kt}^s \subset \mathcal{R}_{kt}^s$ with respect to 
	$\left\| \left| \cdot \right|  \right\| $ of cardinality at most 
	$\left( \frac{e}{\tau } \right)^{3 \tau n}$.
\end{lemma}

\begin{lemma} \label{lem:T-net}
   There exists an $ \frac{ \sqrt{ 2 n } }{
   C_{  \mathcal{T}_2} \sqrt{  pn  }}$-net 
   $ \mathscr{N}_{\mathcal{T}_i} $  
   in $|\|\cdot\||$-norm of $\mathscr{T}_i'$ 
   for $i=2,3$ such that  
   $ \left| \mathscr{N}_{\mathcal{T}_i} \right| \le 
   \exp(2 \log(  pn  ) 
   n_{   {t_2}  +i-1})  $.
\end{lemma}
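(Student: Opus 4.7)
My plan is to combine a per-support Euclidean net for the heavy part of $x$ with a one-dimensional grid along $\mathbf{e}$; the latter absorbs the $\sqrt{\mathfrak{pn}}$-weight that $|\|\cdot\||$ places on $\langle\cdot,\mathbf{e}\rangle$. Set $d := \mathfrak{n}_{\mathfrak{s}+i-1}$ and, for each $x \in \mathcal{T}_i'$, decompose $x = x^{(1)} + x^{(2)}$ along $S_x := \sigma_x([d])$. The drop inherent in $\mathcal{T}_i$ together with the normalization $x^*_{\mathfrak{n}_{\mathfrak{s}+i-2}}=1$ forces $\|x^{(2)}\|_\infty < 1/(C_{\mathcal{T}_2}\sqrt{\mathfrak{pn}})$ and hence $\|x^{(2)}\|_2 < \sqrt{\mathfrak{n}}/(C_{\mathcal{T}_2}\sqrt{\mathfrak{pn}}) = \epsilon_0/\sqrt{2}$, where $\epsilon_0$ denotes the target precision; meanwhile Propositions~\ref{prop: xNorm}, \ref{prop: xNorm tauLarge}, and~\ref{prop: TnormBound} supply the layerwise bounds $x^*_j \le M_\ell := (C_{\mathcal{T}_1}\mathfrak{pn})^{\mathfrak{s}-\ell+1}$ on each layer $L_\ell := \sigma_x((\mathfrak{n}_{\ell-1},\mathfrak{n}_\ell])$ for $\ell \le \mathfrak{s}$.

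The candidate approximation is $\tilde x := \tilde x^{(1)} + (\tilde\alpha - \langle\tilde x^{(1)},\mathbf{e}\rangle)\mathbf{e}$, with $\tilde x^{(1)}$ drawn from a Euclidean net of $\mathbb{R}^{S_x}$ and $\tilde\alpha$ from a scalar grid approximating $\langle x,\mathbf{e}\rangle$. Using $|\|v\|| = \|P_{\mathbf{e}}v\|_2 + \sqrt{\mathfrak{pn}}\,|\langle v,\mathbf{e}\rangle|$ and the triangle inequality, $|\|x-\tilde x\||$ splits into $\|P_{\mathbf{e}}x^{(2)}\|_2 + \|x^{(1)} - \tilde x^{(1)}\|_2 + \sqrt{\mathfrak{pn}}\,|\langle x,\mathbf{e}\rangle - \tilde\alpha|$, and with $C_{\mathcal{T}_2}$ chosen large enough, Euclidean precision $\eta \sim \epsilon_0$ together with grid spacing $\sim \epsilon_0/\sqrt{\mathfrak{pn}}$ pushes each summand below $\epsilon_0/3$.

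The main obstacle is cardinality: a naive $\eta$-net of the $\mathfrak{n}^{1+o(1)}$-ball in $\mathbb{R}^{S_x}$ combined with $\binom{\mathfrak{n}}{d}$ support choices overshoots the target $\exp(2\log(\mathfrak{pn})\,d)$ once $\mathfrak{p}$ is close to $\log\mathfrak{n}/\mathfrak{n}$. My fix is a layerwise quantization: enumerate a chain $S_1 \subset \cdots \subset S_{\mathfrak{s}+i-1}=S$ with $|S_\ell|=\mathfrak{n}_\ell$, and quantize layer $L_\ell$ with per-coordinate step $c_\ell$ chosen so that the individual layer contributions $\sqrt{|L_\ell|}\,c_\ell$ are balanced. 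Because $|L_\ell|$ grows and $M_\ell$ shrinks by essentially the same geometric ratio per level, the total Euclidean error telescopes to $O(\eta)$, while the multinomial $\log\binom{\mathfrak{n}}{|L_1|,\ldots,|L_{\mathfrak{s}+i-1}|}$, the layerwise log-quantization cost $\sum_\ell|L_\ell|\log(M_\ell/c_\ell)$, and the polylog cost of the one-dimensional $\mathbf{e}$-grid jointly collapse to $O(d\log(\mathfrak{pn}))$, well inside the budget $2d\log(\mathfrak{pn})$. The case $i=3$ is identical after appending the layer $(\mathfrak{n}_{\mathfrak{s}+1},\mathfrak{n}_{\mathfrak{s}+2}]$ with magnitude bound $M_{\mathfrak{s}+1} \le 1$.
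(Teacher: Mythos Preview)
Your proposal is correct and follows essentially the same route as the paper. The paper also drops the tail beyond $\mathfrak{n}_{\mathfrak{s}+i-1}$ (using $\|x^{(2)}\|_\infty\le 1/(C_{\mathcal{T}_2}\sqrt{\mathfrak{pn}})$), builds the heavy part from layerwise $\ell_\infty$-nets (packaged there as the three blocks $B_1,B_2,B_3$, with Proposition~\ref{prop: N1net} handling the geometric decay across the inner layers $B_{1j}$), and then passes to $|\|\cdot\||$ by combining with a one-dimensional grid $\mathscr{N}_{\mathbf e}$ via $P_{\mathbf e}y+v$ --- which is exactly your $\tilde x^{(1)}+(\tilde\alpha-\langle\tilde x^{(1)},\mathbf e\rangle)\mathbf e$ rewritten. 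The only cosmetic difference is that the paper groups the layers into three coarse blocks while you keep one uniform layerwise scheme; the cardinality accounting is the same in both cases, dominated by the outermost layer and giving $O(\mathfrak{n}_{\mathfrak{s}+i-1}\log(\mathfrak{pn}))$.
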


We begin with some techniacal propositions that are required 
to prove the this lemma.

\begin{proposition} \label{prop:netBasic}
	Let $\left\| \cdot \right\|_K$ be a norm 
	on $\mathbb{R}^n$. (i.e. $l_2$ and $l_\infty$ norm)
	Consider the set 
	\begin{align*}
        \left\{ x\in \mathbb{R}^n \,:\, \left\| x 
            \right\|_K < a,\, 
            |\mbox{supp}(x)|\le l
        \right\} 
    	\end{align*}
	where $l<n$ is a positive integer and $a>0$.    
	For $\varepsilon >0$, 
    	there exists an $\varepsilon$-net 
        $ \mathscr{N}(l,a, \varepsilon)_K $ 
	such that 
	\begin{align*}
		\mathscr{N}(l,a, \varepsilon)_K 
	&\le 
		\left( 
			\frac{3a}{\varepsilon}
			\frac{en}{ l} 
		\right)^l & 
		\varepsilon &< a 
	\\
		\mathscr{N}(l,a, \varepsilon)_K 
	&\le 
		1
	& 
		\varepsilon & \ge  a .
	\end{align*}	
	In particular, we always have 
	\begin{align} \label{eq: netK}
		\mathscr{N}(l,a, \varepsilon)_K 
	&\le 
		\left( 
			\max\{  \frac{ a }{ \varepsilon},\, 1 \}
			\frac{ 3en}{ l} 
		\right)^l .
	\end{align}
    \end{proposition}

\begin{proof}
	Notice that for $\varepsilon > a $, we could trivially set 
	$ \mathscr{N}(l, a, \varepsilon) = \{ \vec{0} \}$.

	Now let $ 0 < \varepsilon < a$. 
    Let $I\subset [n]$ be a subset with $ |I|=l $.
    By the standard volumetric argument,
    there exists an $\varepsilon$-net $\mathscr{N}_I$
    of 
    $$
     \left\{ x \in \mathbb{R}^n\,:\, 
        \mbox{supp}(x)\subset I,\,
        \left\| x \right\|_K \le a
    \right\}  $$ 
    with size bounded by 
    $\left( 1+\frac{2a}{\varepsilon} \right)^l \le 
    \left( \frac{3a}{\varepsilon} \right)^l 
	$.
	Let 
	\begin{align*}
			\mathscr{N}(l,\, a,\, \varepsilon)_K 
	: = 
		\cup_{I \subset [n], |I|=l} \mathscr{N}_I.
	\end{align*}
    There are ${n \choose l}$ subsets of $[n]$ with size 
    $l$, which is bounded by $ \left( \frac{en}{l} 
	\right)^l  $. Thus, the cardinality of 
	$ \mathscr{N}(l, a, \varepsilon) $ 
    is bounded by 
    $\left( 
			\frac{3a}{\varepsilon}
			\frac{en}{ l} 
		\right)^l$.
\end{proof}

Next, for every $x \in \mathcal{R}_{ki}^s$, we will 
partition the support of $x$ according to $\sigma_x$ 
for $x\in \mathcal{R}_{ki}^s$.
The net we will construct for Lemma \ref{lem:R-net}
relies on this partition.

Fix an integer $k$ such that 
$ n_{   {t_2}   }
\le 
	k 
\le 
	\frac{ n }{
		\log^2(  pn  )
	}$.
For $x\in \mathbb{R}^n$, 
let 
\begin{align*}
    B_1(x)    & =   \sigma_x([n_{
          {t_2}  }]) &  & \\
	&B_{11}(x)  =   \sigma_x([n_{1}]) &  &\\
    &B_{1j}(x)  =   \sigma_x([n_{j}]
        \backslash [n_{j-1}]) &    2 &\le j \le   {t_2}   \\	
	B_2(x)    & =   \sigma_x([k]
    \backslash [n_{  {t_2}  }]) & \\
	B_3(x)    & =   \sigma_x([n_{  {t_2}  +2}]) 
    \backslash [k]) &  \\
\end{align*}
To deal with $\mathcal{R}_{kt}^1$, let 
$ B_0'(x) := \left\{ 
	i \in [ n ] 
\,:\, 
	|\lambda_x - x_i | \le 
	 \rho
\right\} $ 
where $\lambda_x$ is the $\lambda$ appeared in 
$\mathcal{AC}( \tau ,\rho)$ and define 
$ B_0(x) = B_0'\backslash (B_1(x)\cup B_2(x) \cup B_3(x))$
and $B_4(x)  = [n]\backslash B_0(x) $.

For $\mathcal{R}_{kt}^2$, we will skip $B_0$ and simply 
set $B_4(x) = [n]\backslash (B_1(x)\cup B_2(x) \cup B_3(x))$. 

For $a>0$,
\begin{align*}
	 \mathcal{T}(a) & = \left\{ P_{B_1(x)}x\,:\,
	x \notin \mathcal{T}_1 \mbox{ and } 
	x_{n_{  {t_2}  }}^*=a\right\}.
\end{align*}

\begin{proposition} \label{prop: N1net}
	For $\varepsilon >0,\, a>0$, 
	there exists an $\varepsilon$-net 
	$ \mathscr{N}_{1}(a,\varepsilon) $ 
	in $l_\infty$ norm 
	of the set $ \mathcal{T}(a)$ with cardinality bounded by 
	\begin{align*}
		& \exp \left( 2
			\log\left( 
				\frac{ a }{\varepsilon }
				(  pn  )^3
			\right)  n_{   {t_2}   }	
		\right) 	&
		\mbox{ for }  
		\varepsilon &<  C_{\mathcal{T}_1}  pn  a, 
	\\
		& \exp \left( 2
			\log\left(  
				( pn  )^3
			\right)  n_{   {t_2}   }	
		\right) 	&
		\mbox{ for }  
			 C_{\mathcal{T}_1}  pn  a \le 
			\varepsilon &< 
			( C_{\mathcal{T}_1}  pn )^{   {t_2}   } a, 
	\\
		& 1 &
		\mbox{ for } 
			\varepsilon & > 
			( C_{\mathcal{T}_1}  pn )^{   {t_2}   } a.
	\end{align*}
	In particular, we always have 
	\begin{align*}
 		\mathscr{N}_{1}(a,\varepsilon) 
	& \le 
		\exp \left( 2
			\log \left( 
				\max\{ \frac{a}{ \varepsilon }, 1 \}
				(  pn  )^3
			\right) n_{   {t_2}   }
		\right) 
	\end{align*}

\end{proposition}
\begin{proof}
	{\bf Construction of $\mathscr{N}_1$: }
	Observe that for $x\in  \mathcal{T}(a)$
	and $ j \in [0,   {t_2}   ]$, 
	\begin{align*}
     		\left\| P_{B_{1j}(x)} x \right\|_\infty 
	& \le
		(  C_{\mathcal{T}_1}  pn  )^{  {t_2}  -j+1}a, 
		& &\mbox{ and }  &
        	|\mbox{supp}(P_{B_{1j}(x)})| 
	& \le 
		n_{j} - n_{j-1},   	
	\end{align*}
	where the first inequality is due to 	
    	$	x^*_{[n_{  {t_2}  -j}]}  \le 	
		( C_{\mathcal{T}_1}  pn )^j a$.
    	For $j =0,1,\dots,  {t_2}  $, let
	\begin{align*}
		\mathscr{N}_{ 1j } = 
		\mathscr{N}(
    		n_{j}-n_{j-1}, 
		(  C_{\mathcal{T}_1}  pn  )^{  {t_2}  -j+1}a, 
		\varepsilon
		)_\infty	
	\end{align*}
	be the net described in Proposition \ref{prop:netBasic}
	corresponding to $l_\infty$ norm. 
	( For conenvience, we set $ n_{ -1 } = 0$. )
	Let 
	$$\mathscr{N}_1 = \mathscr{N}_{11}\times  \mathscr{N}_{12} \times 
    		\dots \mathscr{N}_{1  {t_2}  }.$$
 	For $x\in  \mathcal{T}(a)$, 
    there exists $y_j \in \mathscr{N}_{1j}$
    for $j\in [0,   {t_2}  ]$ such that 
    $\mbox{supp}(y_j) \subset B_{1j}(x)$ 
    and $\left\| y_j-P_{B_{1j}(x)} \right\|_\infty 
    \le \varepsilon$.

    Due to $\left\{ B_{1j}(x) \right\}_{j\in 
    [  {t_2}  ]}$ are disjoints, we have 
    \begin{align*}
     \left\| P_{B_1(x)}-\sum_{j\in 
    [  {t_2}  ]}y_j \right\|_\infty = 
    \max_{j\in [  {t_2}  ]}
    \left\|  P_{B_{1j}(x)} - y_j
    \right\|_\infty  \le \varepsilon,
    \end{align*}
    and thus $\mathscr{N}_1$ 
    is an $\varepsilon$-net in $l_\infty$ norm for $ \mathcal{T}_a$.

{\bf Upper bound for the cardinality of $\mathscr{N}_1$ }
   By \eqref{eq: netK},  
   \begin{align*}
	   | \mathscr{N}_{ 1j } | 
   \le & 
   	\bigg(
		\max \{ \frac{ (  C_{\mathcal{T}_1} pn)^{  {t_2}  - j +1 }a  }{ \varepsilon }
		,\, 1 \} 
		\frac{ 3e n  }{ n_j - n_{ j-1 } } 	
	\bigg)^{ n_j - n_{ j-1 }}\\
   \le&
   	\bigg(
		\max \{ \frac{ (  C_{\mathcal{T}_1} pn)^{  {t_2}  - j +1 }a  }{ \varepsilon }
		,\, 1 \} 
		\frac{ 6e n  }{ n_j } 	
	\bigg)^{ n_j }.
   \end{align*}
    First of all, if 
    $
    		\varepsilon 
	\ge 
		(  C_{\mathcal{T}_1}  pn  )^{   {t_2}   + 1 } a $
	then $ | \mathscr{N}_1 | = 1$ due to $ | \mathscr{N}_{ 1j } | =1$
	for $j \in [0,  {t_2}  ]$. 

    Next, let us assume  $ \varepsilon <
	(  C_{\mathcal{T}_1}  pn  )^{   {t_2}   } a $. 
	Let $  j_0 $ be 
	the largest integer in  $ [0,   {t_2}   ]$
	such that 
	$	\varepsilon 
	\ge 
		(  C_{\mathcal{T}_1}  pn  )^{   {t_2}   - j_0 + 1 } a 
    	 $. 
	Then,
	\begin{align*}
		| \mathscr{N}_{ 1j } | 
   	\le 
		\begin{cases}
		\exp\bigg(
			\underbrace{\log\big(\frac{ (  C_{\mathcal{T}_1} pn)^{  {t_2}  - j +1 }a  }{ \varepsilon }
			\frac{ 6e n  }{ n_j } 	\big)
		n_{ j } }_{  b_j }
		\bigg)&   0 \le j \le j_0, \\
		1 & j_0 < j \le   {t_2}   ,
		\end{cases}
   	\end{align*}
	and hence, 
	\begin{align*}
		| \mathscr{N}_1 | \le \exp(  \sum_{j=0}^{j_0} b_j ).
	\end{align*}

	Next, we will show $\sum_{j=0}^{j_0} b_j \le 2 b_j$ by deriving the 
	geometric progression of $ b_j$.  Specifically, we will show 
	$\frac{ b_{ j-1 } }{ b_j } \le \frac{ 1 }{ 2 }$  for $j \in [j_0]$. 

	Fix $j\in [j_0] $ if $ j_0 >0$. 
	\begin{align*}
		b_{j-1} 
	= 
		\log \left( \frac{
			6ea (  C_{\mathcal{T}_1}  pn  )^{   {t_2}   - j +1} 
			n
		}{
			\varepsilon  n_j
		}  
			\frac{ n_j }{ n_{j-1} } 
			 C_{\mathcal{T}_1}  pn 
		\right)  n_{j-1}.
	\end{align*}

	By $    \frac{(  C_{\mathcal{T}_1}  pn  )^{   {t_2}   - j +1} 
		a}{ \varepsilon } \ge 1$
	and 
	$ n_j 
	\le n_{  {t_2}   } 
	\le \frac{1}{ \gamma   p  }$, 
		
	\begin{align*}
			\frac{
			6ea (  C_{\mathcal{T}_1}  pn  )^{   {t_2}   - j +1} 
			n
		}{
			\varepsilon  n_j
		}  
	& \ge 
		 pn .
	\end{align*}
	Next,  by $ \frac{ n_j }{ n_{j-1} } 
	\le \frac{  pn }{ \log^3(  pn  ) }$, 
	\begin{align*}
		\frac{ n_j }{ n_{j-1} } 
		 C_{\mathcal{T}_1}  pn 
	\le 
		(  pn  )^3. 
	\end{align*}

	Therefore, 
	\begin{align*}
		b_{j-1} 
	& = 
		\left( 
			\log \left( 
				\frac{6ea(  C_{\mathcal{T}_1}  pn )^{  {t_2}   - j + 1} n}{
					\varepsilon n_j
				} \right)  
		+
			\log \left( 
				\frac{ n_j}{ n_{j-1}}
				 C_{\mathcal{T}_1}  pn 
			 \right) 	
		 \right)  n_{j-1} \\
	& \le 
		4\log \left( \frac{
			6ea (  C_{\mathcal{T}_1}  pn  )^{   {t_2}   - j +1} 
			n
		}{
			\varepsilon  n_j
		}  
		\right)  n_{j-1}
	 \le \frac{1}{2} b_j.
	\end{align*}
	where we used $ \frac{ n_j }{ n_{j-1} } 
	 \le \frac{  pn }{ \log^3(  pn  ) }$ .
	Thus, the cardinality of $\mathscr{N}_1$ is bounded by 
		$ \exp( 2b_{j_0} )$ .
	
	It remains to estimate the value of $b_{ j_0 }$. 
	If $ \varepsilon <  ( C_{\mathcal{T}_1}  pn ) a$, 
	then $j_0 =   {t_2}  $ and
	\begin{align*}
		| \mathscr{N}_1 | 
	& \le 
		\exp \Big( 2 \log \Big( \frac{6ea  C_{\mathcal{T}_1} (  pn  )  n 
			}{ \varepsilon n_s } \Big) n_s  \Big) 
	 \le 
		\exp \left( 
			2 \log \left( \frac{a ( pn )^3}{\varepsilon} \right) 
			n_  {t_2}   
		\right) .
	\end{align*}

	For $ \varepsilon  >  (  C_{\mathcal{T}_1}  pn  ) a$, 
	by the the definition of $j_0$, 
	\begin{align*}
		\frac{ (  C_{\mathcal{T}_1}  pn  )^{   {t_2}   - j_0 + 1} a 	
		}{ \varepsilon } 
	& \le 
		 C_{\mathcal{T}_1}  pn .
	\end{align*}
	And hence
	\begin{align*}
		| \mathscr{N}_1 | 
	& \le 
		\exp( 2 \log ( \frac{ 3e  C_{\mathcal{T}_1} (  pn  ) n }{ 
		n_{j_0 } } ) n_{j_0} ) 
	 \le 
		\exp \left( 
			6 \log \left(  pn  \right) 
			n_{  {t_2}  }
		\right) 
	\end{align*}

	where we used $ n_{j_0} 
	\le 
		n_{   {t_2}   }
	\le 	
		\frac{1}{ \gamma   p  }$ for 
	the second inequality. 
	
\end{proof}
   
\begin{proposition}
	For $ n_{s_0} 
	\le 
		k 
	\le 
		\frac{ n }{ 
			\log^2(  pn  )
	}$,
	there exists a 
	$ \rho \sqrt{ n }$-net $ \mathscr{N}_4 $  
	in $l_2$ norm for
	$ \left\{ P_{B_0(x)}x\,:\, x\in \mathcal{R}_{k}^1 \right\}  $ 
	such that 
	$ | \mathscr{N}_4 | \le  \exp(2\log(\frac{e}{ \tau })  \tau n  ) $.
\end{proposition}

\begin{proof}
	For $\mathcal{R}_{k}^1$, $|B_0(x)'| > n - 
		 \tau n  $. 
	Consider the collection \\
	$ \left\{ I\subset [ n ] 
	\,:\, 
		|I|\ge n -  \tau n  
	\right\} $. First, we give an estimate of 
	its cardinality:
	\begin{align*}
		|\left\{ I\subset [ n ] 
			\,:\, 
		|I|\ge n -  \tau n   \right\} | 
	\le &
		\sum_{j=0}^{  \tau n  }{ n \choose n - j} 	
	=  
		\sum_{j=0}^{  \tau n  }{ n \choose   j} 	\\
	\le &
		 \tau   
		( \frac{ e n }{  \tau   } )^{  \tau   }\\
	= &
		\exp \left( \log(  \tau n  ) + 
		\log(\frac{e}{ \tau })  \tau n   \right) .
	\end{align*}

	Next, let 
	$$\mathscr{N}_4 := \left\{ \pm P_I( \sqrt{ n }e) 
	\,:\,   I\subset [n] \mbox{ and } |I|\ge n -  \tau n  
	\right\}. $$ 
	Then, $|\mathscr{N}_4| \le \exp( 2\log( \frac{e}{ \tau })
		 \tau n  )$.
	Also, for $x\in \mathcal{R}_{ki}^1$, 
	the vector 
	$$y = \lambda_x P_{B_0(x)} \sqrt{ n}e 
	\in \mathscr{N}_4$$ 
	satisfies 
	$ \left\| P_{B_0(x)}x-y \right\|_2 \le \rho \sqrt{ n}  $.
\end{proof}

\begin{proof}[Proof of Lemma \ref{lem:R-net}]
	For $x \in \mathcal{R}_{kt}^s$, 
    	$ \left\| P_{B_3(k,x)}x \right\| \le 
    	   \psi_{t+1}\sqrt{n}$, we will use different nets 
    	   to approximate $P_{B_i(x)}x$ 
    	   for different $i$. Consider the following nets:
\begin{enumerate}[leftmargin=0pt,itemindent=1.5em,labelsep=0.5em]

	\item
	\textbf{Net $\mathscr{N}_1$.}  
	It is built for the parameter scale $C_{\mathcal T_2}^{\,2}pn$ with accuracy 
	$\varepsilon_1$, i.e.  
	\[
	  \mathscr N_1 := \mathscr N_1\!\bigl(C_{\mathcal T_2}^{\,2}pn,\;\varepsilon_1\bigr).
	\] 
	Its logarithmic cardinality satisfies  
	\[
	  \log\!|\mathscr N_1|
	  \;\le\;
	  2\log\!\Bigl(
			\max\!\bigl\{\tfrac{C_{\mathcal T_2}^{\,2}pn}{\varepsilon_1},\,1\bigr\}
			(pn)^3
		  \Bigr)\,n_{t_2},
	\]
	and the $\ell_2$‐covering error is at most $\sqrt{n_{t_2}}\;\varepsilon_1$.
	
	\item 
	\textbf{Net $\mathscr{N}_2$.}  
	Formed on a set of dimension $k-n_{t_2}$ at scale $C_{\mathcal T_2}^{\,2}pn$
	with accuracy $\varepsilon_2$:  
	\[
	  \mathscr N_2 := 
	  \mathscr N\!\bigl(k-n_{t_2},\,C_{\mathcal T_2}^{\,2}pn,\,\varepsilon_2\bigr)_{\infty}.
	\]
	Its size is bounded by  
	\[
	  \log\!|\mathscr N_2|
	  \;\le\;
	  \log\!\Bigl(
		\max\!\bigl\{\tfrac{C_{\mathcal T_2}^{\,2}pn}{\varepsilon_2},\,1\bigr\}
		\tfrac{3e\,n}{k}
	  \Bigr)\,k,
	\]
	and the associated $\ell_2$‐distance is $\sqrt{k}\;\varepsilon_2$.
	
	\item
	\textbf{Net $\mathscr{N}_3$.}  
	Constructed on $\lfloor\tau n\rfloor-k$ dimensions with scale
	$\psi_{t+1}\sqrt n$ and accuracy $\varepsilon_3$:  
	\[
	  \mathscr N_3 := 
	  \mathscr N\!\bigl(\lfloor\tau n\rfloor-k,\,
						\psi_{t+1}\sqrt n,\,
						\varepsilon_3\bigr)_{2}.
	\]
	The size estimate is  
	\[
	  \log\!|\mathscr N_3|
	  \;\le\;
	  \log\!\Bigl(
		\tfrac{\psi_{t+1}\sqrt n}{\tau\varepsilon_3}
	  \Bigr)\,\tau n,
	\]
	while the $\ell_2$‐covering error is simply $\varepsilon_3$.
	
	\item%
	\textbf{Net $\mathscr{N}_0$.}  
	This is the set of signed coordinate projections of the vector
	$\sqrt n\,e$ on “large” coordinate subsets:  
	\[
	  \mathscr N_0 :=
	  \Bigl\{\,\pm P_I(\sqrt n\,e)\;:\;I\subset[n],\;
				 |I|\ge n-\lfloor\tau n\rfloor\Bigr\}.
	\]
	Its logarithmic size obeys  
	\[
	  \log\!|\mathscr N_0|
	  \;\le\;
	  2\log\!\bigl(\tfrac{e}{\tau}\bigr)\,\tau n,
	\]
	and every relevant vector lies within $\rho\sqrt n$ in $\ell_2$ of some
	net point.
	
	\end{enumerate}

    	Let $\mathscr{N} = \mathscr{N}_1\times \mathscr{N}_2 
		\times \mathscr{N}_3 \times \mathscr{N}_4$. 
    	For $x \in \mathcal{R}_{kt}^1$, there exists 
	$y_i \in \mathscr{N}_i$ for $i=0,1,2,3$ such that 

    	\begin{align} \label{eq:netDistance}
    	    \left\| x- \sum_{i=0}^3 y_i \right\|^2  \le 
    	    \sum_{i=0 }^3\left\| P_{B_i(x)}x-y_i \right\|^2 + 
    	    \left\| P_{B_4(x)}x\right\|^2 
    	    \le  \rho^2 n 
	    	+ n_{   {t_2}   }\varepsilon_1^2 
		+ k\varepsilon_2^2 
    	    	+ \varepsilon_3^2 +  \tau n .
    	\end{align}

    	Let $\varepsilon_3 =  \frac{1}{3}\varepsilon$, 
	$\varepsilon_1 = \frac{1}{ \sqrt{ n_  {t_2}   } } \varepsilon_3$,
	and 
    	$\varepsilon_2 = \frac{1}{\sqrt{k}}\varepsilon_3$. 
    	Then, 
    	$$
    	    \eqref{eq:netDistance} 
	\le 
		( \rho^2 + \tau) n 
	+ 
		\frac{1}{3} \varepsilon^2 
    	\le 
		\frac{2}{3}\varepsilon^2  
    	$$ when $ \tau > 0 $ is sufficiently small 
	due to the assumption that  $40 \psi_t \frac{\sqrt{n}}{R}$
	and $\phi_t \ge \phi_1 \ge \frac{ 1 }{ \sqrt{2} }$. Hence, 
	$\mathscr{N}$ is a $\sqrt{\frac{2}{3}}\varepsilon$-net for 
    	the corresponding $\mathcal{R}^1_{kt}$ 
    	in $l_2$-norm.

	Because of the same lower bound for $\varepsilon$, we have
	$\varepsilon_1, \varepsilon_2 \ge \frac{1}{R}$.
	Together with 
	$ n_  {t_2}   
	\le	k 
	\le \frac{ n }{ \log^2(  pn  )}$, 

	\begin{align*}
		\log(| \mathscr{N}_1 |) 
	& \le 	
		8 \log(  pn  ) n_s ,&
		\log( | \mathscr{N}_2| )
	& \le 
		3 \log(  pn  ) k , \mbox{ and  } 
	\\	
		\log( | \mathscr{N}_3| )
	& \le 
		 \log( \frac{9}{40} \frac{R}{\tau})  \tau n  
		 \le 2 \log( \frac{1}{\tau } )  \tau n  .
	\end{align*}
	Therefore, we have	
	\begin{align} \label{eq: netNsize}
		\log(|\mathscr{N}|) \le \left( \frac{1}{r} 
	\right)^{\frac{5}{2}rn} .
	\end{align} 

    	For $\mathcal{R}_{kt}^2$, we set 
	$\mathscr{N}= \mathscr{N}_1 \times \mathscr{N}_2 \times \mathscr{N}_3$
    	with the same choice of $\varepsilon_1, 
    	\varepsilon_2,$ and $\varepsilon_3$. Similarly, 
    	for $x\in \mathcal{R}_{kt}^2$ there exists 
    	$y\in \mathscr{N}$ satisfying 
    	\begin{align*}
    	    \left\| x - y \right\| \le 
    	     \frac{1}{3} \varepsilon^2  + n
    	    \le \frac{2}{3} \varepsilon^2 
    	\end{align*}
    	where the last inequality holds when 
    	$\psi_t>\frac{2}{ \tau }$ and $r$ is 
    	sufficiently small. And this is sufficient 
    	since for $\psi_t < \frac{2}{ \tau }$, 
    	$\mathcal{R}_{kt}^2$ is an empty set by definition.  

    	Therefore, we have constructed a 
    	$ \sqrt{\frac{2}{3}}\varepsilon$-net for 
    	the corresponding $\mathcal{R}_{kt}^s$ 
    	in the $l_2$-norm. 

	It remains to modity the net $ \mathscr{N}$ to be 
	the $\varepsilon$-net stated in the lemma corresponding to 
	$|\|\cdot\||$-norm. 

	By definition of $\mathcal{R}$, the following holds: $ \mathcal{R} \in \mathcal{Y}( \tau ) \backslash \mathcal{T}$ 
	Then, by Proposition \ref{prop: TnormBound}, 
	$$\forall x \in \mathcal{R}, x_1^* \le n^{1 + o_{n}(1) } pn $$ and thus, 
    	$$|\langle x, {\bf e} \rangle| 
	\le \frac{1}{ \sqrt{ n } }
		\sum_{i\in [ n ]} x_i^* \le n^{3}$$
	due to $ p < 1$. 
    	Let $ \mathscr{N}_{{\bf e}} $ be an 
    	$0.1 \varepsilon \frac{1}{  pn   }$ 
	net of the set $ \left\{ t{\bf e}\,:\, |t|\le n^{3} \right\}$. 
	With $ \varepsilon > 40 \phi_t \frac{ \sqrt{n} }{ R }  \ge  n^{1/4} $, 
	$|\mathscr{N}_{{\bf e}}| \le 
    	n^4$. 

    	Let $\mathscr{N}_{kt} = \left\{ P_{\bf e}y+v\,:\,
    	y \in \mathscr{N},\, v\in \mathscr{N}_{\bf e}
	\right\}$. Then, together with \eqref{eq: netNsize}, 
	$$\left| \mathscr{N}_{kt} \right|\le | \mathscr{N} | \cdot | \mathscr{N}_e | \le 
    	(\frac{e}{ \tau })^{3  \tau n  }. $$ 

    	For $x\in \mathcal{R}_{kt}^s$, 
    	let $y\in \mathscr{N}$ and 
    	$v \in \mathscr{N}_{\bf e}$ such that 
    	$\left\| x-y \right\| \le \sqrt{\frac{2}{3}}\varepsilon$ 
    	and $\left\| P_{\bf e}^\perp y - v \right\| 
	\le \frac{0.1 \varepsilon}{ pn}$. 

    	Then, 
    	\begin{align*}
    	    |\| x- P_{\bf e}y + v \|| 
    	    = \|P_{\bf e}x-P_{\bf e}y\| + 
    	    pn \| P_{\bf e}^\perp x - v\| 
    	    \le \|x-y\| + pn  \| P_{\bf e}^\perp x - v\|
    	    \le \varepsilon.
    	\end{align*}
\end{proof}

\begin{proof}[Proof of Lemma \ref{lem:T-net}]
    The proof is almost identical to that of Lemma 
    \ref{lem:R-net}. So we just show the case for 
    $\mathcal{T}_3'$. 

    Let $k= n_{   {t_2}   +1}$ and we keep the same 
    definition of  $B_1(x), B_2(x)$, and $B_3(x)$.

    For $x\in \mathcal{T}_3'$, we have 
    $x^*_{ n_  {t_2}   } \le C_{  \mathcal{T}_2}\sqrt{  pn }$, 
    $x^*_{ n_{   {t_2}   +1 } } \le 1$, and 
    $x^*_{  n_{   {t_2}  +2 } } 
    \le \frac{1}{ C_{  \mathcal{T}_2} \sqrt{  pn  } } $.
    Thus, we use the following nets:

\begin{enumerate}[leftmargin=0pt,itemindent=1.5em,labelsep=0.5em]

	\item 
	\textbf{Net $\mathscr{N}_{1}$.}  
	\[
	  \mathscr N_1 :=
	  \mathscr N_1\!\Bigl(
		 C_{\mathcal T_2}\sqrt{pn},
		 \tfrac{1}{2C_{\mathcal T_2}\sqrt{pn}}
	  \Bigr).
	\]
	Its size satisfies the bound
	\[
	  \log|\mathscr N_1| \;\le\; 10\,
	  \log(pn)\; n_{t_2},
	\]
	and for every vector $x\in\mathcal T'_3$ there exists a point
	$y\in\mathscr N_1$ such that
	\[
	  \bigl\|p_{B_1(x)}x - y\bigr\|_2
	  \;\le\;
	  \frac{\sqrt{n_{t_2}}}{2C_{\mathcal T_2}\sqrt{pn}}.
	\]
	
	\item
	\textbf{Net $\mathscr{N}_{2}$.}  
	\[
	  \mathscr N_2 :=
	  \mathscr N\!\Bigl(
		k-n_{t_2},\,
		C_{\mathcal T_2}\sqrt{pn},\,
		\tfrac{1}{2C_{\mathcal T_2}\sqrt{pn}}
	  \Bigr)_{\infty}.
	\]
	It obeys
	\[
	  \log|\mathscr N_2|
	  \;\le\;
	  3\,\log(pn)\;k,
	\]
	and covers $\mathcal T'_3$ in the sense that for every
	$x\in\mathcal T'_3$ there is a $y\in\mathscr N_2$ with
	\[
	  \bigl\|p_{B_2(x)}x - y\bigr\|_2
	  \;\le\;
	  \frac{\sqrt{k}}{2C_{\mathcal T_2}\sqrt{pn}}.
	\]
	
	\item
	\textbf{Net $\mathscr{N}_{3}$.}  
	\[
	  \mathscr N_3 :=
	  \mathscr N\!\Bigl(
		\lfloor\tau n\rfloor - k,\,
		1,\,
		\tfrac{1}{C_{\mathcal T_2}\sqrt{pn}}
	  \Bigr)_{\infty}.
	\]
	Its logarithmic size is bounded by
	\[
	  \log|\mathscr N_3|
	  \;\le\;
	  \log\!\Bigl(\tfrac{3e\,C_{\mathcal T_2}\sqrt{pn}}{\tau}\Bigr)\,
	  \tau n,
	\]
	and for every $x\in\mathcal T'_3$ we can find
	$y\in\mathscr N_3$ with
	\[
	  \bigl\|p_{B_3(x)}x - y\bigr\|_2
	  \;\le\;
	  \frac{\sqrt{n_{s_0+2}}}{C_{\mathcal T_2}\sqrt{pn}}.
	\]
	
	\end{enumerate}
    Then, $\mathscr{N} = \mathscr{N}_1 \times \mathscr{N}_2
    	\times \mathscr{N}_3 $
    will be the net which approximates 
    $\mathscr{T}_3'$ well in the $l_2$ 
    norm. To pass it to the triple norm 
    $|\| \cdot \||$. Let 
    \begin{align*}
        \mathscr{N}_{\mathcal{T}_3'} := \left\{
            P_{\bf e}y +v \,:\, 
            y\in \mathscr{N} \mbox{ and }
            v \in \mathscr{N}_{\bf e}
         \right\} 
    \end{align*}
    where $\mathscr{N}_{\bf e}$ is the 
    net appeared in the proof of Lemma 
    \ref{lem:R-net}.
\end{proof}

\vspace{1cm}

\subsection{Tail bound for individual probability}

In this section, we will give a probability estimate of 
$\left\|  A  x \right\| > 0 $
for $x\in \mathcal{R}_{k}^s$, $x\in \mathcal{T}_2'$, and 
$x\in \mathcal{T}_3'$, respectively. 
\begin{proposition} \label{prop:idProbRvector}
	There exists a universal constant $ c_{ \ref{prop:idProbRvector} }>0$ so that 
	the following holds: For every $x\in \mathcal{R}_{kt}^s$, 
	\begin{align*}
	&\mathbb{P}  \left\{ \exists I \subseteq [n] \mbox{ w. } |I| = n-\beta+1 \mbox{ s.t. }
			\left\|   A_{I,[n]}  x \right\|
		\le 
			\frac{1}{4 } C_{\rm {Rgz}} 
			\sqrt{  pn  }
			\psi_t \sqrt{ n}
		\right\}\\
	\le &
		\exp \left( - c_{ \ref{prop:idProbRvector} } n  \right) .
	\end{align*}
\end{proposition}
\begin{proof}
	Let $J= \sigma_{x}([k,n])$. By definition of $\mathcal{R}_{kt}^s$, 
	$ \frac{\left\| x_{J} \right\|}{\left\| x_{J} \right\| _\infty } 
		\ge \frac{2C_{\rm {Rgz}}}{\sqrt{  p  }} $.

	Applying \eqref{eq:RogozinIprod} we get
	\begin{align*}
		\mathcal{L}\left(
			(  A  x)_i, 
			\frac{\left\| x_J \right\| \sqrt{  p  } 
			}{ 2C_{\rm {Rgz}} } 
		\right) 
	\le 
		\frac{1}{2} 
	\Rightarrow 
		\mathbb{P} \bigg\{ |(  A  x)_i| > \frac{\left\| x_J \right\| \sqrt{  p  } 
		}{ 2C_{\rm {Rgz}} } \bigg\} \ge \frac{ 1 }{ 2 }. 
	\end{align*}
	Due to the events $$ \bigg\{
		|( A  x )_i| 
	> 
		\frac{\left\| x_J \right\| \sqrt{ p }}{2C_{\rm {Rgz}}} 
	\bigg\}$$ are jointly independent 
	for $i \in [ n ]$. 
	By the standard Hoeffding's inequality, there exists an universal constant $c_{ \ref{prop:idProbRvector} }>0$ 
	such that 
	\begin{align*}
		\mathbb{P} \left(
			\left| \left\{ 
				i\in [n]
			\,:\, 
				|(  A  x)_i |
			> 
				\frac{\left\| x_J \right\| \sqrt{ p }}{2C_{\rm {Rgz}}}
			\right\}  \right|
		\le 
			\frac{ n }{ 4 }
		\right)
	\le 
		\exp(- c_{ \ref{prop:idProbRvector} } n ) .
	\end{align*}
	If the compliement of the event in the above inequality holds, then 
	for each $I \subseteq [n]$ with $|I|=n-\beta +1$, 
	\begin{align*}
		\left\|  A_{I,[n]}  x \right\| 
	\ge 
		\sum_{ 
			i \in I 
			\mbox{ s.t. }
				|(  A  x)_i |
			> 
				\frac{\left\| x_J \right\| \sqrt{p}}{2C_{\rm {Rgz}}}
		} 
		(  A  x )_i^2 
	\ge &
		\Big(\frac{ n }{ 4 } - \beta \Big)
		\frac{  p }{ 2 C_{\rm {Rgz}}} \left\| x_J \right\|^2 \\
	& \phantom{AAA A}\ge 
		\frac{np}{16 C_{\rm Rgz}}\left\| x_J \right\|^2.
	\end{align*}
	Therefore, 
	\begin{align*}
		\mathbb{P}  \left\{
			\left\|   A  x \right\|
		\le 
			\frac{1}{4 } C_{\rm {Rgz}} 
			\sqrt{  pn  }
			\left\| x_J \right\| 
		\right\}
	\le
		\exp \left( - c_{ \ref{prop:idProbRvector} }n \right). 
	\end{align*}
	Finally, the statement of the Proposition follows due to 
	$ \left\| x_J \right\| \ge \psi_t \sqrt{ n }$ 
	by the definition of $\mathcal{R}_{kt}^s$. 
\end{proof}

\begin{lemma} \label{lem: indivProCore}
	Let $A$ be a $n\times n$ random Bernoulli matrices 
	with parameter $p$. Assuming that $n$ is sufficiently 
	large. 
	For a vector $x\in \mathbb{R}^n$ satisfying 
	\begin{align*}
		x^*_{m_1}=3a \, \mbox{ and } x^*_{m_1} > 3x^*_{n-m_1}. 
	\end{align*}
	where $m_0 \le m_1 \le \frac{n}{2}$ and $a>0$. 
	We have 
	\begin{align*}
		\mathbb{P} \left\{ 
		 \left| \left\{ i\in [n] \,:\, |(Ax)_i|>a \right\}  \right|  
		< \frac{1}{50}qn 
		\right\} \le \exp(-\frac{1}{40}qn)
	\end{align*}
	where 
	$$q = 2m_0p(1-p)^{2m_0-1}.$$
	Further, if $\lfloor \frac{m_1 }{m_0 } \rfloor q>C$ for some universal constant $C>0$, 
	then 
	\begin{align*}
		\mathbb{P} \Big\{
		|\left\{ i\in [n]\,:\, |(Ax)_i|>a \right\}| < n/4
	 \Big\} 
		\le 2 \exp \Big(-\frac{1}{12}
		\log \Big( \lfloor \frac{m_1}{m_0} \rfloor q \Big)n \Big).
	\end{align*}
\end{lemma}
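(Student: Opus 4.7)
The plan is to prove both bounds by the same two-step strategy: derive a per-row anticoncentration bound of the form $\mathbb{P}(|(Ax)_i| < \kappa a) \le p_{\mathrm{bad}}$, then tensorize over the $n$ independent rows of $A$ via Chernoff applied to the i.i.d.\ Bernoulli indicators $\mathbf{1}\{|(Ax)_i| \ge \kappa a\}$; the overall tail on $\|Ax\|$ follows from $\|Ax\|^2 \ge \kappa^2 a^2 \cdot \#\{i : |(Ax)_i| \ge \kappa a\}$. The structural input from the hypothesis on $x$ is the following: letting $L$ and $S$ denote the indices of the top and bottom $m_0$ coordinates of $x$ in absolute value, one has $|x_j| \ge x^*_{m_0} \ge x^*_{m_1} = 3a$ for $j \in L$ and $|x_j| \le x^*_{n-m_0+1} \le x^*_{n-m_1} < a$ for $j \in S$ (using $m_0 \le m_1$ and the gap $x^*_{m_1} > 3x^*_{n-m_1}$), so that any pair $(j_L, j_S) \in L \times S$ satisfies the robust separation $|x_{j_L} - x_{j_S}| > 2a$.

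For the first bound I would introduce, for each row $i$, the event $\xi_i$ that the row restricted to $L \cup S$ has exactly one $1$; this has probability $q = 2m_0 p(1-p)^{2m_0-1}$, and conditional on it the unique active column $j^*$ is uniform on $L \cup S$. On $\{\xi_i = 1\}$ one has $(Ax)_i = x_{j^*} + w_i$ with $w_i = \sum_{j \notin L \cup S} A_{ij} x_j$ independent of $(\xi_i, j^*)$. For any fixed matching of $L$ with $S$ as $\{(j_l^L, j_l^S)\}_{l=1}^{m_0}$, the two open intervals $(-x_{j_l^L} - a,\, -x_{j_l^L} + a)$ and $(-x_{j_l^S} - a,\, -x_{j_l^S} + a)$ are disjoint because their centres differ by more than $2a$, so for any distribution of $w_i$ we have $\mathbb{P}(|x_{j_l^L} + w_i| \ge a) + \mathbb{P}(|x_{j_l^S} + w_i| \ge a) \ge 1$. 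Summing in $l$ and weighting by the uniform mass $q/(2m_0)$ of each choice of $j^*$ yields $\mathbb{P}(\{|(Ax)_i| \ge a\} \cap \{\xi_i = 1\}) \ge q/2$. Applying Chernoff to the i.i.d.\ row indicators $\mathbf{1}\{|(Ax)_i| \ge a\}$ then gives $\|Ax\| \gtrsim a\sqrt{qn}$ off an event of probability $\exp(-c\,qn)$, and a numerical optimization of constants delivers the stated $\sqrt{qn/50}$ and $\exp(-qn/40)$.

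For the second, stronger bound I would partition the top and bottom $m_1$ coordinates into $k := \lfloor m_1/m_0 \rfloor$ disjoint pair-blocks $(L_g, S_g)_{g=1}^k$ of size $m_0$ each and define the analogous indicators $\xi_{i,g}$. Because the blocks are disjoint, for fixed $i$ the indicators $\xi_{i,1}, \dots, \xi_{i,k}$ are jointly independent $\mathrm{Bernoulli}(q)$. Conditionally on $\mathcal{G}_i := \{g : \xi_{i,g} = 1\}$ one may write $(Ax)_i = \sum_{g \in \mathcal{G}_i} x_{j^*_g} + Z_i$, where each $x_{j^*_g}$ is uniform on $\{x_j : j \in L_g \cup S_g\}$, the $x_{j^*_g}$'s are independent across $g \in \mathcal{G}_i$, and $Z_i$ (collecting the remaining contributions of the row) is independent of $\{x_{j^*_g}\}_{g \in \mathcal{G}_i}$. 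Each summand satisfies $\mathscr{Q}(x_{j^*_g}, a) \le 1/2$: every open interval of length $a$ meets $L_g$ or $S_g$ but not both (because of the $2a$ gap), so it contains at most $m_0$ of the $2m_0$ atoms, each of mass $1/(2m_0)$. Rogozin's Theorem \ref{thm:Rogozin} applied with $\lambda = 2a$ and all $\lambda_g = a$ therefore yields $\mathscr{Q}\bigl(\sum_{g \in \mathcal{G}_i} x_{j^*_g},\, 2a\bigr) \le C/\sqrt{|\mathcal{G}_i|}$, and by independence of $Z_i$ this passes to $\mathbb{P}(|(Ax)_i| < a \mid \mathcal{G}_i) \le C/\sqrt{|\mathcal{G}_i|}$. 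Combining with a Chernoff lower tail on $|\mathcal{G}_i| \sim \mathrm{Binomial}(k,q)$, which guarantees $|\mathcal{G}_i| \ge kq/2$ except on an event of probability $\exp(-kq/8)$, yields a per-row failure probability $p_{\mathrm{bad}} \le C'/\sqrt{kq}$. Tensorizing via $\binom{n}{n/2} p_{\mathrm{bad}}^{n/2}$ then produces an overall probability of order $\exp\bigl(-c\log(kq)\,n\bigr)$ of having at least $n/2$ bad rows; on the complementary event at least $n/2$ rows contribute $a^2$ each to $\|Ax\|^2$, giving $\|Ax\| \ge a\sqrt{n/2}$, which implies the claimed bound $a\sqrt{n/4}$ after adjusting constants.

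The main obstacle is the anticoncentration input for the second bound. One has to verify the strict separation between $L$- and $S$-values for every term in every block so that Rogozin's theorem produces the correct $1/\sqrt{|\mathcal{G}_i|}$ decay; the factor $1 - \mathscr{Q}(x_{j^*_g}, a) \ge 1/2$ is precisely what the gap hypothesis $x^*_{m_1} > 3 x^*_{n-m_1}$ was engineered to supply. One also has to calibrate the Chernoff on $|\mathcal{G}_i|$ so that the loss $\exp(-kq/8)$ does not dominate the ultimate target $\exp(-c\log(kq)n)$, which is exactly what the standing assumption $\frac{m_1}{m_0}q > C$ ensures.
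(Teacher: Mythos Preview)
Your proposal is correct and follows essentially the same strategy as the paper's proof: partition the top and bottom $m_1$ coordinates into $h=\lfloor m_1/m_0\rfloor$ paired blocks of size $m_0$, use the gap hypothesis to get $\mathscr{Q}(x_{j^*_g},a)\le 1/2$ on each block where the row has exactly one entry, apply Rogozin across the active blocks to obtain a per-row anticoncentration of order $(hq)^{-1/2}$, and then tensorize over independent rows. The only cosmetic differences are that the paper freezes the ``remainder'' by conditioning on the event $\Omega_{\mathcal{J},\mathcal{I},M}$ (fixing the partition structure and the columns outside the blocks) whereas you invoke the independence of $w_i$ and $Z_i$ directly, and that the paper performs the final tensorization in two stages (first bounding the number of rows $i$ with $|S_i|$ small, then the number with $|(Ax)_i|$ small among the remaining rows) while you fold both into a single per-row failure probability $p_{\mathrm{bad}}\le C'/\sqrt{kq}$ before applying $\binom{n}{n/2}p_{\mathrm{bad}}^{n/2}$. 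Both packagings are valid; yours is slightly more streamlined.
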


Remark: We will apply this lemma with 
$m_0 = n_{   {t_2}   }
	= \frac{ 1 }{ \gamma   p }$ 
with $m_1= m_0$ or $m_1 = n_{   {t_2}   + 1}$. 
In either cases,
$ q \ge  m_1  p $.  
As a Corollary, we have 
\begin{corollary} \label{cor: indivProT2T3}

	There exists an universal constant $ c_{ \ref{cor: indivProT2T3}} >0$ 
	so that the following 
	estimates holds: 
	For any fixed constant $\beta$, if $n$ is sufficiently large, then for every $x \in {\mathcal{T}_2'}$,  
	\begin{align*}
		\mathbb{P}\left\{  \exists I \subseteq [n] \mbox{ with } |I|= n-\beta+1 \mbox{ s.t. } \|  A_{I,[n]} x\| 
		< 
	\frac{  \sqrt{ n } }{ \log( n )  }\right\} 	
		&\le 
		\exp(- c_{ \ref{cor: indivProT2T3} } 
			n ) , \\
	\end{align*}
	and for every $x \in {\mathcal{T}_3'}$,
	\begin{align*}
		&\mathbb{P}\left\{ 
		\exists I \subseteq [n] \mbox{ with } |I|= n-\beta+1 \mbox{ s.t. } \|  A_{I,[n]} x\| 
		< 
		\frac{  \sqrt{ n } }{ \log( n )  }	
		\right\} \\
		 \le& 
		\exp \left(
			- c_{ \ref{cor: indivProT2T3} }
			\log(  pn  )
			n
		\right).
	\end{align*}
\end{corollary}
\begin{proof}
	
	For $ x \in \mathcal{T}_2'$, from the definition of $\mathcal{T}_2'$ we know that 
	\begin{align*}
		1 
	= 
		x^*_{  n_{  {t_2}   } }	
	\ge 
		\frac{ 1 }{ C_{ \mathcal{T}_2 } \sqrt{pn}  } 
		x^*_{  n_{   {t_2}  +1 } }
	\ge
		3 x^*_{ n - n_{  {t_2}   } }.
	\end{align*}

	Applying Lemma \ref{lem: indivProCore} with the matrix $  A $, 
	$ m_0 = m_1 = n_  {t_2}  $,
	and $a = 1/3$ we obtain
	\begin{align*}
		\mathbb{P} \left\{ 
		 \left| \left\{ i\in [n]\,:\, |(Ax)_i|>1/3 \right\}  \right|  
		< \frac{1}{50}qn 
		\right\} \le \exp(-\frac{1}{40}qn)
	\end{align*}
	where $ q = 2 n_s p ( 1- p  )^{ 2 n_s -1}$. 
	Since $ n_  {t_2}   
	=  \lceil \frac{1}{ \gamma   p } \rceil$, with the estimate 
	$ ( 1-x ) = \exp( -x + O(x^2))$ we have 
	\begin{align} \label{eq: T23qestimate}
		q \ge \frac{ 2 }{ \gamma } \exp \Big( - \frac{ 3 }{ \gamma }  \Big)
	\end{align}
	due to $ \gamma \ge 1$. Hence, there exists 
	$ c_{ \ref{cor: indivProT2T3} } > 0$ such that 
	\begin{align*}
		\mathbb{P} \big\{ 
		\left| \left\{ i\in I(J)\,:\, |(Ax)_i|>1/3 \right\}  \right|
		<
		c_{ \ref{cor: indivProT2T3} }n 
		\big\} 	
	\le
		\exp (
			-  c_{ \ref{cor: indivProT2T3} }
			n
		).
	\end{align*}
	Hence, with probability at least $1 - \exp( - c_{ \ref{cor: indivProT2T3} }n)$, for each $I \subseteq [n]$ with $|I|=n-\beta +1$, 
	we have 
	\begin{align*}	
		&\left| \left\{ i\in I(J) \cap I \,:\, |(Ax)_i|>1/3 \right\}  \right|
		\ge  c_{ \ref{cor: indivProT2T3} } n - \beta  = o(n/\log^2(n)) \\
	\Rightarrow&  
		\|  A_{I,[n]} x\| 
		\ge  
			\frac{1}{3} \sqrt{c_{ \ref{cor: indivProT2T3} } n - \beta}
		\ge 
		\frac{  \sqrt{ n } }{ \log( n )  }	
	\end{align*}

	For $ x \in \mathcal{T}_3'$, we have 
	\begin{align*}
		1 
	= 
		x^*_{  n_{  {t_2}  +1 } }
	\ge 
		C_{ \mathcal{T}_2 } \sqrt{ pn } x^*_{  n_{  {t_2}  +2 }  } 
	\ge 
		C_{ \mathcal{T}_2 } \sqrt{ pn } x^*_{  n - n_{  {t_2}  +1  } }.
	\end{align*}

	Here we apply Lemma \ref{lem: indivProCore} with the same parameters except 
	$m_1 = n_{   {t_2}  +1 }$ this time. 
	In this case, 
	\begin{align*}
		\frac{ m_1 }{ m_0 } q
	=	
		\frac{ n_{  {t_2}  +1 }  }{ n_{  {t_2}   } } q
	\ge
	 	\gamma \sqrt{ pn }  q
	\ge
	(   pn )^{1/4}. 
	\end{align*}
	Thus, by Lemma \ref{lem: indivProCore} 
	\begin{align*}
		\mathbb{P} \Big\{
		|\left\{ i\in [n]\,:\, |(Ax)_i|> 1/3 \right\}| < n/4
	 \Big\}  
	\le &
		2 \exp \left(
			- \frac{1}{12} \cdot \frac{ 1 }{ 4 }
			\log \left(  
				pn 
			\right) n
		\right) \\
	\le &
		\exp \left(
			- c_{ \ref{cor: indivProT2T3} }
			\log(  pn  )
			n
		\right)\,,
	\end{align*}
	where the last inequality follows provided $ c_{ \ref{cor: indivProT2T3} } >0$ is sufficiently small. Given the complement of the above event, for each $I \subseteq [n]$ with $|I|=n-\beta +1$, 
	we have 
	\begin{align*}	
		&\left| \left\{ i\in I(J) \cap I \,:\, |(Ax)_i|>1/3 \right\}  \right|
		\ge  n/4 - \beta  = o(n/\log^2(n)) \\
	\Rightarrow&  
		\|  A_{I,[n]} x\| 
		\ge  
			\frac{1}{3} \sqrt{n/4 - \beta}
		\ge 
		\frac{  \sqrt{ n } }{ \log( n )  }.
	\end{align*}
	Therefore, the corollary follows. 

\end{proof}

Here we will set up some notations for the proof of Lemma \ref{lem: indivProCore}. 
Let $ h = \lfloor \frac{m_1}{m_0} \rfloor $. 
We define the corresponding sets:

\begin{align*}
J^l_t &=& \sigma_x \big(
	[(t-1)m_0+1, tm_0] \big) & & t\in [ h ] \\
J^r_t &=& \sigma_x \big(
	[n-tm_0+1, n-(t-1)m_0] \big) & & t\in [ h ] \\ 
J_t & =& J^l_t \bigcup J^r_t & &  \\
J_0 & =&  [n] \backslash \left( \cup_{t\in [h]}J_t \right)  \\
\mathcal{J} & = & (J_1,\dots, J_h). & &
\end{align*}

For $J \subset [ n ]$, we define 
\begin{align*}
	I(J) =  \big\{ i\in [n]\,:\, \exists j_0 \mbox{ s.t. }
		 a_{ij_0}=1 \mbox{ and } a_{ij}=0 \mbox{ for }
		 j\in J \backslash \left\{ j_0 \right\} \big\}.
\end{align*}

Next, let $ \mathcal{I}$ be a collection of subsets of  $ [ n ]$ : 
$\mathcal{I}=(I_1,\dots, I_s)$ where $I_i\subset [n]$. 
Let $M$ be a $[n] \times J_0$ matrix with $0,1$ entries.
Specifically, $M$ is a $n \times |J_0|$ matrix whose 
columns are indexed by $J_0$ and rows are indexed by $[ n ]$.

Let $ \Omega_{ \mathcal{J},\mathcal{I}, M } $ be the event 
of $A$ that 
1. $I(J_t) = I_t$ for $t\in [h]$, and 
2. $A_{J_0}=M$, where $A_{J_0}$ is the submatrix of $A$ 
with columns $J_0$. 

For $i \in [ n ]$ and $J \subset [ n ]$, let 
$A_{i J}$ denotes the $1 \times |J|$ submatrix 
of $A$ with row $i$ and columns $J$.
If we condition on $\Omega_{ \mathcal{J},\mathcal{I}, M } $, 
then 
$ \left\{  A_{iJ_t} \right\}_{i\in [n], t\in [h]}  $
are jointly independent but not necessary i.i.d.

Fix $i \in [n]$, we have 
\begin{align*}
	(Ax)_i = \sum_{t =0}^h \sum_{j\in J_t} a_{ij}x_j
	:= \sum_{t =0}^h \xi_t.
\end{align*}

\begin{proposition}
	Condition on $ \Omega_{\mathcal{J},\mathcal{I}, M} $ and fix 
	$i\in [n]$ and $t\in [h]$ such that $i\in I(J_t)$. 
	Let $\xi_t := \sum_{j\in J_t} a_{ij}x_j$. Then, 
	$ \mathcal{L}(\xi_t , 2a) \le \frac{1}{2}$. 
\end{proposition}

\begin{proof}
	Fix $i\in I(J_t)$. $A_{iJ_t}$ contains exactly one 
	none-zero entry: There exists $j_0$ which is uniformly 
	chosen among $J_t$ such that $a_{ij_0}=1$ and $a_{ij}=0$
	for $j \in J\backslash \left\{ j_0 \right\} $. 
	Notice that $\xi_t = x_{j_0}$. 
	
	First, $\mathbb{P} \left\{ j_0 \in J^l_t \right\}  = 
	\mathbb{P}\left\{ j_0 \in J^r_l \right\} = \frac{1}{2}$. 
	Secondly, for $j_l \in J^l_t, j_r \in J^r_t$, we have 
	$|x_{j_l}| >3 |x_{j_r}| \ge a$ by the definition of $J_t$.  
	Together we conclude that $ \mathcal{L}(x_{j_0},\, a) \le 
	\frac{1}{2} $ .
\end{proof}

\begin{proof}[ Proof of Lemma \ref{lem: indivProCore} ]
	Consider the set 
	\begin{align*}
		S_i:=\{t\in [h]\,:\, i\in I(J_t)\}.
	\end{align*}
	If we condition on $\Omega_{\mathcal{J},\mathcal{I}, M }$,
	the set $S_i$ is fixed and determined by $ \mathcal{I}$. 
	In the case that $S_i$ is non-empty,  let $t\in S_i$ and we have
	$ 
		\mathcal{L}((Ax)_i,\, a) \le 
		\mathcal{L}(\xi_t,\, a) \le \frac{1}{2}.
	$
	
	\medskip 	

	We begin with proving the first statement. 
	Let $J:=J_1$ and $I:=I_1$. The expected size of $I(J)$ is $nq$. 
	Let $O_1$ be the event that $ |I(J)| \ge\frac{1}{5}qn $, by 
	\eqref{eq:BnmLow} we have 
	\begin{align} \label{eq: IndiProO_1}
	 \mathbb{P}\left\{ O_1^c \right\} \le \exp \Big(-\frac{1}{3}qn \Big) .  
	\end{align}

	Notice we could partition the event $O_1$ into 
	subevents of the form $ \Omega _ {\mathcal{J},\mathcal{I}, M}$.
	Now we condition on an subevent 
	$ \Omega _ {\mathcal{J},\mathcal{I}, M} \subset O_1$. In this 
	case, $I(J) = I $ is a fixed set. 
	
	Since $\left\{ (Ax)_i \right\}_{i\in I(J)}$ are jointly independent, we have 
	\begin{align*}	
		\mathbb{P} \left\{  \left| \left\{ i\in I(J)\,:\, |(Ax)_i|>a \right\}  
		\right|  \le \frac{1}{5}\cdot \frac{1}{2}|I(J)|\, \Biggm\vert 
		\, \Omega _ {\mathcal{J},\mathcal{I}, M}
		\right\}   
	\le &
		\exp( - \frac{1}{3}\cdot \frac{1}{2} |I(J)|) \\
	\le &
		\exp( - \frac{1}{30}qn)\,,
	\end{align*}	
	by \eqref{eq:BnmLow}. Hence, 
	
	\begin{align} \label{eq: IndiProO_1.2}
		\mathbb{P} \left\{  
			\left| \left\{ i \in I(J) \,:\, 
				| (Ax)_i | > a
			\right\}  \right|  
		\le 
			\frac{1}{50} qn
		\, \Biggm\vert \, 
			O_1
		\right\} 
	\le
		\exp( - \frac{1}{30} qn ).
	\end{align}

	Let $O_2$ be the event that 
	$ \left| \left\{ i\in I(J)\,:\, |(Ax)_i|>a \right\}  \right|  
		\ge \frac{1}{50}qn $. 
	By \eqref{eq: IndiProO_1} and \eqref{eq: IndiProO_1.2}, 
	\begin{align*}
		\mathbb{P}\left\{ O_2^c \right\} \le \mathbb{P} \left\{ O_1^c \right\}
		+ \mathbb{P} \left\{ O_2^c \, |\, O_1 \right\} \le
		\exp(- \frac{1}{40}qn).
	\end{align*}
	

	\medskip

	As for the second statement, let us assume the following:
	$$hq > 6 \log(2e),$$
	or $6 \log(2e)$ is the constant $C$ stated in the Lemma. 

	The expected size of $|S_i|$ is $ hq $. Recall that $A_{i,J_t}$ are jointly independent for 
	$t\in [h]$. Thus, applying \eqref{eq:BnmLow}
	we get 
	\begin{align*}
		\mathbb{P} \left\{ 
			|S_i| < \frac{1}{5} hq
		 \right\}  \le 
		 \exp(-\frac{1}{3} hq ) :=q_2.
	\end{align*} 
	
	Let $I:= \left\{ i\in [n]\,:\, |S_i|> \frac{1}{5} hq \right\} $
	and $O_1$ be the events that $|I| \ge \frac{n}{2}$.
	By \eqref{eq:BnmUp}, 
	\begin{align*}
		\mathbb{P} \left\{ O_1^c \right\}  
	= 
		\mathbb{P} \left\{ 
			|I^c| \ge \frac{n}{2}
		\right\} 
	\le 
		\left( \frac{ en q_2}{n/2} \right)^{ n/2 }
	\le 
		\exp( -\frac{1}{ 12 } \log(hq) n ).
	\end{align*} 
	provided that $hq > 6 \log(2e)$.
	
	We could also partition $O_1$ into subevents of the form 
	$ \Omega_ { \mathcal{J}, \mathcal{I}, M} $. Now we fix 
	a subevent $ \Omega_ { \mathcal{J}, \mathcal{I}, M} $
	of $O_1$.
	
	By Rogozin's Theorem (Theorem \ref{thm:Rogozin}), we have
	\begin{align*}
	\mathcal{L} ( (Ax)_i ,\, a) \le 
		\mathcal{L}( \sum_{t\in S_i}\xi_t, \, a) \le 
		\frac{C}{\sqrt{\frac{1}{2}|S_i| }} \le 
		\frac{C\sqrt{10}}{\sqrt{hq}}
	\end{align*}
	for $i\in I$. Let 
	\begin{align*}
		I' := \left\{ i\in I_1\,:\, |(Ax)_i|>a \right\}.
	\end{align*}
	Due to independence of the rows (after conditioning on
	 $ \Omega_{\mathcal{J}, \mathcal{I}, M}$), 
	 by \eqref{eq:BnmUp} we have
	\begin{align*}
		\mathbb{P} \left\{ |I'|^c > \frac{|I|}{2}  \, \Biggm\vert\, 
		\Omega_ {\mathcal{J}, \mathcal{I}, M} \right\} 
		\le \exp \left( 
			\log( \frac{ 2eC\sqrt{10}}{ \sqrt{hq}}) \frac{|I|}{2}
		 \right) 
		\le \exp(- \log(hq)\frac{|I|}{5}),
	\end{align*}
	and thus 
	\begin{align*}
		\mathbb{P} \left\{  
		 | \left\{ i\in I_1\,:\, |(Ax)_i|>a \right\} |
	\le 
		\frac{n}{4}
	 \, \Biggm\vert\, 
	 	O_1
		\right\} 
	\le 
		\exp( - \log( hq ) \frac{n}{10} )
	\end{align*}
	
	Thus, let $O_2$ be the event that $|I'|>\frac{n}{4}$, we have 
	\begin{align*}
	\mathbb{P} \left\{ O_2^c \right\} \le \mathbb{P}\left\{ O_1^c \right\} 
	+ \mathbb{P}\left\{ O_2^c\,|\, O_1 \right\} 
	 \le 2 \exp(-\frac{1}{12}\log(hq)n)
	\end{align*}
	
\end{proof}

\subsection{ Estimate for $ \mathcal{T}_2', \mathcal{T}_3'$ 
and $\mathcal{R}_{kt}^s$}

\begin{proof}[ Proof of Theorem \ref{thm:R-vector} ]	
	Recall that 
	\begin{align*}
		\mathcal{R} 
	:= 
		\cup_{ s \in [2] } 
		\cup_{ n_  {t_2}   
			\le k
			\le \frac{ n }{ \log^2(  pn  )} }
		\cup_{ t \in [ \mathfrak{m} ] }
		\mathcal{R}^s_{kt}.
	\end{align*}
	
	Now we focus on $ \mathcal{R}^s_{kt} $
	for a triple $(s, k, t)$. Let $\mathscr{N}:=\mathscr{N}_{kt}^s$ be the net 
	descibed in Lemma \ref{lem:R-net}. The cardinality 
	of the net is bounded by 
	$ \left( \frac{e}{ \tau } \right)^{3  \tau n  }$. 
	When $ \tau$ is sufficiently small, we may assume 
	$ \mathscr{N}  \le \exp(  
		\frac{ c_{ \ref{prop:idProbRvector} } }{ 2 }n)$
	where $c_{ \ref{prop:idProbRvector} } >0$ is a universal constant 
	appeared in Proposition \ref{prop:idProbRvector}. 

	By Proposition \ref{prop:idProbRvector} and the union
	bound argument, we have 
	\begin{align*}
	&\mathbb{P}\left\{ \exists x\in \mathscr{N}, \, 
			\exists I \subseteq [n] \mbox{ w. } |I| = n-\beta+1 \mbox{ s.t. }
			\left\|   A_{I,[n]}  x \right\|
		\le 
			\frac{ C_{\rm {Rgz}} }{ 4 }  
			\sqrt{  pn  }
			\psi_t \sqrt{ n}
		\right\}  \\
	\le &
		\exp(- \frac{ c_{ \ref{prop:idProbRvector} } }{ 2 } n ).  
	\end{align*}
	Now we condition on the complement of the above event, namely 
	\begin{align*}
		& \Omega(s,k,t) \\
	:= & \left\{ \forall x\in \mathscr{N}, \, 
			\forall I \subseteq [n] \mbox{ w. } |I| = n-\beta+1, \,  
			\left\|   A_{I,[n]}  x \right\|
		> 
			\frac{ C_{\rm {Rgz}} }{ 4 }  
			\sqrt{  pn  }
			\psi_t \sqrt{ n} \right\}.
	\end{align*}
	Suppose we condition on $\Omega_{\rm norm}$ (see definition from \eqref{def: omegaNorm}) and $\Omega(s,k,t)$. Consider $ x\in \cal R^s_{kt}$ and $I \subseteq [n]$ with $|I|=n-\beta+1$. First, from the definition of $\mathscr N$, there exists $y \in \mathscr{N}$ such that 
	$$
		\|x - y \| \le \frac{40}{R} \psi_i \sqrt{n}. 
	$$  
	Now, 
	\begin{align*}
		\|A_{I,[n]}x\| 	
	\ge &
		\|A_{I,[n]}y\| - \|A_{I,[n]}(x-y)\| \\
	\ge &
		\|A_{I,[n]}y\| - \big\|\big(A_{I,[n]} - \mathbb{E}A_{I,[n]}\big)(x-y)\big\|
		- \|\mathbb{E}A_{I,[n]}(x-y)\| \\
	\ge &
		\|A_{I,[n]}y\| - \big\|\big(A - \mathbb{E}A\big)(x-y)\big\|
		- \|\mathbb{E}A(x-y)\| \\ 
	\ge & 
		\frac{C_{\rm {Rgz}}}{4}\sqrt{  pn  } 
		\psi_t\sqrt{ n} 
		- 2\frac{40}{R}\psi_i\sqrt{ n} C_{\rm {norm}}
		\sqrt{  pn  } \\
	\ge& 
		\frac{C_{\rm {Rgz}}}{ 8 }
		\sqrt{  pn  } \psi_t\sqrt{ n },
	\end{align*}

	where the last inequality holds when $R$ is sufficiently 
	large. 

	Furthermore, since for any $ x \in \mathcal{R}$, 
	$ x \notin \mathcal{T}$. 
	By Proposition \ref{prop: TnormBound},
	$ \left\| x \right\|  \le n^{1+ o_{n }(1) }
	pn $.
	And by the definition of $\mathcal{R}_{kt}^s$, 
	$ \psi_t \sqrt{ n }
	\ge \sqrt{ \frac{ n }{ 2 }}$, we obtain
	\begin{align*}
		\frac{C_{\rm {Rgz}}}{ 2 \sqrt{10} }\sqrt{pn} 
			\psi_t \sqrt{ n }
	\ge
		n^{ - \frac{1}{2} -  o _ n (1) }
		( pn  )^{-1/2} \left\| x \right\|
	\end{align*} 
	and hence, 	
	\begin{align*}
	&	\mathbb{P} \Big\{  
			\exists x \in \mathcal{R}_{kt}^s \mbox{ and }
		\exists I \subseteq [n] \mbox{ w. } |I| = n-\beta+1 \mbox{ s.t. }\\
 	&  \phantom{AAA AAA AAA AAA AAA AAA AA}
			\left\|   A_{I,[n]}  x \right\|
			 \le  
		n^{ - \frac{1}{2} -  o _ n (1) }
		( pn  )^{-1/2} \left\| x \right\|
		\Big\} \\
	\le& 
		\Prob\{ \Omega^c(s,k,t) \cup \Omega_{\rm norm}^c\}.
	\end{align*}

	Applying the union bound argument over all possible triple $(s,k,t)$, ($ s \in [2]$, 
	$ n_s \le k \le \frac{ n  }{ \log^2( pn ) }$,
	$ 1\le t \le  m  \le  2\log( pn )$)
	to get 
	\begin{align*}
	& \mathbb{P} \Big\{  
			\exists x \in \mathcal{R} 
			\mbox{ and }
		\exists I \subseteq [n] \mbox{ w. } |I| = n-\beta+1 \mbox{ s.t. } \\
 	&  \phantom{AAA AAA AAA AAA AAA AAA AA}
			\left\|   A_{I,[n]}  x \right\|
		\le 
		n^{ - \frac{1}{2} -  o _ n (1) }
		( pn  )^{-1/2} \left\| x \right\|
		\Big\}  \\
	\le& 
		\Prob\Big\{ (\bigcup_{s,k,t}\Omega^c(s,k,t)) \cup \Omega_{\rm norm}^c \Big\}
	\le 
		n \exp\Big( - \frac{ c_{ \ref{prop:idProbRvector} } }{ 2 }n \Big)
		+ \Prob\{ \Omega_{\rm norm}^c \}. 
	\end{align*}
	Finally, together with 
	$ 
		\mathbb{P} \{ \Omega_{\rm norm}^c \} 
	= o(\mathbb{P} \{ \Omega_{\rm RC}^c \})$,
	the theorem follows. 
\end{proof}

\begin{proof} [ Proof of Theorem \ref{thm:T_2T_3-vector} ]
	The proof for Theorem \ref{thm:T_2T_3-vector} is essentially 
	the same as that of Theorem \ref{thm:R-vector}.
	Instead of using Lemma \ref{lem:R-net} and 
	Proposition \ref{prop:idProbRvector} for 
	the net and individual probability estimate, we 
	can replace it with Lemma \ref{lem:T-net} and 
	Corollary \ref{cor: indivProT2T3}. Here we will skip the proof. 

\end{proof}

\bibliographystyle{plain}

\bibliography{Ref}

\end{document}